\newtheorem{theo}{Theorem}[section]
\newtheorem{thm}{Theorem}[section]
\newtheorem{prop}[theo]{Proposition}
\newtheorem{lemma}[theo]{Lemma}
\newtheorem{cor}[theo]{Corollary}
\newtheorem{question}[theo]{Question}
\newtheorem{defn}[theo]{Definition}
\newtheorem{conj}[theo]{Conjecture}
\theoremstyle{remark}
\newtheorem{rmk}[theo]{Remark}
\newtheorem{example}[theo]{Example}
\newcommand{\BA}{{\mathbb{A}}}
\newcommand{\BC}{{\mathbb{C}}}
\newcommand{\BE}{{\mathbb{E}}}
\newcommand{\BF}{{\mathbb{F}}}
\newcommand{\BG}{{\mathbb{G}}}
\newcommand{\BL}{{\mathbb{L}}}
\newcommand{\BQ}{{\mathbb{Q}}}
\newcommand{\BR}{{\mathbb{R}}}
\newcommand{\BZ}{{\mathbb{Z}}}
\newcommand{\CA}{{\mathcal A}}
\newcommand{\CB}{{\mathcal B}}
\newcommand{\CC}{{\mathcal C}}
\newcommand{\CD}{{\mathcal D}}
\newcommand{\CO}{{\mathcal O}}
\newcommand{\CS}{{\mathcal S}}
\newcommand{\CT}{{\mathcal T}}
\newcommand{\CW}{{\mathcal W}}
\newcommand{\CX}{{\mathcal X}}
\newcommand{\Fq}{{\mathfrak{q}}}
\newcommand{\ch}{\mathsf{ch}}
\newcommand{\id}{\mathrm{id}}
\newcommand{\pt}{{\mathsf{p}}}
\newcommand{\blangle}{\big\langle}
\newcommand{\brangle}{\big\rangle}
\newcommand{\Mbar}{{\overline M}}
\newcommand\ev{\operatorname{ev}}
\newcommand{\Pic}{\mathop{\rm Pic}\nolimits}
\newcommand{\PT}{\mathsf{PT}}
\newcommand{\GW}{\mathsf{GW}}
\newcommand{\U}{\mathsf{U}}
\DeclareFontFamily{OT1}{rsfs}{}
\DeclareFontShape{OT1}{rsfs}{n}{it}{<-> rsfs10}{}
\DeclareMathAlphabet{\curly}{OT1}{rsfs}{n}{it}
\newcommand{\p}{\mathbb{P}}
\newcommand\Hilb{\operatorname{Hilb}}
\newcommand{\vd}{\mathsf{vd}}
\newcommand{\vir}{\mathsf{vir}}
\newcommand{\red}{\mathsf{red}}
\newcommand{\full}{\mathsf{full}}
\newcommand{\rel}{\mathrm{rel}}
\DeclareFontFamily{OT1}{rsfs}{}
\DeclareFontShape{OT1}{rsfs}{n}{it}{<-> rsfs10}{}
\DeclareMathAlphabet{\curly}{OT1}{rsfs}{n}{it}
\newcommand{\1}{\mathsf{1}}
\newcommand{\Aut}{\mathrm{Aut}}
\newcommand{\taut}{{\mathrm{taut}}}
\newcommand{\Mon}{\mathsf{Mon}}
\newcommand{\vacuum}{\1}
\newcommand{\K}{{\mathsf{K}}}
\newcommand{\rub}{{\textup{rub}}}
\newcommand{\dbtilde}[1]{\accentset{\approx}{#1}}
\renewcommand{\d}{{\mathsf{d}}}
\newcommand{\pr}{{\mathsf{pr}}}
\newcommand{\Nak}{{\mathsf{Nak}}}
\title{Marked relative invariants and GW/PT correspondences}
\author{Georg Oberdieck}
\date{\today}
\begin{document}
\maketitle
\setcounter{section}{0}

\begin{abstract}
We introduce marked relative Pandharipande-Thomas (PT) invariants
for a pair $(X,D)$ of a smooth projective threefold and a smooth divisor.
These invariants are defined by integration over
the moduli space of $r$-marked stable pairs on $(X,D)$,
and appear naturally
when degenerating diagonal insertions via the Li-Wu degeneration formula.
We propose a Gromov-Witten (GW) / PT correspondence for marked relative  invariants.
We show compatibility of the conjecture with the degeneration formula and a splitting formula
for relative diagonals.
The results provide new tools to prove GW/PT correspondences
for varieties with vanishing cohomology. % (with respect to a given degeneration).

As an application we prove the GW/PT correspondence for:
\begin{enumerate}
\item[(i)] all Fano complete intersections, and
\item[(ii)] the reduced theories of $(S \times C, S \times \{ z_1, \ldots, z_N \})$
where $S$ is a K3 surface and $C$ is a curve, for all curve classes which have divisibility at most $2$ over the K3 surface.
\end{enumerate} 

In the appendix we introduce a notion of higher-descendent invariants which can be seen as an analogue of the nodal Gromov-Witten invariants
defined by Arg\"uz, Bousseau, Pandharipande and Zvonkine in \cite{ABPZ}.
We show that the higher-descendent invariants reduce to marked relative invariants with diagonal insertions.
\end{abstract}
\setcounter{tocdepth}{1}
\tableofcontents
%\date{\today}

\section{Introduction}
\subsection{Overview}
There are two basic curve counting invariants
associated to a smooth projective threefold:
\begin{enumerate}
\item[(i)] Pandharipande-Thomas theory, defined via the moduli space of stable pairs,
\item[(ii)] Gromov-Witten theory, defined via the moduli space of stable maps.
\end{enumerate}
A correspondence between these two theories was proposed in \cite{MNOP1, MNOP2, PP_GWPT_Toric3folds}
and proven in many instances in \cite{MOOP, PaPix_GWPT}. In particular, it is known for
all toric threefolds, Calabi-Yau complete intersections in projective space,
and all Fano complete intersections when the cohomology insertions are even.
The goal of this paper is to prove the GW/PT correspondence in two new instances:
For Fano complete intersections with arbitary cohomology insertions,
and for the relative reduced theory of a product of a K3 surface and a curve (with some primitivity assumption on the curve class).
For the proof we introduce \emph{marked relative Pandharipande-Thomas invariants} 
which allow us to control the vanishing cohomology of the threefolds.
%by degenerating diagonal insertions.

\subsection{Pandharipande-Thomas theory}
Let $X$ be a smooth projective 3-fold.
The moduli space of stable pairs $P_{n,\beta}(X)$ parametrizes pairs
$(F,\sigma)$ of a pure $1$-dimensional coherent sheaf on $X$
and a section $\sigma \in H^0(X,F)$ with zero-dimensional cokernel,
satisfying the numerical conditions:
\[
\chi(\mathcal{F})=n\in \mathbb{Z}
\  \ \ \text{and} \ \ \ \ch_2(F)= \beta \in H_2(X,\mathbb{Z})\,.
\]
The moduli space $P_{n,\beta}(X)$ is fine, projective,
and admits a virtual fundamental class \cite{HT}
\[ [ P_{n,\beta}(X)]^{\vir} \in A_{\vd}(P_{n,\beta}(X)), \quad \vd = \int_{\beta} c_1(T_X). \]

Consider the $k$-th descendent of a class $\gamma \in H^{\ast}(X)$ on the moduli space:
\[
\tau_{k}(\gamma) := \pi_{\ast}\left( \pi_X^{\ast}(\gamma) \ch_{2+k}(\BF) \right) \ \ \in H^{\ast}(P_{n,\beta}(X))
\]
where $\pi, \pi_X$ are the projections of $P_{n,\beta}(X) \times X$ to the factors,
and $(\BF,\sigma)$ is the universal stable pair.
The Pandharipande-Thomas invariants of $X$ were defined in \cite{PT} by
\[
\left\langle \, \tau_{k_1}(\gamma_1) \cdots \tau_{k_r}(\gamma_r) \right\rangle^{X,\PT}_{n,\beta}
=
\int_{ [ P_{n,\beta}(X) ]^{\vir} } \prod_i \tau_{k_i}(\gamma_i)
\]
for all $\gamma_i \in H^{\ast}(X)$ and $k_i \geq 0$.

\subsection{Gromov-Witten theory}
Let $\Mbar^{\bullet}_{g,r,\beta}(X)$ be the moduli space of $r$-marked genus $g$ degree $\beta$ stable maps $f : C \to X$,
where the map 
$f$ is required to have positive degree on each connected component of the (possibly disconnected) domain $C$.
Consider the cotangent line classes at the markings:
\[ \psi_i \in H^2(\Mbar^{\bullet}_{g,r,\beta}(X)), \quad i=1,\ldots, r. \]
%Like Pandharipande-Thomas invariants, the 
Gromov-Witten invariants are defined by integration over the virtual fundamental class of the moduli space:
\[
\left\langle \, \tau_{k_1}(\gamma_1) \cdots \tau_{k_r}(\gamma_r) \right\rangle^{X,\GW, \bullet}_{g,\beta}
=
\int_{ [ \Mbar^{\bullet}_{g,r,\beta}(X) ]^{\vir} } \prod_i \psi_i^{k_i} \ev_i^{\ast}(\gamma_i)
\]

\subsection{GW/PT correspondence}
A universal correspondence matrix between the descendent insertions $\tau_{k_i}(\gamma_i)$
in Pandharipande-Thomas (PT) theory and Gromov-Witten (GW) theory was constructed in \cite{PP_GWPT_Toric3folds}.
The matrix\footnote{The universal GW/PT correspondence matrix is denoted by $\tilde{K}_{\alpha, \widehat{\alpha}}$ in \cite{PP_GWPT_Toric3folds, PaPix_GWPT},
and related to our matrix $\K_{\alpha, \widetilde{\alpha}}$ by the variable change:
\[ \K_{\alpha,\widetilde{\alpha}} = \widetilde{\mathsf{K}}_{\alpha, \widetilde{\alpha}}\Big|_{u = -i z}. \]}
\[ \K_{\alpha,\widetilde{\alpha}}\in \mathbb{Q}[i,c_1,c_2,c_3]((z)) \]
is indexed by partitions $\alpha$ and $\widetilde{\alpha}$ of positive size,
and depends on $i = \sqrt{-1}$ and the formal variables $c_i$
which below will be specialized to the Chern classes $c_i(T_X)$.
The basic vanishing 
\[ \widetilde{\mathsf{K}}_{\alpha,\widetilde{\alpha}}=0 \text{ for all } |\alpha| < |\widetilde{\alpha}|. \]
ensures that in the sums below all except finitely many terms are zero.

Let $\alpha = (\alpha_1, \alpha_2 , \ldots, \alpha_r)$ be a partition, and write
\[ \tau_{[\alpha]} = \tau_{\alpha_{1}-1} \cdots \tau_{\alpha_r - 1}. \]
Let $P$ be a set partition of the index set $\{1, \ldots, r \}$.
For any part $T \in P$ given by a subset $T \subset \{ 1, \ldots, r \}$ we
let $\alpha_T := (\alpha_i )_{i \in T}$
be the partition formed from the $T$-indices of $\alpha$.

\begin{defn}[\cite{PP_GWPT_Toric3folds}, Section 0.5]
For any classes $\gamma_1, \ldots, \gamma_r \in H^{\ast}(X)$ define:
\begin{equation*}
\overline{\tau_{\alpha_1-1}(\gamma_1)\cdots
\tau_{\alpha_{\ell}-1}(\gamma_{\ell})}
=
\sum_{\substack{P \textup{ set partitions}\\\textup{ of }\{1,\ldots,r\}}}\ \prod_{T\in P}\ 
\left[ \sum_{\tilde{\alpha}} \tau_{[\tilde{\alpha}]}\Bigg(\Delta_{1, \ldots, \ell(\tilde{\alpha})} \cdot \pi_1^{\ast}\big( \mathsf{K}_{\alpha_T,\tilde{\alpha}} \cdot \prod_{i \in T} \gamma_i \big) \Bigg) \right].
\end{equation*}
where $\tilde{\alpha}$ runs over all partitions,
$\pi_1 : X^{\ell(\tilde{\alpha})} \to X$ is the projection to the first factor,
and $\Delta_{1, \ldots, \ell(\tilde{\alpha})}$ is the class of the small diagonal in $X^{\ell(\tilde{\alpha})}$.
\end{defn}

Using $\d_{\beta} = \int_{\beta} c_1(T_X)$,
we define the partition functions of GW and PT invariants:
\begin{gather*}
Z^{X}_{\PT, \beta}\left( \tau_{k_1}(\gamma_1) \cdots \tau_{k_r}(\gamma_r) \right)
=
\sum_{m \in \frac{1}{2} \BZ} i^{2m} p^m
\big\langle \, \tau_{k_1} (\gamma_1) \cdots \tau_{k_r} (\gamma_r) \big\rangle^{X, \PT}_{m + \frac{1}{2} d_{\beta},\beta}, \\
Z^{X}_{\GW, \beta}\left( \tau_{k_1}(\gamma_1) \cdots \tau_{k_r}(\gamma_r) \right) 
=
(-iz)^{\d_{\beta} } \sum_{g \in \BZ} (-1)^{g-1} z^{2g-2} \left\langle \, \tau_{k_1}(\gamma_1) \cdots \tau_{k_r}(\gamma_r) \right\rangle^{X, \GW, \bullet}_{g, \beta}.
\end{gather*}

\begin{conj}[{\cite[Conjecture 2]{PP_GWPT_Toric3folds}}] \label{conj:GWPT absolute} 
We have that
\[ Z^{X}_{\PT, \beta}\big( \tau_{\alpha_1-1}(\gamma_1) \cdots \tau_{\alpha_r-1}(\gamma_r) \big) \]
is the Fourier expansion of a rational function in $p$, and that
\begin{equation} \label{eqn_correspondence}
Z^{X}_{\PT, \beta}\big(\, \tau_{\alpha_1-1}(\gamma_1) \cdots \tau_{\alpha_r-1}(\gamma_r) \big) \\
=
Z^{X}_{\GW, \beta}\left(\, \overline{\tau_{\alpha_1-1}(\gamma_1) \cdots \tau_{\alpha_r-1}(\gamma_r) } \right)
\end{equation}
under the variable change $p=e^{z}$.
\end{conj}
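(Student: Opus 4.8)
The plan is not to attack the conjecture head‑on but to reduce it, by degeneration, to cases that are already known together with an induction that trades the \emph{vanishing} cohomology of $X$ for the geometry of a smooth divisor $D \subset X$. For a Fano complete intersection threefold the even classes all descend from the ambient projective space and are covered by the known even correspondence of \cite{MOOP, PaPix_GWPT}, so the only obstruction is the primitive middle cohomology $H^3(X)$. The first step is therefore to choose a degeneration $X \rightsquigarrow X_1 \cup_D X_2$ — obtained by factoring one of the defining equations as a product — arranged so that the odd cohomology of $X$ is captured by classes supported near the divisor $D$, and then to transfer the problem to the relative geometries $(X_1,D)$ and $(X_2,D)$.

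The second step is to apply a degeneration formula simultaneously to both partition functions: the Li–Wu formula on the PT side, and the usual splitting of stable maps on the GW side. Both express the absolute invariants of $X$ as sums of relative invariants of $(X_1,D)$ and $(X_2,D)$ glued along matched relative insertions. The crucial subtlety is that the correspondence matrix carries the small diagonal $\Delta_{1,\ldots,\ell(\tilde\alpha)}$, so under degeneration one must control how a diagonal class on $X \times X$, and its iterates, restricts to and splits across the normal‑crossings family. This is precisely the content of a splitting formula for relative diagonals: the diagonal decomposes into a piece supported on each component plus correction terms living on $D$, and it is exactly these $D$‑supported correction terms that force the appearance of \emph{marked relative} invariants carrying relative‑diagonal insertions.

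The third step is to establish and then apply the GW/PT correspondence for marked relative invariants on the pieces $(X_i,D)$. The proposal is to show that this relative correspondence is compatible both with the degeneration formula and with the diagonal splitting, so that the two sides of \eqref{eqn_correspondence} for $X$ are assembled from the two sides of the corresponding relative statements. Since $D$ has lower dimension and, for complete intersections, simpler vanishing cohomology, the argument then closes by induction: the relative correspondence for $(X_i,D)$ reduces to the even case on $X_i$ together with the correspondence for $D$, which is known or handled at an earlier inductive stage. For the K3 example one instead exploits the product geometry $S \times C$, degenerating the curve $C$ and invoking the divisibility‑at‑most‑$2$ hypothesis to control the relative insertions coming from the K3 factor.

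I expect the main obstacle to be the diagonal‑splitting step: one must prove that the small‑diagonal insertions sitting \emph{inside} the correspondence matrix transform under the degeneration formula in a way that matches exactly on the GW and PT sides, including every $D$‑supported correction term and the bookkeeping of $\mathsf{K}_{\alpha,\tilde\alpha}$ evaluated against the restricted Chern classes $c_i(T_X)|_{X_j}$. Propagating the rationality claim — that $Z^{X}_{\PT,\beta}$ is the Fourier expansion of a rational function in $p$ — compatibly across the degeneration, and checking that the vanishing $\widetilde{\mathsf{K}}_{\alpha,\tilde\alpha}=0$ for $|\alpha|<|\tilde\alpha|$ (which guarantees finiteness of the sums) survives the splitting, is the technical heart of the argument.
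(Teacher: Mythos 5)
Your overall architecture --- degenerate $X$, apply the degeneration formula simultaneously on the GW and PT sides, split relative diagonals into marked relative invariants, prove compatibility of the correspondence with both operations, and close by induction --- is indeed the skeleton of the paper's proof of this conjecture in the Fano complete intersection case. But there is a genuine gap at your first step, and it is the pivotal one. You propose to choose the degeneration $X \rightsquigarrow X_1 \cup_D X_2$ ``arranged so that the odd cohomology of $X$ is captured by classes supported near the divisor $D$.'' This cannot be arranged: in the degenerations actually used (e.g.\ a degree $d$ hypersurface in $\p^4$ degenerating to a degree $d-1$ hypersurface union a blow-up of $\p^3$, glued along a rational surface $D$), both components and $D$ have \emph{no odd cohomology at all}, so $H^3(X)$ consists entirely of vanishing cycles. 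Odd insertions therefore do not extend to the total space $W$ of the degeneration, and the degeneration formula --- which only applies to insertions pulled back from $H^{\ast}((\CW/B)^r)$ --- is simply unavailable for them. The missing idea is the monodromy reduction of Arg\"uz--Bousseau--Pandharipande--Zvonkine, implemented in the paper as Proposition~\ref{prop:reduction to non-vanishing}: since the Zariski closure of the monodromy group acting on $V=H^3(X)$ is the full symplectic group $\mathrm{Sp}(V)$, the partition functions with odd insertions define $\mathrm{Sp}(V)$-invariant tensors and hence, by classical invariant theory, are effectively reconstructed from invariants whose insertions are products of full diagonals $\Delta_X$ and even classes --- and, crucially, the reconstruction algorithm is \emph{identical} on the GW and PT sides, so it is compatible with the correspondence. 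Only after this step are all insertions non-vanishing under degeneration, and only then does your diagonal-splitting machinery (Propositions~\ref{prop:splitting relative diagonal}, \ref{prop:relGWPT}, \ref{prop:Compatibility GWPT with degeneration formula}) take over.

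A second, smaller error concerns your proposed bookkeeping of ``$\mathsf{K}_{\alpha,\tilde\alpha}$ evaluated against the restricted Chern classes $c_i(T_X)|_{X_j}$.'' The restriction $c_i(T_W)|_{X_j}$ does not equal $c_i(T_{X_j})$, and with that specialization the correspondence would not degenerate correctly. The correct specialization in the relative correspondence is the \emph{logarithmic} tangent bundle, $c_i := c_i(T_X[-D])$, and compatibility with degeneration rests precisely on the restriction identities $c_i(T_W[-W_0])|_{W_t} = c_i(T_{W_t})$ and $c_i(T_W[-W_0])|_{X_j} = c_i(T_{X_j}[-D])$ used in the proof of Proposition~\ref{prop:Compatibility GWPT with degeneration formula}. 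By contrast, your worry that the vanishing $\widetilde{\mathsf{K}}_{\alpha,\tilde\alpha}=0$ for $|\alpha|<|\tilde\alpha|$ might not ``survive the splitting'' is a non-issue, since this is a property of the universal matrix independent of the geometry. Finally, your closing inductive step (``the relative correspondence for $(X_i,D)$ reduces to the even case on $X_i$ together with the correspondence for $D$'') is realized in the paper via degeneration to the normal cone together with the \emph{invertibility} of the relative descendent cap matrices on both sides (Lemmas~\ref{lemma:PaPixJap} and~\ref{lemma:PaPixJap_GW}) and the known correspondence for the cap geometry $(\p(N_{D/X_1}\oplus \CO), D_0)$ with $D$ deformation equivalent to a toric surface; these invertibility statements, which let one run the degeneration relation backwards, are another ingredient your sketch omits. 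Once these pieces are in place, rationality propagates through the same induction with no separate difficulty.
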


\begin{rmk}
(i) The matrix $\K_{\alpha,\widetilde{\alpha}}$ was constructed geometrically in \cite{PP_GWPT_Toric3folds}.
A more explicit description for essential descendents (where all $\deg(\gamma_i)>0$) was obtained in \cite{OOP, MoreiraOOP}.
However, an explicit formula for the matrix is still missing and would be very interesting.\\
(ii) The correspondence of Conjecture~\ref{conj:GWPT absolute} is invertible, that 
is, it also determines arbitrary GW invariants in terms of PT invariants, see Remark~\ref{rmk:GW in terms of PT} below.
\end{rmk}

\subsection{Fano complete intersection}
Our first main result is the following:
\begin{thm} \label{thm:Fano complete intersection}
The GW/PT correspondence of Conjecture~\ref{conj:GWPT absolute} holds for any complete intersection
$X \subset \p^n$ which is Fano.
\end{thm}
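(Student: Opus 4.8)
\emph{Proof proposal.} Set $m = \dim_{\BC} X$, so that the vanishing cohomology $H^{m}_{\mathrm{van}}(X) \subset H^{m}(X)$ carries the odd classes precisely when $m$ is odd. The plan is first to reduce arbitrary insertions to the already-known even case by an invariant-theoretic argument, and then to reduce the remaining diagonal insertions to strictly simpler complete intersections by degeneration. By the Lefschetz hyperplane theorem the cohomology of $X \subset \p^{n}$ splits as the part restricted from the ambient $\p^{n}$, which consists entirely of even classes, together with $H^{m}_{\mathrm{van}}(X)$. For insertions drawn from the ambient part, Conjecture~\ref{conj:GWPT absolute} is already established in \cite{PaPix_GWPT}; hence the entire content of the theorem is to treat insertions lying in $H^{m}_{\mathrm{van}}(X)$.

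Both partition functions $Z^{X}_{\PT,\beta}$ and $Z^{X}_{\GW,\beta}$ are deformation invariant, and hence invariant under the monodromy group $\Gamma$ of the universal family of smooth complete intersections of the given multidegree acting on $H^{m}_{\mathrm{van}}(X)$. By the work of Beauville and Ebeling on this monodromy, $\Gamma$ is Zariski-dense in the full orthogonal group when $m$ is even and in the full symplectic group when $m$ is odd, for the Poincar\'e pairing. Regarded as multilinear forms in the vanishing insertions, both sides of \eqref{eqn_correspondence} are therefore $\Gamma$-invariant, so by the first fundamental theorem of invariant theory each is a polynomial in the invariant pairings; consequently the two sides agree for all vanishing insertions as soon as they agree after every way of pairing the insertions by the Poincar\'e pairing. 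Each such contraction is computed by inserting the vanishing K\"unneth diagonal $\pi_{\mathrm{van}} = \Delta_X - \pi_{\mathrm{amb}}$ across a pair of markings, and since the ambient projector $\pi_{\mathrm{amb}}$ involves only even classes already controlled above, it suffices to prove \eqref{eqn_correspondence} for insertions of the full diagonal $\Delta_X$. This is exactly the diagonal-insertion setting for which the marked relative invariants of the paper were introduced.

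For the diagonal insertions I would induct on $\sum_i (d_i-1)$, where $(d_1,\ldots,d_k)$ is the multidegree. When all $d_i=1$ the variety is a linear subspace $\cong \p^{n-k}$ with no vanishing cohomology, and the claim follows from the ambient case. For the inductive step, write one defining equation as $\ell\cdot g$ with $\ell$ linear; this degenerates $X$ to a union $Y_1\cup_D Y_2$ of two Fano complete intersections of strictly smaller multidegree glued along a smooth divisor $D$. Applying the Li--Wu degeneration formula together with the compatibility of the marked relative GW/PT correspondence with degeneration and the splitting formula for relative diagonals, both established earlier in the paper, expresses the diagonal insertions on $X$ in terms of marked relative diagonal insertions on $(Y_1,D)$ and $(Y_2,D)$. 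The induction is thus carried out for the marked relative correspondence of the pairs $(Y_i,D)$ rather than for absolute invariants, and it closes precisely because the relative-diagonal splitting matches the combinatorics of the correspondence matrix $\K_{\alpha,\widetilde{\alpha}}$ and of the bar-operation $\overline{(\,\cdot\,)}$ simultaneously on the GW and the PT side.

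The main obstacle is the degeneration step: the pieces $Y_1,Y_2$ and especially the divisor $D$ themselves carry vanishing cohomology, so the splitting formula for the relative diagonal of $X$ must be established in a form that correctly distributes this vanishing cohomology across $\Delta_{Y_1}$, $\Delta_{Y_2}$ and $\Delta_D$ and remains compatible, term by term, with $\K_{\alpha,\widetilde{\alpha}}$. Verifying that the Poincar\'e-pairing contractions used in the invariant-theoretic reduction can be inverted uniformly in the family, and that the resulting diagonal identities are respected simultaneously by $Z_{\PT}$ and $Z_{\GW}$ under the degeneration, is where the real work lies.
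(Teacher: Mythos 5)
Your overall architecture is the paper's: a monodromy/invariant-theory reduction (via Deligne's density theorem) that replaces all odd insertions by products of big diagonals, followed by a degeneration handled with marked relative invariants and the compatibility of the correspondence with the degeneration and splitting formulas (Propositions~\ref{prop:relGWPT} and~\ref{prop:Compatibility GWPT with degeneration formula}). The first reduction is essentially Proposition~\ref{prop:reduction to non-vanishing} and is fine. The gap is in your degeneration step. The pencil $\{t f = \ell g\}$ does \emph{not} give a simple degeneration of $X$ into two smaller complete intersections: its total space is singular along $\{t = \ell = g = f = 0\}$, and after resolving, the central fiber is $X_1 \cup_D \tilde{X}_2$ where only $X_1$ is a complete intersection of smaller degree, while $\tilde{X}_2$ is a \emph{blow-up} (for a degree-$d$ hypersurface in $\p^4$: $\tilde X_2 = \mathrm{Bl}_C \p^3$ with $C$ a complete intersection curve of degree $(d,d-1)$, following \cite{MP}). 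So your induction cannot run purely within the class of Fano complete intersections; the blow-up component must be treated separately, which the paper does by citing \cite[Sec.7.3.3]{PaPix_GWPT}, where the correspondence for $(\tilde X_2, D)$ is known for all cohomology classes.

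Second, your induction "carried out for the marked relative correspondence of the pairs $(Y_i,D)$" never closes as stated: the degeneration formula produces \emph{relative} invariants of $(X_1,D)$, whereas the inductive hypothesis concerns \emph{absolute} invariants of the smaller complete intersection, and you supply no mechanism converting one into the other. The paper bridges this via degeneration to the normal cone of $D$ together with the invertibility of the relative cap matrix (Lemmas~\ref{lemma:PaPixJap} and~\ref{lemma:PaPixJap_GW}), plus the known correspondence for the bubble geometries $(\p_D, D_0)$ and $(\p_D, D_{0,\infty})$ (Theorem~\ref{thm:GWPT for P_S toric K3}); this invertibility step is genuinely needed on both the GW and PT sides and is absent from your proposal. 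Finally, your closing worry that $D$ carries vanishing cohomology is moot in the cases at hand: the divisors occurring are rational surfaces ($\p^2$, a quadric, a cubic, or a quartic del Pezzo), all deformation equivalent to toric surfaces with purely even algebraic cohomology, which is precisely why the splitting formula and its compatibility with the correspondence apply without any further distribution of vanishing classes over $\Delta_D$.
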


The case of Theorem~\ref{thm:Fano complete intersection} where all the cohomology insertions $\gamma_i$ are of even degree was proven in \cite{PaPix_GWPT}.
Since Fano complete intersections can have odd cohomology
(e.g. the  cubic threefold has $10$-dimensional middle cohomology),
this extension is non-trivial.

\subsection{$\mathrm{K3} \times \mathrm{Curve}$}
We come to our second main result, concerning the geometry of K3 surfaces.

Let $S$ be a smooth projective K3 surface, let
$C$ be a smooth curve and let $z=(z_1, \ldots, z_N)$ be a tuple of distinct points $z_i \in C$.
We consider the 
relative geometry
\begin{equation} (S \times C, S_z), \quad S_{z} = \bigsqcup S \times \{ z_i \}. \label{relative geometry2} \end{equation}
Let $\beta \in H_2(S,\BZ)$ be a non-zero (i.e. effective) curve class and consider
\[ (\beta, n) = \iota_{\ast} \beta + n [C] \in H_2(S \times C, \BZ) \cong H_2(S,\BZ) \oplus \BZ [C]. \]
where $\iota : S \times \{ x \} \to S \times C$ is the inclusion of a fiber and $\beta \neq 0$.

Consider a $H^{\ast}(S)$-weighted partition
\[ \lambda = \big( (\lambda_1, \delta_1), \ldots, (\lambda_{\ell(\lambda)}, \delta_{\ell(\lambda)}) \big), \quad \delta_i \in H^{\ast}(S) \]
of size $|\lambda| = \sum_i \lambda_i = d$.
We write $\underline{\lambda} = (\lambda_1, \lambda_2, \ldots, \lambda_{\ell(\lambda)})$ for the underlying partition.
Let 
\begin{equation*} \lambda = \frac{1}{\prod_i \lambda_i} \prod_{i} \Fq_i(\delta_i) \vacuum \quad \in H^{\ast}(S^{[n]}) \end{equation*}
be the class on the Hilbert scheme of $d$ points on $S$ associated to $\lambda$, where
\[ \Fq_i(\alpha) : H^{\ast}(S^{[k]}) \to H^{\ast}(S^{[k+i]}) \]
is the $i$-th Nakajima creation operator \cite{Nak} with cohomology weight $\alpha \in H^{\ast}(S)$, see Example~\ref{example:Nakajima}
for the convention on Nakajima operators that we follow.

The relative Pandharipande-Thomas invariants of \eqref{relative geometry2}
with insertions given by $H^{\ast}(S)$-weighted partitions $\lambda_1, \ldots, \lambda_N$ are defined by
the integration over the reduced\footnote{The (standard) perfect obstruction theory
of the moduli space admits a everywhere surjective cosection to a trivial bundle, hence the standard virtual class vanish.
The theory has to be defined with respect to a reduced virtual class, see \cite{MP_GWNL, MPT}.} virtual fundamental class
of the moduli space of relative stable pairs:
\begin{gather*}
\big\langle \, \lambda_1, \ldots, \lambda_N \big\rangle^{(S \times C,S_z), \PT}_{n, (\beta,d)}
=
\int_{ [ P_{n,(\beta,d)}(S \times C,S_z) ]^{\red} }
\ev_{z_1}^{\ast}(\lambda_1) \cdots \ev_{z_N}^{\ast}(\lambda_N).
\end{gather*}
Similarly, relative Gromov-Witten invariants are defined by
\begin{gather*}
\big\langle \, \lambda_1, \ldots, \lambda_N \big\rangle^{(S \times C,S_z),\GW, \bullet}_{g,(\beta,d)}
=
\int_{ [ \Mbar^{\bullet}_{g,(\beta,d)}((S \times C,S_z), \vec{\lambda}) ]^{\red} }
\prod_{i=1}^{N} \prod_{j=1}^{\ell(\lambda_i)} \ev^{\text{rel}}_{i,j}( \delta_{i,j} ) \,,
\end{gather*}
where the supscript '$\bullet$' stands for allowing disconnected domain curves as long as the stable map has non-zero degree on each component.
We refer to Sections~\ref{sec:comparision with std definition} and~\ref{subsec:defn GW invariants} for the precise notation.
We form the partition functions of the Pandharipande-Thomas invariants,
\[ 
Z^{(S \times C,S_z)}_{\PT, (\beta,d)}\left( \lambda_1, \ldots, \lambda_N \right)
=
\sum_{m \in \BZ} (-p)^m
\big\langle \, \lambda_1, \ldots, \lambda_N \big\rangle^{(S \times C,S_z), \PT}_{m + d,(\beta,d)},
\]
and of the Gromov-Witten invariants:
\[
Z^{(S \times C,S_z)}_{\GW, (\beta.d)}\left( \lambda_1, \ldots, \lambda_N \right) \\
=
%(-1)^{d + \sum_i (\ell(\lambda_i) - |\lambda_i|)}
(-z)^{\sum_i (\ell(\lambda_i) - |\lambda_i|)}
\sum_{g \in \BZ} 
(-1)^{g-1+d} z^{2g-2+2d}
\left\langle \, \lambda_1, \ldots, \lambda_N \, \right\rangle^{(S \times C,S_z), \bullet}_{g, \beta}.
\]

Our second main result is the following.

\begin{thm} \label{thm:K3 GWPT} Let $\beta \in H_2(S,\BZ)$ be a primitive effective curve class. The series
\[
Z^{(S \times C,S_z)}_{\PT, (\beta,d)}\left( \lambda_1, \ldots, \lambda_N \right)
\]
is the Laurent expansion of a rational function in $p$, and
we have the equality
\[
Z^{(S \times C,S_z)}_{\PT, (\beta,d)}\left( \lambda_1, \ldots, \lambda_N \right)
=
Z^{(S \times C,S_z)}_{\GW, (\beta.d)}\left( \lambda_1, \ldots, \lambda_N \right)
\]
under the variable change $p=e^{z}$.
\end{thm}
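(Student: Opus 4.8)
The plan is to reduce the relative correspondence for $(S\times C, S_z)$ to a finite list of building-block correspondences over $S\times\p^1$ by degenerating the base curve, and then to resolve these building blocks by combining deformation invariance of the reduced theory with the monodromy of the K3 lattice. First I would degenerate the base $C$ into a nodal union of rational curves, isolating each relative point $z_i$ on its own component and realizing the genus of $C$ through self-gluings. Applying the (reduced) Li--Wu degeneration formula on both the PT and GW sides expresses $Z^{(S\times C,S_z)}_{\PT,(\beta,d)}$ and $Z^{(S\times C,S_z)}_{\GW,(\beta,d)}$ as the same combinatorial gluing of relative invariants of the pieces $(S\times\p^1, S\times\{p_1,\ldots,p_k\})$ with $k\le 3$, where each node contributes a sum over an intermediate relative condition weighted by the gluing (diagonal) pairing on $H^{\ast}(S^{[d]})$. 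A subtlety here, handled as in the theory of reduced classes \cite{MPT}, is that the reduced obstruction distributes to a single component in the degeneration. Since the relative conditions carry only primary (non-descendent) insertions $\lambda_i$, the extra classes created at the nodes are exactly relative diagonals $\Delta_S$.

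This is where the marked relative machinery enters: by the compatibility of the marked relative GW/PT correspondence with the degeneration formula and with the relative-diagonal splitting formula established above, it suffices to prove the correspondence for the building blocks in the primitive class $(\beta,d)$, now carrying marked relative diagonal insertions. Both partition functions thus become expressions in the same finite set of building-block series, and matching them reduces to matching those series.

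Next I would address the vanishing cohomology of $S$, which is the crux. Because $\beta$ is primitive, deformation invariance of the reduced virtual class \cite{MP_GWNL} lets me specialize to a K3 surface with Picard lattice $\BZ\beta$, so that $\beta$ is the only algebraic $H^2$-class and every other $H^2$-insertion is transcendental. The reduced building-block invariants are then constrained by the monodromy group of the K3 (the orthogonal group of the lattice fixing $\beta$): any transcendental insertion can enter only through the Beauville--Bogomolov pairing, so the a priori infinite family of cohomology-weighted relative conditions collapses to a universal combination of $\1$, the point class $\pt$, the class $\beta$, a fixed dual class, and a single trace term over the transcendental lattice. Since the PT and GW building-block invariants obey the same representation-theoretic constraint, the correspondence for arbitrary weighted partitions $\lambda_i$ follows once it is known for this finite universal set.

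Finally, for the finitely many universal building-block correspondences I would invoke the known description of the local reduced theory of $S\times\p^1$ relative to fibers: on the PT side the relative invariants are governed by quantum multiplication on $H^{\ast}(S^{[d]})$, with the matching GW structure supplied by the Gromov--Witten/Hilbert-scheme correspondence for K3 surfaces, and the rationality of $Z^{(S\times C,S_z)}_{\PT,(\beta,d)}$ in $p$ follows from the resulting Jacobi-form/quantum-cohomology description. \textbf{The main obstacle} I expect lies in the vanishing-cohomology step: one must show that the marked relative diagonal insertions produced by the degeneration split \emph{compatibly} on the two sides despite the large transcendental part of $H^2(S)$, i.e. that the monodromy reduction is respected simultaneously by the reduced virtual class and by the correspondence. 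Controlling this interplay, rather than the combinatorics of the degeneration or the local quantum-cohomology input, is where the real work lies.
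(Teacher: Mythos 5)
Your proposal diverges from the paper most consequentially at the final evaluation step, and there it has a genuine, fatal gap. You propose to settle the building-block correspondences for $(S\times\p^1,S_z)$ by appealing to quantum multiplication on $H^{\ast}(S^{[d]})$ together with ``the Gromov--Witten/Hilbert-scheme correspondence for K3 surfaces.'' But that correspondence is precisely one edge of the conjectural triangle which the theorem under discussion is needed to complete: by Nesterov's Hilb/PT theorem (Theorem~\ref{thm:Nesterov}), the $(C,z)$-domain Gromov--Witten theory of $\Hilb^d(S)$ already \emph{equals} the PT theory of $(S\times C,S_z)$, so invoking a GW/Hilb correspondence to match the PT side with the relative GW theory of $(S\times\p^1,S_z)$ is circular --- the higher-genus GW theory of $S^{[d]}$ for a K3 surface is not independently known (unlike the case of $\BC^2$, where the local triangle of \cite{BP,OP,OPLocal} is available). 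The paper's actual endgame is entirely different and is what does the real work: after reducing to equivariant \emph{capped} descendent invariants of $(S\times\p^1,S_\infty)$, it degenerates the K3 surface itself, $S\rightsquigarrow R_1\cup_E R_2$, into two rational elliptic surfaces, distributes the reduced class as in \cite{MPT} (with the diagonal $\Delta_E$ replaced by $1\otimes 1$), and proves the correspondence for the resulting bi-relative residue theories of $(R\times\p^1,R_\infty\cup(E\times\p^1))$ in the classes $\beta_h=B+hF$ (Theorem~\ref{thm:GWPT for birelative theory}), resting ultimately on the toric results of Pandharipande--Pixton. Nothing in your outline supplies a substitute for this computation; moreover your monodromy specialization to $\Pic(S)=\BZ\beta$ would be incompatible with any such K3 degeneration, whereas the paper specializes instead to an elliptic K3 with section so that $B$ and $F$ extend over the degeneration.

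Two further structural problems. First, degenerating the base curve $C$ does not by itself express the desired series in terms of building blocks: the degeneration formula produces the unknown series paired against arbitrary K\"unneth summands of $\Delta_{S^{[d]}}$, and one must \emph{solve} for it by inverting the relative cap matrix (Lemma~\ref{lemma:PaPixJap} on the PT side, Lemma~\ref{lemma:PaPixJap_GW} on the GW side). In that inversion the relative conditions $\lambda_k$ are traded for descendent insertions $\tau[\lambda_k]$, so descendents are unavoidable even though the statement contains none --- and these descendents then force the reduction to capped equivariant invariants, without which the bi-relative geometry (whose full relative theory does not fit the Li--Wu framework) cannot even be defined. Second, you conflate the diagonal $\Delta_{S^{[d]}}$ that arises as a gluing \emph{relative condition} in the degeneration formula with the relative diagonal $\Delta^{\rel}\subset(X,D)^2$ that enters as an \emph{interior marked} insertion; the splitting formula (Proposition~\ref{prop:splitting relative diagonal}) applies to the latter, not the former. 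In the paper the diagonal insertions $\Delta_S$ that the marked relative machinery is designed to split arise from the monodromy/invariant-theory step (as weights of Nakajima cycles and interior descendents, following \cite{ABPZ}), not from the nodes of the degenerated base curve, and your Step 3, while correct in spirit, has them entering in the wrong place.
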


%Theorem~\ref{thm:K3 GWPT} is expected to hold for arbitrary curve classes $\beta$.
On the PT side the invariants of $(S \times C,S_z)$ are known to satisfy a multiple cover formula,
which expresses the invariants for imprimitive curve classes $\beta$
as an explicit linear combination of invariants where $\beta$ is primitive,
see \cite[Thm.5.1]{QuasiK3}.
A similar multiple cover formula for the Gromov-Witten invariants of the K3 surface $S$
has been conjectured in \cite[Conj.C2]{K3xE},
and is reviewed in Section~\ref{subsec:MCF K3}.
We prove that the GW/PT correspondence is compatible with these multiple cover formulas.

\begin{prop} \label{prop:MCF implies GWPT}
Let $\beta \in H_2(S,\BZ)$ be any effective curve class.
The series
\[ Z^{(S \times C,S_{z}),\red}_{\PT, (\beta,n)}\left( \lambda_1, \ldots, \lambda_N \right) \]
is the expansion of a rational function in $p$.
Moreover, if the multiple cover formula for K3 surfaces (as in \cite[Conj.C2]{K3xE}, recalled in Conjecture~\ref{conj:MCF}) holds
for the class $\beta$, then
\[ 
Z^{(S \times C,S_{z}),\red}_{\PT, (\beta,d)}\left( \lambda_1, \ldots, \lambda_N \right)
=
Z^{(S \times C,S_{z}),\red}_{\GW, (\beta,d)}\left( \lambda_1, \ldots, \lambda_N \right)
\]
under the variable change $p=e^{z}$.
\end{prop}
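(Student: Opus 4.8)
The plan is to reduce an arbitrary effective class $\beta$ to the primitive case of Theorem~\ref{thm:K3 GWPT} by running the two multiple cover formulas in parallel. The multiple cover formula for stable pairs, \cite[Thm.~5.1]{QuasiK3}, expresses the series $Z^{(S \times C,S_z),\red}_{\PT,(\beta,d)}(\lambda_1,\ldots,\lambda_N)$ as a finite $\mathbb{Q}$-linear combination, indexed by the divisors $k$ of the divisibility of $\beta$, of primitive partition functions $Z^{(S \times C,S_z)}_{\PT,(\beta_k,d)}(\lambda_1,\ldots,\lambda_N)$, each subjected to a substitution $p \mapsto p^{k}$. The first assertion is then immediate and unconditional: every primitive series is the Laurent expansion of a rational function in $p$ by Theorem~\ref{thm:K3 GWPT}, this is preserved under $p \mapsto p^{k}$, and a finite $\mathbb{Q}$-linear combination of rational functions is rational.

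For the correspondence I would run the identical reduction on the Gromov-Witten side. Transferring Conjecture~\ref{conj:MCF} from the K3 surface $S$ to the relative geometry $(S \times C, S_z)$, and assuming it for $\beta$, the series $Z^{(S \times C,S_z),\red}_{\GW,(\beta,d)}(\lambda_1,\ldots,\lambda_N)$ becomes the analogous combination of the primitive series $Z^{(S \times C,S_z)}_{\GW,(\beta_k,d)}(\lambda_1,\ldots,\lambda_N)$, now with the matching substitution $z \mapsto k z$. The two reductions are meant to be parallel: the same divisors $k$, the same primitive classes $\beta_k$, and --- this is the crucial point --- compatible substitutions, since $p^{k}|_{p = e^{z}} = e^{k z}$. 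Granting that all the multiplicities match, Theorem~\ref{thm:K3 GWPT} applied to each $\beta_k$ gives $Z^{(S \times C,S_z)}_{\PT,(\beta_k,d)} = Z^{(S \times C,S_z)}_{\GW,(\beta_k,d)}$ under $p = e^{z}$, an identity that survives the correlated substitutions $p \mapsto p^{k}$ and $z \mapsto k z$. Summing over $k$ then yields the claimed equality.

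The real content, and the main obstacle, lies in the precise compatibility of the two formulas; the remaining steps are formal. Two verifications are needed. First, one must transfer \cite[Conj.~C2]{K3xE}, stated for the reduced Gromov-Witten theory of the K3 surface $S$, into a statement for the relative invariants of $(S \times C, S_z)$ together with their normalizations; this rests on the fact that the relative theory is governed by the reduced theory of $S$. Second, and more delicately, one must check that the rational multiplicities and the powers of $k$ attached to each summand agree on the two sides once the prefactors $(-p)^{m}$ and $(-z)^{\sum_i(\ell(\lambda_i)-|\lambda_i|)}$ from the definitions of the partition functions and the change of variables $p = e^{z}$ are accounted for. Once both multiple cover formulas are exhibited as one and the same universal operation on primitive partition functions, differing only through $p = e^{z}$, the proposition reduces formally to Theorem~\ref{thm:K3 GWPT}.
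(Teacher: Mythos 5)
Your proposal is correct and follows essentially the same route as the paper: rationality comes from the stable-pairs multiple cover formula \eqref{eqn:mcf pt} together with the primitive case of Theorem~\ref{thm:GWPT K3 general} (this is Lemma~\ref{cor:PT rationality K3}), and the correspondence from transferring Conjecture~\ref{conj:MCF} to $(S\times C, S_z)$ and matching the correlated substitutions $p\mapsto p^k$, $z\mapsto kz$ under $p=e^z$. The two verifications you defer are exactly what the paper supplies in Proposition~\ref{prop:mcf implies mcf for SxC}: the transfer is carried out via the product formula of \cite{LQ} (after reducing to connected invariants), and the multiplicity $k^{2g-3+\sum_{i,j}\deg(\delta_{i,j})}$ is converted into the $z\mapsto kz$ substitution on the normalized partition function precisely by the dimension constraint, confirming your expectation that the two formulas are one universal operation on primitive series.
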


The multiple cover formula for K3 surfaces is proven in \cite{BB}
for all curve classes $\beta \in H_2(S,\BZ)$ which are of divisibility $2$.
We obtain the following.
\begin{cor}
Theorem~\ref{thm:K3 GWPT} holds also for effective curve classes $\beta$ of divisibility $2$.
\end{cor}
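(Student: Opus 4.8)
The plan is to deduce the divisibility-$2$ case directly from Proposition~\ref{prop:MCF implies GWPT} together with the multiple cover formula of \cite{BB}; almost all of the substantive content has already been isolated in that proposition, so the corollary is really a matter of checking that the required hypothesis is now available.

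First I would record that Proposition~\ref{prop:MCF implies GWPT} provides, for an arbitrary effective class $\beta$, two things: the unconditional rationality of the series $Z^{(S \times C, S_z)}_{\PT,(\beta,d)}(\lambda_1, \ldots, \lambda_N)$ in $p$, and the GW/PT equality under $p = e^z$ \emph{provided} the K3 multiple cover formula of Conjecture~\ref{conj:MCF} holds for $\beta$. These are precisely the two assertions making up Theorem~\ref{thm:K3 GWPT}. Hence, to extend the theorem to a given class $\beta$, it suffices to verify Conjecture~\ref{conj:MCF} for that $\beta$.

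Next I would invoke \cite{BB}, where the multiple cover formula for K3 surfaces is established for every curve class $\beta \in H_2(S,\BZ)$ of divisibility $2$. For such $\beta$ the hypothesis of Proposition~\ref{prop:MCF implies GWPT} is therefore satisfied, and the proposition yields both the rationality of the PT series and its equality with the GW series under $p=e^z$. This is exactly the content of Theorem~\ref{thm:K3 GWPT} for these classes, which completes the proof.

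The only point requiring care---rather than a genuine obstacle---is to confirm that the formulation of the multiple cover formula proven in \cite{BB} matches, term by term, the version recalled in Conjecture~\ref{conj:MCF} and used in the proof of Proposition~\ref{prop:MCF implies GWPT}: the two must agree on the normalization of the Gromov-Witten invariants of $S$, on the bookkeeping of the $H^{\ast}(S)$-weighted partitions $\lambda_i$, and on the set of primitive classes into which an imprimitive class is expanded. Once this compatibility is checked, no further input is needed, since the genuinely hard geometric work---the compatibility of the correspondence with both the PT multiple cover formula of \cite{QuasiK3} and the GW multiple cover formula of \cite{K3xE}, together with the primitive case via Theorem~\ref{thm:K3 GWPT}---is already packaged inside Proposition~\ref{prop:MCF implies GWPT}.
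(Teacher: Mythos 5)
Your proposal is correct and follows essentially the same route as the paper: the corollary is an immediate consequence of Proposition~\ref{prop:MCF implies GWPT}, whose multiple-cover hypothesis (Conjecture~\ref{conj:MCF}) is supplied for divisibility-$2$ classes by \cite{BB}, exactly as the paper indicates in the sentence preceding the corollary. Your closing remark about checking that the normalization conventions of \cite{BB} match Conjecture~\ref{conj:MCF} is sensible diligence, but the paper treats this compatibility as understood and requires no further argument.
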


\subsection{A triangle of correspondences}
Assume that $2g(C)-2+N>0$ so that $(C, z_1, \ldots, z_N)$
is a marked stable curve,
\[ [ (C, z_1, \ldots, z_N) ] \in \Mbar_{g,N}. \]
Recall (e.g. from \cite[Sec.1]{HilbK3}) that there is a canonical decomposition
\[ H_2(S^{[d]},\BZ) \cong H_2(S,\BZ) \oplus \BZ A \]
where $A$ is the class of the exceptional curve of the Hilbert-Chow morphism $S^{[d]} \to S^{(d)}$.

Define the generating series of Gromov-Witten invariants of the Hilbert scheme
with complex structure of the stabilization of the domain curve fixed to be $(C,z)$:
\[
Z^{S^{[d]}, (C,z)}_{\GW, \beta}\left( \lambda_1, \ldots, \lambda_N \right)
=
\sum_{m \in \mathbb{Z}}
(-p)^m
\int_{[ \Mbar_{g(C),N}(S^{[d]}, \beta+mA) ]^{\vir}}
\tau^{\ast}([(C,z)]) \prod_{i=1}^{N} \ev_i^{\ast}(\lambda_i).
\]
Here $\tau$ is the forgetful morphism to the moduli space of stable curves $\Mbar_{g,N}$.

By work of Denis Nesterov we have the following Hilb/PT correspondence:
\begin{thm}[Nesterov, \cite{N1,N2}] \label{thm:Nesterov} For all effective curve class $\beta \in H_2(S,\BZ)$ we have:
\[
Z^{S^{[d]}, (C,z)}_{\GW, \beta}\left( \lambda_1, \ldots, \lambda_N \right)
=
Z^{(S \times C, S_z)}_{\PT, (\beta,d)}\left( \lambda_1, \ldots, \lambda_N \right)
\]
\end{thm}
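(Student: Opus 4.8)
The plan is to prove the identity via Nesterov's theory of quasimaps to the Hilbert scheme \cite{N1,N2}, realizing both generating series as the two extreme chambers of a single quasimap theory of $S^{[d]}$ with fixed nonsingular domain $(C,z)$. The starting point is the classical dictionary between sheaves on $S \times C$ and maps to the Hilbert scheme: a morphism $f : C \to S^{[d]}$ is the same datum as a subscheme $\CZ \subset S \times C$, finite and flat of length $d$ over $C$, equivalently a stable pair $(\CO_{\CZ}, \mathrm{id})$ on $S \times C$ whose support is a $d$-sheeted multisection of the projection to $C$. First I would upgrade this to an isomorphism of moduli spaces: a stable pair $(F,\sigma)$ on $(S\times C, S_z)$ of class $(\beta,d)$ corresponds to an $\epsilon = 0^{+}$ stable quasimap from $(C,z)$ to $S^{[d]}$, the locus where $F$ fails to be $C$-flat of length $d$ (the support of the cokernel of $\sigma$) playing the role of the base points of the quasimap, and the class $\beta + mA$ being read off from $\pi_{C\ast}F$ together with the vertical and embedded contributions. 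The exceptional degree $m$ along $A$ is matched to the Euler characteristic $n$ on the pairs side by Riemann--Roch on $C$, so that the monomials $(-p)^{m}$ and $(-p)^{n}$ agree up to the fixed overall shift built into the normalisation of the two series.

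Second, I would match the relative structure with the Hilbert-scheme evaluation maps. Over each $z_i$ the fibre of the universal subscheme is a length-$d$ subscheme of $S$, giving an evaluation $\ev_{z_i}$ to $S^{[d]}$; the relative condition labelled by the $H^{\ast}(S)$-weighted partition $\lambda_i$ is by construction the pullback $\ev_{z_i}^{\ast}(\lambda_i)$ of the corresponding Nakajima class. The compatibility to verify here is that the target expansions (the bubbling of $S \times C$ along $S_z$) used to define relative stable pairs correspond, under the dictionary, to rigidifying the quasimap to a genuine morphism with prescribed value in $S^{[d]}$ near each $z_i$; this is precisely why the relative PT evaluation and the Hilbert-scheme evaluation carry the same cohomological content. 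I would then check that the two reduced obstruction theories agree: both reductions are induced by the cosection coming from the holomorphic two-form on $S$ (which is the form inducing the holomorphic-symplectic structure on $S^{[d]}$), so the reduced virtual classes are identified under the isomorphism of moduli spaces.

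With the identification in place, the remaining step is the quasimap wall-crossing from the $\epsilon = 0^{+}$ chamber (the pairs side) to the $\epsilon = \infty$ chamber, namely Gromov--Witten theory of $S^{[d]}$ with domain stabilising to $(C,z)$, cut out by the insertion $\tau^{\ast}[(C,z)]$. In general this wall-crossing is governed by the quasimap $I$-function of $S^{[d]}$; the crucial point is that, for the reduced theory of the holomorphic-symplectic target $S^{[d]}$, the only possible contributions arise from multiple covers of the exceptional rational curves in class $A$, and these are exactly the terms organised by the summation variable $p$. I would show that after the substitution $p = e^{z}$ the wall-crossing operator restricts to the identity on the relevant fixed-domain series, so that the quasimap series equals the Gromov--Witten series verbatim.

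I expect this last step --- establishing that the quasimap wall-crossing is trivial on these series (equivalently, that the reduced $I$-function of $S^{[d]}$ in the exceptional direction produces no genuine quantum correction beyond the bookkeeping encoded by $p$) --- to be the main obstacle, since it is here that the special geometry of the Hilbert scheme of a K3 surface enters, rather than the purely formal sheaf/quasimap dictionary of the first two steps. Everything preceding it is a matching of moduli problems and obstruction theories valid for arbitrary effective $\beta$, which is consistent with the absence of any primitivity hypothesis in the statement.
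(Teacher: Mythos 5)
Your route---realizing both series as the $\epsilon=0^{+}$ and $\epsilon=\infty$ chambers of Nesterov's quasimap theory of $S^{[d]}$ with fixed domain $(C,z)$---is exactly the framework of the cited works \cite{N1,N2}; the paper itself offers no independent proof, so structurally you have reconstructed the intended argument: the sheaf/quasimap dictionary identifying stable pairs supported on $d$-fold multisections with quasimaps whose base points record the failure of $C$-flatness, the matching of the relative insertions with Hilbert-scheme evaluations (where the Li--Wu target expansions on the pairs side correspond to domain bubbles at the markings permitted by the fixed-stabilization insertion $\tau^{\ast}[(C,z)]$, a comparison that is itself a theorem in \cite{N1}, not a formality), the identification of the two cosection-reduced obstruction theories, and finally the wall-crossing. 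However, there are two genuine problems in your last step. First, a concrete error: there is no substitution $p=e^{z}$ anywhere in this correspondence. Both sides of the statement are Laurent series in the single variable $p$ --- on the Hilbert-scheme side $p$ tracks the exceptional degree $m$ in $\beta+mA$, on the pairs side it tracks the Euler characteristic via $n=m+d$ --- and the theorem asserts an identity of these series verbatim. The change of variables $p=e^{z}$ belongs to the GW/PT edge of the triangle (Theorems~\ref{thm:K3 GWPT} and \ref{conj:GWPT absolute}-type statements), and importing it here conflates the two edges; a wall-crossing argument phrased as ``identity after $p=e^z$'' would be comparing the wrong generating functions.

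Second, and more substantively, the mechanism you propose for the triviality of the wall-crossing is not the right one. The wall-crossing corrections in the fixed-domain quasimap theory are not organised by multiple covers of the exceptional class $A$ (those contributions are already present in, and tracked identically by, both chambers through the variable $p$); they are indexed by base-point contributions of the $I$-function carrying positive degree over $S$, and for a general surface they are genuinely nonzero --- as the paper itself notes, the correction is known for del Pezzo surfaces and open in general. What makes the K3 case work, for all effective $\beta$ with no primitivity hypothesis, is that the wall-crossing terms factor through virtual classes killed by the cosection coming from the holomorphic symplectic form, i.e.\ the K3-specific reduced-class vanishing applied to the correction invariants themselves. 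Your sketch correctly flags this step as the main obstacle but supplies an incorrect guess for why it closes; without the cosection vanishing argument of \cite{N2}, the proof is not complete.
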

\vspace{5pt}

Taken Theorem~\ref{thm:K3 GWPT} and Theorem~\ref{thm:Nesterov} together we hence
have established a correspondencess 
for all primitive effective curve classes $\beta \in H_2(S,\BZ)$
between the following theories (taken all in the reduced sense):
\begin{enumerate}
\item[(i)] relative Gromov-Witten theory of $(S \times C,S_z)$,
\item[(ii)] relative Pandharipande-Thomas theory of $(S \times C, S_z)$, and
\item[(iii)] Gromov-Witten theory of $\Hilb^n(S)$ with domain curve fixed to be $(C,z)$
\end{enumerate}
This set of correspondences can be geometrically represented by the triangle
shown in Figure~\ref{figure}.
This triangle was conjectured first in \cite{K3xE}.
While the original conjecture only applied to primitive $\beta$, 
by Proposition~\ref{prop:MCF implies GWPT}
we expect it to hold for imprimitive curve classes as well.

A parallel triangle of correspondences has been established for other surfaces as well,
in particular for $\BC^2$ in \cite{BP, OP, OPLocal}
and for the resolution $\widetilde{\CA_n}$ of the $A_n$-singularity $\CA_n = \BC^2/\BZ_{n+1}$
in \cite{M, MOblomDT, MOblomQHilb, Liu}
(these cases are crucially used in the proof of the GW/PT correspondence \cite{MOOP}).
For arbitrary surfaces we expect a similar triangle but with non-trivial wall-crossing.
The Hilb/PT edge has been recently obtained by Nesterov \cite{N1} (the wallcrossing correction is known for del Pezzo surfaces but open in general).
The GW/PT correspondence is known in special cases (e.g. for toric surfaces), see \cite{PaPix_GWPT}.
We have established here the correspondence for a non-toric case.

A more general version of the triangle,
where the source curve $(C,z)$ is allowed to vary in the moduli space of stable curves,
was established in \cite{PTHigherGenusHilb} for $\BC^2$.
%using that the associated cohomological field theory is semisimple.
It would be very interesting to prove such a generalization also for K3 surfaces
since it allows access to the full higher genus Gromov-Witten theory of $S^{[n]}$.

The triangle of Figure~\ref{figure} will be crucial for establishing the holomorphic anomaly equation
for the genus $0$ Gromov-Witten theory of $\Hilb^n(K3)$ in the forthcoming work \cite{HilbHAE},
which was the main motivation for the current paper.

\begin{figure}
\begin{center}
\begin{tikzpicture}
 \draw[very thick]
   (0,0) node[below left, align = center]{Gromov-Witten theory of\\ $(K3 \times C, K3 \times z)$}
-- (1,1.732) node[above, align = center]{$(C,z)$-domain Gromov-Witten theory\\of $\Hilb^n(K3)$}
-- (2,0) node[below right, align = center]{Pandharipande-Thomas theory of\\ $(K3 \times C, K3 \times z)$}
-- (0,0);
\end{tikzpicture}
\end{center}
\caption{Triangle of correspondences for K3 surfaces \label{figure}}
\end{figure}
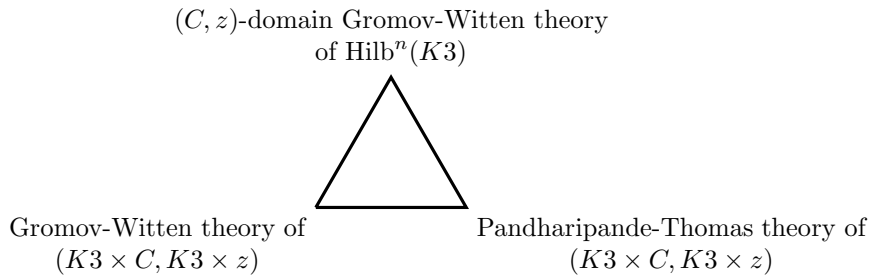

\subsection{Contents}
In Section~\ref{sec:relative geometries} we recall the stack of target expansions for a relative pair $(X,D)$,
the moduli spaces $(X,D)^{r}$ of $r$-marked points on $(X,D)$, and the relative Hilbert schemes of points $(X,D)^{[r]}$.
Section~\ref{sec:relative PT} contains the main definition of the paper, the marked relative invariants.
For that we introduce the stack
\[ P_{n,\beta,r}(X,D) \]
which parametrizes stable pairs $(F,s,p_1, \ldots, p_r)$ together with $r$ marked points on expansions $X[k]$,
where the points $p_i \in X[k]$ are not allowed to lie on the relative divisor $D[k]$ or the singular locus.
Similar to the moduli space of stable maps to a relative target,
the stack admits natural relative and {\em interior} evaluation maps
\[ \ev^{\rel} : P_{n,\beta,r}(X,D) \to D^{[\beta \cdot D]}, \quad \ev : P_{n,\beta,r}(X,D) \to (X,D)^r. \]
Relative-descendents are defined by pullback of cohomology classes via the evaluation maps,
weighted by Chern classes of the universal stable pair pulled back along the universal section.
We show that whenever the cohomology insertion from $(X,D)^r$ is a class pulled back via the canonical projection $(X,D)^r \to X^r$,
these marked relative invariants specialize to the usual descendent PT invariants.
We further discuss a degeneration formula, rubber invariants, rigidification, and a splitting formula for relative big diagonals (parallel to the GW side in \cite{ABPZ})
for the marked relative invariants. All take the expected form.

In Section~\ref{sec:GW theory} we introduce the details we need on the Gromov-Witten side.
The marked relative invariants here are defined by integration over the usual moduli space of relative stable maps,
$\Mbar_{g,r}(X,D, \vec{\lambda})$,
and have been used already in \cite{PaPixJap} for the statement of the GW/PT correspondence in the relative setting.

In Section~\ref{sec:GW/PT corr} we extend the conjectural GW/PT correspondence of \cite{PaPix_GWPT}
to marked relative invariants.
This is most naturally done by viewing the correspondence matrix as a cycle in the product $(X,D)^{r + \ell(\hat{\alpha})}$.
We prove compatibility of the correspondence with the degeneration formula and the splitting of relative big diagonals.
We also remark on possible Chow-theoretic lifts of the GW/PT correspondence.

After having finished introducing our technical tools, we turn to application.
First in Section~\ref{sec:Fano complete intersection} we consider Fano complete intersections $X$.
By using the monodromy and an argument of \cite{ABPZ}, the PT and GW invariants of $X$ are determined by the invariants
where all cohomological insertions are even or products of big diagonals.
Using marked relative invariants these invariants can be determined by a degeneration to complete intersections of lower degree. %following \cite{PaPix_GWPT}.
The compatibility of the splitting formula and degeneration formula with the GW/PT correspondence
then shows that it suffices to check the claim for the end points of the degeneration,
which are known by induction and the results of \cite{PaPix_GWPT}.

As a preparation for the K3 case, in Section~\ref{sec:birelative residue theory}
we consider a rational elliptic surface $R$ with a smooth elliptic fiber $E \subset R$.
Following \cite{PaPix_GWPT} we study the birelative capped residue theories
of $(R \times \p^1, R_{\infty} \cup E \times \p^1)$.
Relying on \cite{PaPix_GWPT} we prove the GW/PT correspondence for these invariants for curve classes on $R$ of degree $1$ over the base.
This will be used later in the degeneration formula.

From Section~\ref{sec:GWPT K3} onwards we turn to the particular case $K3 \times \mathrm{Curve}$.
We first discuss partitions functions, degeneration formulas, etc, in the framework of the reduced virtual class.
It is most natural here to introduce a formal parameter $\epsilon \in \BQ[\epsilon]/\epsilon^2$ for the statement of results.
The GW/PT correspondence in the primitive case is stated in Section~\ref{subsec:GWPT K3},
and then proven in Section~\ref{sec:Proof gwpt k3}.
For that the vanishing cohomology for the degeneration of a K3 surface into two rational elliptic surfaces
is controlled by the monodromy of the K3 surface. % following again \cite{ABPZ}.
% we put everything together and prove the GW/PT correspondence for $(K3 \times C, K3_{z})$ in the primitive case.
The discussion in the imprimitive case is done in Section~\ref{sec:K3 Curve imprimitive case}.

In the appendix we introduce higher-descendents, which are an analogues of the nodal Gromov-Witten invariants of \cite{ABPZ}.

The content of this work follows many of the existing ideas in the field,
in particular the work of Pandharipande and Pixton \cite{PaPix_GWPT, PP_GWPT_Toric3folds, PaPixJap} on the GW/PT correspondence,
the work of Li and Li-Wu \cite{Li1, Li2, LiWu} on the degeneration formula (as well as \cite{MPT} for the reduced virtual class),
the ideas of Arg\"uz, Bousseau, Pandharipande and Zvonkine \cite{ABPZ} on how to deal with vanishing cohomology,
and various standard constructions such as rigidification \cite{MP,OkPandVir, OPLocal},
Instead of writig a very short paper assuming all of these notions,
we tried to lay out the main ideas of these theories and
give examples and intuition along the way.
Since skipping sections is easy to do, we hope this to be also in the best interest of the reader.

\subsection{Convention} \label{sec:convention}
Given a function $Z : H^{\ast}(X) \to \BQ$. We will often write
\[ Z(\Delta_1) \cdot Z(\Delta_2) \]
and say that $\Delta_1, \Delta_2$ stands for summing over the K\"unneth decomposition of the class of the diagonal $\Delta \subset X \times X$.
This is defined by
\[ Z(\Delta_1) \cdot Z(\Delta_2) := \sum_{i} Z( \phi_i ) Z( \phi_i^{\vee} ) \]
where $[\Delta] = \sum_i \phi_i \otimes \phi_i^{\vee} \in H^{\ast}(X \times X)$ is the K\"unneth decomposition.

To avoid cumbersome sign notation {\em we will assume in Sections 2-5 that all our varieties have even cohomology}.
It is straightforward to introduce signs in all cases: Whenever we switch in an expression the order of two odd cohomology classes,
we need to multiply the resulting expression with a sign.

\subsection{Acknowledgements}
I thank Denis Nesterov and Rahul Pandharipande for discussions related to this work.
The author is funded by the Deutsche Forschungsgemeinschaft (DFG) - OB 512/1-1.

\section{Relative geometries and target expansions} \label{sec:relative geometries}
Let $X$ be a smooth projective variety and let $D \subset X$ be a smooth divisor,
which we assume here for simplicity to be connected.
The general case of disconnected $D$ is parallel.
It only requires us to keep track of number of expansions at each connected component of $D$,
see for example \cite[2.3]{LiWu}.

\subsection{Stack of expansions}
Let $N_{D/X}$ be the normal bundle of $D \subset X$, and consider the projective bundle
\[ \p = \p(N_{D/X} \oplus \CO_{D}) \to D. \]
The projection has two canonical sections
\[ D_{0}, D_{\infty} \subset \p \]
called the zero and infinite section specified by the condition that the zero section has normal bundle $N_{E/\p} \cong N_{E/X}^{\vee}$
and the infinite section has normal bundle $N_{E/X}$.
(In other words, if we consider the degeneration of $X$ to the normal cone of $D$, we find the central fiber $X \cup_{D \cong D_0} \p$.)
For every $k \geq 1$ let
\[ \p_{k} = \p \cup \ldots \cup \p \]
be the chain formed by $k$ copies of $\p$ where the infinity section of the $i$-th copy is glued to the zero section of the $(i+1)$-th copy for $i=1, \ldots, k-1$.
A ($k$-step) \emph{expanded degeneration} of $(X,D)$ is the pair $(X[k], D[k])$
defined for $k \geq 1$ by
\[ X[k] = X \cup_{D \cong D_{1,0}} \p_{k} \]
where $D_{1,0} \subset \p_{k}$ is the zero section in the first copy of $\p$, and
\[ D[k] := D_{k,\infty} \]
is the infinite section in the $k$-th copy of $\p$ in $\p_{k}$.
In the case $k=0$ we set
$(X[0], D[0]) := (X,D)$ and also say the pair is unexpanded.
We call $X[k]$ {\em an expansion} of $X$, and $D[k] \subset X[k]$ the {\em relative divisor of the expansion},
and $\p_k \subset X[k]$ the bubble of the expansion.

The $1$-step expanded degeneration of $(X,D)$ arises naturally as the special fiber of the degeneration of $X$ to the normal cone of $D$.
More generally, a $k$-step expanded degeneration arises naturally by iterated degeneration to the normal cone.
A universal family of expanded degeneration over a stack $\CT$ of expanded degenerations was constructed in \cite{Li1, Li2}.
The universal family is denoted by
\[ (\CX,\CD) \to \CT. \]

We shortly recall the definition of $(\CX,\CD) \to \CT$ following \cite{LiICTP}:
Let $(\CX_0, \CD_0) := (X,D)$.
Let $\CX_1$ be the blow-up of $X \times \BA^1$ along $D \times 0$, and let $\CD_1$ be the proper transform of $D \times \BA^1$.
Inductively, let $\CX_n$ be the blow-up $\CX_{n-1} \times \BA^1$ along $\CD_{n-1} \times 0$, and set $\CD_n$ to be the proper transform of $\CD_{n-1} \times \BA^1$.
For every $n \geq 0$ we have a natural morphism
\[ p_{n} : \CX_{n} \to \BA^{n} \]
given by the composition $\CX \to \CX_{n-1} \times \BA^1 \xrightarrow{p_{n-1} \times \id} \BA^n$.
The fibers of $p_n$ are expanded degenerations of $X$,
and families of expanded degenerations are defined by pullback of this family.
More precisely, one has the following definition, see \cite[Sec.4, p.567]{Li1}:

\begin{defn}
A family of expanded degeneration of $(X,D)$ over a scheme $S$ is a pair $(\CX_S, \CD_S)$ where
\begin{itemize}
\item $\CX_S$ is a scheme over $S$ with an $S$-projection $\CX_S \to X \times S$, and
\item $\CD_S$ is a Cartier divisor of $\CX_S$,
\end{itemize}
such that there exists an \'etale open cover $S_{\alpha} \to S$ of $S$
such that $(\CX_S \times_S S_{\alpha}, \CD_S \times_{S} S_{\alpha})$
is isomorphic to $(f_{\alpha}^{\ast}\CX_n, f_{\alpha}^{\ast}\CD_n)$ for some $n \geq 0$ and morphism $f_{\alpha} : S_{\alpha} \to \BA^n$,
and the isomorphism is compatible with the projections to $X \times S_{\alpha}$.

An isomorphism between $(\CX_S,\CD_S)$ and $(\CX_S', \CD'_S)$ 
is an isomorphism $\CX_S \xrightarrow{\cong} \CX_{S'}$ which sends $\CD_S$ to $\CD'_S$ and is compatible with the projection to $X \times S$.
\end{defn}

We let $\CT$ be the stack whose objects are family of expanded degenerations (see \cite{Li1} why it is a stack).
By construction, there exists a universal expanded degeneration $(\CX,\CD) \to \CT$ consisting of a universal family of expansions
\[ \CX \to \CT \]
given by a flat\footnote{The morphism $\CX_n \to \BA^n$ is flat (e.g. use miracle flatness).}, proper and representable morphism and a universal relative divisor $\CD \subset \CX$.
For any scheme $S$ and morphism $S \to \CT$ we let
denote the pullback of the universal expanded degeneration by $(\CX_S, \CD_S) \to S$,
that is we have the fiber diagram:
\[
\begin{tikzcd}
\CD_S \ar{d} \ar{r} \ar{r} & \CD \ar{d} \\
\CX_S \ar{d} \ar{r} & \CX \ar{d} \\
S \ar{r} & \CT.
\end{tikzcd}
\]
%Every geometric fiber of $\CX_S \to S$ is isomorphic to $X[k]$ for some $k$.
By construction, every geometric fiber of $(\CX_S, \CD_S) \to S$ is isomorphic to some $k$-step expanded degeneration $(X[k], D[k])$.

\begin{rmk}
The stack $\CT$ can be also directly constructed from the pairs $(\CX_n, \CD_n)$ as a limit as follows.
Consider the standard action of $\BG_m^n$ on $\BA^n$ given by
\[ \sigma \cdot (x_1, \ldots, x_n) = (\sigma_1 x_1, \ldots, \sigma_n x_n), \quad \sigma = (\sigma_1, \ldots, \sigma_n) \in \BG_m^n. \]
The action lifts to an action of $\BG_m^n$ on $\CX_n$. Scaling by $\BG_m^n$-certainly yields isomorphic expanded degenerations,
so we certainly need to quotient by this group.
However, the stack $\CX_n \to \BA^n$ also comes with a choice of ordering in which the degenerations are taken,
i.e. the restrictions of $\CX_n$ to $\{ t_j \neq 0 | j \in J \}$ and $\{ t_j \neq 0 | j \in J' \}$ for any two subsets $J, J' \subset \{ 1, \ldots, n \}$ of the same size
are $\BG_m^n$-equivariantly isomorphic, so we need to quotient out by this equivalence relation as well. Here $t_1, \ldots, t_n$ are the coordiantes on $\BA^n$.
The stack $\CT$ is then constructed by taking the limit over $n$ of the the quotients of $\BA^n$ by the joint equivalence relation given by translating by $\BG_m^n$ and reordering
of non-zero coordinates. The universal family $\CX$ is likewise constructed as limits of the quotients of $\CX_n$.

When working over schemes of finite type, the naive picture of viewing $\CT$ as a quotient of $\BA^n$ and the universal family as a quotient of $\CX_n$ hence suffices.
\end{rmk}
\subsection{Cotangent line classes on stack of expansions}
Let $\iota : \CT \times D \cong \CD \to \CX$ be the inclusion of the universal relative divisor.
Following \cite[Sec.1.5.2]{MP} we define the cotangent line bundle on $\CT$ corresponding to $D$ by
\[ \BL_D = p_{\CT \ast}(N_{\CD/\CX}^{\vee} \otimes p_D^{\ast} N_{D/X}) \]
where $p_{\CT}$ and $p_D$ are the projections of $\CT \times D$ to its factors.
Note that in $\p$ we have
$N_{D_0/\p} \cong N_{D_{\infty}/\p}^{\vee}$, so
%$N_{D[\ell]/\p_{\ell}} \cong N_{D_0[\ell]/\p_{\ell}}^{\vee}$
so that $N_{\CD/\CX}^{\vee} \otimes p_D^{\ast} N_{D/X}$ restricted to a fixed expansion $X[k]$
is simply the product of two normal bundles $N_{D/X} \otimes N_{D_0/\p'}$ (where $\p' \subset \p_{k}$ is the first component).
Since the zero section has normal bundle $N_{D/X}^{\vee}$ we see that
$N_{D/X} \otimes N_{D/\p'}$ is trivial, so $\BL_D$ is in fact a line bundle.
We define the cotangent line class of the divisor $D$ to be
\[ \psi_D = c_1(\BL_D). \]

The line bundle $\BL_D$ on $\CT$ admits a natural section with vanishing locus parametrizing non-trivial expansion.
Consider the universal contraction:
\[ \pi_X : \CX \to X \times \CT \]
Taking the differential along $\CD$ yields a morphism of line bundles on $\CD = \CT \times \CD$,
\[ s : N_{\CD/\CX} \to N_{D/X}. \]
On a fiber $X[k]$ the morphism $s'$ is an isomorphism if $k=0$, and zero if $k > 0$.
Hence the dual morphism
\[ s : \CO_{\CD} \to N_{\CD/\CX}^{\vee} \otimes N_{D/X} \]
vanishes precisely on the locus of non-trivial expansion.
Pushing fordward by $p_{\CT}$, we obtain the section
\[ s_{\CD} = p_{\CD \ast}(s) : \CO \to \BL_D \]
vanishing non-trivial expansions.

\begin{example}
Consider the family of expanded degenerations:
\[ p : \CX_{\p^1} = \mathrm{Bl}_{D \times 0}(X \times \p^1) \to \p^1. \]
We have the fiber $p^{-1}(0) = X \cup_{D \cong D_0} \p(N_{D/X} \otimes \CO)$. 
The relative divisor $\CD_{\p^1} \cong D \times \p^1$ is the proper transform of $D \times \p^1 \subset X \times \p^1$.
In the total space $\CX_{\p^1}$ we have the rational equivalence
\[ [\CD_{\p^1}] = \pi^{\ast}([D]) - [ \p(N_{D/X} \otimes \CO) ]. \]
where $\pi : \CX_{\p^1} \to X \times \p^1$ is the blow-down map (or equivalently, the canonical projection). Hence we find that
\[ N_{\CD_{\p^1}/\CX_{\p^1}} = \CO( \CD_{\p^1} )|_{\CD_{\p^1}} = 
\CO( \pi^{\ast}D ) \otimes \CO( \p(N_{D/X} \otimes \CO) )^{\vee}|_{\CD_{\p^1}}
=
\mathrm{pr}_{D}^{\ast}( N_{D/X} ) \otimes \mathrm{pr}_{\p^1}^{\ast}( \CO_{\p^1}(1) ), \]
where we used that
\[ \CO( \p(N_{D/X} \otimes \CO) )|_{\CD} = \pi^{\ast}( \CO_{\p^1}(0))|_{\CD_{\p^1}} = \mathrm{pr}_{\p^1}^{\ast}( \CO_{\p^1}(1) ). \]
We see that for the classifying morphism $f : \p^1 \to \CT$ of the family we have
\[ f^{\ast} \BL_{D} = \mathrm{pr}_{\p^1, \ast}\left( N_{\CD_{\p^1}/\CX_{\p^1}}^{\vee} \otimes N_{D/X} \right)
= \CO_{\p^1}(1) \]
and $f^{\ast} s_{D}$ is the section of $\CO_{\p^1}(1)$ vanishign at the origin.
\end{example}

\begin{example}
Consider $p_n : \CX_n \to \BA^n$. Then one has a $\BG_m^n$-equivariant isomorphism
\[ \BL_{D} \cong \CO_{\BA^n}( D_1 + \ldots + D_n ) \]
where $D_i = \{ t_i = 0 \}$ is the $i$-th coordinate hyperplane.
We have $s_{D} = t_1 \cdots t_n$.
\end{example}

%We assume for simplicity that $D$ is connected. The general case has parallel properties as discussed here.
\subsection{Moduli of ordered points}
\subsubsection{Definition}
Let $(X,D)^r$ be the moduli space of $r$ ordered points on the relative pair $(X,D)$,
which appeared crucially in \cite{PaPix_GWPT} and \cite{ABPZ} and many other places.
It is defined as follows:

\begin{defn}
A family of $r$ ordered points on $(X,D)$ over a scheme $S$ consists of
a tuple $((\CX_S,\CD_S) \to S, p_1, \ldots, p_r)$ where
\begin{itemize}
\item $(\CX_S,\CD_S) \to S$ is an expanded degeneration of $(X,D)$,
\item $p_1, \ldots, p_r : S \to \CX_S$ are sections
\end{itemize}
such that for every geometric point $s \in S$ with $\CX_{S,s} \cong X[\ell]$
\begin{enumerate}
\item[(i)] the points $p_i(s) \in X[\ell]$ do not lie on the relative divisor or the singular locus,
\item[(ii)] The automorphism group $\Aut(\CX_{S,s}, p_1(s), \ldots, p_r(s))$ of all automorphisms of $\CX_{X,s}$ fixing $X$ and preserving the markings
is finite.
\end{enumerate}
An isomorphism of $(\CX_S, p_1, \ldots, p_r)$ and $(\CX_S, p_1', \ldots, p'_r)$
is a isomorphism $\varphi : \CX_S \to \CX_S$ such that $\pi_X \circ \varphi = \pi_X$
and $\varphi \circ p_i = p_i'$,
where $\pi_X : \CX_S \to \CX \to X$ is the canonical projection.
\end{defn}

The universal target $\CX_r := (X,D)^r \times_{\CT} \CX$ over the moduli space fits into the diagram
\begin{equation} \label{fib pr}
\begin{tikzcd}
\CX_r \ar{r} \ar{d}{\pi} & \CX \ar{d} \\
(X,D)^r \ar[bend left]{u}{p_1, \ldots, p_r} \ar{r} & \CT.
\end{tikzcd}.
\end{equation}

\begin{prop} \label{prop:moduli of ordered points}
(i) The functor $(X,D)^r$ is a smooth proper variety of dimension $r \dim(X)$.\\
(ii) For any $r \geq 0$ there exists flat morphisms
\[ \pi_i : (X,D)^{r+1} \to (X,D)^{r} \]
given by forgetting the $i$-th marking and contracting unstable components.
\end{prop}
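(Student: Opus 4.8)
The plan is to prove both parts simultaneously by induction on $r$, the key point being that the forgetful morphism $\pi_{r+1}$ should be identified with the universal family $\CX_r \to (X,D)^r$, in exact analogy with the way the universal curve over $\Mbar_{g,n}$ is the next moduli space $\Mbar_{g,n+1}$. Because collisions of points are allowed, there is no stability interaction between distinct markings, so the construction is symmetric in the points and it suffices to treat $\pi_{r+1}$; a general $\pi_i$ is obtained by relabelling. The base case $r=0$ is the point $(X,D)^0 = \Spec \mathbb{C}$, which is smooth, proper and of dimension $0$ (and indeed yields $(X,D)^1 \cong \CX_0 = X$).

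For the inductive step, assume $(X,D)^r$ is a smooth proper variety of dimension $r\dim X$ carrying its universal family $\CX_r := (X,D)^r \times_{\CT} \CX$, which is proper and flat with fibres the expansions $X[\ell]$. First I would construct a morphism $(X,D)^{r+1} \to \CX_r$ by forgetting the last marking $p_{r+1}$, stabilising (contracting any bubble that carried only $p_{r+1}$), and recording the contracted position of $p_{r+1}$ as a point of the resulting fibre $X[\ell]$. Conversely, a point of $\CX_r$ is a point $z$ of some fibre $X[\ell]$: if $z$ lies in the interior one sets $p_{r+1}=z$, and if $z$ lies on a node or on the relative divisor $D[\ell]$ one bubbles off a new component at $z$ carrying $p_{r+1}$ (modulo the $\BG_m$ scaling this new bubble contributes a copy of $D$, matching the contracted locus). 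These two operations are mutually inverse, giving a canonical isomorphism $(X,D)^{r+1} \cong \CX_r$ under which $\pi_{r+1}$ becomes the structure morphism of the universal family.

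Granting this identification, part (i) for $r+1$ follows: representability holds because $\CX \to \CT$ is representable and $\CX_r$ is the fibre product of the scheme $(X,D)^r$ with it; properness holds since the structure morphism $\CX_r \to (X,D)^r$ is proper over the proper base; and the dimension is $\dim \CX_r = r\dim X + \dim X = (r+1)\dim X$ as the fibres $X[\ell]$ have dimension $\dim X$. Smoothness is the one nontrivial input: here I would use that the universal family has smooth total space, visible in the finite-type local model $\CX_n \to \BA^n$ of Section~\ref{sec:relative geometries} (an iterated blow-up of the smooth variety $X \times \BA^n$ along the smooth centres $\CD_{k} \times 0$, hence smooth). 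Combined with the smoothness of $(X,D)^r$ and the description of $\CT$ and $\CX$ as quotients of $\BA^n$ and $\CX_n$, this shows $\CX_r = (X,D)^{r+1}$ is smooth; it is moreover an irreducible variety, since it contains the dense open locus $(X \setminus D)^{r+1}$ of unexpanded configurations.

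Finally, part (ii). Since $(X,D)^{r+1}$ and $(X,D)^r$ are now both smooth, hence Cohen–Macaulay and regular respectively, flatness of $\pi_{r+1}$ follows from miracle flatness once all fibres are seen to be equidimensional of dimension $\dim X = \dim (X,D)^{r+1} - \dim (X,D)^r$. The fibre of $\pi_{r+1}$ over a configuration with target $X[\ell]$ is exactly the space of positions of the forgotten point: its interior is $X[\ell]$ minus its nodes and relative divisor, while the limits where the point approaches this bad locus are filled in by the corresponding bubble strata, each a copy of $D$, so that contracting identifies the fibre with $X[\ell]$, which is connected and pure of dimension $\dim X$. Thus the hypotheses of miracle flatness hold and $\pi_{r+1}$ is flat. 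I expect the main obstacle to be the smoothness of $(X,D)^{r+1}$ together with the precise local bubbling model near the boundary strata — that is, verifying that the forget/bubble construction above is an isomorphism of schemes (not merely a bijection) and that the total space remains smooth across the divisor and node loci. This is where the normal-crossings structure of $\CX_n$ and the blow-up description of the degeneration to the normal cone must be used carefully, parallel to the constructions of Li–Wu and of Fulton–MacPherson configuration spaces.
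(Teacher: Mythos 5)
Your overall strategy coincides with the paper's: both proofs turn on identifying $(X,D)^{r+1}$ with the universal family $\CX_r = (X,D)^r \times_{\CT} \CX$ and inducting on $r$, with flatness of $\pi_{r+1}$ inherited from the family (the paper transports flatness of $\CX \to \CT$, itself miracle flatness, through the isomorphism; your direct miracle-flatness argument on $\pi_{r+1}$ is equivalent). Your point-level forget/bubble dictionary is exactly the paper's check that $f = \tilde q \circ p_{r+1}$ is bijective on closed points, and your deferral of the scheme-theoretic verification to a careful local analysis matches the paper, which settles the local model by reducing étale-locally to $(\BA^1,0)^r \times U^r$ and citing the explicit inverse of \cite{LM}. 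One family-level point you elide: to define the forgetful map on an arbitrary family (not merely on closed points) you must contract the destabilized bubbles \emph{in families}, which the paper does via Jun Li's standard ample line bundle twisted by the markings \cite{Li1}; without this you do not even have the morphism $f$ whose isomorphy you then check. Your inductive derivation of properness and representability from the isomorphism is fine and slightly cleaner than the paper's appeal to Li's properness theorem for degree-$0$ relative stable maps.

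The genuine gap is the smoothness step. You deduce smoothness of $\CX_r = (X,D)^r \times_{\CT} \CX$ from smoothness of the total space $\CX_n$ (correct: it is an iterated blow-up of $X \times \BA^n$ along smooth centres) together with smoothness of $(X,D)^r$. This inference is invalid: a fibre product of smooth spaces over a smooth base need not be smooth absent transversality (two smooth curves meeting tangentially in a smooth surface already give a non-reduced fibre product). What is actually needed is smoothness of the classifying morphism $(X,D)^r \to \CT$, and this is precisely what the paper supplies by deformation theory: the relative cotangent complex of $(X,D)^r \to \CT$ is concentrated in degree $0$ (the relative obstruction space in the sense of \cite{GV} vanishes), whence smoothness of $(X,D)^r$ over the smooth stack $\CT$, and then of $\CX_r$ as a base change sitting over the smooth $\CX$. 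The failure is visible already at $r=1$: $(X,D)^2 = X \times_{\CT} \CX$, and the classifying map $X \to \CT$ meets the boundary divisor of $\CT$ exactly along $D$; smoothness of the fibre product uses that this map is transverse to the boundary — equivalently that $D \subset X$ is a smooth divisor — not merely that $X$ and $\CX$ are smooth. Your closing remark about the normal-crossings structure of $\CX_n$ gestures at the right local analysis but does not carry it out; the patch is either the paper's cotangent-complex argument or the étale-local identification $(X,D)^r \simeq (\BA^1,0)^r \times U^r$, where $(\BA^1,0)^r$ is the smooth space of \cite{LM}, which yields smoothness and the scheme-theoretic isomorphism with the universal family simultaneously.
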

\begin{proof}
The functor $(X,D)^r$ may be viewed as a special case of the
construction of relative stable maps of Jun Li \cite{Li1}.
Indeed, $(X,D)^r$ is naturally isomorphic to the moduli stack of relative stable maps to $(X,D)$ of degree $0$ with the requirement that every domain curve has $r$ connected components each of genus $0$ and carrying $3$ markings. By the main result of \cite{Li1}
the functor $(X,D)^r$ is hence a proper Deligne-Mumford stack.
Moreover, since $\CT$ is smooth and the relative cotangent complex of the classifying morphism $(X,D)^{r} \to \CT$ is concentrated in degree $0$, the stack $(X,D)^r$ is smooth (an overkill for this argument is to use the deformation theory of relative stable maps discussed in \cite{Li2} and \cite{GV}. In particular it is easy to see
that the obstruction space relative to $\CT$ as given in \cite[(2)]{GV} vanishes.) Its also immediate to see that any object in $(X,D)^r$ has automorphisms only the identity.

To construct the forgetful morphism, let $(\CX_S \to S, p_1, \ldots, p_{r+1})$ be an object of $(X,D)^{r+1}$.
By forgetting the last section we obtain an $r$-pointed family
but which may have infinitely many automorphism over geometric points. As explained in \cite[Sec.3.1]{Li1} there exists a line bundle
(called the standard line bundle associated to the stable map in \cite{Li1}) associated to the tuple $(p_1, \ldots, p_r)$ whose restriction to a fiber $X[\ell]$ is ample on $X$ and all the bubbles $\p$ containing a marking, while for components without a marking it is pulled back from an ample line bundle on $D$.
For the contraction associated to this line bundle,
\begin{equation} \label{contraction map} q: \CX_S \to \overline{\CX}_S, \end{equation}
the tuple $(\overline{\CX}_S, q \circ p_1, \ldots, q \circ p_r)$ is an element of $(X,D)^r$.
%see \cite{Li1} for details.
This yields the forgetful functor $\pi_{r+1}(X,D)^{r+1} \to (X,D)^r$.

To prove the rest of the proposition the idea is to construct a natural isomorphism
\begin{equation} f : (X,D)^{r+1} \xrightarrow{\cong} \CX_r \label{iso} \end{equation}
such that $\pi_{r+1} = \pi \circ f$.
%where $\pi : \CX_r \to (X,D)^r$ is the canonical map.
Indeed, this implies that $(X,D)^r$ is a variety whenever $(X,D)^{r-1}$ is one (since $\CX \to \CT$ is representable), so we may apply induction for the first part.
% and $(X,D)^{r-1}$ is a variety by induction.
Moreover, since $\pi$ is flat and $f$ is an isomorphism, we also see that $\pi_{n+1}$ is flat.

To obtain the isomorphism \eqref{iso} we construct a morphism $f : (X,D)^{r+1} \to \CX_r$ as follows. Let $\pi_{r+1} : (X,D)^{r+1} \to (X,D)^r$ be the forgetful morphism and consider the natural commutative diagram
\[
\begin{tikzcd}
\CX_{r+1} \ar{d} \ar{r}{\tilde{q}} & \CX_r \ar{d} \\
(X,D)^{r+1} \ar{r}{\pi_{n+1}} \ar[bend left]{u}{p_{n+1}} & (X,D)^r.
\end{tikzcd}
\]
where $\tilde{q}$ is the composition
of the contraction morphism
$q: \CX_{r+1} \to \pi_{n+1}^{\ast}(\CX_r)$ 
of \eqref{contraction map}
with the natural map $\pi_{n+1}^{\ast}(\CX_r) \to \CX_r$.
%(Here the diagram is non-fibered since the induced map $\CX_{r} \to %\pi_{n+1}^{\ast}(\CX_r)$ is precisely the contraction map 
%\eqref{contraction map}, i.e. it contracts
%components which become unstable after forgetting the $(r+1)$-th marking.) 
We define 
\[ f := \tilde{q} \circ p_{n+1} : (X,D)^{r+1} \to \CX_r. \]
It is easy to see that $f$ is an isomorphism on closed points.
Indeed, consider a $\BC$-point $(X[k], p_1, \ldots, p_r)$ of $(X,D)^r$,
and let $a \in X[k]$ be a point.
If $a$ does not lie on a relative divisor $D[k]$ or the singular locus, then $a \in (\CX_r)_{(X[k], p_1, \ldots, p_r)}$ is the image of the unique point $(X[k],p_1, \ldots, p_r,a)$ under the morphism $f$.
If $a$ lies on the relative divisor $D[k]$, then
it is the image of the unique point
$(X[k+1], p_1, \ldots, p_r, \tilde{a})$
where $\tilde{a}$ is the unique point (up to scaling) on the $(k+1)$-th bubble which does not lie on the zero or infinite section
and projects to $a$ on $D$.
The case $a$ lying in the singular locus is similar (we glue in an extra component).
Finally, to check that $f$ is also scheme-theoretically an isomorphism
can be checked by a local computation.
Alternatively, it also follows by the case considered in \cite{LM}.
Indeed, near the relative divisor, the pair $(X,D)$ 
is \'etale locally isomorphic to an open in $(\BA^1,0) \times U$ for some smooth variety $U$, and hence \'etale locally $(X,D)^r$ is isomorphic to (products of) $(\BA^1,0)^{r} \times U^r$.
In this case, the isomorphism of $(\BA^1,0)^{r}$ with the
universal family over $(\BA^1,0)^{r-1}$ was shown in \cite[Sec.1]{LM}
by writing down an explicit inverse.
\end{proof}

\begin{example}
As a variety $(X,D)^1$ is isomorphic to $X$ (via the canonical projection $\CX \to X$),
and $(X,D)^2$ is the blow-up $\mathrm{Bl}_{D \times D}(X \times X)$.
%In general, $(X,D)^n$ is an iterated blow-up of $X^n$.
%A description of $(X,D)^n$ can be found in \cite{PaPix_GWPT}.
We refer to \cite[Sec.3.4]{ABPZ} for more discussion and a beautiful self-explaining figure illustrating this case.
\end{example}
\begin{example}
Let $Y$ be a smooth projective variety.
Let $C$ be a smooth curve and let $z=(z_1, \ldots, z_N)$ be a tuple of distinct points $z_i \in C$
such that
\[ (C, z_1, \ldots, z_N) \]
is stable. Consider the relative pair
\[ (X,D) = (Y \times C, Y_z), \quad Y_{z} := \bigsqcup Y \times \{ z_i \}. \]

Following \cite{GV}, see also \cite{ACFW}, the stack of target degenerations $\CT$ of this pair is described very concretely.
Let $\mathrm{st} : \mathfrak{M}_{g,N} \to \Mbar_{g,N}$ be the stabilization map from the
moduli space of $N$-marked genus $g$ nodal curves to the moduli space of stable curves.
The fiber
\[ \mathfrak{M}_{(C,z)} = \mathrm{st}^{-1}([C,z]) \]
parametrizes nodal degenerations of $(C,z)$. Let
\[ \mathfrak{M}_{(C,z)}^{\text{ss}} \subset \mathfrak{M}_{(C,z)} \]
be the open substack parametrizing semi-stable curves, i.e. those marked curves
such that every connected component of the nomralization of genus $0$ carries at least $2$ special points.
Concretely, we allow chains of $\p^1$'s attached to $C$ but the last component in the chain must carry one of the marked points $z_i$.
Then one has
\[ \CT = \mathfrak{M}_{(C,z)}^{\text{ss}}. \]

Moreover, let $\Mbar_{C, (1^{N}, \epsilon^{r})}$ be the moduli space 
parametrizing nodal curves $\Sigma$ together with $N+r$ markings $p_1, \ldots, p_N, q_1, \ldots, q_r$ such that:
\begin{itemize}
\item The stabilization $(\Sigma, p_1, \ldots, P_N)$ is $(C, z_1, \ldots, z_N)$
\item The points $p_i, q_i$ lie in the smooth locus, the $p_i$ are pairwise distinct and distinct from $q_j$,
but the $q_j$ may coincide,
\item Stability condition: Every connected component of the normalization of $\Sigma$ has at least $3$ special points (a special point is either the preimage of a marking or a node).
\end{itemize}
This is a special case of Hassett's construction of moduli space of weighted s table curves \cite{Hassett}.
Then one has
\[ (X,D)^r \cong \Mbar_{C, (1^{N}, \epsilon^{r})} \times Y^r. \]
\qed
\end{example}

\subsubsection{Forgetful morphism}
By Proposition~\ref{prop:moduli of ordered points}, for any $I \subset \{ 1, \ldots, r \}$
we have contraction maps 
\[ \pi_I : (X,D)^r \to (X,D)^{|I|} \]
given by  forgetting all markings except those labeled by $I$ and contracting the unstable components.
Given a class $\gamma \in (X,D)^{|I|}$ we write
\[ \gamma_I = \pi_I^{\ast}(\gamma) \]
for the pullback.
A special case is given by the joint projection:
\[ \pi_1 \times \cdots \times \pi_r : (X,D)^r \to X^r. \]
Given a class $\alpha \in H^{\ast}(X^r)$ we will
denote the pullback $(\prod_i \pi_i)^{\ast}(\alpha)$ simply by $\alpha$,
that is any cohomology class on $X^r$ will be viewed as naturally defining a class on $(X,D)^r$.
%More generally, given classes on the product
%$\prod_i (X,D)^{a_i}$ with $\sum_i a_i = r$ (such as $X^r$) we obtain cohomology classes on $(X,D)^r$
%via pullback by the projections. 

\subsubsection{Diagonal}
There exists also a natural diagonal morphism
\[ X \cong (X,D)^1 \to (X,D)^r \]
given on points by $(X[k], p) \mapsto (X[k], p, \ldots, p)$.
We will be particularly interested in the class of the relative diagonal
\[ \Delta_{(X,D)}^{\text{rel}} \subset (X,D)^2. \]
Under the isomorphism $(X,D)^2 \cong \mathrm{Bl}_{D \times D}(X \times X)$
it is the proper transform of the usual diagonal $\Delta_X \subset X \times X$.
Consider the blow-up diagram
\[
\begin{tikzcd}
\p(N) \ar{d}{g} \ar{r}{j} & (X,D)^2 \ar{d}{\pi} \\
D \times D \ar{r}{i} & X^2.
\end{tikzcd}
\]
where $N = \CO(D)|D \oplus \CO(D)|D$ is the normal bundle of $D \times D \subset X \times X$.
The following describes the class of the relative diagonal:
\begin{lemma} \label{lemma:rel splitting}
In $A^{\ast}((X,D)^2)$ we have $\Delta_{(X,D)}^{\text{rel}} = \pi^{\ast}( \Delta_X ) - j_{\ast} g^{\ast}( \Delta_{D} )$.
\end{lemma}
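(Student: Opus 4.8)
The plan is to compute the total transform $\pi^{\ast}[\Delta_X]$ and to extract from it the strict transform $\Delta_{(X,D)}^{\text{rel}}$ together with an exceptional correction term. Since $\pi : (X,D)^2 \to X^2$ is a morphism of smooth varieties it is a local complete intersection morphism, so $\pi^{\ast}[\Delta_X]$ is computed by the Gysin pullback of the diagonal; once one checks that the scheme-theoretic preimage $\pi^{-1}(\Delta_X)$ is everywhere of pure codimension $\dim X$ (i.e. has no excess component), this class is simply the cycle of $\pi^{-1}(\Delta_X)$ with its natural multiplicities. Thus the heart of the matter is to understand $\pi^{-1}(\Delta_X)$ as a cycle.

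First I would work \'etale-locally near the relative divisor, where $(X,D) \cong (\BA^1_t, 0) \times U$ with $D = \{t=0\}$ and $U$ smooth. On $X \times X$ I use coordinates $(x,t)$, $(x',t')$, so that $\Delta_X = V(x-x',\, t-t')$ and the blow-up center is $Z = D \times D = V(t,t')$. In the chart $t' = tu$ of the blow-up, the ideal of $\Delta_X$ pulls back to $(x-x',\, t(1-u))$, which factors as the union of $\{x=x',\,u=1\}$ — the strict transform $\Delta_{(X,D)}^{\text{rel}}$, meeting $E$ in the diagonal direction — and $\{x=x',\,t=0\}$, which is the full $\p^1$-bundle $g^{-1}(\Delta_D) \subset E$ over the diagonal $\Delta_D \subset D \times D$. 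Because $t(1-u)$ is squarefree and the linear equations $x_i-x_i'$ cut transversally, both components are reduced and appear with multiplicity one; the symmetric chart gives the same picture. Covering $(X,D)^2$ by such charts shows globally that $\pi^{-1}(\Delta_X) = \Delta_{(X,D)}^{\text{rel}} \cup g^{-1}(\Delta_D)$ as reduced schemes, each component of codimension $\dim X$.

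Combining the two steps gives $\pi^{\ast}[\Delta_X] = [\Delta_{(X,D)}^{\text{rel}}] + j_{\ast} g^{\ast}[\Delta_D]$, since the class of the exceptional component $g^{-1}(\Delta_D)$ is exactly $j_{\ast} g^{\ast}[\Delta_D]$, and rearranging yields the stated formula $\Delta_{(X,D)}^{\text{rel}} = \pi^{\ast}(\Delta_X) - j_{\ast} g^{\ast}(\Delta_D)$. A variant of the same computation that avoids quoting the l.c.i.-pullback theorem is to note that $\Delta_X$ is locally cut out by the regular sequence $x_1-x_1',\dots,t-t'$, so that locally $[\Delta_X]$ is the product of the corresponding Cartier divisor classes; pullback is a ring homomorphism, $\pi^{\ast}[V(t-t')] = [E] + [V(1-u)]$, and the remaining factors meet these properly, reproducing the same two terms.

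The main obstacle is the bookkeeping that makes the passage from the local cycle computation to the global identity rigorous: one must verify that the preimage carries no excess component or embedded multiplicity (so the Gysin class is literally the reduced cycle), and that the diagonal — which is not globally a complete intersection in $X^2$ — still has its pullback correctly computed by the local complete-intersection presentations. The only genuine subtlety is the excess intersection of $\Delta_X$ with the center along $\Delta_D$, which has codimension one rather than the expected two inside $\Delta_X$; this excess is exactly what produces the extra divisorial piece $g^{-1}(\Delta_D)$ in the exceptional locus, and pinning down its coefficient and sign is the crux. As a consistency check one may instead invoke Fulton's blow-up formula, where the correction is governed by the Segre class $s(\Delta_D,\Delta_X)$ and the excess line bundle $g^{\ast}N_{Z/X^2}/\CO_E(-1)$; in our clean-intersection situation this collapses to the single term $g^{\ast}[\Delta_D]$, matching the coordinate computation.
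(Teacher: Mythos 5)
Your proof is correct, but it takes a genuinely more hands-on route than the paper, whose entire proof is a one-line citation of the classical blow-up formula \cite[Thm.~6.7]{Fulton}: for the blow-up of $X^2$ along $Z = D\times D$, the difference $\pi^{\ast}[\Delta_X] - [\Delta_{(X,D)}^{\text{rel}}]$ is the excess term $j_{\ast}\bigl\{ c(\mathcal{E}) \cap g^{\ast} s(\Delta_D, \Delta_X)\bigr\}_{\dim X}$, which collapses to $j_{\ast} g^{\ast}[\Delta_D]$ because $\Delta_D \subset \Delta_X$ is a smooth divisor --- exactly the computation you relegate to your final consistency check. Your primary argument instead computes the total transform directly: in the \'etale-local model $(\BA^1_t,0)\times U$ the ideal $(x-x',\,t-t')$ pulls back to $(x-x',\,t(1-u))$, whose zero scheme is the reduced union of the strict transform $\{x=x',\,u=1\}$ and the $\p^1$-bundle $g^{-1}(\Delta_D)$, both of the expected codimension $\dim X$ and multiplicity one; since $\pi$ is an l.c.i.\ morphism between smooth varieties and the preimage is a reduced Cohen--Macaulay scheme (locally a hypersurface in the smooth locus $\{x=x'\}$) of pure expected codimension, the Gysin class is the cycle of the preimage, giving $\pi^{\ast}[\Delta_X] = [\Delta_{(X,D)}^{\text{rel}}] + j_{\ast}g^{\ast}[\Delta_D]$. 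What your route buys is self-containedness and a transparent explanation of where the correction term comes from (the excess of $\Delta_X \cap Z$ along $\Delta_D$, codimension one in $\Delta_X$ instead of the expected two); what the citation buys is brevity and no chart bookkeeping. One small caution: your ``variant'' that writes $[\Delta_X]$ locally as a product of Cartier divisor classes is not rigorous as stated, since Chow classes cannot be defined chart-by-chart and glued (the factors $x_i - x_i'$ are not global divisors); it can be repaired by applying the refined Gysin map of the regular embedding $\Delta_X \hookrightarrow X^2$ and invoking reducedness to pin the multiplicities, but your main argument already does this and does not depend on the variant.
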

\begin{proof}
This is immediate from the classical blow-up formula \cite[Thm.6.7]{Fulton}.
\end{proof}

\subsection{Rubber target} \label{subsec:moduli of points rubber}
Let $D \subset X$ be our smooth connected divisor 
and recall the projective bundle:
\[ \p = \p(N_{D/X} \oplus \CO_{D}). \]
We consider the expanded target expansions associated to the rubber target
\[ (\p, D_{0,\infty})^{\sim}, \quad D_{0,\infty} = D_0 \sqcup D_{\infty}, \]
where we write $\sim$ to denote the rubber.

The easiest way to define the corresponding stack of target degenerations is as the closed substack
\[ \CT^{\rub} \subset \CT, \]
where $\CT$ is the stack of target expansions of $(X,D)$,
parametrizing expansions which are non-trivially expanded. We let
\[ (\CX^{\rub}, \CD_0 \sqcup \CD_{\infty}) \to \CT^{\rub} \]
be the universal family. Here $\CX^\rub$ can be defined as the complement of the open substack
\[ (X \setminus D) \times \CT^{\rub} \subset \CX|_{\CT^{\rub}}. \]
The infinite divisor is simply $\CD_{\infty} = \CD|_{\CT^\rub}$
and $\CD_{0}$ is the intersection $\CX^{\rub} \cap (X \times \CT^\rub)$.

The moduli space of $r$ ordered points on the rubber $(\p, D_{0,\infty})^{\sim}$
\[ (\p, D_{0,\infty})^{r,\sim} \]
parametrizes tuples
\[ (\p_{\ell}, p_1, \ldots, p_r) \]
where $\p_{\ell} = \p \cup \ldots \cup \p$ is a chain of copies of $\p$ glued as usual,
and $p_1, \ldots, p_r$ are point on $p_{\ell}$ not incident to the singular locus
or the divisors $D_0[\ell] = D_{1,0}$ (the zero section in the first copy) and $D_{\infty}[\ell] = D_{\ell, \infty}$ (the infinite section in the last copy).
Two tuples $(\p_{\ell}, p_1, \ldots, p_r)$ and $(\p_{\ell}, p_1', \ldots, p_r')$
are identified if they differ by an element of $\BG_m^{\ell}$,
where the $i$-th copy of $\BG_m$ acts on the $i$-th copy of $\p$ by the natural scaling action on the fibers of $\p \to D$.
The definition of a family of $r$-ordered points on the rubber over a scheme $S$ is parallel.

As before the moduli space $(\p, D_{0,\infty})^{r,\sim}$ is smooth and proper, it admits forgetful and diagonal morphism.
Because of the scaling action it is of dimension $r \dim(\p) - 1$.
There exists a closed embedding
\[ (\p, D_{0,\infty})^{r,\sim} \hookrightarrow (X,D)^r \]
which defines a non-singular divisor on $(X,D)^r$.

\begin{example}
We have $(\p, D_{0,\infty})^{1,\sim} \cong D$ where the isomorphism is given by the projection $\p \to D$.
For two markings under the isomorphism 
\[ (X,D)^2 \cong \mathrm{Bl}_{D \times D}(X \times X) \]
the space $(\p, D_{0,\infty})^{2,\sim}$
is the exceptional divisor of the blow-up:
\[ (\p, D_{0,\infty})^{2,\sim} \cong \p_{D \times D}(N_{D/X} \oplus N_{D/X}) \cong D \times D \times \p^1. \]
The $\p^1$ records the relative position of the two markings on the bubble.
\end{example}
\begin{example}
In the case of the relative pair $(X,D) = (Y \times C, Y_z)$ we have
\[ \p = Y \times \p^1, \quad D_0 = Y \times \{0 \}, \quad D_{\infty} = Y \times D_{\infty}. \]
The stack of target expansions is $\CT^{\rub} \cong \mathfrak{M}_{0,2}^{\textup{ss}}$.
To describe the space
$(Y \times \p^1, Y_{0,\infty})^{r,\sim}$,
we can use the rigidification arguments of \cite[Sec.3.2]{MOblomDT} and \cite[Sec.4.9]{OPLocal}.
The idea is as follows: Given a tuple $(\p_{\ell}, p_1, \ldots, p_{r})$ we can single out a distinguished component $\p_{i_1} \subset \p_{\ell}$ of the chain $\p_{\ell}$,
namely the one that carries the marking $p_1$.
By the $\BG_m$-action we can further assume that on the distinguished component $\p_{i_1} = \p^1 \times Y$
the point $p_1$ lies over the point $1 \in \p^1$.
We hence view it as an element
\[
((Y \times \p^1)[k_1,k_2], p_1, \ldots, p_r) \in (Y \times \p^1, Y_{0,\infty})^{r}
\]
where $p_1$ is fixed over $1$ and
\[ (Y \times \p^1)[k_1, k_2] = \underbrace{\p \cup \ldots \cup \p}_{k_1 \text{ times}}\ \cup\ \underbrace{Y \times \p^1}_{\p_{i_1}}\ \cup\ \underbrace{\p \cup \ldots \cup \p}_{k_2 \text{ times}}. \]
Fixing the point over $1 \in \p^1$ essentially fixes a slice of the $\BG_m$-automorphism on $p_{i_1}$,
hence also the isomorphisms data are the same.
This identifies 
%objects
$(Y \times \p^1, Y_{0,\infty})^{r,\sim}$
with the closed substack of $(Y \times \p^1, Y_{0,\infty})^{r}$ where the first marked point $p_1'$ lies on 
the main component $Y \times \p^1$ over the point $1 \in \p^1$.
In other words,
we have the fiber diagram
\[
\begin{tikzcd}
(Y \times \p^1, Y_{0,\infty})^{r,\sim} \ar{r} \ar{d} & (Y \times \p^1, Y_{0,\infty})^{r} \ar{d}{\mathrm{pr}_1} \\
Y \times \{ 1 \} \ar{r} & Y \times \p^1
\end{tikzcd}
\]
where the right hand vertical arrow is given by the composition of the forgetful morphism
$\pi_1$ with the canonical projection to $Y \times \p^1$. \qed 
\end{example}

The moduli of ordered points on the pair $(\p, D_{0,\infty})$ and the moduli spaec of rubber points $(\p, D_{0, \infty})^{r,\sim}$ are naturally related.
In the former the points lie also on a  chain $\p_{\ell}$ but with a distinguished component
for which we do not identify points by $\BG_m$-automorphism; in the latter there is no distinguised component, all are treated equally.
Forgetting the distinguished component yields a morphism
\[ f : (\p, D_{0,\infty})^{r} \to (\p, D_{0,\infty})^{r,\sim}. \]
More precisely, for a point $x=((Y \times \p^1)[k_1,k_2], p_1, \ldots, p_r)$ it is defined as follows:
If one of the markings $p_i$ lies on the distinguished component $Y \times \p^1$,
we send $x$ to its isomorphism class under the natural $\BG_m$-action on this component
(and hence forgetting the distinguished component). On this locus $f$ is a quotient map by $\BG_m$.
The second case is if none of the markings $p_i$ lie on $Y \times \p^1$,
or equivalently, that $x$ is fixed by the $\BG_m$-action;
the image point is then obtained by contracting the distinguished component.

We have the following rigidification lemma:
\begin{lemma} \label{lemma:point rigidification}
\begin{align*} 
[ (\p, D_{0,\infty})^{r,\sim} ] & = f_{\ast}( \pi_1^{\ast}([D_0]) \cap [(\p, D_{0,\infty})^{r}] )\\
& = f_{\ast}( \pi_1^{\ast}([D_{\infty}]) \cap [(\p, D_{0,\infty})^{r}] )
\end{align*}
where $\pi_1 : (\p, D_{0,\infty})^r \to (\p,D_{0,\infty}) \cong \p$ is the forgetful morphism.
\end{lemma}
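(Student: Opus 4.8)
The plan is to show both right–hand sides equal $[(\p,D_{0,\infty})^{r,\sim}]$ by exploiting that $f$ is generically a $\BG_m$–quotient and that $[D_0],[D_\infty]$ are section classes of the projection $\rho\colon \p\to D$. Write $V=(\p,D_{0,\infty})^r$ and $W=(\p,D_{0,\infty})^{r,\sim}$, so that $\dim V=r\dim\p$ and $\dim W=r\dim\p-1$. On the open locus where some marking lies on the distinguished (main) component, $f$ is the quotient by the $\BG_m$ that scales this component, hence has $1$–dimensional generic fibre; since $f$ is surjective, this gives $f_{\ast}[V]=0$ for dimension reasons ($\dim W<\dim V$), a fact I will use twice.

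First I would prove that the two expressions agree. As $D_0$ and $D_\infty$ are both sections of $\rho$, their difference is pulled back from the base: $[D_0]-[D_\infty]=\rho^{\ast}\delta$ for some $\delta\in A^1(D)$. The composite $\rho\circ\pi_1\colon V\to D$ records only the image point in $D$ of the first marking, which is unchanged when the distinguished component is rescaled or contracted; hence it is constant on the fibres of $f$ and factors as $\rho\circ\pi_1=\bar\pi_1\circ f$ for a morphism $\bar\pi_1\colon W\to D$. The projection formula then gives
\[ f_{\ast}\big(\pi_1^{\ast}([D_0]-[D_\infty])\cap[V]\big)=f_{\ast}\big(f^{\ast}\bar\pi_1^{\ast}\delta\cap[V]\big)=\bar\pi_1^{\ast}\delta\cap f_{\ast}[V]=0, \]
so the $D_0$– and $D_\infty$–expressions coincide.

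It then remains to identify $f_{\ast}(\pi_1^{\ast}([D_0])\cap[V])$ with $[W]$. Since $W$ is smooth and connected, hence irreducible, this pushforward equals $m[W]$ for some integer $m$, and it suffices to compute $m$ at the generic point of $W$. Here I would argue Zariski–locally over $D$: the $\p^1$–bundle $\rho$ is Zariski–locally trivial, so over a suitable open $U\subset D$ we have $\p|_U\cong U\times\p^1$, and then $[D_0]|_U=[U\times\{0\}]=[U\times\{1\}]$ because $0\sim1$ on $\p^1$. The section $U\times\{1\}$ avoids $D_0$ and $D_\infty$, so pinning the first marking to it cuts out, through the flat morphism $\pi_1$, the graph of a section $\iota$ of $f$ over $W_U:=\bar\pi_1^{-1}(U)$; this is precisely the fibre–product description $W_U\cong V_U\times_{\p}(U\times\{1\})$ computed in the product case above, with $V_U:=f^{-1}(W_U)$. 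Consequently $\pi_1^{\ast}([D_0])\cap[V]$ restricts over $V_U$ to $\iota_{\ast}[W_U]$, and $f\circ\iota=\id$ yields $m[W_U]=[W_U]$, so $m=1$. Together with the previous paragraph this proves the Lemma.

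The hard part is that $\rho$ is genuinely nontrivial, so $[D_0]\neq[D_\infty]$ in general and there is no global section of $\rho$ avoiding both $D_0$ and $D_\infty$ (for instance on $\BF_1=\p(\CO(1)\oplus\CO)$ every section in the class $[D_0]$ meets $D_\infty$). In particular one cannot use the literal divisor $D_0$ to cut out the rigidifying slice: the preimage $\pi_1^{-1}(D_0)$ consists of configurations in which the first marking has bubbled off onto a $D_0$–component, and $f$ carries this into the \emph{boundary} of $W$ rather than onto a dense locus. The content of the rigidification is therefore to move $[D_0]$ to a genuine pinning section—possible only after trivializing $\rho$ locally—and then to reconcile the resulting local degree–$1$ count with a global statement; this is exactly what the projection–formula argument (yielding $f_{\ast}[V]=0$ and the $D_0/D_\infty$ symmetry) together with the irreducibility of $W$ accomplish.
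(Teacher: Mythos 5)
Your proof is correct, but it reaches the coefficient by a genuinely different route than the paper. The paper's own argument is two lines: the class $f_{\ast}(\pi_1^{\ast}([D_0])\cap[(\p,D_{0,\infty})^{r}])$ has dimension $r\dim\p-1=\dim(\p,D_{0,\infty})^{r,\sim}$, hence is a multiple of the fundamental class, and the multiple is computed by integrating $\pi_1^{\ast}[D_0]$ over the fibre of $f$ at a generic point: the closure of the $\BG_m$-orbit sweeps the first marking once around the $\p^1$-fibre of $\p\to D$, meeting $D_0$ (and likewise $D_{\infty}$) in a single transverse point, so the multiple is $1$, and both equalities come out of the same computation. You share the first step (top-dimensional class on the irreducible, smooth proper $W$), but you compute the coefficient by Zariski-locally trivializing $\rho$, moving $[D_0]$ rationally to a slice $U\times\{1\}$ disjoint from both sections, and recognizing $\pi_1^{-1}(U\times\{1\})$ as the graph of a section of $f$ over $W_U$ — this is exactly the rigidification fibre square that the paper records in its example describing $(Y\times\p^1,Y_{0,\infty})^{r,\sim}$, so the identification is legitimate — and you obtain the $D_0$/$D_{\infty}$ symmetry for free from $[D_{\infty}]=[D_0]+\rho^{\ast}c_1(N_{D/X})$ together with $f_{\ast}[V]=0$ via the projection formula. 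Your version trades the paper's generic-fibre intersection count (where one must see that the orbit closure meets $D_0$ transversally once) for local triviality of $\rho$ plus a restriction-to-dense-open argument; the latter is sound since $A_{\dim W}(W)\to A_{\dim W}(W_U)$ is an isomorphism for $W$ irreducible.

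One remark in your closing paragraph is wrong, though it is not load-bearing: $f$ does \emph{not} carry $\pi_1^{-1}(D_0)$ into the boundary of $(\p,D_{0,\infty})^{r,\sim}$. A configuration with all markings on a single bubble over $D_0$ and none on the distinguished component lies in $\pi_1^{-1}(D_0)$, and $f$ contracts the distinguished component, landing in the dense open locus of unexpanded rubber chains; in fact $f$ restricted to $\pi_1^{-1}(D_0)$ is generically injective onto $W$, which is precisely why the paper's generic-fibre integral produces the coefficient $1$ directly from the literal divisor $D_0$, with no need to move it. (Your $\BF_1$ parenthesis is also off: $D_0$ itself is the unique section in the class $[D_0]$ and is disjoint from $D_{\infty}$, since $D_0\cdot D_{\infty}=0$ there; the correct statement is that no section class $C$ on $\BF_1$ satisfies $C\cdot D_0=C\cdot D_{\infty}=0$.) Since your actual chain of deductions never uses these claims, the proof stands as written.
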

\begin{proof}
In both cases the right hand side is of degree $0$ hence a multiple of the fundamental class.
Integrating over the fiber of a generic point yields the result.
\end{proof}

\begin{rmk}
A second connection between $(\p, D_{0,\infty})$ and $(\p, D_{0,\infty})^{r,\sim}$ is given by presenting the second as a $\BG_m$ quotient. Let
\[ U \subset (\p, D_{0,\infty})^{r} \]
be the open subset corresponding to points where there is no expansion over $D_{\infty}$,
and which are not fixed by the $\BG_m$-translation action on the main component $\p$.
Then one has $(\p, D_{0,\infty})^{r,\sim} \cong U / \BG_m$.
(The inverse is provided by viewing $(\p[k], p_1, \ldots, r)$ in $(\p, D_{0,\infty})^{r, \sim}$
as marked points on $\p[k-1,0]$, with the main component in $\p[k-1,0]$ taking the role of the $k$-th component in $\p_{k}$.) \qed
\end{rmk}

We can prove also the following generalization of Lemma~\ref{lemma:rel splitting} which will be important later on.
Let $(X,D)^I$ denote the moduli space of ordered point on $(X,D)$ labelled by a finite set $I$, and similarly in the rubber case.
Consider the fiber diagram:
\[
\begin{tikzcd}
W \ar{r} \ar{d} & (X,D)^r \ar{d}{\pi_{12}} \\
\p(N) \ar{r}{\Delta^{\rel}} & (X,D)^2 
\end{tikzcd}
\]
where $\p(N) = \p(N_{D^2/X^2}) \subset \mathrm{Bl}_{D \times D}(X \times X) \cong (X,D)^2$ parametrizes the locus where both points lie on a bubble.

For any decomposition $\{1, \ldots, r \} = I \sqcup J$ consider the gluing morphism
\[ \xi_{I,J} : (\p, D_{0,\infty})^{I,\sim} \times (X,D)^{J} \to (X,D)^r. \]

\begin{lemma} \label{lemma:2diagonal}
The gluing morphism
\[ \bigsqcup \xi_{I,J} : \bigsqcup_{\substack{ \{1, \ldots, r \} = I \sqcup J \\ 1,2 \in I }}
(\p, D_{0,\infty})^{I,\sim} \times (X,D)^{J} \xrightarrow{\quad \quad} W \subset (X,D)^r \]
is birational.
\end{lemma}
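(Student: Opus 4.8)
The plan is to exhibit, for each decomposition $\{1,\ldots,r\} = I \sqcup J$ with $1,2 \in I$, an explicit dense open stratum of $W$ onto which $\xi_{I,J}$ restricts to an isomorphism, to check that these strata are pairwise disjoint and exhaust a dense open of $W$, and to match dimensions. First I would record the dimensions. Writing $n = \dim X$, the locus $\p(N) = \p(N_{D^2/X^2}) \subset (X,D)^2$ is a $\p^1$-bundle over $D \times D$, hence of dimension $2n-1$; since the forgetful morphism $\pi_{12}$ is flat of relative dimension $(r-2)n$ by Proposition~\ref{prop:moduli of ordered points}, the space $W = \pi_{12}^{-1}(\p(N))$ is of pure dimension $rn-1$ in $(X,D)^r$. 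On the other hand $\dim (\p, D_{0,\infty})^{I,\sim} = |I|\,n - 1$ and $\dim (X,D)^{J} = |J|\,n$, so each source piece has dimension $rn-1$ as well. Thus it suffices to show that for each $(I,J)$ the morphism $\xi_{I,J}$ is birational onto one irreducible component of $W$, that these components are distinct, and that together they cover $W$.

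Next I would describe the generic points of $W$. A geometric point of $W$ is an expansion $(X[k], p_1, \ldots, p_r)$ whose image under $\pi_{12}$ lies on a single bubble; equivalently $p_1, p_2$ do not lie on the main component $X$ and, after forgetting $p_3, \ldots, p_r$ and contracting, come to lie on one and the same copy of $\p$. The loci with $k \geq 2$, or where $p_1$ and $p_2$ sit on distinct bubbles of a chain, are of dimension strictly less than $rn-1$ (each additional bubble, glued along the $(n-1)$-dimensional $D$, lowers the dimension). Hence the top-dimensional strata of $W$ are the single-bubble configurations $X \cup \p$ in which $p_1, p_2$ together with some subset of the remaining markings lie on the bubble $\p$, the rest lying on $X$. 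For $I \supseteq \{1,2\}$ let $W_I^{\circ} \subset W$ be the locally closed locus of such configurations in which the markings on $\p$ are \emph{exactly} those indexed by $I$. These strata are disjoint, each of dimension $rn-1$, and their closures $\overline{W_I^\circ}$ cover $W$; since they share the same dimension, none lies in the boundary of another, so they are precisely the irreducible components of $W$.

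It then remains to identify $\xi_{I,J}$ with the inclusion of $\overline{W_I^\circ}$, that is, to invert it over $W_I^\circ$. Over this stratum the picture is transparent: the generic point of $(\p, D_{0,\infty})^{I,\sim}$ is a single $\p$ carrying the markings indexed by $I$ up to the fibrewise $\BG_m$-scaling, and under $\xi_{I,J}$ this scaling is exactly the automorphism of the bubble of $X \cup \p$ that fixes $X$. Thus the $\BG_m$-rigidification built into the rubber moduli space agrees with the way the markings rigidify the bubble inside $(X,D)^r$. The candidate inverse sends a configuration in $W_I^\circ$ to the pair consisting of its bubble, regarded as a rubber with markings $I$, and its main component $X$ with the markings $J$; this is a two-sided inverse on closed points, and it is of degree one because the set $I$ of markings on the bubble determines the decomposition uniquely, so no other $\xi_{I',J'}$ dominates $W_I^\circ$.

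Finally, promoting this generic bijection to a birational morphism requires the scheme-theoretic comparison of the $\BG_m$-quotient presentation of the rubber with the bubble-scaling automorphism in $(X,D)^r$, and this is the step I expect to be the main obstacle. I would handle it exactly as in the proof of Proposition~\ref{prop:moduli of ordered points}: near the relative divisor $(X,D)$ is étale-locally isomorphic to $(\BA^1,0) \times U$, so both $(X,D)^r$ and the relevant rubber spaces become étale-locally products of the one-dimensional model treated in \cite{LM}, where the inverse of the analogous gluing map is written down explicitly. Once this local model is available, the disjointness of the $W_I^{\circ}$ together with the dimension count above yields at once that $\bigsqcup \xi_{I,J}$ is birational onto $W$.
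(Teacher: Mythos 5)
Your proof is correct and follows essentially the same route as the paper, whose entire proof is the one-line remark that the claim ``can be checked locally \cite{LM} and is also implicit in \cite{Li1, Li2, LiWu}'' --- exactly the \'etale-local reduction to $(\BA^1,0)\times U$ and the Losev--Manin model that you invoke for the scheme-theoretic step. Your stratification of $W$ by the set $I$ of markings on the bubble, the dimension count $rn-1$ on both sides, and the matching of the rubber $\BG_m$-rigidification with the bubble-scaling automorphism are precisely the bookkeeping the paper leaves implicit, and you have correctly identified the local comparison as the only genuinely nontrivial point.
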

\begin{proof}
This can be checked locally \cite{LM} and is also implicit in \cite{Li1, Li2, LiWu}.
\end{proof}

Recall the class of the locus where the first two markings coincide:
\[ \Delta_{12}^{\rel} = \pi_{12}^{\ast}( \Delta^{\rel} ). \]
We also have the absolute diagonal,
\[ \Delta_{12} = (\pi_1 \times \pi_2)^{\ast}( \Delta_X ) \]
where $\pi_i : (X,D)^r \to (X,D)^1 \cong X$ forgets all but the $i$-th point.

In $(\p, D_{0,\infty})^{s,\sim}$ consider the diagonal
\[ \Delta_{D, 12} = (\pi_1 \times \pi_2)^{\ast}(\Delta_D) \]
where $\pi_i : (\p, D_{0,\infty})^{s,\sim} \to (\p, D_{0,\infty})^{1,\sim} \cong D$.
\begin{cor} \label{cor:diagonal splitting general}
 We have
\[ \Delta_{12}^{\rel} = \Delta_{12} - 
\sum_{\substack{ \{1, \ldots, r \} = I \sqcup J \\ 1,2 \in I }} \xi_{\ast}( \Delta_{D,12} ). \]
\end{cor}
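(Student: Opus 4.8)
The plan is to derive the corollary by pulling the splitting of Lemma~\ref{lemma:rel splitting} back along the forgetful map $\pi_{12}$ and then rewriting the resulting push-forward term through the birational parametrization of $W$ supplied by Lemma~\ref{lemma:2diagonal}. First I would apply $\pi_{12}^{\ast}$ to the identity $\Delta^{\rel}_{(X,D)} = \pi^{\ast}(\Delta_X) - j_{\ast} g^{\ast}(\Delta_D)$. Since the blow-down $\pi$ precomposed with $\pi_{12}$ equals the joint projection $\pi_1 \times \pi_2 : (X,D)^r \to X^2$ — both merely record the images in $X$ of the first two markings — the leading term becomes
\[ \pi_{12}^{\ast}\pi^{\ast}(\Delta_X) = (\pi_1 \times \pi_2)^{\ast}(\Delta_X) = \Delta_{12}, \]
and it remains to match $\pi_{12}^{\ast}(j_{\ast} g^{\ast}\Delta_D)$ with the sum $\sum_{I,J}(\xi_{I,J})_{\ast}(\Delta_{D,12})$.

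For the push-forward term I would invoke base change. The map $\pi_{12}$ is flat, being a composition of the single-point forgetful morphisms shown flat in Proposition~\ref{prop:moduli of ordered points}(ii), and $j : \p(N) \hookrightarrow (X,D)^2$ is a closed embedding; hence in the Cartesian square defining $W$ the induced projection $q : W \to \p(N)$ is again flat and the inclusion $i_W : W \hookrightarrow (X,D)^r$ is proper. The compatibility of flat pull-back with proper push-forward in a fiber square \cite[Prop.~1.7]{Fulton} then gives $\pi_{12}^{\ast}(j_{\ast} g^{\ast}\Delta_D) = (i_W)_{\ast}\big((g \circ q)^{\ast}\Delta_D\big)$, where $g \circ q : W \to D \times D$ reads off the positions in $D$ of the first two markings. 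Next I would evaluate this class along the birational map $\bigsqcup_{I,J} \xi_{I,J} : \bigsqcup_{I,J} (\p, D_{0,\infty})^{I,\sim} \times (X,D)^{J} \to W$ of Lemma~\ref{lemma:2diagonal}. By the projection formula, birationality turns the push-forward into $\sum_{I,J}(\xi_{I,J})_{\ast}\big(\xi_{I,J}^{\ast}(g\circ q)^{\ast}\Delta_D\big)$; and on each stratum, where $1,2 \in I$ force both markings onto the rubber chain, the composite $g \circ q \circ \xi_{I,J}$ is exactly $\pi_1 \times \pi_2 : (\p, D_{0,\infty})^{I,\sim} \to D \times D$, so $\xi_{I,J}^{\ast}(g\circ q)^{\ast}\Delta_D = \Delta_{D,12}$. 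Assembling the three steps yields the stated formula.

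The step I expect to be the main obstacle is the cycle-level bookkeeping in the last paragraph: one must check that the birational parametrization contributes each stratum of $W$ with multiplicity exactly one, so that no correction coefficients survive, and that $g \circ q \circ \xi_{I,J}$ really reduces to the $D$-evaluation of the two markings on every stratum. Concretely this amounts to confirming that the fiber-product scheme $W = (X,D)^r \times_{(X,D)^2} \p(N)$ is generically reduced along each stratum, so that $\sum_{I,J}(\xi_{I,J})_{\ast}[(\p, D_{0,\infty})^{I,\sim} \times (X,D)^{J}] = [W]$ as cycles; the flatness of $\pi_{12}$ together with the \'etale-local model underlying Lemma~\ref{lemma:2diagonal} (verified via \cite{LM}) is what makes this identification clean.
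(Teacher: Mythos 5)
Your proposal is correct and takes essentially the same approach as the paper: the paper's proof of this corollary is exactly the one-line deduction from Lemma~\ref{lemma:rel splitting} and Lemma~\ref{lemma:2diagonal}, and your argument is that deduction with the flat-base-change step ($\pi_{12}$ flat by Proposition~\ref{prop:moduli of ordered points}, so $\pi_{12}^{\ast} j_{\ast} = (i_W)_{\ast} q^{\ast}$) and the identification $g \circ q \circ \xi_{I,J} = \pi_1 \times \pi_2$ on each stratum made explicit. The multiplicity-one issue you flag at the end is precisely the content the paper delegates to the birationality assertion of Lemma~\ref{lemma:2diagonal} (verified there by the local computation via \cite{LM}), so no further correction terms arise and your proof is complete.
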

\begin{proof}
This follows from Lemma~\ref{lemma:rel splitting} and Lemma~\ref{lemma:2diagonal}.
\end{proof}

For later on we also record the following immediate computation:
\begin{lemma} \label{lemma:diagonal pullback}
For any decomposition $\{ 1, \ldots, r \} = I \sqcup J$ and elements $a,b \in \{ 1, \ldots, r \}$ we have
\[
\xi_{I,J}^{\ast}( \Delta_{ab}^{\rel}) =
\begin{cases}
0 & \text{ if } (a \in I, b \in J) \text{ or } (a \in J, b \in I) \\
\mathrm{pr}_1^{\ast} \Delta_{(\p,D_{0,\infty})^{\sim}, ab}^{\rel} & \text{ if } a,b \in I \\
\mathrm{pr}_2^{\ast} \Delta_{(X,D),ab}^{\rel} & \text{ if } a,b \in J.
\end{cases}
\]
\end{lemma}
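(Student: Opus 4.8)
The plan is to reduce to the two-marking case and then analyze a composition of a forgetful map with the gluing morphism. By definition $\Delta_{ab}^{\rel} = \pi_{ab}^{\ast}(\Delta_{(X,D)}^{\rel})$, where $\pi_{ab} : (X,D)^r \to (X,D)^2$ forgets all markings except $a$ and $b$, so that $\xi_{I,J}^{\ast}(\Delta_{ab}^{\rel}) = (\pi_{ab} \circ \xi_{I,J})^{\ast}(\Delta_{(X,D)}^{\rel})$. Everything then rests on identifying the morphism $\pi_{ab} \circ \xi_{I,J}$ in each of the three cases. The only geometric input needed is the compatibility of the forgetful morphisms of Proposition~\ref{prop:moduli of ordered points} with the gluing morphism $\xi_{I,J}$: forgetting all the markings carried by the rubber contracts the entire rubber chain and leaves the main expansion untouched, whereas forgetting markings on the main expansion never contracts the main component $X$. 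This compatibility can be checked \'etale-locally exactly as in \cite{LM}.

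First I would dispose of the mixed case $a \in I$, $b \in J$ (the opposite case being symmetric). In any configuration in the image of $\xi_{I,J}$ the marking $a$ sits on a bubble of the rubber and $b$ on the main expansion, so the two points lie on distinct irreducible components; applying $\pi_{ab}$ keeps $a$ on a (stable, hence surviving) bubble and $b$ on the main component away from the node, so the images never meet $\Delta_{(X,D)}^{\rel}$. As $\Delta_{(X,D)}^{\rel}$ is regularly embedded in the smooth variety $(X,D)^2$, the Gysin pullback of its class along a morphism whose image avoids it vanishes, giving $\xi_{I,J}^{\ast}(\Delta_{ab}^{\rel}) = 0$.

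For $a,b \in J$ the morphism $\pi_{ab} \circ \xi_{I,J}$ contracts the whole rubber, hence factors as $\mathrm{pr}_2$ followed by the forgetful map $(X,D)^J \to (X,D)^2$; pulling back $\Delta_{(X,D)}^{\rel}$ and using that $\Delta_{(X,D),ab}^{\rel}$ on $(X,D)^J$ is by definition this pullback gives the stated $\mathrm{pr}_2^{\ast}\Delta_{(X,D),ab}^{\rel}$. Symmetrically, for $a,b \in I$ the main component becomes unmarked, and $\pi_{ab} \circ \xi_{I,J}$ factors as $\mathrm{pr}_1$, then the rubber forgetful map $(\p,D_{0,\infty})^{I,\sim} \to (\p,D_{0,\infty})^{2,\sim}$, and finally the canonical closed embedding $\iota : (\p,D_{0,\infty})^{2,\sim} \hookrightarrow (X,D)^2$, which is the gluing morphism $\xi_{\{a,b\},\emptyset}$.

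The one non-formal point, which I expect to be the crux, is the base identity $\iota^{\ast}(\Delta_{(X,D)}^{\rel}) = \Delta_{(\p,D_{0,\infty})^{\sim}}^{\rel}$. Here I would invoke Lemma~\ref{lemma:rel splitting}, by which $\Delta_{(X,D)}^{\rel}$ is the proper transform of the diagonal $\Delta_X$ in $(X,D)^2 = \mathrm{Bl}_{D \times D}(X \times X)$, together with the identification of $(\p,D_{0,\infty})^{2,\sim}$ with the exceptional divisor. Since $\Delta_X \cong X$ is smooth and meets the center $D \times D$ cleanly in $\Delta_D \cong D$, its proper transform is smooth and cuts the exceptional divisor in the projectivization of the diagonal sub-line-bundle of $(N_{D/X} \boxplus N_{D/X})|_{\Delta_D}$, which is exactly the rubber relative diagonal (equal base point in $D$ and equal height on the bubble). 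A dimension count shows the intersection is proper of the expected dimension $\dim D$, and that the scheme-theoretic intersection is smooth, so the two cycles coincide with multiplicity one. The point to watch is that $\Delta_X$ meets $D \times D$ with one-dimensional excess in $X \times X$, so one genuinely must pass to the proper transform rather than restrict $\Delta_X$ naively — precisely the correction already built into Lemma~\ref{lemma:rel splitting}. Feeding the base identity into the two factorizations of the previous paragraph completes the remaining cases.
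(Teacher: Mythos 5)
Your proof is correct; the paper in fact states Lemma~\ref{lemma:diagonal pullback} with no proof at all (it is recorded as an ``immediate computation''), and your argument — factoring $\pi_{ab}\circ\xi_{I,J}$ through the forgetful and gluing maps, killing the mixed case by a support argument, and establishing the base identity $\iota^{\ast}\Delta^{\rel}_{(X,D)}=\Delta^{\rel}_{(\p,D_{0,\infty})^{\sim}}$ via the proper transform — supplies precisely the details being suppressed, with the multiplicity-one claim justified because the clean intersection $\Delta_X\cap(D\times D)=\Delta_D$ makes the scheme-theoretic intersection of the proper transform with the exceptional divisor reduced. One small imprecision: in the mixed case the point labeled $b$ need not lie on the main component $X$ itself but may sit on a bubble of the $X$-side expansion; however, your parenthetical observation that $a$ and $b$ always end up on \emph{distinct} irreducible components (marked components are never contracted by stabilization, so they can never be identified) is what the vanishing actually uses, so the argument stands as written.
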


\subsection{Simple degenerations}
Let $W$ be a smooth variety, $B$ be a smooth curve with a distinguished point $0 \in B$.
A simple degeneration (also denoted $W_t \rightsquigarrow X_1 \cup X_2$ for some $t \in B \setminus 0$) is a
flat projective morphism
\[ \epsilon : W \to B \]
such that the fibers $W_b = \epsilon^{-1}(b)$ satisfy:
\begin{itemize}
\item[(i)] $W_b$ is smooth for all $b \in B \setminus \{ 0 \}$,
\item[(ii)] $W_0$ is the union of two smooth irreducible components $X_1, X_2$ glued along a smooth connected divisor $D$.
\end{itemize}

An $k$-step expanded degeneration of the central fiber $W_0$ is the variety
\[ W_0[k] = X_1 \cup \p_{k} \cup X_2 \]
where $\p_k = \p \cup \ldots \cup \p$ is the chain of $k$ copies of $\p = \p(N_{D/X_1} \oplus \CO)$ from before,
$X_1$ is glued along $D$ to the zero section of the first component of $\p_k$,
and $X_2$ is glued to the infinity section of the last component of $\p_k$.
We have
\[ N_{D/X_1} \cong N_{D_2/X}^{\vee} \]
so that for every singular divisor, the two normal directions are normal to each other,
in particular the infinite section of the last copy has normal bundle $N_{D_{\infty}/\p} = N_{D/X_1} \cong N_{D/X_2}^{\vee}$
which is dual to $N_{D/X_2}$.
By \cite{Li1, Li2, LiWu} there exists a stack of target expansions
\[ \CT_{\epsilon} \to B \]
and a universal expansion $\CW \to \CT_{\epsilon}$,
such that for a morphism $S \to \CT_{\epsilon}$ the pullback family $\CW_S$ has fibers $W_s$
whenever $s$ does not lie over $0\in B$ and $W_0[k]$ for some $k$ if $s$ lies over $0$.

We let $(\CW/B)^r$ be the moduli space of $r$ ordered points on $\CW \to \CT_{\epsilon}$.
concretely, an object in $(\CW/B)^r$ over a scheme $S$ is a tuple
\[ (f : S \to \CT_{\epsilon}, p_1, \ldots, p_r) \]
where $p_1, \ldots, p_r$ are sections of the pullback family $\CW_{S} \to S$,
\[
\begin{tikzcd}
\CW_S \ar{r} \ar{d} & \CW \ar{d} \\
S \ar{r} \ar[bend left]{u}{p_1, \ldots, p_r} & \CT_{\epsilon} \ar{r} & B
\end{tikzcd}
\]
with the property that for every geometric point $s \in S$ lying over a $0 \in B$ 
and with fiber $\CW_{s} \cong W_0[k]$ we have:
\begin{itemize}[itemsep=0pt]
\item[(i)] $p_1(s), \ldots, p_r(s)$ do not meet the singular locus or the relative divisor,
\item[(ii)] the automorphism $\Aut(W_0[k], p_1(s), \ldots, p_r(s))$ is finite,
where we the automorphism group consists of the elements in $\BG_m^k$ acting on $\p_k$ which fix the markings.
\end{itemize}

Then $(\CW/B)^r$ is a smooth proper variety, and $\epsilon_r : (\CW/B)^r \to B$ is flat.
The fiber over a point $t \in B \setminus \{ 0 \}$ is simply
\[ \epsilon_r^{-1}(t) = W_t^r. \]
Consider the fiber of $\epsilon_r$ over $0$,
\[
\begin{tikzcd}
(\CW/B)^r_0 \ar{r} \ar{d} &  (\CW/B)^r \ar{d} \\
0 \ar{r} & B.
\end{tikzcd}
\]
Then we have a natural gluing morphism, 
\[ \bigsqcup_{\{1, \ldots, r \} = I \sqcup J}
(X_1, D)^{I} \times (X_2, D)^J \to (\CW/B)^r_0
\]
which is finite.
The diagonal and forgetful morphisms of $(\CW/B)^r$ are defined similarly to before.

\subsection{Relative Hilbert scheme of points} \label{subsec:rel hilb scheme}
Let $S$ be a smooth projective surface and let $D \subset S$ be a (connected) non-singular divisor.
For an integer $d \geq 0$ let
\[ (S,D)^{[d]} \]
be the Hilbert scheme of $d$ points on the relative pair $(S,D)$
which parametrizes length $d$ zero-dimensional subschemes on expansions $S[k]$,
which do not meet the relative divisor or the singular locus,
and have finite automorphism, with automorphisms given by the scaling-action of $\BG_m^k$
on the fibers of the bubbles.
The Hilbert scheme $(S,D)^{[d]}$ is a smooth Deligne-Mumford stack
and is a special case of the construction in \cite{LiWu}.
We also refer to \cite{Setayesh} for a study of its cohomology.

\begin{example}
We have $(S,D)^{[1]} \cong (S,D)^1 \cong S$. For two points consider the $\BZ_2$-action on
\[ \mathrm{Bl}_{\Delta^{\rel}_{(S,D)}}( \mathrm{Bl}_{D \times D}(S \times S) ) \]
that switches the two factors. It has fixed locus equal to the exceptional divisor of the blow-up (a $\p^1$-bundle over $\Delta_{(S,D)}^{\rel}$)
as well as the (the preimage of the) dimension $1$ locus $\Sigma$ in $\p(N)$ corresponding to pairs of points which are interchanged by
the action of the involution
$(-1) \in \BG_m$ on the bubble (the $\BZ_2$ action restricted to $\p(N) \cong \p^1 \times D^2$ is the product of the action on $D^2$-switching the two factors
and an involution on $\p^1$. Any involution on $\p^1$ has two fixed loci,
which correspond here to the intersection of the relative diagonal with $\p(N)$ and the locus $\Sigma \cong D$.)
The Hilbert scheme $(S,D)^{[2]}$ is then given as the quotient
\[ (S,D)^{[2]} \cong ( \mathrm{Bl}_{\Delta^{\rel}_{(S,D)}}( \mathrm{Bl}_{D \times D}(S \times S) ) / \BZ_2 \]
where we take the ordinary scheme-theoretic quotient along the exceptional divisor, and the stack quotient along $\Sigma$.
In particular, $(S,D)^{[2]}$ is a smooth Deligne-Mumford stack with singular coarse moduli space. \qed
\end{example}

We recall the construction of Nakajima cycles on the relative Hilbert schenme $(S,D)^{[d]}$
following \cite[Sec.5.4]{PaPix_GWPT}.
Given a partition $\lambda = (\lambda_1, \ldots, \lambda_{\ell})$ let
\[ Z_{(S,D), \lambda} \subset (S,D)^{\ell} \times (S,D)^{[d]} \]
be the closure of the locus of distinct points in $(S,D)^{\ell}$ carrying punctual subschemes of length $\lambda_1, \ldots, \lambda_{\ell}$
For any $\gamma \in H^{\ast}((S,D)^{\ell})$ we then define
\[ \mathsf{Nak}_{\lambda}(\gamma) = \mathrm{pr}_{2 \ast}\left( \mathrm{pr}_{1}^{\ast}(\gamma) \cdot [Z_{(S,D), \lambda}] \right), \]
where $\pr_i$ are the projection of $(S,D)^{\ell} \times (S,D)^{[d]}$ to the factors.

\begin{example} \label{example:Nakajima}
Assume that $D$ is empty, so that $(S,D)^{[d]}$ is simply the ordinary Hilbert scheme of points $S^{[d]}$ on the surface $S$.
For $i > 0$ and $\alpha \in H^{\ast}(S)$ recall the Nakajima operators
\[ \Fq_i(\alpha) : H^{\ast}(S^{[a]}) \to H^{\ast}(S^{[a+i]}), \ \gamma \mapsto \rho_{2 \ast}( \rho_1^{\ast}(\gamma) \cdot q^{\ast}(\alpha)) \]
with $\rho_1, q, \rho_2$ the projections to the factors of the incidence scheme:
\[ S^{[a,a+i]} = \{ (I_1, x, I_2) \in S^{[a]} \times S \times S^{[a+i]} | I_1 \subset I_2, \quad \mathrm{Supp}(I_2/I_1) = \{ x \} \}. \]
Then by \cite{Nak} (see also \cite{deCM}) the cycle $[(S,\varnothing)^{(\lambda)}]$ is precisely the class of $\Fq_{\lambda_1} \cdots \Fq_{\lambda_{\ell}}( - )$ viewed as
a cycle in $S^{\ell} \times S^{[d]}$.
Hence in this case, with $\gamma = \gamma_1 \otimes \ldots \otimes \gamma_{\ell} \in X^{\ell}$ we have
\[ \mathsf{Nak}_{\lambda}(\gamma) = \prod_{i=1}^{\ell} \Fq_{\lambda_i}(\gamma_i) \vacuum. \]
\qed
\end{example}

Given a simple degeneration $\epsilon : W \to B$ of the surface $S$ into the union $S_1 \cup_D S_2$,
we also have the relative Hilbert scheme of points on the fibers of $\epsilon$
\[ (\CW/B)^{[d]} \to B \]
which parametrizes $0$-dimensional subschemes on the fibers $W_t[k]$ for $t \in B$
which do not lie on the singular locus, where we identify subschemes modulo the usual scaling action on the bubbles.
We can again construct relative Nakajima cycles in a straightforward way as follows:
Let 
\[ Z_{\CW/B, \lambda} \subset (\CW/B)^{\ell} \times (\CW/B)^{[d]} \]
be the closure of the locus of distinct points in $(\CW/B)^{\ell(\lambda)}$
carrying punctual subschemes of length $\lambda_1, \ldots, \lambda_{\ell}$.
Given $\gamma \in H^{\ast}( (\CW/B)^{\ell} )$ we define
\[ \mathsf{Nak}_{\lambda}(\gamma) = \mathrm{pr}_{2 \ast}\left( \mathrm{pr}_{1}^{\ast}(\gamma) \cdot [ Z_{\CW/B, \lambda}] \right) 
\quad \in H^{\ast}( (\CW/B)^{[d]} ).
\]

Over $0 \in B$ for any $i+j=d$ we have a natural finite gluing morphism:
\[ \xi_{i} : (S_1, D)^{[i]} \times (S_2,D)^{[j]} \to \left( (\CW/B)^{[d]} \right)_0. \]

\begin{lemma} \label{lemma:rel hilb gluing}
We then have:
\[
\xi_{i}^{\ast}( \mathsf{Nak}_{\lambda}(\gamma))
=
\sum_{\substack{
\{1 ,\ldots , \ell(\lambda)\} = I \sqcup J \\
|\lambda_I|=i, |\lambda_J|=j}} \Nak_{\lambda_{I}}( \delta_{I,s} ) \otimes \Nak_{\lambda_{J}}( \delta_{I,s}' ),
\]
where 
\begin{itemize}
\item $\lambda_I = (\lambda_i)_{ i \in I}$ is the partition formed from the $I$-parts of $\lambda$,
\item we have used the K\"unneth decomposition
\[ \xi_I^{\ast}( \gamma ) = \sum_{s} \delta_{I,s} \otimes \delta_{I,s}' \in H^{\ast}( (S_1,D)^I \times (S_2,D)^J ) \]
where $\xi_I : (S_1,D)^I \times_{B} (S_2,D)^J \to (\CW/B)^{\ell(\lambda)}$ is the gluing morphism.
\end{itemize}
\end{lemma}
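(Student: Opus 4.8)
The plan is to deduce the formula from a degeneration of the incidence correspondence $Z_{\CW/B,\lambda}$ over the central fibre, followed by a K\"unneth bookkeeping. Write $\iota_0 : (\CW/B)^{\bullet}_0 \hookrightarrow (\CW/B)^{\bullet}$ for the inclusion of the fibre over $0 \in B$, where $\bullet \in \{ \ell(\lambda), [d] \}$. Since $B$ is a smooth curve, $\iota_0$ is the inclusion of a Cartier divisor, and since $Z_{\CW/B,\lambda}$ is by construction the closure of the corresponding family over $B \setminus \{ 0 \}$ it has no component supported over $0$ and is flat over $B$. Hence the Gysin pullback along this Cartier divisor sends $[Z_{\CW/B,\lambda}]$ to the honest fibre cycle $[Z_{\CW/B,\lambda}|_0]$ and commutes with the proper pushforward $\pr_{2 \ast}$. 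As the gluing morphism $\xi_i$ factors through $\iota_0$, this reduces the computation of $\xi_i^{\ast}\Nak_{\lambda}(\gamma)$ to a statement purely about the central fibre.

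The geometric heart is the description of $Z_{\CW/B,\lambda}|_0$. A point of $Z_{\CW/B,\lambda}$ over $0 \in B$ consists of $\ell(\lambda)$ ordered points on an expansion $W_0[k] = S_1 \cup \p_k \cup S_2$ together with a length-$d$ subscheme that is punctual of length $\lambda_r$ at the $r$-th point. Since the points avoid the singular locus and the subscheme is supported exactly at them (hence disjoint from the relative divisors), each marked point together with the length it carries lies entirely on the $S_1$-side or the $S_2$-side of the expansion. This partitions $\{1, \ldots, \ell(\lambda)\} = I \sqcup J$ with $|\lambda_I| = i$, $|\lambda_J| = j$, $i+j=d$, and every such splitting occurs. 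Because the incidence condition is local around the support, it decouples into the incidence condition for $(\lambda_I, I)$ on $(S_1,D)$ and for $(\lambda_J, J)$ on $(S_2,D)$. Using the gluing morphisms $\xi_I : (S_1,D)^I \times (S_2,D)^J \to (\CW/B)^{\ell(\lambda)}_0$ and $\xi_i$, taking closures yields the cycle identity
\[
(\xi_I \times \xi_i)^{\ast}\,[Z_{\CW/B,\lambda}|_0] = \sum_{\substack{\{1,\ldots,\ell(\lambda)\} = I \sqcup J \\ |\lambda_I| = i,\ |\lambda_J| = j}} [\, Z_{(S_1,D),\lambda_I} \times Z_{(S_2,D),\lambda_J} \,],
\]
where $Z_{(S_1,D),\lambda_I} \subset (S_1,D)^I \times (S_1,D)^{[i]}$ and $Z_{(S_2,D),\lambda_J} \subset (S_2,D)^J \times (S_2,D)^{[j]}$, and the distinct splittings meet only in lower dimension.

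Granting this, the lemma follows by base change and the projection formula. Restricting $\Nak_{\lambda}(\gamma) = \pr_{2\ast}(\pr_1^{\ast}\gamma \cdot [Z_{\CW/B,\lambda}])$ along $\xi_i$ and inserting the displayed identity, one pulls $\gamma$ back by $\xi_I$, writes $\xi_I^{\ast}\gamma = \sum_s \delta_{I,s} \otimes \delta_{I,s}'$, and distributes the pushforward $\pr_{2\ast}$ over the two product factors $(S_1,D)^{[i]}$ and $(S_2,D)^{[j]}$. Because the correspondence on the right-hand side is a product over the two sides, the pushforward factors as a tensor product, producing $\pr_{2\ast}(\delta_{I,s} \cdot [Z_{(S_1,D),\lambda_I}]) \otimes \pr_{2\ast}(\delta_{I,s}' \cdot [Z_{(S_2,D),\lambda_J}]) = \Nak_{\lambda_I}(\delta_{I,s}) \otimes \Nak_{\lambda_J}(\delta_{I,s}')$, summed over the splittings and over $s$, exactly as claimed.

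The main obstacle is the displayed cycle identity: one must check that $Z_{\CW/B,\lambda}|_0$ carries no excess multiplicity and that its closure is precisely the union of the product loci. This is a local transversality question along the singular divisor: that the finite gluing morphisms $\xi_I$ and $\xi_i$ are compatibly transverse and generically injective onto their images, that the $\BG_m$-scaling automorphisms on the bubbles $\p_k$ introduce no stacky multiplicities, and that no length escapes onto the relative divisor in the limit (so that there is no rubber contribution at $D$; this uses that the zero-dimensional subschemes meet $D$ in length $0$). All three points are local and can be settled by the explicit expansion model of \cite{LM}, the same input already used in Lemma~\ref{lemma:2diagonal}, while flatness over $B$ upgrades the set-theoretic decomposition to the stated identity of cycles.
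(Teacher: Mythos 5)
Your proof is correct and is essentially the paper's own argument: the paper's (very terse) proof sets up the diagram relating $\bigsqcup_{I,J} Z_{(S_1,D),\lambda_I} \times Z_{(S_2,D),\lambda_J}$ to $Z_{\CW/B,\lambda}$, asserts that the right-hand square is fibered, and concludes by a diagram chase --- which is exactly your decomposition of the preimage of the incidence correspondence into products over splittings $I \sqcup J$, followed by base change, the projection formula, and K\"unneth. Your preliminary Gysin/flatness step over $B$ is a harmless detour (since $\xi_i$ already factors through the central fibre, the fibre-product identification does all the work), and the transversality and multiplicity checks you defer to the local model of \cite{LM} are precisely what the paper compresses into ``one checks that the right square is fibered.''
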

\begin{proof}
We have the diagram
\[
\begin{tikzcd}
\bigsqcup_{I,J} (S_1,D)^I \times (S_2,D)^J \ar{d}{\xi_I} & 
\bigsqcup_{I,J} Z_{(S_1,D), \lambda_I} \times Z_{(S_2,D), \lambda_J} \ar{l}{\pr_1} \ar{r}{\pr_2}\ar{d} & (S_1, D)^{i} \times (S_2,D)^{j} \ar{d}{\xi_i} \\
(\CW/B)^{\ell(\lambda)} & Z_{\CW/B,\lambda} \ar{l}{\pr_1} \ar{r}{\pr_2} & (\CW/B)^{[d]}
\end{tikzcd}
\]
where $I,J$ runs over the same data as in the claim.
The left square is commutative and one checks that the right square is fibered.
The claim follows by a diagram chase.
\end{proof}

\subsection{Logarithmic tangent bundle}
\label{subsec:log tangent bundle}
For use later on we also introduce the logarithmic tangent bundle. Let $D \subset X$ be a simple normal crossing divisor in a smooth variety $X$,
that is $D = \sum_i D_i$ for smooth irreducible divisors $D_i$
and, \'etale locally $D$ is the union of hypersurfaces with local equation $z_1 \cdots z_k = 0$,
where $z_1, \ldots, z_n$ are the local coordinates of $X$.
Let $\Omega_X[D]$ be the locally free sheaf of differential forms with logarithmic poles along $D$.
If $D$ is locally given by $z_1 \cdots z_k = 0$, then the stalk $\Omega_{X}[D]$ at $z_1 = \ldots = z_k=0$ is
\[ \Omega_{X,p}[D] = \CO_{X,p} \frac{dz_1}{z_1} \oplus \cdots \oplus \CO_{X,p} \frac{dz_k}{z_k} \oplus \CO_{X,p} dz_{k+1} \oplus \cdots \oplus \CO_{X,p} dz_{n}. \]

In particular, we have the exact sequence:
\begin{equation} 0 \to \Omega_X \to \Omega_X[D] \to \bigoplus_{i} \iota_{i \ast} \CO_{D_i} \to 0 \label{log tangent ses} \end{equation}
and the restriction to the $i$-th divisor is
\[
\Omega_X[D]|_{D_i} = \CO_{D_i} \oplus \Omega_{D_i}\left[ \sum_{j \neq i} (D_i \cap D_j) \right].
\]

The logarithmic tangent bundle is defined to be
\[ T_X[-D] = \Omega_X[D]. \]

\section{Relative Pandharipande-Thomas theory} \label{sec:relative PT}
Let $X$ be a smooth projective threefold and let $D \subset X$ be a smooth connected divisor.
(Again, the case of non-disconnected $D$ is not more difficult.)

\subsection{Moduli spaces} \label{subsec:moduli space of r-marked stable pairs}
Let $(\CX,\CD) \to \CT$ be the universal family over the stack of expanded degenerations.
For a morphism $S \to \CT$ let $(\CX_S,\CD_S) = (\CX \times_{\CT} S, \CD \times_{\CT} S) \to S$ denote the pull-back family.

\begin{defn}[\cite{LiWu}]
A relative stable pair on $(X,D)$ over a scheme $S$ is a triple
$((\CX_S,\CD_S) \to S, F, \sigma)$ where 
\begin{itemize}
\item $(\CX_S,\CD_S) \to S$ is an expanded degeneration of $(X,D)$ over $S$,% (obtained by pullback of the universal family by a map $S \to \CT$),
\item $F$ is a coherent sheaf on $\CX_S$ flat over $S$,
\item $\sigma \in H^0(\CX_S, F)$ is a section,
\end{itemize}
such that on every geometry fiber $\CX_{S,s} \cong X[\ell]$
the restriction $(F_s, \sigma_s)$ satisfies the following conditions:
\begin{enumerate}
\item[(i)] $F_s$ is pure $1$-dimensional,
\item[(ii)] the cokernel of $\sigma_s$ is zero-dimensional,
\item[(iii)] the sheaf $F_s$ is normal to the relative divisor $D[\ell] \subset X[\ell]$ and the singular locus,
and the cokernel of $\sigma_s$ is disjoint from the relative divisor $D[\ell]$ and the singular locus,
\item[(iv)] the automorphism group $\Aut_{\CX_s}(F_s,\sigma_s)$ is finite.
\end{enumerate}
An isomorphism between relative stable pairs
$((\CX_S,\CD_S) \to S, F, \sigma)$ and $((\CX_S,\CD_S) \to S, F', \sigma')$
is an automorphism $\varphi : (\CX_S,\CD_S) \to (\CX_S,\CD_S)$
together with isomorphisms $\varphi^{\ast}(F) \cong F'$ and $\varphi^{\ast}(\sigma) = \sigma'$.
In particular, we identify relative stable pairs if they differ by a scaling automorphism of the bubble.
\end{defn}

Let $\beta \in H_2(X,\BZ)$ is a curve class and $n \in \BZ$. We will write
\[ P_{\Gamma}(X,D), \quad \Gamma = (n,\beta) \]
for the moduli space of relative stable pairs $\CO_{X[\ell]} \xrightarrow{\sigma} F$ with numerical conditions
\[ \beta = p_{\ast} \ch_2(F), \quad n = \chi(F) = \ch_3(F) + \frac{1}{2} \int_{\beta} c_1(T_X), \]
where $p : X[\ell] \to X$ is the canonical projection map contracting the bubbles.
By \cite{LiWu} $P_{\Gamma}(X,D)$ is a proper Deligne-Mumford stack.

We will be interested to import cohomology classes from $(X,D)^r$ to the moduli space $P_{\Gamma}(X,D)$.
For that we require a version of $P_\Gamma(X,D)$ which involves \emph{marked points}.

\begin{defn}
A $r$-marked relative stable pair on $(X,D)$ over a scheme $S$ is a tuple
$(S \to \CT, F, \sigma, p_1, \ldots, p_r)$ where
\begin{itemize}
\item $(\CX_S,\CD_S) \to S$ is an expanded degeneration of $(X,D)$ over $S$,
\item $F$ is a coherent sheaf on $\CX_S$ flat over $S$
\item $\sigma \in H^0(\CX_S, F)$ is a section,
\item $p_1, \ldots, p_r : S \to \CX_S$ are sections
\end{itemize} 
such that on every geometry fiber
$\CX_{S,s} \cong X[\ell]$
the restriction $(F_s, \sigma_s, p_1(s), \ldots, p_r(s))$ satisfies conditions (i-iii) above as well as
\begin{enumerate}
\item[(iv)] $p_1(s), \ldots, p_r(s)$ do not lie on the relative divisor or in the singular locus of $X[\ell]$,
\item[(v)] The automorphism group respecting the markings $\Aut_{\CX_s}(F_s,\sigma_s, p_1(s), \ldots, p_r(s))$ is finite.
\end{enumerate}
Isomorphisms are defined as isomorphisms of relative stable pairs as before but with the additional assumption that
$\varphi$ commutes with the sections: $\varphi_i \circ p_i = p_i$.
\end{defn}

We write $P_{\Gamma,r}(X,D)$ for the moduli stack of $r$-marked relative stable pairs
with numerical conditions fixed by $\Gamma$ as before.
The universal target 
\[ \CX_r := P_{\Gamma,r}(X,D) \times_{\CT} \CX \to P_{\Gamma,r}(X,D) \]
over the moduli space fits into the diagram
\begin{equation} \label{universal family P Gamma r}
\begin{tikzcd}
\CX_r \ar{r} \ar{d} & \CX \ar{d} \\
(X,D)^r \ar[bend left]{u}{p_1, \ldots, p_r} \ar{r} & \CT.
\end{tikzcd}.
\end{equation}

\begin{prop} \label{prop:moduli of r marked stable pairs}
(i) The functor $P_{\Gamma,r}(X,D)$ is a proper Deligne-Mumford stack.\\
(ii) For any $r \geq 0$, there exists flat morphisms
\[ \pi_i : P_{\Gamma,r+1}(X,D) \to P_{\Gamma,r}(X,D) \]
given by forgetting the $i$-th marking and contracting unstable components.
\end{prop}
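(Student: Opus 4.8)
The plan is to mirror closely the proof of Proposition~\ref{prop:moduli of ordered points}, treating the stable pair data $(F,\sigma)$ as extra structure carried along during the forgetful and bubbling operations. The base case $r=0$ is precisely the theorem of \cite{LiWu} that $P_{\Gamma}(X,D)$ is a proper Deligne--Mumford stack, and the whole argument proceeds by induction on $r$ through the universal target \eqref{universal family P Gamma r}.

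First I would construct the forgetful morphism $\pi_{r+1} : P_{\Gamma,r+1}(X,D) \to P_{\Gamma,r}(X,D)$. Given an $(r+1)$-marked stable pair one forgets $p_{r+1}$; the resulting $r$-pointed family may acquire superfluous $\BG_m$-automorphisms on its bubbles, which are removed by contracting along the standard line bundle of \cite[Sec.3.1]{Li1}, now associated to the joint data of the support of $F$ and the remaining markings $p_1, \ldots, p_r$. The point to verify is that the sheaf descends along this contraction to a valid stable pair on the contracted target: a component rendered unstable by forgetting $p_{r+1}$ carries $F$ only as the fibrewise (pullback) extension across the relative divisor, so by the normality condition (iii) the pushforward remains pure $1$-dimensional with cokernel disjoint from $D$, and stability of the contracted pair follows.

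The heart of the proof is the construction, exactly as in \eqref{iso}, of a natural isomorphism
\[ f : P_{\Gamma,r+1}(X,D) \xrightarrow{\cong} \CX_r, \quad f = \tilde{q} \circ p_{r+1}, \]
with $\pi_{r+1} = \pi \circ f$, where $\CX_r = P_{\Gamma,r}(X,D) \times_{\CT} \CX$ is the universal target and $\tilde{q}$ is the contraction \eqref{contraction map} composed with the natural map $\pi_{r+1}^{\ast}(\CX_r) \to \CX_r$. On geometric points the inverse sends a marked stable pair together with a point $a$ of the expansion to the $(r+1)$-marked pair obtained by: setting $p_{r+1}=a$ when $a$ lies in the interior; bubbling off a new $\p$-component carrying $p_{r+1}$ over $a$ (unique up to the $\BG_m$-scaling) when $a$ lies on the relative divisor; and gluing in an extra component when $a$ lies in the singular locus. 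In the last two cases the sheaf extends uniquely as the pullback of $F$ along the new contraction, once more by (iii), so the data is well defined. Scheme-theoretic bijectivity is then checked by the same local computation as for the moduli of points, reducing \'etale-locally near the divisor to the explicit inverse of \cite{LM}.

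The conclusion follows formally. Since $\CX \to \CT$ is representable and proper and, by the inductive hypothesis, $P_{\Gamma,r}(X,D)$ is a proper Deligne--Mumford stack, the universal target $\CX_r \to P_{\Gamma,r}(X,D)$ is representable and proper; transporting along $f$ shows that $P_{\Gamma,r+1}(X,D)$ is a proper Deligne--Mumford stack, which gives (i). For (ii), flatness of $\CX \to \CT$ makes $\pi : \CX_r \to P_{\Gamma,r}(X,D)$ flat, hence $\pi_{r+1} = \pi \circ f$ is flat, and relabelling the markings yields flatness of each $\pi_i$. The main obstacle I anticipate is precisely the sheaf-extension step: verifying that $(F,\sigma)$ extends uniquely and remains a stable pair across the bubbling at the relative divisor and singular locus, and descends under the forgetful contraction. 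This is where condition (iii) and the compatibility of the Li--Wu gluing with the universal sheaf are genuinely used, in contrast to Proposition~\ref{prop:moduli of ordered points}, where only the target geometry enters.
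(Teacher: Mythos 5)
Your proposal follows essentially the same route as the paper's proof: you build the forgetful morphism via the standard line bundle contraction $q$ with the stable pair descending uniquely (the paper phrases this as $(F,\sigma) \cong (q^{\ast}\overline{F}, q^{\ast}\overline{\sigma})$ for a unique pair on the contracted family), and then establish the key isomorphism $f = \tilde{q} \circ p_{r+1} : P_{\Gamma,r+1}(X,D) \xrightarrow{\cong} \CX_r$ with $\pi_{r+1} = \pi \circ f$, deducing properness and flatness by induction through the universal target, exactly as the paper does. The only cosmetic difference is that you elaborate the sheaf-extension/descent step (via condition (iii)) that the paper treats briefly, noting as it does that the pair plays no role in the pointwise fiber analysis, which reduces to Proposition~\ref{prop:moduli of ordered points}.
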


\begin{proof}
Similar arguments already appeared in \cite[Sec.4.9]{OPLocal} or \cite[Sec.3.2]{MOblomDT}
in the context of rigidification.
To construct the forgetful morphism
one argues precisely as in
the proof of Proposition~\ref{prop:moduli of ordered points}:
Given an $(r+1)$-marked stable pair over a scheme $S$,
\[ (\CX_S \to S, F, \sigma, p_1, \ldots, p_{r+1}), \]
consider the tuple where the $(r+1)$-th marking is forgotten. By using the standard line bundle of \cite{LiWu} twisted by the markings
one obtains then an associated contraction 
\[
q : \CX_S \to \overline{\CX}_S
\]
such that on geometric fibers the map $q$ contracts all bubbles which do not contain any markings and on which the stable pair is $\BG_m$-equivariant. % that is pulled back from $D$.
It follows that on $\CX_S$ we have
\[ (F,\sigma) \cong (q^{\ast} \overline{F}, q^{\ast}( \overline{\sigma}) ) \]
for a unique stable pair $(\overline{F}, \overline{\sigma})$ on $\overline{\CX}_S$. The morphism
\[ \pi_{n+1} : P_{\Gamma,r+1}(X,D) \to P_{\Gamma,r}(X,D) \]
is defined by
\[ (\CX_S \to S, F, \sigma, p_1, \ldots, p_{r+1}) 
\mapsto
( \overline{\CX}_S \to S, \overline{F}, \overline{\sigma}, q \circ p_1, \ldots, q \circ p_{r}). \]

Consider the diagram
\[
\begin{tikzcd}
\CX_{r+1} \ar{r}{\tilde{q}} \ar{d} & \CX_{r} \ar{d} \\
P_{\Gamma,r+1}(X,D) \ar{r}{\pi_{r+1}} & P_{\Gamma,r}(X,D).
\end{tikzcd}
\]
Then as in the proof of Proposition~\ref{prop:moduli of ordered points} one argues that
\[ f := \tilde{q} \circ p_{n+1} : P_{\Gamma,r+1}(X,D) \to \CX_r \]
is an isomorphism (it is an isomorphism on closed points
and the fiber of closed points are closed; for the latter the stable pair does not play a role and hence this can be argued as in Proposition~\ref{prop:moduli of ordered points}).
Since $\CX_r \to P_{\Gamma,r}(X,D)$ is flat, proper and representable
(as a base change of $\CX \to \CT$) we conclude Claim (ii) directly,
and Claim (i) by induction on $r$.
\end{proof}

\begin{example}
In case that $D$ is empty, we have $P_{\Gamma,r}(X,D) = P_{\Gamma}(X) \times X^r$.
\end{example}
\begin{rmk}
Contrary to the moduli space of $r$-marked stable maps here we do not require the markings to be distinct here.
The intuitive reason is that they correspond on the Gromov-Witten side to the images of the marked points in the target,
which also do not have to be distinct.\qed
\end{rmk} 

\begin{rmk}
Consider the classifying morphism of the universal target over $P_{\Gamma,r}(X,D)$,
\[ f_r : P_{\Gamma,r}(X,D) \to \CT. \]
Note that these maps usually do \emph{not} commute with the morphisms forgetting the $i$-th marking, that is
$f_{r -1} \circ \pi_i$ and $f_r$ are not isomorphic in general.
The reason is that the forgetful morphism is defined by contracting unstable components which changes the universal target (and hence the map $f_r$).
The natural morphism
\[
\pi_{1} \times \cdots \pi_r : P_{\Gamma,r}(X,D) \to P_{\Gamma}(X,D) \times_{\CT} \underbrace{\CX \times_{\CT} \ldots \times_{\CT} \CX}_{r \text{ times }} \]
is almost always not an isomorphism.
This is similar to the case of Gromov-Witten theory. The morphism to the Artin stacks of prestable curves do not commute with the forgetful morphisms on $\Mbar_{g,n}(X,\beta)$ since we contract  curves. \qed
\end{rmk}

By \cite{LiWu} the moduli space $P_{\Gamma}(X,D)$ carries a virtual fundamental class $[ P_{\Gamma}(X,D) ]^{\vir}$ of dimension $\int_{\beta} c_1(T_X)$.
%By the proof above the forgetful functors
%\[ \pi_i : P_{\Gamma,r+1}(X,D) \to P_{\Gamma,r}(X,D) \]
%are flat of relative dimension $3$.  
Let $\pi : P_{\Gamma,r}(X,D) \to P_{\Gamma}(X,D)$ be the map forgetting all the markings and contracting unstable components,
which by Proposition~\ref{prop:moduli of r marked stable pairs} are flat and proper.
We define the virtual class of $P_{\Gamma,r}(X,D)$ by
\[
[ P_{\Gamma,r}(X,D) ]^{\vir} = \pi^{\ast} [ P_{\Gamma}(X,D) ]^{\vir} \in A_{\ast}( P_{\Gamma,r}(X,D) )
\]
which is of dimension $\int_{\beta} c_1(T_X) + 3r$.

\begin{rmk}
This is similar to the case of Gromov-Witten theory, where for the forgetful morphism $\pi : \Mbar_{g,n}(X,\beta) \to \Mbar_{g}(X,\beta)$ we have that
\[ [ \Mbar_{g,n}(X,\beta) ]^{\vir} = \pi^{\ast} [\Mbar_g(X,\beta) ]^{\vir}. \]
\end{rmk}

\subsection{Marked relative Invariants} \label{subsec:relative descendents}
For every $r \geq 0$ we have an evaluation map at the relative divisor
\[ \ev_{D}^{\rel} : P_{\Gamma,r}(X,D) \to D^{[\ell]}, \quad \ell = D \cdot \beta \]
defined by sending a point $(X[k], F, \sigma, p_1, \ldots, p_r)$ to the intersection $\CO_{D[k]} \xrightarrow{\sigma|_{D[k]}} F|_{D[k]}$
viewed as an element in the Hilbert scheme of $\ell$ points on $D$.

On the moduli spaces of $r$-marked stable pairs we also have interior evaluation maps:
\[ \ev : P_{\Gamma,r}(X,D) \to (X,D)^r \]
whose image of a point $(X[k], F, s, p_1, \ldots, p_r)$ is obtained by forgetting $(F,s)$ and then contracting all unstable bubbles (precisely those $\p$ that do not contain any marking).
Let 
\[ \CX_r \to P_{\Gamma,r}(X,D) \]
be the universal target over the moduli space,
and let 
\[ p_1, \ldots, p_r : P_{\Gamma,r}(X,D) \to \CX_r. \]
be the universal sections, see  \eqref{universal family P Gamma r}.
Let $(\BF,\sigma)$ be the universal stable pair on $\CX_r$.

For any $\lambda \in H^{\ast}(D^{[\ell]})$ and $\gamma \in H^{\ast}((X,D)^r)$
and integers $k_1, \ldots, k_r \in \BZ$ we define:

\begin{defn} 
The marked relative Pandharipande-Thomas invariants are defined by
\begin{multline*}
\left\langle \, \lambda \middle| \tau_{k_1} \cdots \tau_{k_r}(\gamma) \right\rangle^{(X,D), \PT, \textup{marked}}_{\Gamma} \\
=
\int_{ [ P_{\Gamma,r}(X,D) ]^{\vir} }
\ev^{\mathrm{rel} \ast}_D(\lambda) \cdot
p_1^{\ast}(\ch_{2+k_1}(\BF)) \cdots p_{r}^{\ast}( \ch_{2+k_r}(\BF)) \cdot \ev^{\ast}(\gamma).
\end{multline*}
\end{defn}

\vspace{5pt}

Consider a partition
\[ \{ 1 , \ldots, r \} = I_1 \sqcup \ldots \sqcup I_s \]
and write $I_{j} = (i_{j,b})_{b=1}^{\ell(I_j)}$. If $\gamma$ is of the form
\[ \gamma = \prod_{j} \pi_{I_j}^{\ast}(\gamma_j) \]
for some classes $\gamma_j \in H^{\ast}( (X,D)^{I_j} )$ then 
%for a partition $\{ I_j \}_{j=1}^{s}$ of the index set $\{ 1, \ldots, r \}$, then 
we will also write
\[
\tau_{k_1} \cdots \tau_{k_r}(\gamma) = 
%\prod_{j} \tau_{k_{i_{j,1}}} \cdots \tau_{k_{i_{j,\ell(I_j)}}}( \gamma_j )
\left( \tau_{k_{i_{1,1}}} \cdots \tau_{k_{i_{1,\ell(I_1)}}}( \gamma_1 ) \right) \cdots \left( \tau_{k_{i_{s,1}}} \cdots \tau_{k_{i_{s,\ell(I_s)}}}( \gamma_s ) \right).
\]
For example, if $\gamma = \pi_1^{\ast}(\gamma_1) \cdots \pi_r^{\ast}(\gamma_r) \in H^{\ast}((X,D)^r)$ for some $\gamma_i \in H^{\ast}(X)$, we write
\[ \tau_{k_1} \cdots \tau_{k_r}(\gamma) = \tau_{k_1}(\gamma_1) \cdots \tau_{k_r}(\gamma_r). \]
%\[ \mathrm{pr}_I

\subsection{Comparision with standard definition} \label{sec:comparision with std definition}
Recall the usual definition of descendent invariants in relative Pandharipande-Thomas theory.
Let $P_{\Gamma}(X,D)$ be the moduli space of (unmarked) stable pairs on $(X,D)$,
and let $(\BF,s)$ be the universal stable pair on $P_{\Gamma}(X,D) \times_{\CT} \CX$.
Consider the diagram
\[
\begin{tikzcd}
& P_{\Gamma}(X,D) \times_{\CT} \CX \ar{dr}{\pi_X} \ar{dl}[swap]{\rho} &  \\
P_{\Gamma}(X,D) & & X
\end{tikzcd}
\]
where $\pi_X$ is the projection to $\CX$ followed by the universal contraction morphism $\CX \to X$.

Given $k \geq 0$ and a class $\gamma \in H^{\ast}(X)$ we define the descendents
\[ \tau_{k}(\gamma) = \rho_{\ast}( \ch_{2+k}(\BF ) \cup \pi_X^{\ast}(\gamma) ) \in H^{\ast}(P_{\Gamma}(X,D)), \]
viewed here in operational cohomology.
%Usual Pandharipande-Thomas invariants are defined by integrating over these descendents.

Let $\lambda \in H^{\ast}(D^{[\beta \cdot D]})$ be a cohomology class, and
let $k_i \geq 0$ and $\gamma_i \in H^{\ast}(X)$.
\begin{defn} 
The descendent Pandharipande-Thomas invariants are defined by
\[
\left\langle \, \lambda \middle| \tau_{k_1}(\gamma_1) \cdots \tau_{k_r}(\gamma_r) \right\rangle^{(X,D), \PT, \textup{std.-desc.}}_{\Gamma} \\
=
\int_{ [ P_{\Gamma}(X,D) ]^{\vir} }
(\ev_D^{\rel})^{\ast}(\lambda) \cup \prod_{i=1}^{r} \tau_{k_i}(\gamma_i).
\]
\end{defn}

We have the following comparision result which says that for
\[ \gamma = \pi_1^{\ast}(\gamma_1) \cdots \pi_r^{\ast}(\gamma_r) \in H^{\ast}((X,D)^r) \]
for $\pi_i : (X,D)^r \to (X,D) \cong X$ the forgetful morphism to the $i$-th point,
the marked relative invariants specialize to the usual invariants:

\begin{prop} \label{prop: comparision} 
If $\gamma = \pi_1^{\ast}(\gamma_1) \cdots \pi_r^{\ast}(\gamma_r) \in H^{\ast}((X,D)^r)$, then
\[
\left\langle \, \lambda\, \middle|\, \tau_{k_1} \cdots \tau_{k_r}( \gamma ) \right\rangle^{(X,D), \PT, \textup{marked}}_{\Gamma} 
=
\left\langle \, \lambda\, \middle| \, \tau_{k_1}(\gamma_1) \cdots \tau_{k_r}(\gamma_r) \right\rangle^{(X,D), \PT, \textup{std.-desc.}}_{\Gamma}
\]
\end{prop}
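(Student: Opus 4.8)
The plan is to push the marked integral forward along the total forgetful morphism $\pi : P_{\Gamma,r}(X,D) \to P_{\Gamma}(X,D)$ and to match the resulting pushforward of the integrand with the product of standard descendents $\prod_i \tau_{k_i}(\gamma_i)$. The whole argument is powered by the fibration structure of Proposition~\ref{prop:moduli of r marked stable pairs} together with the definition $[P_{\Gamma,r}(X,D)]^{\vir} = \pi^{\ast}[P_{\Gamma}(X,D)]^{\vir}$.

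First I would rewrite the interior insertion in terms of the universal sections. Since $\gamma = \prod_i \pi_i^{\ast}(\gamma_i)$ with $\gamma_i \in H^{\ast}(X)$, and since the two maps $\pi_i \circ \ev$ and $\pi_X \circ p_i$ from $P_{\Gamma,r}(X,D)$ to $X$ both send a marked pair to the image in $X$ of its $i$-th marked point (the interior evaluation first contracts the marking-free bubbles, but the final projection to $X$ collapses everything anyway), these compositions coincide. Hence $\ev^{\ast}(\gamma) = \prod_i p_i^{\ast}(\pi_X^{\ast}\gamma_i)$, and grouping with the descendent weights the marked integrand becomes $(\ev^{\rel}_D)^{\ast}(\lambda) \cdot \prod_i p_i^{\ast}\big(\ch_{2+k_i}(\BF) \cdot \pi_X^{\ast}(\gamma_i)\big)$. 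Next I would record that the relative evaluation on $P_{\Gamma,r}(X,D)$ is the composition of $\pi$ with the relative evaluation $\ev^{\rel}_D$ on $P_{\Gamma}(X,D)$, since this data depends only on the stable pair restricted to the relative divisor and is unaffected by forgetting a marking and contracting a marking-free semistable bubble. With $(\ev^{\rel}_D)^{\ast}(\lambda)$ thus pulled back from $P_{\Gamma}(X,D)$, the projection formula for the flat, proper morphism $\pi$ turns the marked invariant into $\int_{[P_{\Gamma}(X,D)]^{\vir}} (\ev^{\rel}_D)^{\ast}(\lambda) \cdot \pi_{\ast}\big(\prod_i p_i^{\ast}(\ch_{2+k_i}(\BF)\cdot \pi_X^{\ast}\gamma_i)\big)$.

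The heart of the argument is the evaluation of this pushforward, which I would carry out by induction on $r$, peeling off the last marking. By Proposition~\ref{prop:moduli of r marked stable pairs} the last forgetful morphism $\pi_r : P_{\Gamma,r}(X,D) \to P_{\Gamma,r-1}(X,D)$ is identified, via the isomorphism $f$, with the universal target $\CX_{r-1} \to P_{\Gamma,r-1}(X,D)$. Under this identification the first $r-1$ sections are pulled back from $P_{\Gamma,r-1}(X,D)$ and factor out of $\pi_{r\,\ast}$ by the projection formula, while the factor carrying the $r$-th section becomes precisely $\ch_{2+k_r}(\BF) \cup \pi_X^{\ast}(\gamma_r)$ on $\CX_{r-1}$, whose fiberwise pushforward is by definition the descendent $\tau_{k_r}(\gamma_r)$ on $P_{\Gamma,r-1}(X,D)$. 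Iterating produces $\prod_i \tau_{k_i}(\gamma_i)$, which is exactly the integrand of the standard-descendent invariant, proving the claim.

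The main obstacle is the bookkeeping of the universal stable pair across the tower of universal targets: one must verify that, under $P_{\Gamma,r}(X,D) \cong \CX_{r-1}$, the restriction $p_r^{\ast}\ch_{2+k_r}(\BF)$ of the universal sheaf on the $r$-marked target agrees with $\ch_{2+k_r}$ of the universal sheaf on $\CX_{r-1}$. This rests on the fact that the universal sheaf on a marked target is the pullback of the universal sheaf on $P_{\Gamma}(X,D)$ along the forget-and-contract map, and that its Chern character is insensitive to the contracted bubbles, since the sheaf is pulled back (hence trivial along, and supported away from) any bubble created merely to accommodate a marking at the relative divisor or the singular locus. Confirming this compatibility, together with the factorization of $\ev^{\rel}_D$ through $\pi$, is the only point requiring a local analysis near the relative divisor; everything else reduces to the projection formula and the fibration structure already established.
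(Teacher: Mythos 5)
Your proposal is correct and is essentially the paper's own argument: both hinge on the identification of $P_{\Gamma,r}(X,D)$ with the universal target $\CX_{r-1}$ from Proposition~\ref{prop:moduli of r marked stable pairs}, the definition $[P_{\Gamma,r}(X,D)]^{\vir}=\pi^{\ast}[P_{\Gamma}(X,D)]^{\vir}$, the fact that the universal stable pair is pulled back along the forget-and-contract map (so that $p_i^{\ast}\ch_{2+k_i}(\BF_r)$ and $\ev^{\ast}\pi_i^{\ast}(\gamma_i)$ are pulled back compatibly), and the projection formula. The only organizational difference is that the paper pushes forward in a single step along $q\colon P_{\Gamma,r}(X,D)\to \CX_0\times_{P_0}\cdots\times_{P_0}\CX_0$ and splits the pushforward into the product $\prod_i\tau_{k_i}(\gamma_i)$ at once, whereas you peel off one marking at a time, which is equivalent but requires the (easily verified) extra remark that the descendent class produced at each stage is itself compatible with pullback along the remaining forgetful morphisms.
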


By the Proposition the marked relative invariants generalize the classical descendent invariants.
Hence we will drop "marked" and "std.-desc." from the notation.

\begin{proof}
Let $P_r = P_{\Gamma,r}(X,D)$ with universal target $\CX_r \to P_r$.
Consider the diagram
\[
\begin{tikzcd}
P_r \ar{rr}{q} \ar[swap]{dr}{\pi}&  & \CX_0 \times_{P_{0}} \cdots \times_{P_0} \CX_0 \ar{dl}{\rho} \\
& P_{0}. &
\end{tikzcd}
\]
where the map to the $i$-th factor is given by $P_r \xrightarrow{p_i} \CX_r \xrightarrow{\tilde{q}} \CX_0$.
By construction we have that $[P_r]^{\vir}=\pi^{\ast}[P_0]^{\vir}$ and hence that
\[ q_{\ast} [ P_r ]^{\vir} = \rho^{\ast} [P_0]^{\vir}. \]

We show that also the integrand appearing in the marked-relative invariants is pulled-back by $q$.
Clearly, $(\ev_{D}^{\rel})^{\ast}(\lambda)$ is pulled back from $P_0$, and similarly, we have
\[ \ev^{\ast}( \pi_i^{\ast}(\gamma_i)) = \tilde{\pi}^{\ast}\big( \pr_i^{\ast} \pi_X^{\ast}(\gamma_i) \big) \]
where $\pr_i : \CX_0 \times_{P_{0}} \cdots \times_{P_0} \CX_0 \to \CX_0$ is the projection to the $i$-th factor.
To deal with the Chern characters of the universal stable pair,
consider the commutative diagram
\[
\begin{tikzcd}
\CX_r \ar{r}{\tilde{q}} & \CX_0 \\
P_r \ar{u}{p_i} \ar{r}{\tilde{\pi}} & \CX_0 \times_{P_0} \cdots \times_{P_0} \CX_0 \ar{u}{\pr_i}.
\end{tikzcd}
\]
By construction we have that the universal stable pair $(\BF_r,\sigma_r)$ on $\CX_r \to P_r$ is
given by
\[ (\BF_r,\sigma_r) = \tilde{q}^{\ast}(\BF,\sigma). \]
Hence we find that
\[ p_i^{\ast}( \ch_{2+k_i}(\BF_r)) = \tilde{\pi}^{\ast} \pr_i^{\ast}( \ch_{2+k_i}(\BF) ). \]

In conclusion we find that:
\begin{align*}
& \left\langle \, \lambda\, \middle|\, \tau_{k_1} \cdots \tau_{k_r}( \gamma ) \right\rangle^{(X,D), \PT, \textup{marked}}_{\Gamma} \\
= & \int_{ [ P_{\Gamma,r}(X,D) ]^{\vir} }
\ev^{\mathrm{rel} \ast}_D(\lambda) \cdot
p_1^{\ast}(\ch_{k_1}(\BF)) \cdots p_{r}^{\ast}( \ch_{k_r}(\BF)) \cdot \ev^{\ast}(\gamma)  \\
=& 
\int_{ \CX_0 \times_{P_0} \cdots \times_{P_0} \CX_0}
\rho^{\ast}\left( [P_0]^{\vir}  (\ev_{D}^{\rel})^{\ast}(\lambda) \right)
\prod_{i=1}^{r} \pr_i^{\ast}( \ch_{2+k_i}(\BF) \pi_X^{\ast}(\gamma_i) )  \\
=& \int_{[P_0]^{\vir}} (\ev_{D}^{\rel})^{\ast}(\lambda) \rho_{r \ast}\left( \prod_{i=1}^{r} \pr_i^{\ast}( \ch_{2+k_i}(\BF) \pi_X^{\ast}(\gamma_i) ) \right) \\
=& \int_{[P_0]^{\vir}} (\ev_{D}^{\rel})^{\ast}(\lambda) \prod_{i=1}^{r} \tau_{k_i}(\gamma_i)
%= & \left\langle \, \lambda\, \middle| \, \tau_{k_1}(\gamma_1) \cdots \tau_{k_r}(\gamma_r) \right\rangle^{(X,D), \PT, \textup{std.-desc.}}_{\Gamma}
\end{align*}
which completes the proof.
%where the last step follows by a push-pull argument.
\end{proof}

\subsection{Degeneration formulas} \label{subsec:degeneration formula PT}
Consider a simple degeneration $\epsilon : W \to B$ which degenerates a smooth fiber $X$ into the union $X_1 \cup_D X_2$
where $D$ is a smooth connected divisor,
\[ X \rightsquigarrow X_1 \cup_D X_2. \]
Let $\beta \in H_2(W,\BZ)$ be a curve class, and write
\[ \iota : X \to W, \quad \iota_{1} : X_1 \to W, \quad \iota_2 : X_2 \to W \]
for the inclusions.

Recall the moduli of $r$-ordered points $\epsilon_r : (\CW/B)^r \to B$,
and fix a class
\[ \gamma \in H^{\ast}( (\CW/B)^r ). \]
Since for $t \in B \setminus \{ 0 \}$ we have
\[ \epsilon_r^{-1}(t) = W_t^r \]
the class $\gamma$ induces a class on $W_t^r$ by restriction.
Over the origin we have the gluing map
\[ \xi = \bigsqcup_I \xi_I \colon \bigsqcup_{I} (X_1, D)^{I} \times (X_2, D)^J \to (\CW/B)^r_0. \]
where $I$ runs over all subsets of $\{1, \ldots, r\}$ with complement $J$.
The K\"unneth decomposition of the pullback to the $I$-th component will be denoted by
\[ \xi_I^{\ast}(\gamma) = \sum_{\ell} \delta_{I,\ell} \otimes \delta'_{I,\ell} 
\quad \in H^{\ast}((X_1, D)^{I} \times (X_2, D)^J). \]

\begin{prop}[Degeneration formula for marked relative invariants] 
\label{prop:degeneration formula PT}
For any $\beta \in H_2(W,\BZ)$,
\begin{multline*}
\sum_{\substack{ \beta' \in H_2(X,\BZ) \\ \iota_{\ast} \beta' = \beta}}
\left\langle \, \tau_{k_1} \cdots \tau_{k_r}( \gamma|_{X^r} ) \right\rangle^{X, \PT}_{n,\beta'} = \\
%\sum_{ \{ 1, \ldots, r \} = I \sqcup J }
\sum_{\substack{ \beta_i \in H_2(X_i,\BZ) \\ \iota_{1 \ast} \beta_1 + \iota_{2 \ast} \beta_2 = \beta \\ \beta_1 \cdot D = \beta_2 \cdot D }}
\sum_{\substack{ n = n_1 + n_2 - (D \cdot \beta_1) \\  \{ 1, \ldots, r \} = I \sqcup J \\ \ell }}
\left\langle \Delta_1 \middle| \left( {\textstyle {\prod_{i \in I}} \tau_{k_i} }\right) (\delta_{I,\ell}) \right\rangle^{(X_1,D), \PT}_{n_1,\beta_1} 
\left\langle \Delta_2 \middle| \left( {\textstyle {\prod_{i \in J}} \tau_{k_j} } \right) (\delta'_{I,\ell}) \right\rangle^{(X_2,D), \PT}_{n_2,\beta_2}
\end{multline*}
where (following our convention of Section~\ref{sec:convention})
$\Delta_1, \Delta_2$ stands for summing over the K\"unneth decomposition of the class of the diagonal $\Delta_{D^{[\ell]}} \subset (D^{[\ell]})^2$
with $\ell = \beta_1 \cdot D$.
\end{prop}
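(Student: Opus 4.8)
The plan is to deduce the formula from the Li--Wu degeneration formula \cite{LiWu} for the unmarked relative theory, together with the geometry of the moduli of ordered points $(\CW/B)^r$ developed in Section~\ref{sec:relative geometries}. First I would introduce the relative marked moduli space $P_{\Gamma,r}(\CW/B) \to B$ of $r$-marked relative stable pairs on the fibers of the universal expansion $\CW \to \CT_{\epsilon}$, equipped with its interior evaluation map to $(\CW/B)^r$, its relative evaluation to the Hilbert scheme of the singular divisor, and its universal stable pair $(\BF,\sigma)$. Exactly as in Section~\ref{subsec:moduli space of r-marked stable pairs}, its virtual class is the pullback along the markings-forgetting morphism $\pi$ of the virtual class of the unmarked space $P_\Gamma(\CW/B)$, so it inherits the deformation invariance and splitting behaviour of the latter.

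Second, I would use deformation invariance over $B$. Integrating the full integrand $\ev^{*}(\gamma)\cdot\prod_i p_i^{*}(\ch_{2+k_i}(\BF))$ against $[P_{\Gamma,r}(\CW/B)]^{\vir}$ over the fiber at a general point $t\neq 0$ yields the left-hand side, the sum over $\beta'$ with $\iota_{*}\beta'=\beta$ appearing because all such classes specialize to $\beta$ and $\gamma|_{W_t^r}=\gamma|_{X^r}$, while the marked Chern-character insertions are defined compatibly over $B$. By invariance this equals the integral of the same class over the central fiber, which I then decompose.

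Third, I would apply the Li--Wu formula to the unmarked virtual class: over $t=0$ the class $[P_\Gamma(\CW/B)]^{\vir}$ decomposes as a sum over curve-class splittings $\beta=\iota_{1*}\beta_1+\iota_{2*}\beta_2$ with $\beta_1\cdot D=\beta_2\cdot D$ and Euler-characteristic splittings $n=n_1+n_2-(D\cdot\beta_1)$, pushed forward from the fiber product of $P_{\Gamma_1}(X_1,D)$ and $P_{\Gamma_2}(X_2,D)$ over the Hilbert scheme $D^{[\ell]}$, $\ell=\beta_1\cdot D$. Rewriting this fiber product via the K\"unneth decomposition of the diagonal $\Delta_{D^{[\ell]}}$ produces the paired relative insertions $\Delta_1,\Delta_2$ of the statement, and pulling back along $\pi$ converts the unmarked virtual classes into the marked ones. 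It remains to match the interior insertions.

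The main step, and the principal obstacle, is to track the remaining integrand under the gluing morphism $\xi=\bigsqcup_I \xi_I$ of Section~\ref{sec:relative geometries}. A marked point on the central target $W_0[k]=X_1\cup\p_k\cup X_2$ cannot meet the singular locus, so it lies on exactly one of the two sides, and the resulting distribution of markings is precisely the decomposition $\{1,\dots,r\}=I\sqcup J$ indexing $\xi$. Under $\xi_I$ the interior evaluation factors through $(X_1,D)^{I}\times(X_2,D)^{J}$, so $\xi_I^{*}\ev^{*}(\gamma)$ is computed by the K\"unneth decomposition $\xi_I^{*}(\gamma)=\sum_\ell \delta_{I,\ell}\otimes\delta'_{I,\ell}$; moreover the universal stable pair on the glued target restricts to the universal pairs on the two pieces, so each $p_i^{*}(\ch_{2+k_i}(\BF))$ restricts to the corresponding Chern-character insertion on the side carrying the $i$-th marking. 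Assembling these identifications factors the central-fiber integral into the product of marked relative invariants on $(X_1,D)$ and $(X_2,D)$, which is the right-hand side. The delicate point is to verify rigorously that the interior evaluation map and the universal stable pair are genuinely compatible with gluing---so that markings and Chern characters split cleanly along $I\sqcup J$---and that the finite gluing morphism carries the decomposed virtual class to the product class with unit multiplicity; both follow from the \'etale-local description of stable pairs and ordered points near the singular divisor in \cite{Li1,LiWu}, the diagonal matching at $D^{[\ell]}$ being the same Hilbert-scheme gluing already present in the unmarked formula.
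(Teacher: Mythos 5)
Your proposal is correct and follows essentially the same route as the paper's proof: construct $P_{\Gamma,r}(\CW/B)\to B$ with virtual class pulled back via the markings-forgetting morphism, push the full integrand forward to $B$ and compare restrictions at $t\neq 0$ and $0$, split the central fiber via the Li--Wu formula and the gluing morphism $\xi=\bigsqcup_I\xi_I$, and check that the interior evaluation and the Chern characters of the universal pair restrict compatibly along $I\sqcup J$. The compatibilities you flag as the delicate point are exactly what the paper records in its commutative diagrams and the identity $\xi^{*}(p_i^{*}\ch_{2+k}(\BF_r))=\pr_1^{*}(p_i^{*}\ch_{2+k}(\BF_1))$, so no substantive difference remains.
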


\begin{proof} 
The argument is a straightforward generalization of \cite[Sec.6]{LiWu}.
For $\Gamma = (n,\beta)$,
let $\widetilde{\epsilon} : P_{\Gamma,r}(\CW/B) \to B$ be the moduli space of $r$-marked stable pairs on the degeneration $\epsilon : W \to B$,
which over $t \neq 0$ simply parametrizes $r$-marked stable pairs on $W_t$,
and over $0$ parametrizes relative stable pairs on expansions $W_0[k]$ together with $r$ marked points $p_1, \ldots, p_r \in W_0[k]$
not lying on the singular locus.
In particular, for $t \neq 0$ we have
\[ \widetilde{\epsilon}^{-1}(t) = \bigsqcup_{ \substack{ \beta' \in H_2(X_t,\BZ) \\ \iota_{t \ast} \beta' = \beta}} P_{n,\beta',r}(W_t), \]
and over $0 \in B$ we have a natural gluing morphism
\begin{equation} \label{xi def}
\xi: \bigsqcup_{\substack{ \beta_i \in H_2(X_i,\BZ) \\ \iota_{1 \ast} \beta_1 + \iota_{2 \ast} \beta_2 = \beta \\ \beta_1 \cdot D = \beta_2 \cdot D \\ n = n_1 + n_2 - (D \cdot \beta_1)\\ \{ 1, \ldots, r \} = I \sqcup J }}
P_{n_1,\beta_1,I}(X_1, D) \times_{D^{[\ell]}} P_{n_2, \beta_2,J}(X_2, D) \to P_{\Gamma,r}(\CW/B)_0.
\end{equation}

By pulling back the virtual class and splitting formulas of \cite{LiWu} via the forgetful morphism $P_{\Gamma,r}(\CW/B) \to P_{\Gamma,0}(\CW/B)$,
the moduli space $P_{\Gamma,r}(\CW/B)$ carries a virtual fundamental class $[ P_{\Gamma,r}(\CW/B) ]^{\vir}$ with the property
that for the inclusion $j_t : \{ t \} \to B$ we have over $t \neq 0$ the intersection
\[
j_t^{!} [ P_{\Gamma}(\CW/B) ]^{\vir} = \sum_{ \iota_{t \ast} \beta' = \beta} [ P_{n,\beta',r}(X_t) ]^{\vir},
\]
and over $t \in B$ we have
\begin{equation} \label{3wsef}
j_0^{!} [ P_{\Gamma,r}(\CW/B) ]^{\vir} =
\sum 
%\sum_{\substack{ \beta_i \in H_2(X_i,\BZ) \\ \iota_{1 \ast} \beta_1 + \iota_{2 \ast} \beta_2 = \beta \\ \beta_1 \cdot D = \beta_2 \cdot D \\ n = n_1 + n_2 - (D \cdot \beta_1)\\ \{ 1, \ldots, r \} = I \sqcup J }}
\xi_{I \ast} \Delta_{D^{[\ell]}}^{!}\Big( [ P_{n_1,\beta_1,I}(X_1, D) ]^{\vir} \times [P_{n_2, \beta_2,J}(X_2, D)]^{\vir} \Big),
\end{equation}
where the sum is over the same data as in \eqref{xi def} and we 
%where we let $j_t : \{ t \} \to B$ be the inclusion and 
used the diagram:
\[
\begin{tikzcd}
P_{n_1,\beta_1,I}(X_1, D) \times_{D^{[\ell]}} P_{n_2, \beta_2,J}(X_2, D) \ar{r} \ar{d} & P_{n_1,\beta_1,I}(X_1, D) \times P_{n_2, \beta_2,J}(X_2, D) \ar{d}{\ev^{\rel} \times \ev^{\rel}} \\
D^{[\ell]} \ar{r}{\Delta_{D^{[\ell]}}}  & D^{[\ell]} \times D^{[\ell]}.
\end{tikzcd}
\]

We also have an evaluation morphism $\ev : P_{\Gamma,r}(\CW/B) \to (\CW/B)^r$ to the moduli space of $r$ ordered points on the fibers $\CW \to B$
which fits into the commutative diagram
\begin{equation} \label{ev diagram}
\begin{tikzcd}
P_{n_1,\beta_1,I}(X_1, D) \times_{D^{[\ell]}} P_{n_2, \beta_2,J}(X_2, D) \ar{r}{\xi} \ar{d}{\ev_I \times \ev_J} & P_{\Gamma,r}(\CW/B)_0 \ar{d}{\ev} \\
(X_1,D)^{I} \times (X_2, D)^{J} \ar{r}{\xi_I} & \left( (\CW/B)^{r} \right)_0.
\end{tikzcd}
\end{equation}

It remains to consider the Chern characters of the universal stable pairs. Let
\[ \CX_r \to P_{\Gamma,r}(\CW/B), \quad \CX_1 \to P_{n_1,\beta_1,I}(X_1,D), \quad \CX_2 \to P_{n_2,\beta_2,J}(X_2,D) \]
be the universal targets and consider the commutative diagram:
\[
\begin{tikzcd}
\pr_1^{\ast} \CX_1 \sqcup \pr_2^{\ast}(\CX_2) \ar{d} \ar{r}{\nu} & \CX_r \ar{d} \\
P_{n_1,\beta_1,I}(X_1, D) \times_{D^{[\ell]}} P_{n_2, \beta_2,J}(X_2, D) \ar{r}{\xi} & P_{\Gamma,r}(\CW/B).
\end{tikzcd}
\]
where $\nu$ is induced by the natural inclusion $\pr_1^{\ast}(\CX_1) \hookrightarrow \xi^{\ast}(\CX_r)$.
Then for $i \in I$ we have
\[ p_i \circ \xi = \nu \circ \big( (p_i \circ \pr_1) \times \pr_2 \big) \]
and since $\nu^{\ast}(\BF_r)|_{\pr_1^{\ast}\CX_1} = \pr_1^{\ast}(\BF_1)$ (with $\BF_r, \BF_1$ the universal sheafs on $\CX_r, \CX_1$)
we see that
\begin{equation} \xi^{\ast}( p_i^{\ast} \ch_{2+k}(\BF_r) ) = \pr_1^{\ast}( p_i^{\ast}( \ch_{2+k}(\BF_1) )). \label{chF pullback} \end{equation}
The case $j \in J$ is similar.

Consider now the class
\[
\widetilde{\epsilon}\left( 
p_1^{\ast}(\ch_{k_1}(\BF)) \cdots p_{r}^{\ast}( \ch_{k_r}(\BF)) \cdot \ev^{\ast}(\gamma)
\cap [ P_{\Gamma,r}(\CW/B) ]^{\vir}
\right) \in A^{\ast}(B).
\]
Restricting this class over $t \in B$ yields the left hand side of Proposition~\ref{prop:degeneration formula PT}.
Restricting this class over $0 \in B$ and using \eqref{3wsef},
the commutativity of \eqref{ev diagram},
and \eqref{chF pullback}, then yields precisely the right hand side. This completes the proof.
\end{proof}

\subsection{Rubber stable pairs}
Consider the projective bundle,
\[ \p = \p(N_{D/X} \oplus \CO_{D}), \]
and the rubber target
\[ (\p, D_{0,\infty})^{\sim}, \quad D_{0,\infty} = D_0 \sqcup D_{\infty}. \]

An $r$-marked rubber stable pair on $(\p, D_{0,\infty})$ is a tuple $(F,\sigma, p_1, \ldots, p_r)$ where
$F$ is a pure 1-dimensional sheaf on a chain $\p_{\ell}$, the element $\sigma \in H^0(F)$ is a section with zero-dimensional cokernel,
and $p_1, \ldots, p_r \in \p_{\ell}$ are points satisfying the following properties:
\begin{itemize}[itemsep=0pt]
\item $F$ is normal to the relative divisors $D_{0}[\ell]$, $D_{\infty}[\ell]$ and the singular locus,
\item $p_1, \ldots, p_r \in \p_{\ell}$ do not lie on the relative divisor or the singular locus,
\item there exist only finitely many $\varphi \in \BG_m^{\ell}$ acting fiberwise on $\p_{\ell}$ such that
$\varphi^{\ast}(F,s) \cong (F,s)$ and $\varphi(p_i) = p_i$.
\end{itemize}
And we have the rubber condition: Two such tuples $(F,\sigma,p_1, \ldots, p_r)$, $(F', \sigma',p'_1, \ldots, p'_r)$  are considered isomorphic if they differ
by an element $\varphi \in \BG_m^r$.

Given $\Gamma = (n,(\alpha,d))$ for a curve class $\alpha \in H_2(D,\BZ)$, $d \geq 0$, and $n \in \BZ$ we let
\[ P_{\Gamma,r}(\p, D_{0,\infty})^{\sim} \]
be the moduli space of rubber stable pairs $(F,\sigma)$ on $(\p, D_{0,\infty})^{\sim}$
satisfying the numerical conditions
\begin{equation} \chi(F) = n, \quad \pi_{D \ast} \ch_2(F) = \alpha, \quad \ell(F \cap D_{\infty}[\ell]) = d, \label{3sdfYYY} \end{equation} 
where $\pi_D : \p_{\ell} \to D$ is the projection.
In the case $r=0$ we recover the usual moduli space of rubber stable pairs \cite{LiWu}.

Since on $\p$ we have that
\[ D_{\infty} = D_{0} + \pi_D^{\ast}( c_1(N_{D/X}) )  \in A^{\ast}(\p) \]
we see that \eqref{3sdfYYY} implies that
%for any rubber stable pair $(\p_{\ell}, F, \sigma, p_i)$ in $P_{\Gamma,r}(\p, D_{0,\infty})^{\sim}$ we have
\[ d_0 := \ell( F \cap D_{0}[\ell] ) = \ell( F \cap D_{\infty} )- \int_{\alpha} c_1(N_{D/X}) = d - \int_{\alpha} c_1(N_{D/X}). \]

The moduli space $P_{\Gamma,r}(\p, D_{0,\infty})^{\sim}$ carries a virtual fundamental class,
and there are relative evaluation maps both over the divisor $D_0$, $D_{\infty}$
\begin{gather*} 
\ev^{\rel}_{D_0} : P_{\Gamma,r}(\p, D_{0,\infty})^{\sim} \to D_0^{[d_0]} \\
\ev^{\rel}_{D_{\infty}} : P_{\Gamma,r}(\p, D_{0,\infty})^{\sim} \to D_{\infty}^{[d]}
\end{gather*} 
given by send $(F,\sigma, p_1, \ldots, p_{\infty})$ on some $\p_{\ell}$ to $F \cap D_0[\ell]$ or $F \cap D_{\infty}[\ell]$ respectively.
We also have interior evaluation maps
\[
\ev : P_{\Gamma,r}(\p, D_{0,\infty})^{\sim} \to (\p, D_{0,\infty})^{r, \sim}
\]
where the moduli space $(\p, D_{0,\infty})^{r, \sim}$ was introduced in Section~\ref{subsec:moduli of points rubber}. Let
\[ \CX_r \to P_{\Gamma,r}(\p, D_{0,\infty})^{\sim} \]
be the universal target over the moduli space, let $(\BF,\sigma)$ be the universal stable pair,
and let 
\[ p_1, \ldots, p_r : P_{\Gamma,r}(\p, D_{0,\infty})^{\sim} \to \CX_r \]
be the universal sections.
(By definition, the tuple $(\BF, \sigma, p_1, \ldots, p_r)$ is canonical only up the action of $\BG_m^r$ but we can always make one choice of it.
The invariant below does not depend on the choice.)
Let $\gamma \in H^{\ast}( (\p, D_{0,\infty})^{r, \sim} )$, as well as $\lambda \in H^{\ast}(S^{[d_0]})$ and $\mu \in H^{\ast}(S^{[d]})$
be cohomology classes for some $a,b \geq 0$.
We define the marked-relative rubber invariants:
\begin{multline*}
\big\langle \, \lambda\, , \mu \, \big|\, \tau_{k_1} \cdots \tau_{k_r}(\gamma) \big\rangle^{(\p,D_{0,\infty}), \PT, \sim}_{\Gamma} \\
=
\int_{ [ P_{\Gamma,r}(\p,D_{0,\infty})^{\sim} ]^{\vir} }
(\ev^{\rel}_{D_0})^{\ast}(\lambda) (\ev^{\rel}_{D_\infty})^{\ast}(\mu) \cdot
p_1^{\ast}(\ch_{k_1}(\BF)) \cdots p_{r}^{\ast}( \ch_{k_r}(\BF)) \cdot \ev^{\ast}(\gamma).
\end{multline*}

\subsection{Rigidification}
We can compare the rubber invariants with the (usual) relative stable pair invariants of the pair $(\p, D_{0,\infty})$.
We will use the identification
\[ H_2(\p, \BZ) = H_2(D,\BZ) \oplus \BZ \]
given by sending $(\alpha,d) \in H_2(D,\BZ) \oplus \BZ$ to
\[ \iota_{D_0 \ast}(\alpha) + d F \]
where $F$ is the class of a fiber of $\pi_D : \p \to D$,
and $\iota_{D_0} : D_0 \to \p$ is the inclusion.
Note that this matches nicely the convention for the rubber stable pairs:
\[ (\alpha,d) \cdot D_{\infty} = d, \quad (\alpha,d) \cdot D_0 = d - \int_{\alpha} c_1(N_{D/X}). \]

Given $\Gamma = (n, (\alpha,d))$ we have a natural morphism
\[ f : P_{\Gamma,r}(\p, D_{0,\infty}) \to P_{\Gamma,r}(\p, D_{0,\infty})^{\sim} \]
that for a relative stable pair $(F,\sigma, p_i)$ on $(\p[k_1, k_2], D_{0,\infty})$ does the following:
If the pair is not fixed by the natural $\BG_m$-action induced from the fiberwise scaling of $\p$,
we simply view it as a stable pair on the chain $\p_{\ell} = \p[k_1, k_2]$ (in the process hence forgetting the distinguished component,
and identifying the stable pair with all its $\BG_m$-translates).
If the pair is $\BG_m$-fixed, 
the stable pair $(F,\sigma, p_i)$ is isomorphic to the pullback of a stable pair $(F', \sigma', p_i')$ under the 
map $c : \p[k_1, k_2] \to \p_{k_1+k_2}$ that contracts the central fiber.
The map then sends $(F,\sigma, p_i)$ to $(F',\sigma',p_i')$.

We have the following comparision of virtual classes:
\begin{prop}[Rigidification] \label{prop:rigidification}
\begin{align*}
[ P_{\Gamma,r}(\p, D_{0,\infty})^{\sim} ]^{\vir} & = f_{\ast} \left( \ev_1^{\ast}(D_0) [ P_{\Gamma,r}(\p, D_{0,\infty})^{\sim} ]^{\vir} \right) \\
& = f_{\ast} \left( \ev_1^{\ast}(D_\infty) [ P_{\Gamma,r}(\p, D_{0,\infty})^{\sim} ]^{\vir} \right)
\end{align*}
\end{prop}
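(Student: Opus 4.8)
The plan is to promote the point-rigidification Lemma~\ref{lemma:point rigidification} to the level of virtual classes. Throughout write $M = P_{\Gamma,r}(\p, D_{0,\infty})$ for the rigid moduli space and $M^{\sim} = P_{\Gamma,r}(\p, D_{0,\infty})^{\sim}$ for the rubber one, so that $f \colon M \to M^{\sim}$ forgets the distinguished main component. The rigid space $M$ carries a $\BG_m$-action scaling the main component of each expansion, the map $f$ is $\BG_m$-invariant, and $\ev_1 \colon M \to (\p, D_{0,\infty})^1 \cong \p$ is $\BG_m$-equivariant for the fibrewise scaling action on $\p$, with $D_0, D_{\infty} \subset \p$ the two fixed sections. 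Note the right-hand sides should read $[P_{\Gamma,r}(\p,D_{0,\infty})]^{\vir}$, i.e. the rigid class.

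First I would record the quotient structure. Exactly as in the Remark following Lemma~\ref{lemma:point rigidification}, the open substack $U \subset M$ of stable pairs with no expansion over $D_{\infty}$ that are not fixed by the $\BG_m$-scaling is a principal $\BG_m$-bundle $f|_U \colon U \to M^{\sim}$ realising $M^{\sim} = U/\BG_m$; in particular $f|_U$ is surjective. The next and central step is the virtual-class comparison: rigidifying adds only the $\BG_m$-scaling deformation of the target, which is free and unobstructed, so on $U$ the rigid obstruction theory is the pullback of the rubber one. This gives $[M]^{\vir}|_U = f^{\ast}[M^{\sim}]^{\vir}$ as a flat pullback along the $\BG_m$-bundle (raising dimension by one). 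I would establish this following the rigidification arguments of \cite{OPLocal, MOblomDT, MP, OkPandVir}.

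With these in hand the computation parallels the point case. Since $D_0$ is a $\BG_m$-fixed section meeting the closure of each generic orbit — a fibre $\p^1$ of $\p \to D$ — in a single point, the class $\ev_1^{\ast}(D_0)$ has fibre-degree one along $f|_U$, so the projection formula yields
\[
f_{\ast}\big( \ev_1^{\ast}(D_0) \cap f^{\ast}[M^{\sim}]^{\vir} \big) = [M^{\sim}]^{\vir}
\]
over the image of $U$. Running the identical argument with the other fixed section $D_{\infty}$ gives the second asserted equality. The two rigidifications agree because $D_0 - D_{\infty} = -\pi_D^{\ast} c_1(N_{D/X})$ restricts to zero on every fibre of $\p \to D$; hence $\ev_1^{\ast}(D_0 - D_{\infty})$ has fibre-degree zero and its $f$-pushforward, after capping with $[M]^{\vir}$, vanishes on the free locus.

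The main obstacle is controlling the non-free locus $M \setminus U$ so that the identity holds globally rather than only over $f(U)$. On the $\BG_m$-fixed part the map $f$ contracts the (necessarily marking-free) main component while $\ev_1$ lands in $D_0$, so this is precisely where the cap with $\ev_1^{\ast}(D_0)$ is non-transverse and could a priori contribute. I expect to dispatch this by the standard rubber calculus: comparing the $D_0$- and $D_{\infty}$-rigidifications and checking that the excess contributions from the fixed and $D_{\infty}$-expanded strata are supported in strictly smaller virtual dimension, hence vanish. This is the only point where the argument genuinely exceeds the smooth statement of Lemma~\ref{lemma:point rigidification}, and it is handled by the techniques of \cite{OPLocal, MP}.
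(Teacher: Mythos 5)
You are right that the right-hand sides of the Proposition should carry the rigid class $[P_{\Gamma,r}(\p,D_{0,\infty})]^{\vir}$ (the paper's statement has a typo), and your torsor picture $M^{\sim}=U/\BG_m$ with $[M]^{\vir}|_U = f^{\ast}[M^{\sim}]^{\vir}$ (in your notation $M = P_{\Gamma,r}(\p,D_{0,\infty})$, $M^{\sim}$ the rubber space) is sound. But the central step of your argument --- the projection formula over $U$ --- fails, and in a way that cannot be repaired by the vanishing you invoke at the end. Since $\ev_1$ is $\BG_m$-equivariant and $D_0$ is a fixed divisor, the locus $\ev_1^{-1}(D_0)\subset M$ is $\BG_m$-invariant: an orbit in $U$ either misses it entirely or is wholly contained in it (namely when the first marking already lies on an expansion over $D_0$). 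So $\ev_1^{\ast}(D_0)$ is never a fibre-degree-one multisection of $f|_U$; a generic orbit meets $\ev_1^{-1}(D_0)$ only at the limit point of its closure, which lies in the strata you set aside. Moreover $f|_U$ is not proper (its fibres are $\BG_m$-orbits), so there is no pushforward along $f|_U$ to which a projection formula could apply: the honest pushforward $f_{\ast}$ from the proper space $M$ concentrates the entire class $\ev_1^{\ast}(D_0)\cap[M]^{\vir}$ exactly on the expanded loci. In other words, all of the content of the Proposition lives on the strata your final paragraph claims contribute nothing.

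The paper's proof makes this precise by virtual localization \cite{GP} for the $\BG_m$-action, following \cite[Sec.1.5.3]{MP}. The fixed loci are configurations expanded over $D_0$ and/or $D_\infty$; the doubly expanded loci drop out for the dimension reason you anticipate, but the loci expanded only over $D_\infty$ are killed not by dimension --- rather because $\ev_1^{\ast}(D_0)$ restricts to zero there, as $D_0\cap D_\infty=\varnothing$. The locus expanded only over $D_0$, far from vanishing, contributes
\[
\frac{t - c_1(N_{D/X})}{t - \psi_{D_{\infty}}} \cap [ P_{\Gamma,r}(\p, D_{0,\infty})^{\sim} ]^{\vir},
\]
where the numerator is the equivariant restriction $[D_0]|_{D_0}$ and the denominator is the node-smoothing weight of the virtual normal bundle; the pushforward of this term yields the rubber class, i.e.\ the whole identity. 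So your plan must be completed by precisely this fixed-locus computation: the estimate that all boundary contributions sit in strictly smaller virtual dimension is wrong for the $D_0$-stratum, and the comparison of $D_0$- versus $D_\infty$-rigidifications on the free locus cannot see the factor $(t - c_1(N_{D/X}))/(t-\psi_{D_\infty})$.
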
 
\begin{proof}
The proof is identical to \cite[Sec.1.5.3]{MP}. We sketch the idea for convenience.
The map $f$ is $\BG_m$-equivariant with respect to the induced action by the fiberwise scaling action of $\BG_m$ on $\p$,
and the trivial action on the target.
One computes the pushforward of $\ev_1^{\ast}(D_0) [ P_{\Gamma,r}(\p, D_{0,\infty})^{\sim} ]^{\vir}$ by virtual localization \cite{GP}
(the other case is similar).
The $\BG_m$-fixed locus parametrizes relative stable pairs with expansions at $D_0$ and/or $D_\infty$.
If we have expansion at both $D_0$ and $D_\infty$, then the virtual dimension of the fixed virtual class
is one less than the dimension of
\[  f_{\ast} \left( \ev_1^{\ast}(D_0) [ P_{\Gamma,r}(\p, D_{0,\infty})^{\sim} ]^{\vir} \right) . \]
Hence it does not contribute.
If there is no expansion over $D_0$, the marking $p_1$ has to lie in the expansion at $D_\infty$,
but then the restriction of $\ev_1^{\ast}(D_0)$ to this component vanishes.
%Using that
%\[ D_{0} = D_{\infty} - \pi_D^{\ast} c_1(N_{D/X}) \]
One finds the remaining contribution coming from the expansion over $D_{0}$ to be
\[ \frac{t - c_1(N_{D/X})}{t - \psi_{D_{\infty}}} \cap [ P_{\Gamma,r}(\p, D_{0,\infty})^{\sim} ]^{\vir} \]
where $\psi_{D_{\infty}}$ is the cotangent line class\footnote{The class $\psi_{D_{\infty}}$ is the pull-back of the first chern class of the line bundle 
$\BL_{D_{\infty}}$ on $\CT^{\rub}_{(\p, D_{0} \sqcup D_{\infty})}$. Note that the line bundle $\BL_{D}$ on $\CT_{(X,D)}$
restricts to $\BL_{D_0} \cong \BL_{D_{\infty}}^{\vee}$ on $\CT^{\rub}_{(\p, D_{0} \sqcup D_{\infty})}$.
The term in the denominator is the contribution from the virtual normal bundle which is here the product of the two normal bundles at the divisor where the stable pairs splits.}
of the relative divisor $D_{\infty}$ in the rubber geometry $(\p, D_{0,\infty})^{\sim}$,
and $t$ is the tangent weight of the $\BG_m$-action on the fiber of $N_{D_0/\p}|_{x}$ for some $x \in D_0$
(in particular, we have the equivariant Chern class $c_1(\CO_{D_0}(D_0)) = c_1(N_{D_0/\p}) + t$).
The result now follows from pushforward.
\end{proof}

\begin{cor} \label{cor:rigidification}
Let $r \geq 1$. For any $\gamma \in (\p,D_{0,\infty})^{r, \sim}$ we have
\[
\big\langle \, \lambda\, , \mu \, \big|\, \tau_{k_1} \cdots \tau_{k_r}(\gamma) \big\rangle^{(\p,D_{0,\infty}), \PT, \sim}_{\Gamma} \\
= 
\big\langle \, \lambda\, , \mu \, \big|\, \tau_{k_1} \cdots \tau_{k_r}\big(\pi_1^{\ast}(D_0) f^{\ast}(\gamma) \big) \big\rangle^{(\p,D_{0,\infty}), \PT}_{\Gamma}
\]
where $f : (\p,D_{0,\infty})^r \to (\p,D_{0,\infty})^{r, \sim}$ is the natural morphism.
\end{cor}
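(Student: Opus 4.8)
The plan is to deduce the corollary directly from the class-level rigidification of Proposition~\ref{prop:rigidification} together with the projection formula, so that the only new work is checking that each insertion is compatible with the morphism $f : P_{\Gamma,r}(\p, D_{0,\infty}) \to P_{\Gamma,r}(\p, D_{0,\infty})^{\sim}$. First I would expand the left-hand side as the integral
\[
\int_{[P_{\Gamma,r}(\p,D_{0,\infty})^{\sim}]^{\vir}} (\ev^{\rel}_{D_0})^{\ast}(\lambda)\,(\ev^{\rel}_{D_\infty})^{\ast}(\mu)\cdot \prod_{i} p_i^{\ast}(\ch_{k_i}(\BF))\cdot \ev^{\ast}(\gamma),
\]
substitute the class identity $[P_{\Gamma,r}(\p,D_{0,\infty})^{\sim}]^{\vir} = f_{\ast}\big(\ev_1^{\ast}(D_0)\,[P_{\Gamma,r}(\p,D_{0,\infty})]^{\vir}\big)$ of Proposition~\ref{prop:rigidification} (the class on the right being the rigid one, since $f$ runs from the rigid to the rubber space), and then apply the projection formula to move the whole integrand through $f^{\ast}$ onto $P_{\Gamma,r}(\p,D_{0,\infty})$, obtaining
\[
\int_{[P_{\Gamma,r}(\p,D_{0,\infty})]^{\vir}} \ev_1^{\ast}(D_0)\cdot f^{\ast}\Big[(\ev^{\rel}_{D_0})^{\ast}(\lambda)(\ev^{\rel}_{D_\infty})^{\ast}(\mu)\prod_i p_i^{\ast}(\ch_{k_i}(\BF))\,\ev^{\ast}(\gamma)\Big].
\]

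The heart of the argument is then the compatibility of the three kinds of insertions with $f$. For the relative insertions, the morphism $f$ only affects the distinguished interior component and leaves the expansions at $D_0$ and $D_{\infty}$, together with the subschemes cut out there, untouched; hence $\ev^{\rel}_{D_0}\circ f = \ev^{\rel}_{D_0}$ and $\ev^{\rel}_{D_\infty}\circ f = \ev^{\rel}_{D_\infty}$, so $f^{\ast}(\ev^{\rel}_{D_0})^{\ast}(\lambda)=(\ev^{\rel}_{D_0})^{\ast}(\lambda)$ and similarly for $\mu$. For the interior insertion I would record the commuting square
\[
\begin{tikzcd}
P_{\Gamma,r}(\p,D_{0,\infty}) \ar{r}{f} \ar{d}{\ev} & P_{\Gamma,r}(\p,D_{0,\infty})^{\sim} \ar{d}{\ev} \\
(\p,D_{0,\infty})^{r} \ar{r}{f} & (\p,D_{0,\infty})^{r,\sim}
\end{tikzcd}
\]
in which both horizontal maps forget or contract the distinguished component in the same way, giving $f^{\ast}\ev^{\ast}(\gamma)=\ev^{\ast}f^{\ast}(\gamma)$. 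Finally, by the very construction of $f$ the universal stable pair on the rubber moduli space pulls back to the universal stable pair on the rigid one (a non-fixed pair is identified with its $\BG_m$-orbit, and a $\BG_m$-fixed pair is pulled back along the contraction of the central component), whence $f^{\ast}p_i^{\ast}(\ch_{k_i}(\BF))=p_i^{\ast}(\ch_{k_i}(\BF))$.

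Combining these, the pulled-back integrand equals $(\ev^{\rel}_{D_0})^{\ast}(\lambda)(\ev^{\rel}_{D_\infty})^{\ast}(\mu)\prod_i p_i^{\ast}(\ch_{k_i}(\BF))\cdot\ev^{\ast}f^{\ast}(\gamma)$, and the leftover factor is $\ev_1^{\ast}(D_0)=\ev^{\ast}\pi_1^{\ast}(D_0)$, where $\pi_1:(\p,D_{0,\infty})^{r}\to(\p,D_{0,\infty})^{1}\cong\p$ is the forgetful map to the first point and $D_0\subset\p$ the zero section. Using $\ev^{\ast}\pi_1^{\ast}(D_0)\cdot\ev^{\ast}f^{\ast}(\gamma)=\ev^{\ast}\big(\pi_1^{\ast}(D_0)\,f^{\ast}(\gamma)\big)$, the integral becomes exactly the defining integral of the rigid marked-relative invariant with interior insertion $\pi_1^{\ast}(D_0)\,f^{\ast}(\gamma)$, which is the right-hand side. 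I expect the only genuinely delicate point to be the compatibility of the universal sheaf and of the interior evaluation on the $\BG_m$-fixed locus, where $f$ contracts the distinguished component: there one must verify that the sheaf, section, and markings really descend along the contraction so that the Chern-character and interior insertions transform as stated. The remainder is the projection formula and a bookkeeping check of degrees, the codimension-one class $\pi_1^{\ast}(D_0)$ accounting for the single $\BG_m$ that lowers the rubber virtual dimension by one relative to the rigid space.
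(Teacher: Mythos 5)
Your proposal is correct and takes essentially the same approach as the paper: the paper's proof consists exactly of the commutative diagram relating $\ev$, $\ev_1$, $f$, and $\pi_1$ together with Proposition~\ref{prop:rigidification} and the projection formula, which you have simply spelled out insertion by insertion (relative evaluations, interior evaluation, and the universal sheaf on the $\BG_m$-fixed locus). You also correctly read Proposition~\ref{prop:rigidification} as $[P_{\Gamma,r}(\p,D_{0,\infty})^{\sim}]^{\vir} = f_{\ast}\big(\ev_1^{\ast}(D_0)\,[P_{\Gamma,r}(\p,D_{0,\infty})]^{\vir}\big)$ with the rigid class on the right-hand side, which is the intended statement (the paper's displayed formula repeats the rubber class, an evident typo).
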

\begin{proof}
We have the commutative diagram
\[
\begin{tikzcd}
P_{\Gamma,r}(\p, D_{0,\infty}) \ar{d} \ar{r}{\ev} \ar[bend left]{rr}{\ev_1} & (\p,D_{0,\infty})^r \ar{d}{f} \ar{r}{\pi_1} & \p \\
P_{\Gamma,r}(\p, D_{0,\infty})^{\sim} \ar{r} & (\p,D_{0,\infty})^{r, \sim}
\end{tikzcd}
\]
from which the claim follows by Proposition~\ref{prop:rigidification} and the definition.
\end{proof}

\subsection{Splitting formula}
For any decomposition $\{1, \ldots, r \} = I \sqcup J$ consider the gluing morphism
\[ \xi_{I} : (\p, D_{0,\infty})^{I,\sim} \times (X,D)^{J} \to (X,D)^r, \]
where we suppress $J$ from the notation because it is determined by $I$ via $J = I^{c}$.

Let $\pi_{12} : (X,D)^r \to (X,D)^2$ the morphism that forgets all but the first two points,
and recall that we write
\[ \Delta_{12}^{\rel} = \pi_{12}^{\ast}( \Delta^{\rel} ) \]
for the relative diagonal, which is the class of the locus where the first two points coincide.
We also have the absolute diagonal,
\[ \Delta_{12} = (\pi_1 \times \pi_2)^{\ast}( \Delta ) \in H^{\ast}((X,D)^r) \]
where $\pi_i : (X,D)^r \to (X,D)^1 \to X$ is the projection to the $i$-th factor.

We prove here the following splitting formula (which is an analogue of \cite[Thm.3.10]{ABPZ}):

\begin{prop} 
Let $d = \beta \cdot D$. We have
\label{prop:splitting relative diagonal}
\begin{multline*}
\big\langle \, \lambda \, \big| \, \tau_{k_1} \cdots \tau_{k_r}( \Delta_{12}^{\rel} \cdot \gamma) \big\rangle^{(X,D), \PT}_{n,\beta}
=
\big\langle \, \lambda \, \big| \, \tau_{k_1} \cdots \tau_{k_r}( \Delta_{12} \cdot \gamma) \big\rangle^{(X,D), \PT}_{n,\beta} - \\
\sum_{\substack{\iota_{\ast} \alpha + \beta' = \beta \\ n_1 + n_2 = n + d_0}}
\sum_{ \substack{ \{ 1, \ldots, r \} = I \sqcup J \\ \text{with } 1,2 \in I \\ \ell }}
\big\langle \Delta_1, \lambda \big| \left( {\textstyle \prod_{i \in I} \tau_{k_i} } \right) ( \Delta_{D} \delta_{I,\ell}) \big\rangle^{(\p,D_{0, \infty}), \PT, \sim}_{n_1,(\alpha,d)}
\big\langle \Delta_2 \big| \left( {\textstyle \prod_{i \in J} \tau_{k_i} } \right)( \delta'_{I,\ell} ) \big\rangle^{(X,D), \PT}_{n_2,\beta'} 
\end{multline*}
where 
$\Delta_1, \Delta_2$ runs over the K\"unneth decomposition of the diagonal $\Delta_{D^{[d_0]}}$ in $( D^{[d_0]} )^2$
with $d_0 = d - \int_{\alpha} c_1(N_{D/X})$ and we used the K\"unneth decomposition
\[ \xi_{I}^{\ast}( \gamma ) = \sum_{\ell} \delta_{I,\ell} \otimes \delta'_{I,\ell}  \quad \in H^{\ast}( (\p, D_{0,\infty})^{I, \sim} \times (X,D)^{J} ). \]
\end{prop}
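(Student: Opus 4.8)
The plan is to deduce the statement from the purely geometric splitting of the relative diagonal in Corollary~\ref{cor:diagonal splitting general}, and then to interpret the resulting boundary contributions by the same base-change and virtual-class techniques used in the proof of the degeneration formula (Proposition~\ref{prop:degeneration formula PT}). First I would pull back the cycle identity
\[ \Delta_{12}^{\rel} = \Delta_{12} - \sum_{\substack{\{1,\ldots,r\}=I\sqcup J\\ 1,2\in I}} \xi_{I\ast}(\Delta_{D,12}) \]
of Corollary~\ref{cor:diagonal splitting general} along the interior evaluation map $\ev\colon P_{\Gamma,r}(X,D)\to (X,D)^r$, multiply by $\ev^{\ast}(\gamma)$, and substitute into the integral defining the left-hand side. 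The term $\ev^{\ast}(\Delta_{12}\cdot\gamma)$ reproduces verbatim the first invariant on the right-hand side. For each boundary term, the projection formula together with the K\"unneth decomposition $\xi_I^{\ast}(\gamma)=\sum_\ell \delta_{I,\ell}\otimes\delta'_{I,\ell}$ and $\Delta_{D,12}=(\pi_1\times\pi_2)^{\ast}\Delta_D$ gives
\[ \gamma\cdot\xi_{I\ast}(\Delta_{D,12}) = \xi_{I\ast}\Big( \sum_\ell (\Delta_D\cdot\delta_{I,\ell})\otimes\delta'_{I,\ell}\Big), \]
so the insertion $\Delta_D\,\delta_{I,\ell}$ on the rubber factor and $\delta'_{I,\ell}$ on the $(X,D)^J$-factor already appear in exactly the shape of the right-hand side.

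The substance of the proof is to interpret $\ev^{\ast}\xi_{I\ast}$ on the moduli space. The preimage $\ev^{-1}(\mathrm{im}\,\xi_I)$ is the boundary locus of $P_{\Gamma,r}(X,D)$ where, after contracting marking-free bubbles, the markings indexed by $I$ lie on a chain of bubbles over $D$ on which the pair restricts to a rubber stable pair, while the markings in $J$ and the remaining support lie on the main component; the relevant bubble is exactly $\p=\p(N_{D/X}\oplus\CO_D)$, the central component of the degeneration of $X$ to the normal cone of $D$. I would therefore construct the gluing morphism
\[ \tilde\xi_I\colon P_{n_1,(\alpha,d),I}(\p,D_{0,\infty})^{\sim}\times_{D^{[d_0]}} P_{n_2,\beta',J}(X,D)\to P_{\Gamma,r}(X,D), \]
summed over $\iota_{\ast}\alpha+\beta'=\beta$ and $n_1+n_2=n+d_0$ with $d_0=d-\int_\alpha c_1(N_{D/X})$, and exhibit it as the left edge of a fiber square over $\xi_I$ whose left vertical arrow is $\ev_I\times\ev_J$. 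The fiber product over $D^{[d_0]}$ matches the relative evaluation at $D_0$ of the rubber with the relative evaluation at $D$ of the main component, producing the diagonal $\Delta_{D^{[d_0]}}$ (the $\Delta_1,\Delta_2$ summation); meanwhile $\lambda$ stays at $D_\infty$ since that divisor is the original relative divisor $D$. The Chern-character weights split exactly as in equation~\eqref{chF pullback}: for $i\in I$ the universal sheaf restricts to the rubber part and for $j\in J$ to the main part, so the descendents $p_i^{\ast}(\ch_{2+k_i}(\BF))$ distribute over the two factors according to $I$ and $J$. Finally, because $1,2\in I$ the rubber factor always carries at least one marking, so Corollary~\ref{cor:rigidification} guarantees that it contributes a well-defined (rigidified) rubber invariant.

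The main obstacle is the virtual-class compatibility underlying this base change, i.e. the identity
\[ \ev^{\ast}\big(\xi_{I\ast}(\mu)\big)\cap[P_{\Gamma,r}(X,D)]^{\vir}=\tilde\xi_{I\ast}\Big((\ev_I\times\ev_J)^{\ast}(\mu)\cap\Delta_{D^{[d_0]}}^{!}\big([P_{n_1,(\alpha,d),I}(\p,D_{0,\infty})^{\sim}]^{\vir}\times[P_{n_2,\beta',J}(X,D)]^{\vir}\big)\Big), \]
relating the pullback of the boundary cycle along the non-flat map $\ev$ to the gluing of the product virtual class. This is precisely the content of the splitting formula of \cite{LiWu} for the normal-cone degeneration, pulled back along the forgetful morphism adding the markings, exactly as in the proof of Proposition~\ref{prop:degeneration formula PT}; the only additional bookkeeping is tracking the markings through the contraction defining $\ev$, which is controlled by the rigidification of Proposition~\ref{prop:rigidification}. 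Once this commutation is established, assembling the rubber and relative factors and summing over $I$, $\alpha$, $\beta'$, $n_1$, $n_2$ and $\ell$ yields the stated right-hand side.
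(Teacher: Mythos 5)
Your proposal is correct and follows essentially the same route as the paper's proof: both rest on the cycle-level splitting of the relative diagonal (your use of Corollary~\ref{cor:diagonal splitting general} is just the packaged form of Lemma~\ref{lemma:rel splitting} plus the birationality of Lemma~\ref{lemma:2diagonal}, which is how the paper proceeds), on the Li--Wu virtual-class splitting at the boundary divisor $\p(N)\subset (X,D)^2$ expressed as $j^{!}[P_{\Gamma,r}(X,D)]^{\vir}=\sum_{I}\widetilde{\xi}_{\ast}[R_I P_{\Gamma,r}(X,D)]^{\vir}$, and on the Chern-character restriction \eqref{chF pullback} to distribute the descendents over the two factors. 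The one inessential slip is your appeal to rigidification (Proposition~\ref{prop:rigidification}, Corollary~\ref{cor:rigidification}) for the rubber factor: the rubber invariants and their virtual class are defined directly, so no rigidification is needed here — it only enters later, in Proposition~\ref{prop:relGWPT}.
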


\begin{proof}
By Lemma~\ref{lemma:rel splitting} in $H^{\ast}((X,D)^2)$ we have 
\[ \Delta_{(X,D)}^{\text{rel}} = (\pi_1 \times \pi_2)^{\ast}( \Delta_X ) - j_{\ast}(g^{\ast}(\Delta_D)), \]
where $j : \p(N) \to (X,D)^2$ is the natural inclusion and $g : \p(N) \to D \times D$ is the projection.

Consider the space
\[
R_{I} P_{\Gamma,r}(X,D) =
\bigsqcup_{\substack{\iota_{\ast} \alpha + \beta' = \beta \\ n_1 + n_2 = n + \beta \cdot D}}
P_{(n_1,(\alpha,d)),I}(\p, D_{0,\infty})^{\sim} \times P_{(n_2,\beta'),J}(X,D).
\]
and its virtual class, defined by
\[
[ R_{I} P_{\Gamma,r}(X,D) ]^{\vir} 
=
\sum_{\substack{\iota_{\ast} \alpha + \beta' = \beta \\ n_1 + n_2 = n + \beta \cdot D}}
\Delta_{D^{[\ell]}}^{!}\left( [ P_{(n_1,(\alpha,d)),I}(\p, D_{0,\infty})^{\sim} ] \otimes [ P_{(n_2,\beta'),J}(X,D) ]^{\vir} \right).
\]

We then have a commutative diagram
\[
\begin{tikzcd}
\bigsqcup_{ \substack{ \{ 1, \ldots, r \} = I \sqcup J \\ 1,2 \in I }} R_{I} P_{\Gamma,r}(X,D) \ar{r}{\widetilde{\xi}} \ar{d}{\ev_I \times \ev_J} & \widetilde{W} \ar{r} \ar{d} & P_{\Gamma,r}(X,D) \ar{d} \\
\bigsqcup_{ \substack{ \{ 1, \ldots, r \} = I \sqcup J \\ 1,2 \in I }} (\p, D_{0,\infty})^{I, \sim} \times (X,D)^{J} \ar{r}{\xi = \sqcup \xi_{I}} & W \ar{r} \ar{d} & (X,D)^r \ar{d}{\pi_{12}} \\
& \p(N) \ar{r}{j} & (X,D)^2.
\end{tikzcd}
\]
where the square on the right are fiber (and so define $W$ and $\widetilde{W}$)
and $\xi$ is birational (by Lemma~\ref{lemma:2diagonal}).
By the arguments of \cite{LiWu} (compare also \cite[Thm.3.9]{ABPZ}) we have
\[
j^{!} [ P_{\Gamma,r}(X,D) ]
=
\sum_{\substack{I,J \\ 1,2 \in I}} \widetilde{\xi}_{\ast} [ R_{I} P_{\Gamma,r}(X,D) ]^{\vir}.
\]

We obtain:
\begin{align*}
& \big\langle \, \lambda \, \big| \, \tau_{k_1} \cdots \tau_{k_r}( (j_{\ast} g^{\ast}\Delta_D )_{12} \cdot \gamma) \big\rangle^{(X,D), \PT}_{n,\beta}  \\
& =
\int_{[ P_{\Gamma,r}(X,D) ]^{\vir} } \ev_{\rel}^{\ast}(\lambda) \ev^{\ast}(\gamma) \ev_{12}^{\ast}( j_{\ast} g^{\ast}(\Delta_D))  \cdot \prod_i p_i^{\ast}(\ch_{k_i}(\BF)) \\
& =
\sum_{ \substack{ I,J \\ 1,2 \in I }}
\int_{ [ R_{I} P_{\Gamma,r}(X,D) ]^{\vir} }
\ev_{D_{\infty}}^{\ast}(\lambda) ( \ev_I \times \ev_J )^{\ast}( \xi^{\ast}(\gamma)) \ev_{12}^{\ast}(\Delta_D) \cdot \prod_i p_i^{\ast}(\ch_{k_i}(\BF)).
\end{align*}
Using a similar observation as \eqref{chF pullback} this completes the proof.
\end{proof}

\section{Relative Gromov-Witten theory} \label{sec:GW theory}
Let $X$ be a smooth projective variety and let $D \subset X$ be a smooth 
connected divisor.
\subsection{Moduli space}
Let $\beta \in H_2(X,\BZ)$ and let
$\vec{\lambda} = (\lambda_1, \lambda_2, \ldots, \lambda_{\ell}$ be an ordered partition of size and length
\[ |\vec{\lambda}| := \sum_i \lambda_i = \beta \cdot D, \quad \ell(\lambda) := \ell. \]
We consider the moduli space introduced by Jun Li \cite{Li1,Li2}
\[ \Mbar_{g,r,\beta}((X,D), \vec{\lambda}) \]
which parametrizes $r$-pointed genus $g$ degree $\beta$ relative stable maps from connected curves to the pair $(X,D)$
with ordered ramification profile $\vec{\lambda}$ along the divisor $D$.
By definition, an element of the moduli space is a map
\[ f : C \to X[k] \]
to an expansion of $(X,D)$ 
such that (i) no component is mapped entirely into the singular locus,
(ii) $f$ is predeformable, (iii) the relative multiplicities with divisor $D$ are as specified (the intersection points are marked),
and (iv) has finite automorphism. We refer to \cite{LiICTP} or the recent \cite{ABPZ} for an introduction.
The degree
of the map $f$ is fixed to be $p_{\ast} f_{\ast}[C] = \beta$
where $p : X[k] \to X$ is the canonical map that contracts the expansion.

The moduli space has relative evaluation maps
\[ \ev_{D,i}^{\rel} : \Mbar_{g,r,\beta}((X,D), \vec{\lambda}) \to D \]
which send a stable map to the $i$-th intersection point with the divisor $D$ (according to the fixed ordering).
We also have an interior evaluation map:
\[
\ev : \Mbar_{g,r,\beta}((X,D), \vec{\lambda}) \to (X,D)^r
\]
Given a subset $I \subset \{ 1, \ldots, r \}$ we write $\ev_I = \pi_I \circ \ev$ where $\pi_I : (X,D)^r \to (X,D)^I$
is the morphism which forgets all points except those labeled by $I$.

\subsection{Cohomology weighted partitions}
A $H^{\ast}(D)$-weighted partition $\lambda$ (or simply cohomology-weighted partition if $D$ is clear from context) is an ordered list of pairs
\begin{equation} \label{weighted partition} \big( (\lambda_1, \delta_1) , \ldots, (\lambda_{\ell}, \delta_{\ell} ) \big), \quad \delta_i \in H^{\ast}(D), \quad \lambda_i \geq 1  \end{equation}
such that $\vec{\lambda} = (\lambda_1, \lambda_2, \ldots, \lambda_{\ell})$ is a partition
(called the partition underlying $\lambda$).

While the $\delta_i$ can be arbitrary cohomology classes on $D$,
we often take them to be elements of a fixed basis $\CB$ of $H^{\ast}(D)$.
In this case we also talk of a $\CB$-weighted partition.
Given a $\CB$-weighted partition $\lambda$, the automorphism group $\Aut(\lambda)$ consists of the permutation symmetries of $\lambda$,
i.e. those $\sigma \in S_r$ such that $\lambda^{\sigma} = \lambda$.

\subsection{Gromov-Witten invariants} \label{subsec:defn GW invariants}
Given a $H^{\ast}(D)$-weighted partition $\lambda$ and a class $\gamma \in H^{\ast}((X,D)^r)$
we define relative Gromov-Witten invariants by
integration over the virtual fundamental class of the moduli space:
\begin{equation} \label{GWbracket}
\big\langle \, \lambda \, \big| \, \tau_{k_1} \cdots \tau_{k_r}(\gamma) \big\rangle^{(X,D), \GW}_{g,\beta} \\
=
\int_{ [ \Mbar_{g,r,\beta}((X,D), \vec{\lambda}) ]^{\vir} }
\ev^{\ast}(\gamma) \prod_{i=1}^{r} \psi_i^{k_i} \prod_{i=1}^{\ell(\lambda)} \ev^{\text{rel}}_{D,i}( \delta_{i} ),
\end{equation}
where $\psi_i$ are the cotangent line classes at the interior markings.

The discussion also applies
when we allow the source curve of our relative stable map to be disconnected.
More precisely, we let 
\[ \Mbar_{g,r,\beta}^{\bullet}((X,D), \vec{\lambda}) \]
denote the moduli space of relative stable maps to $(X,D)$ as above
except that we allow disconnected domain curves
subject to the following condition:

($\bullet$) For any relative stable map $f : C \to X[k]$ to an expansion of $(X,D)$
the stable map $f$ has non-zero degree on every of the connected components of its domain.

We define Gromov-Witten invariants in the disconnected case parallel as in \eqref{GWbracket}.
The brackets on the left hand side will be denoted with a supscript $\bullet$, as in $\langle .. \rangle^{(X,D), \GW \bullet}$.

\subsection{Degeneration formula}
We state for completeness the degeneration formula in Gromov-Witten theory.
We use the same notation as in Section \ref{subsec:degeneration formula PT}.
\begin{prop} [\cite{Li1,Li2}] \label{prop:degeneration formula GW}
For any $\beta \in H_2(W,\BZ)$,
\begin{multline*}
\sum_{\substack{ \beta' \in H_2(X,\BZ) \\ \iota_{\ast} \beta' = \beta}}
\left\langle \, \tau_{k_1} \cdots \tau_{k_r}( \gamma|_{X^r} ) \right\rangle^{X, \GW,\bullet}_{g,\beta'} = 
%\sum_{ \{ 1, \ldots, r \} = I \sqcup J }
\sum_{\substack{ \beta_i \in H_2(X_i,\BZ) \\ \iota_{1 \ast} \beta_1 + \iota_{2 \ast} \beta_2 = \beta \\ \beta_1 \cdot D = \beta_2 \cdot D }}
\sum_{\substack{ g_1+g_2=g + 1 - \ell(\mu) \\  \{ 1, \ldots, r \} = I \sqcup J }}
\sum_{\mu, \ell} \\
\frac{\prod_i \mu_i}{|\Aut(\mu)|}
\left\langle \mu \middle| \left( {\textstyle {\prod_{i \in I}} \tau_{k_i} }\right) (\delta_{I,\ell}) \right\rangle^{(X_1,D), \GW, \bullet}_{g_1,\beta_1} 
\left\langle \mu^{\vee} \middle| \left( {\textstyle {\prod_{i \in J}} \tau_{k_j} } \right) (\delta'_{I,\ell}) \right\rangle^{(X_2,D), \GW, \bullet}_{g_2,\beta_2}
\end{multline*}
where $\mu$ runs over all cohomology weigted partitions $\mu = \{ (\mu_i, \gamma_{s_i}) \}$ of size $\beta_1 \cdot D$
with weights from a fixed basis $\CB=\{ \gamma_i \}$ of $H^{\ast}(D)$,
and we let $\mu^{\vee} = \{ (\mu_i, \gamma_{s_i}^{\vee}) \}$ be the dual partition
with weights from the basis $\{ \gamma_i^{\vee} \}$ which is dual to $\{ \gamma_i \}$.
Moreover, we used the K\"unneth decomposition
\[ \xi_I^{\ast}(\gamma) = \sum_{\ell} \delta_{I,\ell} \otimes \delta'_{I,\ell} 
\quad \in H^{\ast}((X_1, D)^{I} \times (X_2, D)^J). \]
\end{prop}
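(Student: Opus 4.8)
The plan is to follow Jun Li's degeneration formula \cite{Li1,Li2} and adapt it exactly as in the proof of Proposition~\ref{prop:degeneration formula PT}, replacing the moduli of stable pairs by the moduli of relative stable maps. Since the core of the statement — the splitting of the virtual class over the central fiber together with the combinatorial factor $\prod_i \mu_i / |\Aut(\mu)|$ — is precisely Li's theorem, the only genuinely new content is the compatibility of the interior evaluation maps and the cotangent line classes $\psi_i$ with the gluing morphism. First I would introduce the moduli space $\mathfrak{M}$ of $r$-marked genus $g$ relative stable maps to the fibers of $\epsilon : W \to B$ (with disconnected domains of nonzero degree on each component), which over $t \neq 0$ restricts to $\bigsqcup_{\iota_{\ast}\beta' = \beta} \Mbar^{\bullet}_{g,r,\beta'}(W_t)$ and over $0 \in B$ parametrizes relative stable maps to expansions $W_0[k]$ carrying $r$ interior markings that avoid the relative divisor and the singular locus.

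Next I would invoke Li's splitting of the virtual class over the central fiber. For the inclusion $j_0 : \{0\} \to B$ this reads
\[
j_0^{!} [ \mathfrak{M} ]^{\vir}
=
\sum \frac{\prod_i \mu_i}{|\Aut(\mu)|}\, \xi_{\ast} \Delta_{D^{\ell(\mu)}}^{!} \Big( [\Mbar^{\bullet}_{g_1,I,\beta_1}((X_1,D),\mu)]^{\vir} \times [\Mbar^{\bullet}_{g_2,J,\beta_2}((X_2,D),\mu^{\vee})]^{\vir} \Big),
\]
where the sum runs over the same data as in the statement, the gluing is along the diagonal of $D^{\ell(\mu)}$, and the node multiplicities $\prod_i \mu_i$ together with the partition automorphisms $|\Aut(\mu)|$ arise exactly as in \cite{Li1,Li2}. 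The relative markings on the two sides carry the dual cohomology weights of $\mu$ and $\mu^{\vee}$, which is the origin of the dual-basis structure on the right-hand side.

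The remaining step is to track the integrand through the gluing morphism $\xi$. The interior evaluation map $\ev : \mathfrak{M} \to (\CW/B)^r$ is compatible with $\xi$ via a commutative diagram identical to \eqref{ev diagram}, so that $\ev^{\ast}(\gamma)$ splits according to the K\"unneth decomposition $\xi_I^{\ast}(\gamma) = \sum_\ell \delta_{I,\ell} \otimes \delta'_{I,\ell}$. For the cotangent line classes I would use the key observation that the interior markings lie away from the relative divisor, and hence away from the nodes produced when gluing the two relative maps into a single map to the expansion; therefore the cotangent line at each interior marking is inherited unchanged from the factor on which the marking lives, and $\prod_i \psi_i^{k_i}$ pulls back to $\prod_{i \in I} \psi_i^{k_i}$ on the first factor times $\prod_{i \in J} \psi_i^{k_i}$ on the second.

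Finally I would run the standard argument: form the pushforward to $A^{\ast}(B)$ of the integrand capped with $[\mathfrak{M}]^{\vir}$, and compare its restrictions over $t \neq 0$ and over $0 \in B$; the former gives the left-hand side and the latter, using the splitting above, gives the right-hand side. I expect the main obstacle to be organizational rather than conceptual: the conceptual input (the virtual splitting and the combinatorial factors) is already contained in \cite{Li1,Li2}, and the genuinely new verification — the $\psi$-class compatibility — is immediate once one observes that interior markings never collide with the gluing locus. Some care is nonetheless needed to match the multiplicity $\prod_i \mu_i$ and the automorphism factor $|\Aut(\mu)|$ with the virtual-class conventions used here, but this is exactly the bookkeeping already carried out in Li's proof.
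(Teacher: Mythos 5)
Your proof is correct and is exactly the argument the paper intends: Proposition~\ref{prop:degeneration formula GW} is stated with only the citation to \cite{Li1,Li2}, and the marked-relative extension is meant to be read off from the parallel PT-side proof of Proposition~\ref{prop:degeneration formula PT}, which is precisely what you reproduce (family moduli space over $B$, Li's virtual splitting with the factor $\prod_i \mu_i/|\Aut(\mu)|$ and dual-basis weights, K\"unneth splitting of $\ev^{\ast}(\gamma)$ via the gluing diagram analogous to \eqref{ev diagram}, and comparison of the pushforward to $A^{\ast}(B)$ over $t \neq 0$ and $0$). Your one genuinely new verification --- that the classes $\psi_i$ at interior markings restrict factorwise under $\xi$ because interior markings avoid the relative divisor and hence the gluing nodes --- is the correct observation and completes the adaptation.
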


\subsection{Rubber moduli space}
Recall the projective bundle $\p = \p(N_{D/X} \oplus \CO_D)$ with sections $D_{0}, D_{\infty} \subset \p$.
Let
\[ \Mbar_{g,r,\alpha}^{\sim}((\p,D_0 \sqcup D_{\infty}), \vec{\lambda}, \vec{\mu}) \]
be the moduli space of genus $g$ degree $\alpha \in H_2(D,\BZ)$ rubber stable maps with target $(\p, D_{0,\infty})$
with oredered ramification profiles $\vec{\lambda}$, $\vec{\mu}$ over the divisors $D_0$ and $D_{\infty}$ respectively.
Elements of the moduli space are maps $f : C \to \p_{l}$
satisfying the usual list of conditions
(finite automorphism, predeformability, no components mapping entirely mapped to the singular fibers, relative multiplicities as specified),
and where two maps are considered to be isomorphic if they differ by an action of the natural scaling automorphism $\BG_m^{l}$ of $\p_{l}$.
The degree of the map is fixed to be $\pi_{D \ast} f_{\ast} [C] = \alpha$ where $\pi_D : \p_l \to D$ is the natural projection.
In our definition above the source curve is assumed to be connected.
If we allow disconnected domains subject to condition ($\bullet$), we decorate the moduli space and invariants with the supscript $\bullet$.

We have evaluation maps at the relative markings over both $D_0$ and $D_{\infty}$,
\begin{gather*}
 \ev^{\rel}_{D_0,i} : \Mbar_{g,r,\alpha}^{\sim}((\p,D_0 \sqcup D_{\infty}), \vec{\lambda}, \vec{\mu}) \to D_0, \ i=1,\ldots, \ell(\vec{\lambda}) \\
 \ev^{\rel}_{D_{\infty},i} : \Mbar_{g,r,\alpha}^{\sim}((\p,D_0 \sqcup D_{\infty}), \vec{\lambda}, \vec{\mu}) \to D_{\infty}, \ i=1,\ldots, \ell(\vec{\mu})
\end{gather*}
and an interior evaluation map
\[ \ev:  \Mbar_{g,r,\alpha}^{\sim}((\p,D_0 \sqcup D_{\infty}), \vec{\lambda}, \vec{\mu}) \to (\p,D_{0,\infty})^{r,\sim}. \]
Given $H^{\ast}(D)$-weighted partitions $\lambda = (\lambda_i, \delta_i)_{i=1}^{\ell(\lambda)}$, $\mu = (\mu_i, \delta'_i)_{i=1}^{\ell(\mu)}$
and $\gamma \in H^{\ast}((\p,D_{0,\infty})^r)$ we define:
\begin{multline*}
\big\langle \, \lambda, \mu \, \big| \, \tau_{k_1} \cdots \tau_{k_r}(\gamma) \big\rangle^{(\p,D_{0,\infty}), \GW, \sim}_{g,\alpha} \\
=
\int_{ [ \Mbar_{g,r,\alpha}^{\sim}((\p,D_0 \sqcup D_{\infty}), \vec{\lambda}, \vec{\mu}) ]^{\vir} }
\ev^{\ast}(\gamma) \prod_{i=1}^{r} \psi_i^{k_i} \prod_{i=1}^{\ell(\lambda)} (\ev_{D_0,i}^{\rel})^{\ast}(\delta_i) \prod_{i=1}^{\ell(\mu)} (\ev_{D_{\infty},i}^{\rel})^{\ast}(\delta_i').
\end{multline*}

There is a rigidification statement parallel to Proposition~\ref{prop:rigidification},
see \cite[Sec.1.5.3]{MP}.

\subsection{Splitting formula}
We state the splitting formulas we will need.
Let $\iota : D \to X$ denote the inclusion.
\begin{prop} \label{prop:splitting relative diagonal GW}
Let $d = \beta \cdot D$. We have
\begin{multline*}
\big\langle \, \lambda \, \big| \, \tau_{k_1} \cdots \tau_{k_r}( \Delta_{12}^{\rel} \cdot \gamma) \big\rangle^{(X,D), \GW, \bullet}_{g,\beta}
=
\big\langle \, \lambda \, \big| \, \tau_{k_1} \cdots \tau_{k_r}( \Delta_{12} \cdot \gamma) \big\rangle^{(X,D), \GW, \bullet}_{g,\beta} \\
- \sum_{\substack{\iota_{\ast} \alpha + \beta' = \beta \\ g_1+g_2=g + 1 - \ell(\mu)}}
\sum_{ \substack{ \{ 1, \ldots, r \} = I \sqcup J \\ \text{with } 1,2 \in I }}
\sum_{\mu, \ell}\frac{\prod_i \mu_i}{|\Aut(\mu)|}
\big\langle \Delta_1, \lambda \big| \left( {\textstyle \prod_{i \in I} \tau_{k_i} } \right) ( \Delta_{D} \delta_{I,\ell}) \big\rangle^{(\p,D_{0, \infty}), \GW, \bullet, \sim}_{g_1,(\alpha,d)} \\
\big\langle \Delta_2 \big| \left( {\textstyle \prod_{i \in J} \tau_{k_i} } \right)( \delta'_{I,\ell} ) \big\rangle^{(X,D), \GW, \bullet}_{g_2,\beta'}
\end{multline*}
where $\mu$ runs over all cohomology weigted partitions $\mu = \{ (\mu_i, \gamma_{s_i}) \}$ of size $d_0 = d - \int_{\alpha} c_1(N_{D/X})$,
with weights from a fixed basis $\CB=\{ \gamma_i \}$ of $H^{\ast}(D)$
and we used the K\"unneth decomposition
\[ \xi_{I}^{\ast}( \gamma ) = \sum_{\ell} \delta_{I,\ell} \otimes \delta'_{I,\ell}  \quad \in H^{\ast}( (\p, D_{0,\infty})^{I, \sim} \times (X,D)^{J} ) \]
where $\xi_{I} : (\p, D_{0,\infty})^{I,\sim} \times (X,D)^{J} \to (X,D)^r$ is the gluing morphism.
\end{prop}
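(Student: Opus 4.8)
The plan is to mirror the proof of the Pandharipande-Thomas splitting formula (Proposition~\ref{prop:splitting relative diagonal}), replacing the moduli of relative stable pairs by the moduli of relative stable maps $\Mbar_{g,r,\beta}^{\bullet}((X,D),\vec{\lambda})$ and the relative Hilbert-scheme gluing along $\Delta_{D^{[d_0]}}$ by the standard gluing of relative Gromov-Witten theory along cohomology-weighted partitions. The starting point is again Lemma~\ref{lemma:rel splitting}, which expresses the relative diagonal as $\Delta_{(X,D)}^{\rel} = (\pi_1\times\pi_2)^{\ast}(\Delta_X) - j_{\ast}(g^{\ast}(\Delta_D))$. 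After pullback by $\ev$ and integration the first summand produces exactly the term $\langle\lambda\,|\,\tau_{k_1}\cdots\tau_{k_r}(\Delta_{12}\cdot\gamma)\rangle^{(X,D),\GW,\bullet}_{g,\beta}$ on the right hand side, so the whole content is to identify the contribution of the correction term $(j_{\ast}g^{\ast}\Delta_D)_{12}$, which is supported on the locus $\p(N)\subset(X,D)^2$ where the first two markings both lie on a bubble.

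Next I would set up the gluing geometry. The preimage under $\ev$ of $\p(N)$ is the boundary stratum of $\Mbar_{g,r,\beta}^{\bullet}((X,D),\vec{\lambda})$ on which the stable map degenerates into a rubber piece over $(\p,D_{0,\infty})^{\sim}$ carrying the markings indexed by $I$ (with $1,2\in I$) glued along the relative divisor to a main piece over $(X,D)$ carrying the markings indexed by $J$. By Jun Li's degeneration/splitting formula \cite{Li1,Li2} --- the Gromov-Witten counterpart of the virtual-class identity used in Proposition~\ref{prop:splitting relative diagonal} and of \cite[Thm.3.9]{ABPZ} --- the Gysin pullback $j^{!}[\Mbar_{g,r,\beta}^{\bullet}((X,D),\vec{\lambda})]^{\vir}$ decomposes as a sum over $I\sqcup J$ of pushforwards of fiber-product virtual classes
\[
\sum_{\mu}\frac{\prod_i\mu_i}{|\Aut(\mu)|}\,\Big[\Mbar_{g_1,I,\alpha}^{\bullet,\sim}\big((\p,D_{0,\infty}),\mu\big)\times_{D^{\ell(\mu)}}\Mbar_{g_2,J,\beta'}^{\bullet}\big((X,D),\mu^{\vee}\big)\Big]^{\vir},
\]
with $\mu$ ranging over cohomology-weighted partitions of the relative intersection number $d_0=d-\int_{\alpha}c_1(N_{D/X})$, dual weights on $\mu^{\vee}$, and the classes and genera constrained by $\iota_{\ast}\alpha+\beta'=\beta$ and $g_1+g_2=g+1-\ell(\mu)$. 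Here the factor $\prod_i\mu_i/|\Aut(\mu)|$ and the genus-addition rule are exactly the gluing data of the relative Gromov-Witten degeneration formula (Proposition~\ref{prop:degeneration formula GW}), while the diagonal $\Delta_D$ appearing in the rubber insertion is produced, via Corollary~\ref{cor:diagonal splitting general} and Lemma~\ref{lemma:diagonal pullback}, by the condition that the images on $D$ of the first two markings coincide.

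Finally I would compute the integral. The interior insertion splits through the K\"unneth decomposition $\xi_I^{\ast}(\gamma)=\sum_\ell\delta_{I,\ell}\otimes\delta'_{I,\ell}$ into $\delta_{I,\ell}$ on the rubber factor and $\delta'_{I,\ell}$ on the main factor, while the original relative insertion $\lambda$ stays on the outer divisor $D_\infty$ of the rubber; the cotangent line class $\psi_i$ of an interior marking pulls back to the cotangent line class of the corresponding factor, since contracting and stabilizing away from an interior marking does not alter its cotangent line. Assembling these pieces, and using the Gromov-Witten rigidification (the analogue of Proposition~\ref{prop:rigidification}) to record the rubber contribution, yields the stated identity. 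The main obstacle is the bookkeeping of the relative gluing: one must check that the sum over cohomology-weighted partitions $\mu$ with dual weights reproduces the correct diagonal over the relative divisor and that the factors $\prod_i\mu_i/|\Aut(\mu)|$ come out precisely as in Proposition~\ref{prop:degeneration formula GW}. This is the standard but delicate combinatorial input that distinguishes the Gromov-Witten gluing from its Pandharipande-Thomas counterpart, where the gluing is instead governed by the diagonal of the Hilbert scheme $D^{[d_0]}$.
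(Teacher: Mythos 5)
Your proposal is correct and takes essentially the same route as the paper, whose proof simply cites \cite[Theorem 3.10]{ABPZ} and sketches exactly your argument: split the relative diagonal via Lemma~\ref{lemma:rel splitting}, then apply the splitting of the virtual class of \cite{Li1,Li2} to the correction term supported on $\p(N)$. The one step you assert rather than justify --- that $j^{!}$ of the virtual class decomposes into the weighted fiber products --- is exactly where the paper's sketch supplies the key observation, namely that $\p(N) \subset (X,D)^2$ is the pullback of the Cartier divisor on $\CT$ parametrizing non-trivial expansions, which is what makes Li's boundary splitting applicable to this Gysin pullback.
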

\begin{proof}
This is a special case of \cite[Theorem 3.10]{ABPZ}.
Essentially, the argument is to start with Lemma~\ref{lemma:rel splitting}
and then one observes that $\p(N) \subset (X,D)^2$ 
is the pullback of the Cartier divisor on the stack of target degenerations $\CT$
which parametrizes non-trivial expansions.
The one uses the splitting of the virtual class of \cite{Li1,Li2}.
\end{proof}

\section{GW/PT correspondences} \label{sec:GW/PT corr}
Let $X$ be a smooth projective threefold and let $D \subset X$ be a smooth connected divisor,
which is hence a smooth surface.
We discuss here GW/PT correspondence for the pair $(X,D)$
and in particular consider the case of marked relative invariants.

\subsection{Cohomology weighted partitions}
Consider a $H^{\ast}(D)$ weighted partition
\[ \lambda = \big( (\lambda_1, \delta_1), \ldots, (\lambda_{\ell(\lambda)}, \delta_{\ell(\lambda)}) \big), \quad \delta_i \in H^{\ast}(D) \]
We can associate to $\lambda$ a cohomology class in $H^{\ast}(D^{[|\lambda|]})$ as follows.
For $i > 0 $ let
\[ \Fq_i(\alpha) : H^{\ast}(D^{[n]}) \to H^{\ast}(D^{[n+i]}) \]
be the $i$-th Nakajima creation operator with cohomology weight $\alpha \in H^{\ast}(D)$, see \cite{Nak}
(we refer to Example~\ref{example:Nakajima} for the convention that we follow here).

\begin{defn}
The class in $H^{\ast}(D^{[|\lambda|]})$ associated to $\lambda$ is
\begin{equation} \lambda = \frac{1}{\prod_i \mu_i} \prod_{i} \Fq_i(\delta_i) \vacuum. \label{identification} \end{equation}
\end{defn}

A given class in the Hilbert scheme can have several representations as a cohomology weighted partitions.
Nevertheless, we will also write $\lambda$ for the associated cohomology class,
because the formulas where it will appear will only depend on the class.

\begin{lemma} In $H^{\ast}(D^{[n]} \times D^{[n]})$ we have the K\"unneth decomposition of the diagonal
\begin{equation} \label{Kunneth diagonal}
\Delta_{D^{[n]}} = \sum_{\mu} (-1)^{n - \ell(\mu)}
\frac{\prod_i \mu_i}{|\Aut(\mu)|} \cdot \mu \boxtimes \mu^{\vee}.
\end{equation}
where $\mu$ runs over all cohomology weigted partitions $\mu = \{ (\mu_i, \gamma_{s_i}) \}$ with weights from a fixed basis $\CB = (\gamma_1, \ldots, \gamma_{b})$ of $H^{\ast}(D)$,
and $\mu^{\vee} = \{ (\eta_i, \gamma_{s_i}^{\vee}) \}$ is the dual partition
with weights from the basis $( \gamma_i^{\vee} )$ which is dual to $\CB$.
Moreover we have used \eqref{identification} to associate a cohomology class to a cohomology weighted partition.
\end{lemma}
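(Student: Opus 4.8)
The plan is to realise $\bigoplus_{n} H^{\ast}(D^{[n]})$ as the Fock space of the Heisenberg algebra generated by the Nakajima operators and to read off the diagonal from the pairing of a basis against its Poincar\'e-dual basis. Recall from \cite{Nak} (see also \cite{deCM}) that, in the convention of Example~\ref{example:Nakajima}, the classes $\prod_i \Fq_{\mu_i}(\gamma_{s_i})\vacuum$ with $\mu$ ranging over $\CB$-weighted partitions of $n$ form a basis of $H^{\ast}(D^{[n]})$, that $\Fq_{-m}$ is (up to sign) the Poincar\'e adjoint of $\Fq_{m}$, and that the one-part pairing is normalised by
\[ \left\langle \Fq_m(\alpha)\vacuum, \Fq_m(\beta)\vacuum \right\rangle = (-1)^{m-1} m \left( {\textstyle \int_D} \alpha\beta \right). \]
Since $D$ has even cohomology (Section~\ref{sec:convention}), no Koszul signs intervene in what follows.

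First I would invoke the elementary fact that for a smooth projective $Y$ with homogeneous basis $\{ e_a \}$ of $H^{\ast}(Y)$ and dual basis $\{ e^a \}$ (so $\int_Y e_a e^b = \delta^b_a$), the diagonal is $\Delta_Y = \sum_a e_a \boxtimes e^a$. Taking $Y = D^{[n]}$ with the Nakajima basis $\{ \mu \}$ of \eqref{identification}, the task reduces to identifying the Poincar\'e dual of each $\mu$; note that fixing the basis $\CB$ removes the ambiguity coming from the several representations of a class by weighted partitions.

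The computation is the pairing $\int_{D^{[n]}} \mu \cdot \nu^{\vee}$ for two $\CB$-weighted partitions. Commuting the annihilation operators arising from $\nu^{\vee}$ to the right past the creation operators of $\mu$ (a Wick-type contraction), the pairing vanishes unless $\mu = \nu$ as weighted partitions; when $\mu = \nu$ the surviving terms correspond to the permutations matching equal parts carrying equal weights, of which there are $|\Aut(\mu)|$, each contributing $\prod_i \mu_i$ from the one-part pairings together with the total sign $\prod_i (-1)^{\mu_i-1} = (-1)^{n-\ell(\mu)}$. Dividing by the two normalising factors $\prod_i \mu_i$ from \eqref{identification} gives
\[ \int_{D^{[n]}} \mu \cdot \mu^{\vee} = (-1)^{n-\ell(\mu)} \frac{|\Aut(\mu)|}{\prod_i \mu_i}. \]
Hence the dual of $\mu$ equals $(-1)^{n-\ell(\mu)} \frac{\prod_i \mu_i}{|\Aut(\mu)|}\, \mu^{\vee}$, and substituting into $\Delta_{D^{[n]}} = \sum_{\mu} \mu \boxtimes (\text{dual of } \mu)$ yields \eqref{Kunneth diagonal}.

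The main obstacle is the bookkeeping inside this pairing: tracking the signs, the factors $\prod_i \mu_i$ produced by the commutators against the normalisation $1/\prod_i \mu_i$ built into $\mu$ and $\mu^{\vee}$, and --- most delicately --- counting the Wick matchings so that the combinatorial factor lands on exactly $|\Aut(\mu)|$ rather than overcounting permutations of parts that carry distinct weights. Once the one-part pairing above is fixed, the multi-part case is a formal consequence of the Heisenberg relations.
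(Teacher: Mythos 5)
Your proof is correct and follows essentially the same route as the paper: the paper's entire proof consists of quoting the orthogonality relation $\int_{D^{[n]}} \mu \cdot \nu^{\vee} = \delta_{\mu \nu} (-1)^{n+\ell(\mu)} \frac{|\Aut(\mu)|}{\prod_i \mu_i}$ (note $(-1)^{n+\ell(\mu)} = (-1)^{n-\ell(\mu)}$, so the signs agree) and letting the dual-basis description of the diagonal do the rest. The only difference is that you additionally derive this pairing from the Heisenberg relations and the one-part normalisation $(-1)^{m-1}m\int_D \alpha\beta$, with the Wick-matching count correctly landing on $|\Aut(\mu)|$, which the paper leaves implicit.
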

\begin{proof}
For $\CB$-weighted partitions $\mu, \nu$ one has
\[ \int_{D^{{n]}}} \mu \cdot \nu^{\vee} = \delta_{\mu \nu} (-1)^{n+\ell(\mu)} \frac{|\Aut(\mu)|}{\prod_i \mu_i}. \]
\end{proof}

\subsection{Partition functions} \label{sec:partition functions}
Let $\beta \in H_2(X,\BZ)$ be a curve class and define the integer
\[ \d_{\beta} = \int_{\beta} c_1(T_X). \]
Let $\lambda$ be a $H^{\ast}(D)$-weighted partitions of size $\beta \cdot D$, let $\gamma \in H^{\ast}(X,D)^r$, and let $k_1, \ldots, k_r \geq 0$.

\begin{defn}
The partition function of Pandharipande-Thomas invariants is defined by
\[ 
Z^{(X,D)}_{\PT, \beta}\left( \lambda \middle| \tau_{k_1} \cdots \tau_{k_r}(\gamma) \right)
=
\sum_{m \in \frac{1}{2} \BZ} i^{2m} p^m
\big\langle \, \lambda \, \big| \, \tau_{k_1} \cdots \tau_{k_r} (\gamma) \big\rangle^{(X,D), \PT}_{m + \frac{1}{2} d_{\beta},\beta},
\]
where $i = \sqrt{-1}$.
\end{defn}

We index here the stable pairs series by
the third Chern character of the stable pair, that is if $\ch_3(F) = m$, then $\chi(F) = m + d_{\beta}/2$.
%Note that the Chern character can be fractional, 
%in which case we set $(-1)^m := i^{2m}$.

\begin{defn} The partition function of Gromov-Witten invariants is defined by
\begin{multline} \label{defn:ZGW}
Z^{(X,D)}_{\GW, \beta}\left( \lambda \middle| \tau_{k_1} \cdots \tau_{k_r}(\gamma) \right) \\
=
(-i)^{\d_{\beta}} (-1)^{ \ell(\lambda) - |\lambda| }
z^{\d_{\beta} + \ell(\lambda) - |\lambda| }
\sum_{g \in \BZ} (-1)^{g-1} z^{2g-2}
\left\langle \, \lambda \, \middle| \, \tau_{k_1} \cdots \tau_{k_r}(\gamma) \right\rangle^{(X,D), \GW, \bullet}_{g, \beta}.
\end{multline}
\end{defn}

Our variables $z$ and $p$ here are related to the standard genus and Euler characteristic variables $u$ and $q$ of \cite{PaPix_GWPT} by
the variable change $z = iu$ and $q=-p$.

Since the Gromov-Witten bracket is invariant under permutations of relative markings that preserve the ramification profile (i.e. under $\Aut(\vec{\lambda})$),
the partition function \eqref{defn:ZGW}
only depends on the associated class $\lambda \in H^{\ast}(D^{[|\lambda|]})$. Hence the above defines a morphism:
\[
Z^{(X,D)}_{\GW, \beta}\left( - \middle| \tau_{k_1} \cdots \tau_{k_r}(\gamma) \right) : 
H^{\ast}(D^{[\beta \cdot D]}) \to \BQ((z)),
\]

We will also require the partition functions of rubber invariants.
Let $\alpha \in H_2(D,\BZ)$ be a curve class, 
and let $\lambda, \mu$ be $H^{\ast}(D)$-weighted cohomology partition of size\footnote{By our convention, $\lambda$ records the ramification conditions with $D_0$,
and the fiberwise degree is measured against $D_{\infty}$.}
\[ |\lambda| = d_0 := d - \int_{\alpha} c_1(N_{D/X}), \quad |\mu| = d. \]
Let $\gamma \in H^{\ast}( (\p, D_{0,\infty})^{r,\sim})$ be a class.
We have
\begin{align*}
\mathsf{d}_{(\alpha,d)} & = \int_{(\alpha,d)} c_1(T_{\p})  \\
& = d + d_0 + \int_{\alpha} c_1(T_D).
\end{align*}

\begin{defn} The partition functions of rubber PT and GW invariants is defined by
\[ 
Z^{(\p, D_{0,\infty}), \sim}_{\PT, \alpha}\left( \lambda, \mu \middle| \tau_{k_1} \cdots \tau_{k_r}(\gamma) \right)
=
\sum_{m \in \frac{1}{2} \BZ} i^{2m} p^m
\big\langle \, \lambda, \mu \, \big| \, \tau_{k_1} \cdots \tau_{k_r}(\gamma) \big\rangle^{(\p, D_{0,\infty}), \PT, \sim}_{m + \frac{1}{2} d_{\alpha},(\alpha,d)},
\]
and
\begin{multline}
Z^{(\p, D_{0,\infty}),\sim}_{\GW, \alpha}\left( \lambda, \mu \middle| \tau_{k_1} \cdots \tau_{k_r}(\gamma) \right) 
=
(-i)^{\d_{(\alpha,d)}} (-1)^{ \ell(\lambda) - |\lambda| + \ell(\mu) - |\mu| } \\
\cdot z^{\d_{(\alpha,d)} + \ell(\lambda) - |\lambda| + \ell(\mu) - |\mu| }
\sum_{g \in \BZ} (-1)^{g-1} z^{2g-2}
\left\langle \, \lambda, \mu \, \middle| \, \tau_{k_1} \cdots \tau_{k_r}(\gamma) \right\rangle^{(\p, D_{0,\infty}), \GW, \bullet, \sim}_{g, \alpha}.
\end{multline}
\end{defn}

Here we choose the signs and prefactor in the rubber generating series to match
those of the non-rubber pair $(\p, D_{0,\infty})$.
This will yield a clean statement of rigidification.

\subsection{Properties}
The GW and PT partition functions are choosen so that
the degeneration formula, the rigidification and the splitting formulas
takes for them exactly the same form and moreover that there are no extra factors appearing.
For convenience we present here the form of these formulas for the partitions functions $Z^{(X,D)}_{\beta, \GW/\PT}( \cdots )$.
We will use the following convention: If we write
\[ Z^{(X,D)}_{\beta}\left( \ldots \right) \]
without specifying $\PT$ or $\GW$, the statement will hold for both GW and PT partition functions.
We then have the following (with the notation of the corresponding sections).

\vspace{6pt}
\noindent
\textbf{Degeneration formula} (Propositions~\ref{prop:degeneration formula PT} and \ref{prop:degeneration formula GW}):
We have
\begin{multline*}
\sum_{\substack{ \beta' \in H_2(X,\BZ) \\ \iota_{\ast} \beta' = \beta}}
Z^{X}_{\beta'}\left( \, \tau_{k_1} \cdots \tau_{k_r}( \gamma|_{X^r} ) \right) = \\
%\sum_{ \{ 1, \ldots, r \} = I \sqcup J }
\sum_{\substack{ \beta_i \in H_2(X_i,\BZ) \\ \iota_{1 \ast} \beta_1 + \iota_{2 \ast} \beta_2 = \beta \\ \beta_1 \cdot D = \beta_2 \cdot D }}
\sum_{\substack{ \{ 1, \ldots, r \} = I \sqcup J \\ \ell }}
Z^{(X_1,D)}_{\beta_1}\left( \Delta_1 \middle| \left( {\textstyle {\prod_{i \in I}} \tau_{k_i} } \right) (\delta_{I,\ell}) \right)
Z^{(X_2,D)}_{\beta_2}\left( \Delta_2 \middle| \left( {\textstyle {\prod_{i \in J}} \tau_{k_j} } \right) (\delta'_{I,\ell}) \right)
\end{multline*}
where $\Delta_1, \Delta_2$ runs over the K\"unneth decomposition of $\Delta_{D^{[\beta_1 \cdot D]}} \subset (D^{[\beta_1 \cdot D]})^2$.

\begin{proof}
Let $W \to B$ be the total space of the degeneration and consider the logarithmic tangent bundle $T_W[-W_0]$, see
Section~\ref{subsec:log tangent bundle}. For $t \in B \setminus \{ 0 \}$ we have the restriction
\[ T_W[-W_0]|_{W_t} = T_{W_t} \oplus \CO \]
and over $0$ we have the restrictions
\[ T_W[-W_0]|_{X_1} = T_{X_1}[-D], \quad T_W[-W_0]|_{X_2} = T_{X_2}[-D]. \]
Hence given $\beta', \beta_1, \beta_2$ as in the claim we find that
\begin{align*}
\d_{\beta'} & =  \int_{\beta'} c_1(T_W[-W_0]) \\
& = \int_{\beta} c_1(T_W[-W_0]) \\
& = \int_{\beta_1} c_1( T_{X_1}[-D] ) + \int_{\beta_2} c_1(T_{X_2}[-D]) \\
& = \d_{\beta_1} + \d_{\beta_2} - D \cdot \beta_1 - D \cdot \beta_2.
\end{align*}
where we use the exact sequence $0 \to \Omega_{X_i} \to \Omega_{X_i}[D] \to \iota_{\ast} \CO_{D} \to 0$ in the last step.

The statement on the PT side follows from the fact that for $n=n_1 + n_2 - D \cdot \beta$ we have 
\[ n-\frac{1}{2} \d_{\beta'} = \left( n_1 - \frac{1}{2} \d_{\beta_1} \right) + \left( n_2 - \frac{1}{2} \d_{\beta_2} \right). \]

The GW side follows since for $g = g_1 + g_2 + \ell(\mu) - 1$ we have
\begin{align*}
(-i)^{\d_{\beta'}} z^{\d_{\beta'}} (-1)^{g-1} z^{2g-2}
& = (-i)^{\d_{\beta_1}} (-1)^{\ell(\mu) - |\mu|} (-1)^{g_1-1} z^{2g_1-2 + \ell(\mu) - |\mu| + \d_{\beta_1}} \\
& \phantom{=} \cdot (-i)^{\d_{\beta_2}} (-1)^{\ell(\mu) - |\mu|} (-1)^{g_2-1} z^{2g_2-2 + \ell(\mu) - |\mu| + \d_{\beta_2}} \\
& \phantom{=}\cdot (-1)^{\ell(\mu) - |\mu|}
\end{align*}
and the last sign together with the splitting factor $\prod_i \mu_i / |\Aut(\mu)|$ yields precisely the K\"unneth decomposition of the diagonal as in Lemma~\ref{Kunneth diagonal}.
\end{proof}

\vspace{6pt}
\noindent
\textbf{Rigidification} (Corollary~\ref{cor:rigidification}).
For any $\gamma \in (\p,D_{0,\infty})^{r, \sim}$ for $r \geq 1$,
\[
Z^{(\p, D_{0,\infty}), \sim}_{\alpha}\left( \lambda, \mu \middle| \tau_{k_1} \cdots \tau_{k_r}(\gamma) \right)
=
Z^{(\p, D_{0,\infty})}_{\U, \alpha}\left( \lambda, \mu \middle| \tau_{k_1} \cdots \tau_{k_r}\big(\pi_1^{\ast}(D_0) f^{\ast}(\gamma) \big) \right)
\]
where $f : (\p,D_{0,\infty})^r \to (\p,D_{0,\infty})^{r, \sim}$ is the natural morphism.

\vspace{10pt}
\noindent
\textbf{Splitting formula} (Propositions~\ref{prop:splitting relative diagonal} and \ref{prop:splitting relative diagonal GW}).
\begin{multline*}
Z^{(X,D)}_{\beta} \left( \, \lambda \, \big| \, \tau_{k_1} \cdots \tau_{k_r}( \Delta_{12}^{\rel} \cdot \gamma) \right)
=
Z^{(X,D)}_{\beta} \left( \, \lambda \, \big| \, \tau_{k_1} \cdots \tau_{k_r}( \Delta_{12} \cdot \gamma) \right) - \\
\sum_{\iota_{\ast} \alpha + \beta' = \beta}
\sum_{ \substack{ \{ 1, \ldots, r \} = I \sqcup J \\ 1,2 \in I \\ \ell }}
Z^{(\p,D_{0,\infty}), \sim}_{(\alpha, d)}\left( \Delta_1, \lambda \big| \left( {\textstyle \prod_{i \in I} \tau_{k_i} } \right) ( \Delta_{D} \delta_{I,\ell}) \right)
Z^{(X,D)}_{\beta'}\left( \Delta_2 \big| \left( {\textstyle \prod_{i \in J} \tau_{k_i} } \right)( \delta'_{I,\ell} ) \right)
\end{multline*}
where $d = \beta \cdot D$ and $d_0 = d - \int_{\alpha} c_1(N_{D/X})$ and % $\lambda \in H^{\ast}(D^{[d]})$, and 
$\Delta_1, \Delta_2$ runs over the K\"unneth decomposition of the diagonal $\Delta_{D^{[d_0]}}$.

\begin{proof}
One argues as before.
To show that $\d_{\beta} = \d_{\beta'} + \d_{(\alpha,d)} - 2d_0$ 
one can use the same argument as before for the degeneration of $X$ to the normal cone of $D$.
\end{proof}

\subsection{Standard correspondence}
By \cite{PP_GWPT_Toric3folds} there exists a universal correspondence matrix\footnote{The universal GW/PT correspondence matrix is denoted by $\tilde{K}_{\alpha, \widehat{\alpha}}$ in \cite{PP_GWPT_Toric3folds, PaPix_GWPT},
and related to our matrix $\K_{\alpha, \widetilde{\alpha}}$ by the variable change:
\[ \K_{\alpha,\widetilde{\alpha}} = \widetilde{\mathsf{K}}_{\alpha, \widetilde{\alpha}}\Big|_{u = -i z}. \]}
\[ \K_{\alpha,\widetilde{\alpha}}\in \mathbb{Q}[i,c_1,c_2,c_3]((z)) \]
indexed by partitions $\alpha$ and $\widetilde{\alpha}$ of positive size.
The $c_i$ are formal variables that in the formulas below will be specialized to
the Chern classes of the logarithmic tangent bundle, % $T_X[-D] = \Omega_X[D]^{\vee}$,
\[ c_i := c_i(T_X[-D]). \]
We have the basic vanishing 
\begin{equation} \widetilde{\mathsf{K}}_{\alpha,\widetilde{\alpha}}=0 \text{ for all } |\alpha| < |\widetilde{\alpha}|. \label{vanishingaa} \end{equation}
This ensures that in the sums below all except finitely many terms are zero.

Let $\alpha = (\alpha_1, \alpha_2 , \ldots, \alpha_r)$ be a partition, and write
\[ \tau_{[\alpha]} = \tau_{\alpha_{1}-1} \cdots \tau_{\alpha_r - 1}. \]
Let $P$ be a set partition of the index set $\{1, \ldots, r \}$.
For any part $T \in P$ given by a subset $T \subset \{ 1, \ldots, r \}$ we
can form a partition from the $T$-indices of $\alpha$:
\[ \alpha_T := (\alpha_i )_{i \in T} \]

\begin{defn}[\cite{PaPix_GWPT}, Section 1.3] \label{defn:desc tra 1}
For any classes $\gamma_1, \ldots, \gamma_r \in H^{\ast}(X)$ define the descendent transformation:
\begin{equation*}
\overline{\tau_{\alpha_1-1}(\gamma_1)\cdots
\tau_{\alpha_{\ell}-1}(\gamma_{\ell})}
=
\sum_{\substack{P \textup{ set partitions}\\\textup{ of }\{1,\ldots,r\}}}\ \prod_{T\in P}\ 
%\sum_{ \substack{\text{for all }T \in P \\ \text{partitions } \hat{\alpha}_T }}
%\prod_{T} \pi_{I_T}^{\ast}\left( 
\left[ \sum_{\tilde{\alpha}} \tau_{[\tilde{\alpha}]}\Bigg(\Delta^{\text{rel}}_{1, \ldots, \ell(\tilde{\alpha})} \cdot \pi_1^{\ast}\big( \mathsf{K}_{\alpha_T,\tilde{\alpha}} \cdot \prod_{i \in T} \gamma_i \big) \Bigg) \right].
\end{equation*}
where $\tilde{\alpha}$ runs over all partitions,
$\pi_1 : (X,D)^{\ell(\tilde{\alpha})} \to (X,D) \cong X$ is the forgetful morphism
and $\Delta^{\text{rel}}_{1, \ldots, \ell(\tilde{\alpha})}$ denotes the (class of the) small diagonal in $(X,D)^{\ell(\tilde{\alpha})}$.
\end{defn}

We state the main GW/PT correspondence for relative geometries in the form of \cite{PaPix_GWPT}.

\begin{conj}[{\cite[Conjecture 4]{PaPix_GWPT}}] \label{conj:GWPT} 
Let $\gamma_1, \ldots, \gamma_r \in H^{\ast}(X)$. We have that
\[ Z^{(X,D)}_{\PT, \beta}\big( \lambda \big| \tau_{\alpha_1-1}(\gamma_1) \cdots \tau_{\alpha_r-1}(\gamma_r) \big) \]
is the Fourier expansion of a rational function in $p$, and
\begin{equation} \label{eqn_correspondence}
Z^{(X,D)}_{\PT, \beta}\big(\, \lambda \, \big| \,\tau_{\alpha_1-1}(\gamma_1) \cdots \tau_{\alpha_r-1}(\gamma_r) \big) \\
=
Z^{(X,D)}_{\GW, \beta}\left(\, \lambda\, \middle|\, \overline{\tau_{\alpha_1-1}(\gamma_1) \cdots \tau_{\alpha_r-1}(\gamma_r) } \right)
\end{equation}
under the variable change $p=e^{z}$.
\end{conj}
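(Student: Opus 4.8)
Since Conjecture~\ref{conj:GWPT} is the full relative GW/PT correspondence, which remains open for a general pair $(X,D)$, the plan is not to establish it outright but to reduce it, via degeneration, to the absolute correspondence of Conjecture~\ref{conj:GWPT absolute} (known in many cases), exploiting that both sides obey matching structural formulas. The starting observation is that the partition functions $Z^{(X,D)}_{\PT,\beta}$ and $Z^{(X,D)}_{\GW,\beta}$ satisfy \emph{identical} degeneration, rigidification, and splitting formulas (Propositions~\ref{prop:degeneration formula PT}, \ref{prop:degeneration formula GW}, \ref{prop:splitting relative diagonal}, Corollary~\ref{cor:rigidification}, and their reformulations for $Z_\beta$). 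I would therefore set up an induction along a chosen simple degeneration $X \rightsquigarrow X_1 \cup_D X_2$ whose inductive step reads: if the correspondence holds for the relative building blocks $(X_1,D)$ and $(X_2,D)$, together with the rubber geometry $(\p, D_{0,\infty})^{\sim}$ over $\p = \p(N_{D/X}\oplus\CO_D)$, then it holds for $X$.

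The heart of the argument is to show that the descendent transformation $\overline{(\,\cdot\,)}$ of Definition~\ref{defn:desc tra 1} is \emph{compatible with degeneration}. Concretely, I would apply the PT degeneration formula to the left-hand side of \eqref{eqn_correspondence} and the GW degeneration formula to the right, and match the results term by term over the summation data $\beta=\beta_1+\beta_2$, $\{1,\dots,r\}=I\sqcup J$, and the K\"unneth decomposition of $\Delta_{D^{[\ell]}}$. For this to close, one must verify that degenerating $\overline{\tau_{\alpha_1-1}(\gamma_1)\cdots}$ on $X$ equals the product of the transformed insertions on the two pieces. Two inputs drive this: first, the universal matrix $\K_{\alpha,\widetilde\alpha}$ depends only on $c_i = c_i(T_X[-D])$, which restrict compatibly to $c_i(T_{X_j}[-D])$ under the degeneration (the same logarithmic-tangent-bundle computation used in the proof of the degeneration formula for $Z_\beta$); second, the small and relative diagonals entering the transformation split according to Lemma~\ref{lemma:rel splitting} and Corollary~\ref{cor:diagonal splitting general}, producing exactly the diagonal contractions and the rubber correction terms that the degeneration formula predicts.

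The main obstacle will be the rubber bookkeeping. When markings or the diagonal loci in the transformation collide over the relative divisor, the splitting formula (Proposition~\ref{prop:splitting relative diagonal}) introduces rubber invariants over $(\p,D_{0,\infty})^{\sim}$, and I would need to show that a \emph{rubber} descendent transformation matches on the PT and GW sides, with the rigidification statements (Corollary~\ref{cor:rigidification} and its GW analogue) converting these into honest relative integrals with matching prefactors. This is precisely why the partition functions were normalized to absorb the combinatorial factors $\prod_i \mu_i/|\Aut(\mu)|$ and the signs $(-1)^{n-\ell(\mu)}$ into the K\"unneth diagonal \eqref{Kunneth diagonal}. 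Checking that these normalizations propagate correctly through the nested set-partition sum over $P$ defining $\overline{(\,\cdot\,)}$ is delicate, and is the step most likely to hide a sign or factor discrepancy; it is also where the analogy with the GW-side splitting of \cite{PaPix_GWPT} must be matched exactly.

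Finally, for the base of the induction I would appeal to the known cases of Conjecture~\ref{conj:GWPT absolute}: the degeneration is chosen so that its endpoints are toric, complete intersections of strictly lower degree, or (in the K3 setting) rational elliptic surfaces, for which the correspondence is established in \cite{PaPix_GWPT} or follows by a further round of the same induction. The genuinely new difficulty relative to \cite{PaPix_GWPT}, namely odd cohomology, I would handle \emph{before} feeding into this scheme, using a monodromy argument together with the ABPZ-style reduction to even classes and products of big diagonals, so that the degeneration machinery only ever needs to transport the correspondence for insertions of a controlled type.
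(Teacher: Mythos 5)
You were asked to prove a statement that the paper itself does not prove: Conjecture~\ref{conj:GWPT} is quoted verbatim from \cite[Conjecture 4]{PaPix_GWPT} and enters the paper only as a hypothesis (e.g.\ in Proposition~\ref{prop:relGWPT}) or as a known input in special geometries (toric surfaces and K3 surfaces via Theorem~\ref{thm:GWPT for P_S toric K3}). So there is no ``paper proof'' to compare against, and your opening admission that you are reducing rather than proving is the correct reading of the situation. To your credit, the machinery you sketch is exactly the machinery the paper builds: compatibility of the transformation $\overline{(\,\cdot\,)}$ with the degeneration formula is Proposition~\ref{prop:Compatibility GWPT with degeneration formula} (including the logarithmic-tangent-bundle restriction $c_i(T_W[-W_0])|_{X_j} = c_i(T_{X_j}[-D])$), the rubber bookkeeping you flag as the delicate step is carried out in Proposition~\ref{prop:relGWPT} via Lemmas~\ref{lemma:2diagonal}, \ref{lemma:point rigidification} and \ref{lemma:c_i restriction}, and the inversion needed to recover relative invariants of the building blocks is Lemmas~\ref{lemma:PaPixJap} and~\ref{lemma:PaPixJap_GW}. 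In other words, you have reconstructed the proof architecture of Theorems~\ref{thm:Fano complete intersection} and~\ref{thm:K3 GWPT}, not a proof of Conjecture~\ref{conj:GWPT}.

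The genuine gap, if one reads your proposal as an attempted proof of the conjecture for a general pair $(X,D)$, is that the scheme has no way to close. First, there is no base case: your induction needs a simple degeneration of $X$ whose endpoints already satisfy the correspondence, and for an arbitrary smooth projective threefold no such degeneration to toric or otherwise known geometries exists; the paper's applications work precisely because Fano complete intersections degenerate to lower-degree ones and K3 surfaces degenerate to rational elliptic surfaces. Second, the degeneration formula only transports insertions $\gamma$ that are restrictions of classes on $(\CW/B)^r$, so vanishing cohomology is invisible to it; the monodromy/ABPZ reduction you propose to run beforehand requires the monodromy group to be maximal --- the full symplectic group on $H^3$ via Deligne's theorem \cite{Deligne1,Deligne2} for complete intersections, or $\widetilde{O}(H^2(S,\BZ))$ via the Torelli theorem for K3s --- and for a general $X$ the monodromy of any available family can be far too small to reduce odd classes to diagonals. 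These two obstructions are exactly why the statement is, and remains in the paper, a conjecture rather than a theorem.
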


\begin{rmk} \label{rmk:GW in terms of PT}
From \cite[Sec.7]{PP_GWPT_Toric3folds} it follows that
\begin{equation} \overline{\tau_{\alpha_1-1}(\gamma_1) \cdots \tau_{\alpha_r-1}(\gamma_r) } = 
z^{\ell(\alpha) - |\alpha|} \tau_{\alpha_1-1}(\gamma_1) \cdots \tau_{\alpha_r-1}(\gamma_r) + ( \ldots ) \label{dsfAAA} \end{equation}
where the dots stand for terms $\tau_{[\tilde{\alpha}]}(\cdots )$ with $|\tilde{\alpha}| < |\alpha|$.
Hence, by an induction on $|\alpha|$, the correspondence of Conjecture~\ref{conj:GWPT} can be
be inverted and can be used to express all PT invariants in terms of GW invariants.

(To prove \eqref{dsfAAA} one uses \eqref{vanishingaa} and shows that $\K_{\alpha, \tilde{\alpha}} = \delta_{\alpha, \tilde{\alpha}} \delta_{\ell(\alpha),1} z^{\ell(\alpha)-|\alpha|}$
whenever $|\alpha| = |\tilde{\alpha}|$.
The latter follows from \cite[Prop.24]{PP_GWPT_Toric3folds} and a direct evaluation of the non-vansihg term via \cite[Eqn. (59)]{PP_GWPT_Toric3folds}
and basic properties of the Sterling numbers of the second kind.) \qed
%
%\begin{lemma}
%\[ \K_{\alpha, \tilde{\alpha}} = \delta_{\alpha, \tilde{\alpha}} \delta_{\ell(\alpha),1} z^{\ell(\alpha)-|\alpha|} \]
%\end{lemma}
%\begin{proof}
%Let $T\K_{\alpha, \hat{\alpha}}$ be the toric correspondence matrix. One has
%\[ T\K_{\alpha, \hat{\alpha}} = \begin{cases}
%z^{\ell(\alpha) - |\alpha|} & \text{ if } \alpha = \hat{\alpha} \\
%0 & \text{ if } |\alpha| \leq | \hat{\alpha} | + | \ell(\alpha) - \ell(\hat{\alpha})  |
%\end{cases}
%\]
%Hence only $\alpha = \hat{\alpha}$ contributes for $|\alpha| = |\hat{\alpha}|$.
%By assuming $|\alpha| = |\tilde{\alpha}|$ in \cite[Eqn. (59)]{PP_GWPT_Toric3folds} we hence find that in this case
%\[
%\K_{\alpha, \tilde{\alpha}} = \delta_{\alpha, \tilde{\alpha}} z^{\ell(\alpha)-|\alpha|} \frac{1}{(s_1 s_2 s_3)^{\ell(\alpha)-1}} \sum_{P \text{ set partitions of } \{ 1, \ldots, \ell \}}
%(-1)^{|P|-1} ( |P| -1 )!
%\]
%We have that
%\[
%\sum_{P \text{ set partitions of } \{ 1, \ldots, n \}}
%(-1)^{|P|-1} ( |P| -1 )!
%=
%\sum_{k = 1}^{n} S(n,k) (k-1)! (-1)^{k-1} \]
%where $S(n,k)$ is the number of set partitions of $\{1, \ldots, n \}$ into $k$ parts, also called the Sterling number of second kind.
%These satisfy the generating series identity:
%\[
%\sum_{n=k}^{\infty} S(n,k) \frac{x^n}{n!} = \frac{1}{k!} (e^x-1)^k. 
%\]
%Using that $S(n,k) = 0$ for $k>n$ we find that
%\begin{align*}
%\sum_{n=1}^{\infty} \frac{x^n}{n!} \sum_{k = 1}^{n} S(n,k) (k-1)! (-1)^{k-1}
%& = \sum_{k=1}^{\infty} (-1)^{k-1} \frac{1}{k} (e^x-1)^k \\
%& = \ln(1+ (e^x-1)) \\
%& = x.
%\end{align*}
%This concludes the claim.
%\end{proof}
\end{rmk}

\subsection{Generalized correspondence} \label{sec:generalized correspondence}
The conjectural GW/PT correspondence of Conjecture~\ref{conj:GWPT} apples to the usual descendent PT invariants,
and hence by the comparision of Proposition~\ref{prop: comparision}
to all marked relative invariants for insertions
of the form $\gamma = \pi_1^{\ast}(\gamma_1) \cdots \pi_r^{\ast}(\gamma_r)$ on $(X,D)^r$.
We will need a generalized form of the correspondence, valid for all $\gamma \in H^{\ast}((X,D)^r)$

Let $\alpha = (\alpha_1, \alpha_2 , \ldots, \alpha_r)$ be again a partition,
and let $P$ be a set partition of the index set $\{1, \ldots, r \}$.
For any $T \in P$ let a partition $\hat{\alpha}_T$ be given. We write
\[ \hat{\alpha} =  (\hat{\alpha}_T)_{T \in P}, \quad \ell(\hat{\alpha}) = \sum_T \ell(\hat{\alpha}_T). \]
Consider any set partition
\[ \left\{ 1, \ldots, r, r+1, \ldots, r+\ell(\hat{\alpha}) \right\} = \{ 1, \ldots, r \} \sqcup \bigsqcup_{T} I_T. \]

We define the cycle
\[ \Gamma_{\alpha,P, \hat{\alpha}} \in A^{\ast}( (X,D)^{r + \ell(\hat{\alpha}) }) \]
by
\[
\Gamma_{\alpha,P,\hat{\alpha}}
=
\prod_{T \in P} \pi_{T \sqcup I_T}^{\ast}\left( \pi_1^{\ast}(\K_{\alpha_{T}, \hat{\alpha}_T}) \cdot \Delta_{T \sqcup I_T}^{\rel} \right),
\]
where
\[
\pi_{T \sqcup I_T} : (X,D)^{r + \ell(\hat{\alpha})} \to (X,D)^{T \sqcup I_T}
\]
is the forgetful morphism to the indices labeled $T \sqcup I_T$,
\[ \pi_1 : (X,D)^{T \sqcup I_T}  \to (X,D) \cong X \]
is the map forgetting all but the first marking, and
\[ \Delta_{T \sqcup I_T}^{\rel} \subset (X,D)^{T \sqcup I_T} \]
is the small diagonal (the image of the diagonal $(X,D) \to (X,D)^{T \sqcup I_T}$).

We view the cycle $\Gamma_{\alpha,P,\hat{\alpha}}$ as defining a morphism on cohomology in the usual way:
\begin{gather*}
\Gamma_{\alpha,P, \hat{\alpha}} : H^{\ast}( (X,D)^r ) \to H^{\ast}( (X,D)^{\ell(\widehat{\alpha})} ) \\
\gamma \mapsto \rho_{2 \ast}( \rho_1^{\ast}( \gamma) \cdot \Gamma_{\alpha,P, \hat{\alpha}})
\end{gather*}
where the maps $\rho_1, \rho_2$ are the natural projection maps:
\[
\begin{tikzcd}
&  (X,D)^{r + \ell(\hat{\alpha})} \ar{dl}[swap]{\rho_1} \ar{dr}{\rho_2} & \\
(X,D)^r & & (X,D)^{\ell(\hat{\alpha})}.
\end{tikzcd}
\]
%
%Given a partition $\alpha = (\alpha_1, \alpha_2, \ldots, \alpha_r)$ and a class $\gamma \in H^{\ast}( (X,D)^r)$ set
%\[
%\tau_{[\alpha]} = \tau_{\alpha_{1}-1} \cdots \tau_{\alpha_r - 1}, \quad \tau_{[\alpha]} ( \gamma ):= \tau_{\alpha_{1}-1} \cdots \tau_{\alpha_r - 1}(\gamma).
%\]

The main definition is the following:
\begin{defn} \label{defn:desc tra 2} For any $\gamma \in H^{\ast}((X,D)^r)$ we have
\[
\overline{\tau_{\alpha_{1}-1} \cdots \tau_{\alpha_r - 1}(\gamma)}
=
\sum_{P \textup{ set partition of }\{1,\ldots,\}} 
\sum_{ \substack{\textup{for all }T \in P \\ \textup{a partition } \hat{\alpha}_T }}
\left( { \textstyle \prod_{T} \tau_{[\hat{\alpha}_T]} }\right) ( \Gamma_{\alpha,P, \hat{\alpha}}(\gamma) )
\]
\end{defn}

In the special case where $\gamma$ is pulled back from $X^r$ we recover Definition~\ref{defn:desc tra 1}.

\begin{lemma} \label{lemma:correspondence standard case}
Assume that $\gamma = \pi_1^{\ast}(\gamma_1) \cdots \pi_r^{\ast}(\gamma_r) \in H^{\ast}((X,D)^r)$ 
for some $\gamma_i \in H^{\ast}(X)$. Then the transformation of Definition~\ref{defn:desc tra 2} yields:
\begin{equation*}
\overline{\tau_{\alpha_1-1}(\gamma_1)\cdots
\tau_{\alpha_{\ell}-1}(\gamma_{\ell})}
=
\sum_{\substack{P \textup{ set partitions}\\\textup{ of }\{1,\ldots,r\}}}\ \prod_{T\in P}\ 
%\sum_{ \substack{\text{for all }T \in P \\ \text{partitions } \hat{\alpha}_T }}
%\prod_{T} \pi_{I_T}^{\ast}\left( 
\left[ \sum_{\tilde{\alpha}} \tau_{[\tilde{\alpha}]}\Bigg(\Delta^{\text{rel}}_{1, \ldots, \ell(\tilde{\alpha})} \cdot \pi_1^{\ast}\big( \mathsf{K}_{\alpha_T,\tilde{\alpha}} \cdot \prod_{i \in T} \gamma_i \big) \Bigg) \right].
\end{equation*}
where $\tilde{\alpha}$ runs over partitions,
and $\Delta_{1, \ldots, \ell(\widetilde{\alpha})}^{\rel}$ is the small diagonal in $(X,D)^{\ell(\widetilde{\alpha})}$.
\end{lemma}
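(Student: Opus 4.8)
The plan is to fix a set partition $P$ of $\{1,\ldots,r\}$ together with a choice of partition $\hat\alpha_T$ for each $T\in P$, and to match the resulting summand of Definition~\ref{defn:desc tra 2} with the summand of Definition~\ref{defn:desc tra 1} obtained by choosing $\tilde\alpha=\hat\alpha_T$ in the $T$-th bracket. Expanding the product $\prod_{T}[\sum_{\tilde\alpha}\cdots]$ in Definition~\ref{defn:desc tra 1} distributes precisely into a sum over such choices $(\hat\alpha_T)_{T\in P}$, so the summation structures of the two definitions agree. Since $\tau_{[\hat\alpha]}=\prod_{T}\tau_{[\hat\alpha_T]}$ and, by the notational convention of Section~\ref{subsec:relative descendents}, applying $\prod_T\tau_{[\hat\alpha_T]}$ to an external product $\prod_T\pi_{I_T}^{\ast}(\eta_T)$ of classes $\eta_T\in H^{\ast}((X,D)^{I_T})$ is \emph{by definition} the product $\prod_T\tau_{[\hat\alpha_T]}(\eta_T)$, the entire lemma reduces to the cohomological identity
\begin{equation*}
\Gamma_{\alpha,P,\hat\alpha}(\gamma)=\prod_{T\in P}\pi_{I_T}^{\ast}\Big(\Delta^{\rel}_{I_T}\cdot\pi_1^{\ast}\big(\K_{\alpha_T,\hat\alpha_T}\cdot{\textstyle\prod_{i\in T}}\gamma_i\big)\Big)\in H^{\ast}((X,D)^{\ell(\hat\alpha)})
\end{equation*}
for $\gamma=\prod_{i=1}^{r}\pi_i^{\ast}(\gamma_i)$, where $\Delta^{\rel}_{I_T}$ is the small diagonal on the markings $I_T$.

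First I would pull $\gamma$ back along $\rho_1$. Because $\pi_i\circ\rho_1=\pi_i$ as maps to $X$, we get $\rho_1^{\ast}(\gamma)=\prod_{i=1}^r\pi_i^{\ast}(\gamma_i)$ on $(X,D)^{r+\ell(\hat\alpha)}$. Writing $D_T:=\pi_{T\sqcup I_T}^{\ast}[\Delta^{\rel}_{T\sqcup I_T}]$ and using $\pi_1\circ\pi_{T\sqcup I_T}=\pi_j$ for the first marking $j$ of $T\sqcup I_T$, the $T$-th factor of $\rho_1^{\ast}(\gamma)\cdot\Gamma_{\alpha,P,\hat\alpha}$ becomes $\pi_{j}^{\ast}(\K_{\alpha_T,\hat\alpha_T})\cdot\prod_{i\in T}\pi_i^{\ast}(\gamma_i)\cdot D_T$. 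The key local manipulation is that on the diagonal locus $D_T$ all markings of $T\sqcup I_T$ coincide, so the class $\K_{\alpha_T,\hat\alpha_T}$ together with the weights $\gamma_i$ ($i\in T$) may all be transported onto a single chosen retained marking $j_T\in I_T$; this is possible because $\K_{\alpha_T,\hat\alpha_T}$ vanishes unless $\hat\alpha_T$ has positive size, whence $I_T\neq\varnothing$. Setting $\kappa_T=\K_{\alpha_T,\hat\alpha_T}\cdot\prod_{i\in T}\gamma_i$ and using $\pi_{j_T}=\pi^{(\ell)}_{j_T}\circ\rho_2$ (valid since $j_T$ is retained, where $\pi^{(\ell)}_{j_T}\colon(X,D)^{\ell(\hat\alpha)}\to X$ reads off the $j_T$-th coordinate), each factor is turned into $\rho_2^{\ast}\pi^{(\ell)\ast}_{j_T}(\kappa_T)\cdot D_T$.

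With every cohomology weight now pulled back along $\rho_2$, the projection formula yields
\[
\Gamma_{\alpha,P,\hat\alpha}(\gamma)=\rho_{2\ast}\big(\rho_1^{\ast}\gamma\cdot\Gamma_{\alpha,P,\hat\alpha}\big)=\Big({\textstyle\prod_T}\pi^{(\ell)\ast}_{j_T}(\kappa_T)\Big)\cdot\rho_{2\ast}\Big({\textstyle\prod_{T}}D_T\Big),
\]
so everything comes down to the purely geometric identity $\rho_{2\ast}\big(\prod_T D_T\big)=\prod_T\pi_{I_T}^{\ast}[\Delta^{\rel}_{I_T}]$. This is the step I expect to be the main obstacle. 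I would prove it by identifying $\bigcap_T D_T$ with the image of the partial-diagonal embedding $\delta_P\colon(X,D)^{P}\to(X,D)^{r+\ell(\hat\alpha)}$ repeating the $T$-th point along all markings of $T\sqcup I_T$ — the intersection being dimensionally proper since the diagonals involve disjoint groups of markings, so that $\prod_T D_T=\delta_{P\ast}[(X,D)^{P}]$ — and likewise identifying $\bigcap_T\Delta^{\rel}_{I_T}$ with the image of $\delta_P'\colon(X,D)^{P}\to(X,D)^{\ell(\hat\alpha)}$. Since $\rho_2\circ\delta_P=\delta_P'$ and each $I_T$ is non-empty, $\rho_2$ restricts to an isomorphism between these two loci and the pushforward has degree one. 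The care needed is exactly the interaction with the expanded geometry: one must check that forgetting the markings in $\bigsqcup_T T$ — all pinned by the small diagonal to a retained marking of $\bigsqcup_T I_T$ — contracts no essential bubble and creates no automorphism, which follows from the flatness and compatibility of the forgetful morphisms in Proposition~\ref{prop:moduli of ordered points} together with $I_T\neq\varnothing$.

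Finally, transporting the weight back onto the first marking of $I_T$ along the diagonal gives $\pi^{(\ell)\ast}_{j_T}(\kappa_T)\cdot\pi_{I_T}^{\ast}[\Delta^{\rel}_{I_T}]=\pi_{I_T}^{\ast}\big(\pi_1^{\ast}(\kappa_T)\cdot\Delta^{\rel}_{I_T}\big)$, which produces the displayed formula for $\Gamma_{\alpha,P,\hat\alpha}(\gamma)$. Applying $\tau_{[\hat\alpha]}=\prod_T\tau_{[\hat\alpha_T]}$, invoking the convention of Section~\ref{subsec:relative descendents} to split the insertion, and summing over $P$ and $(\hat\alpha_T)_{T\in P}$ then recovers Definition~\ref{defn:desc tra 1} term by term, completing the proof.
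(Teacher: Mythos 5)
Your proof is correct and follows essentially the same route as the paper's: pull back $\gamma$ along $\rho_1$, slide the weights $\gamma_i$ and $\K_{\alpha_T,\hat{\alpha}_T}$ along each small relative diagonal onto a retained marking, and push forward by $\rho_2$ using that the forgetful morphism carries the small diagonal isomorphically onto the smaller one. Your geometric identity $\rho_{2\ast}\big(\prod_T D_T\big)=\prod_T\pi_{I_T}^{\ast}\big[\Delta^{\rel}_{I_T}\big]$ is just an unpacked, blockwise form of the pushforward identity $\pi_{I\ast}\big(\pi_1^{\ast}(\delta)\,\Delta^{\rel}_{I\sqcup J}\big)=\pi_1^{\ast}(\delta)\cdot\Delta^{\rel}_{J}$ that the paper invokes parenthetically at the same point, with $I_T\neq\varnothing$ guaranteed, as you correctly note, because the correspondence matrix is indexed by partitions of positive size.
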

\begin{proof}
If $\gamma = \pi_1^{\ast}(\gamma_1) \cdots \pi_r^{\ast}(\gamma_r)$, then we have
\begin{align*}
\rho_1^{\ast}(\gamma) \cdot \pi_{T \sqcup I_T}^{\ast}\left( \pi_1^{\ast}(\K_{\alpha_{T}, \hat{\alpha}_T}) \cdot \Delta_{T \sqcup I_T}^{\rel} \right) 
& = \pi_1^{\ast}(\gamma_1) \cdots \pi_r^{\ast}(\gamma_r) \cdot \pi_{T \sqcup I_T}^{\ast}\left( \pi_1^{\ast}(\K_{\alpha_{T}, \hat{\alpha}_T}) \cdot \Delta_{T \sqcup I_T}^{\rel} \right) \\
& = \prod_{i \notin T} \pi_i^{\ast}(\gamma_i) \cdot 
\pi_{T \sqcup I_T}^{\ast}\left( \prod_{i \in T} \pi_i^{\ast}(\gamma_i) \cdot \pi_1^{\ast}(\K_{\alpha_{T}, \hat{\alpha}_T}) \cdot \Delta_{T \sqcup I_T}^{\rel} \right) \\
& = \prod_{i \notin T} \pi_i^{\ast}(\gamma_i) \cdot 
\pi_{T \sqcup I_T}^{\ast}\left( \pi_1^{\ast}\Big( \K_{\alpha_{T}, \hat{\alpha}_T} \cdot \prod_{i \in T} \gamma_i \Big) \cdot \Delta_{T \sqcup I_T}^{\rel} \right).
\end{align*}
Taking the product over $T$ we hence find that
\[
\rho_1^{\ast}(\gamma) \Gamma_{\alpha, P, \widehat{\alpha}}
=
\prod_{T \in P}
\pi_{T \sqcup I_T}^{\ast}\left( \pi_1^{\ast}\Big( \K_{\alpha_{T}, \hat{\alpha}_T} \cdot \prod_{i \in T} \gamma_i \Big) \cdot \Delta_{T \sqcup I_T}^{\rel} \right).
\]
This implies the claim by pushforward by $\rho_2$ (Use that if $\delta \in H^{\ast}(X)$ is a class,
and given any partition $\{ 1, \ldots, s \} = I \sqcup J$, then in $H^{\ast}((X,D)^J)$ we have
\[
\pi_{I \ast}( \pi_1^{\ast}(\delta) \Delta_{I \sqcup J}^{\rel}) = \pi_1^{\ast}(\delta) \cdot \Delta_J^{\rel}. \ )
\]
\end{proof}

We can now state the general form of the GW/PT correspondence for marked relative insertions.

\begin{conj} \label{conj:relGWPT}
For all $\gamma \in H^{\ast}((X,D)^r)$ we have that
\[ Z^{(X,D)}_{\PT, \beta}\big(\, \lambda \, \big|\, \tau_{\alpha_1-1} \cdots \tau_{\alpha_r-1}(\gamma) \big) \]
is the Fourier expansion of a rational function in $p$, and
%\begin{multline} \label{eqn_correspondence}
\[
Z^{(X,D)}_{\PT, \beta}\big(\, \lambda \, \big|\, \tau_{\alpha_1-1} \cdots \tau_{\alpha_r-1}(\gamma) \big)
=
Z^{(X,D)}_{\GW, \beta}\left( \, \lambda \, \middle|  \, \overline{\tau_{\alpha_1-1} \cdots \tau_{\alpha_r-1}(\gamma)} \right)
\]
%\end{multline}
under the variable change $p=e^{z}$.
\end{conj}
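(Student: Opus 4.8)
The plan is to deduce the generalized correspondence of Conjecture~\ref{conj:relGWPT} from the standard one, Conjecture~\ref{conj:GWPT}, by induction on the curve class $\beta$, with the splitting formula for relative diagonals as the inductive engine. The base of the induction is already available: when $\gamma = \pi_1^*(\gamma_1)\cdots\pi_r^*(\gamma_r)$ is pulled back from $X^r$, Lemma~\ref{lemma:correspondence standard case} identifies the descendent transformation of Definition~\ref{defn:desc tra 2} with the standard one, and Proposition~\ref{prop: comparision} identifies the marked invariants with the ordinary descendent invariants, so Conjecture~\ref{conj:relGWPT} collapses to Conjecture~\ref{conj:GWPT}. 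Since both partition functions and the transformation $\overline{(\cdots)}$ are linear in $\gamma$, it suffices to verify the correspondence on a spanning set of $H^*((X,D)^r)$.

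First I would exploit the stratification of $(X,D)^r$ by expansion type. By the iterated blow-up description over $X^r$, with the model $(X,D)^2 \cong \mathrm{Bl}_{D\times D}(X\times X)$ and Lemma~\ref{lemma:rel splitting} as prototype, the space $H^*((X,D)^r)$ is spanned by classes pulled back from $X^r$ together with classes $\xi_{I*}(\eta)$ pushed forward along the gluing maps $\xi_I\colon (\p,D_{0,\infty})^{I,\sim}\times(X,D)^J \to (X,D)^r$ from the rubber boundary strata. Pulled-back classes are the base case. For a boundary class $\gamma = \xi_{I*}(\eta)$, the pullback $\ev^*\gamma$ to the moduli space $P_{\Gamma,r}(X,D)$ is supported on the locus of stable pairs with nontrivial expansion, which is exactly the boundary governed by the splitting/gluing analysis underlying Proposition~\ref{prop:splitting relative diagonal}; that analysis applies to a general boundary class and not only to the relative diagonal singled out there for the applications.

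Running this splitting on both theories, and using that the two partition functions obey splitting formulas of identical shape (Propositions~\ref{prop:splitting relative diagonal} and~\ref{prop:splitting relative diagonal GW}), I would express $Z^{(X,D)}_\beta(\lambda\,|\,\tau(\gamma))$ as a finite sum of products of a rubber invariant of $(\p,D_{0,\infty})^\sim$ with an invariant of $(X,D)$ of strictly smaller curve class $\beta'$, where $\iota_*\alpha+\beta'=\beta$ with $\alpha\neq 0$. The crux, and the step I expect to be the main obstacle, is to check that the descendent transformation of Definition~\ref{defn:desc tra 2} is compatible with this splitting: after applying $\overline{(\cdots)}$ and invoking the GW splitting formula, one must recover term-by-term the same combination obtained by first splitting on the PT side and then transforming each factor. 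Concretely this reduces to a cycle-level identity, namely that the restriction $\xi_I^*\Gamma_{\alpha,P,\hat{\alpha}}$ of the correspondence cycle factors as a product of a correspondence cycle for the rubber $(\p,D_{0,\infty})^\sim$ and one for $(X,D)^J$; Lemma~\ref{lemma:diagonal pullback} supplies the behavior of the relative diagonals under $\xi_I^*$ and the multiplicativity of $\Gamma_{\alpha,P,\hat{\alpha}}$ over the set partition $P$ makes the factorization natural, but carrying it out in full is the technical core.

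Finally I would close the induction. The rubber correspondence factors are reduced to the non-rubber pair $(\p,D_{0,\infty})$ by rigidification (Corollary~\ref{cor:rigidification}), which trades a rubber bracket for an ordinary relative bracket with an extra $\pi_1^*(D_0)$ insertion; as $(\p,D_{0,\infty})$ is a projective bundle over the surface $D$ and its relevant curve class is bounded by $\alpha$, both the rubber factor and the $\beta'$-factor fall under the induction hypothesis, the rubber being handled by an inner induction using the same boundary analysis. Rationality in $p$ is inherited from the base case and the rubber case through the finite splitting sum, and the claimed identity follows under $p=e^z$. This closes the induction and establishes that Conjecture~\ref{conj:relGWPT} follows from the standard correspondence Conjecture~\ref{conj:GWPT} together with the correspondence for the simpler rubber geometries entering the splitting.
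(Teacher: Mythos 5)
The statement you are proving is a \emph{conjecture} in the paper: it is not proved there in the generality stated, and your proposal cannot prove it either, since your argument is explicitly conditional on the standard correspondence (Conjecture~\ref{conj:GWPT}), which is itself open. What the paper actually establishes is exactly the conditional, partial version of your plan: Proposition~\ref{prop:relGWPT} shows that if Conjecture~\ref{conj:GWPT} holds for $(X,D)$ and for $(\p(N_{D/X}\oplus\CO),D_{0,\infty})$, then Conjecture~\ref{conj:relGWPT} holds for all $\gamma$ in the subring $DH^{\ast}((X,D)^r)$ generated by the big relative diagonals $\Delta^{\rel}_{ab}$ and the pullbacks $\pi_i^{\ast}(\gamma_i)$. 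Your overall architecture — base case via Proposition~\ref{prop: comparision} and Lemma~\ref{lemma:correspondence standard case}, splitting on both sides, rigidification, and the cycle-level factorization of $\xi_I^{\ast}\Gamma_{\alpha,P,\hat{\alpha}}$ that you correctly identify as the crux — is precisely the paper's proof; in particular your ``technical core'' is carried out there as Lemma~\ref{lemma:splitting}, using Lemma~\ref{lemma:point rigidification}, Lemma~\ref{lemma:diagonal pullback} and Lemma~\ref{lemma:c_i restriction}. But you claim strictly more than the paper, and the two steps by which you go beyond it both have genuine gaps. First, your spanning claim — that $H^{\ast}((X,D)^r)$ is spanned by pullbacks from $X^r$ and boundary pushforwards $\xi_{I\ast}(\eta)$, together with a splitting formula valid for arbitrary such $\eta$ — is not available. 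The paper's splitting formula (Proposition~\ref{prop:splitting relative diagonal}) is proved only for classes of the form $\pi_{12}^{\ast}j_{\ast}(\cdot)$, i.e.\ supported on the pullback of the divisor $\p(N)\subset(X,D)^2$, via the virtual identity $j^{!}[P_{\Gamma,r}(X,D)]^{\vir}=\sum\widetilde{\xi}_{\ast}[R_I P_{\Gamma,r}(X,D)]^{\vir}$; a class $\xi_{I\ast}(\eta)$ supported on a deeper, non-divisorial stratum would require new virtual splitting results along multi-step expansions that are nowhere established. Indeed the paper explicitly flags in a footnote that it is \emph{open} whether $DH^{\ast}((X,D)^r)$ equals $H^{\ast}((X,D)^r)$ — precisely the question your spanning argument would have to settle.

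Second, your inductive engine does not terminate. You assert the splitting produces factors ``of strictly smaller curve class $\beta'$, where $\iota_{\ast}\alpha+\beta'=\beta$ with $\alpha\neq 0$,'' but the splitting sum includes $\alpha=0$, in which case $\beta'=\beta$ and your induction on the curve class makes no progress. The $\alpha=0$ rubber terms genuinely contribute on both sides (fiber-class multiple covers in the rubber on the GW side; compare the rubber term $\mathsf{R}^{\PT}_{(0,d)}$ retained in Section~\ref{sec:birelative residue theory}), so they cannot be discarded. The paper avoids this entirely by inducting not on $\beta$ but on the number $L$ of diagonal factors in a monomial presentation of $\gamma\in DH^{\ast}$: by Lemma~\ref{lemma:diagonal pullback} every term produced by one application of the splitting — including every $\alpha=0$ term — has at most $L-1$ diagonal factors, and the rubber factors are absorbed by the hypothesis that Conjecture~\ref{conj:GWPT} holds for $(\p,D_{0,\infty})$ after rigidification (Corollary~\ref{cor:rigidification}), rather than by a further induction on curve classes. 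To repair your write-up you would need to (i) restrict the target class of the statement to $DH^{\ast}((X,D)^r)$, or else prove the spanning claim and the deeper-stratum splitting formulas, and (ii) replace the induction on $\beta$ by the induction on the number of diagonal insertions.
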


\subsection{Compatibility with the splitting formula} \label{subsec:compatibility splitting formula}
We derive the compatibility with the splitting formula.
Consider the subring\footnote{It would be interesting to know whether this subring is equal to the full cohomology ring $H^{\ast}((X,D)^r)$.}
\[ DH^{\ast}( (X,D)^r ) \subset H^{\ast}((X,D)^r) \]
generated by all
\begin{itemize} 
\item big diagonals $\Delta_{ab}^{\rel} = \pi^{\ast}(\Delta_{ab})$ for all $a,b \in \{ 1, \ldots, r \}$,
\item classes $\pi_i^{\ast}(\gamma)$ for all $i \in \{ 1, \ldots, r \}$ and $\gamma \in H^{\ast}(X)$,
where $\pi_i : (X,D)^r \to (X,D)^1 \cong X$ is the map forgetting all but the $i$-th marking.
\end{itemize}

\begin{prop} \label{prop:relGWPT}
Assume that Conjecture~\ref{conj:GWPT} holds for
$(X,D)$ and $(\p(N_{D/X} \oplus \CO), D_{0, \infty})$.
Then Conjecture~\ref{conj:relGWPT} holds for $(X,D)$ and all $\gamma \in DH^{\ast}((X,D)^r)$.
\end{prop}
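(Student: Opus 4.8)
The plan is to prove the statement by a simultaneous induction on the number $m$ of relative-diagonal factors $\Delta_{ab}^{\rel}$ appearing in a monomial generating $DH^{\ast}((X,D)^r)$, run in parallel for the pair $(X,D)$ and for the projective-bundle pair $(\p(N_{D/X}\oplus\CO),D_{0,\infty})$ together with its rubber. Since both partition functions $Z_{\PT}$, $Z_{\GW}$ and the descendent transformation $\overline{(\cdots)}$ are linear in $\gamma$, it suffices to treat one such monomial at a time. The base case $m=0$ is when $\gamma=\prod_i\pi_i^{\ast}(\gamma_i)$ is a product of pullbacks: here Lemma~\ref{lemma:correspondence standard case} identifies the generalized transformation of Definition~\ref{defn:desc tra 2} with the standard one of Definition~\ref{defn:desc tra 1}, and Proposition~\ref{prop: comparision} identifies the marked invariants with the ordinary descendent invariants, so that Conjecture~\ref{conj:relGWPT} reduces exactly to Conjecture~\ref{conj:GWPT}, which holds by hypothesis for both assumed geometries.

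For the inductive step I would relabel the markings so that one chosen relative diagonal is $\Delta_{12}^{\rel}$ and write $\gamma=\Delta_{12}^{\rel}\cdot\gamma'$ with $\gamma'$ carrying $m-1$ relative diagonals. First I apply the splitting formula in the partition-function form of Section~\ref{subsec:compatibility splitting formula} to the PT side, expressing $Z^{(X,D)}_{\PT,\beta}(\lambda\,|\,\tau_{\vec k}(\Delta_{12}^{\rel}\gamma'))$ as $Z^{(X,D)}_{\PT,\beta}(\lambda\,|\,\tau_{\vec k}(\Delta_{12}\gamma'))$ minus a sum over $\{1,\dots,r\}=I\sqcup J$ (with $1,2\in I$) of products of a rubber PT invariant and an $(X,D)$ PT invariant. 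The decisive bookkeeping is that every resulting insertion carries strictly fewer relative diagonals: the absolute diagonal $\Delta_{12}$ is a sum of pullbacks via its Künneth expansion, so $\Delta_{12}\gamma'$ has $m-1$; by Lemma~\ref{lemma:diagonal pullback} the map $\xi_I^{\ast}$ sends each relative diagonal of $\gamma'$ either to zero or to a relative diagonal on a single factor, so the Künneth pieces $\delta_{I,\ell}$ (rubber) and $\delta'_{I,\ell}$ ($(X,D)$) each carry at most $m-1$; and the extra rubber factor $\Delta_{D,12}$ is again a sum of pullbacks (Künneth of $\Delta_D$), contributing none. Hence the induction hypothesis applies to each PT piece: to the $(X,D)$ pieces directly, and to the rubber pieces after reducing them via the rigidification identity (Corollary~\ref{cor:rigidification} and its GW analogue) to non-rubber $(\p,D_{0,\infty})$ invariants. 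Since the rubber of $(\p,D_{0,\infty})$ is itself isomorphic to $(\p,D_{0,\infty})$, the induction closes on exactly the two assumed geometries.

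Converting each PT piece into its GW counterpart by the induction hypothesis and reassembling, I would then run the Gromov--Witten splitting formula in reverse to recognise the result as the GW splitting of $Z^{(X,D)}_{\GW,\beta}(\lambda\,|\,\overline{\tau_{\vec k}(\Delta_{12}^{\rel}\gamma')})$; the rationality in $p$ propagates through the same induction. The hard part will be precisely this last reassembly: one must show that the descendent transformation $\overline{(\cdots)}$ commutes with the relative-diagonal splitting, i.e. that splitting the transformed insertion reproduces the transformations of the split pieces on the rubber and on $(X,D)$. This hinges on combining the decomposition $\Delta_{12}^{\rel}=\Delta_{12}-\sum_I\xi_{I\ast}\Delta_{D,12}$ (Corollary~\ref{cor:diagonal splitting general}), the restriction behaviour of the relative diagonals inside the universal cycle $\Gamma_{\alpha,P,\hat\alpha}$ under $\xi_I$ (Lemma~\ref{lemma:diagonal pullback}), and the compatibility of the correspondence matrix $\K$ --- through its dependence on $c_i=c_i(T_X[-D])$ --- with restriction under the degeneration to the normal cone, matching the Chern-class bookkeeping already used for compatibility with the degeneration formula. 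The most delicate points are the $\BG_m$-rubber scaling and making the rigidification bookkeeping for the rubber factor compatible with the set-partition combinatorics defining $\overline{(\cdots)}$.
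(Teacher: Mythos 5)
Your proposal follows the paper's own proof essentially step for step: induction on the number of relative-diagonal factors, with the base case handled by Lemma~\ref{lemma:correspondence standard case} together with Proposition~\ref{prop: comparision}, and the inductive step running the PT splitting formula, reducing rubber terms by rigidification (Corollary~\ref{cor:rigidification}), and splitting $\Delta^{\rel}_{12}=\Delta_{12}-j_{\ast}g^{\ast}\Delta_D$ as in Lemma~\ref{lemma:rel splitting}, exactly as in Section~\ref{subsec:compatibility splitting formula}. The ``hard part'' you correctly flag --- that the transformation $\overline{(\cdots)}$ commutes with the relative-diagonal splitting --- is precisely the paper's Lemma~\ref{lemma:splitting}, and the paper proves it with exactly the ingredients you list: the birationality of the gluing map (Lemma~\ref{lemma:2diagonal}, yielding Corollary~\ref{cor:diagonal splitting general}), the restriction behaviour of Lemma~\ref{lemma:diagonal pullback}, point-level rigidification (Lemma~\ref{lemma:point rigidification}), and the Chern-class restriction $c_i(T_{\p}[-D_{0,\infty}])=\pi_D^{\ast}c_i(T_D)$ of Lemma~\ref{lemma:c_i restriction}.
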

\begin{proof}
For $r=0$ the claim is trivial,
and for $r=1$ it holds since $(X,D)^1 \cong X$ and by
Proposition~\ref{prop: comparision}. In general,
we write $\gamma$ as a monomials in diagonal classes $\Delta^{\rel}_{ab}$ and pullbacks $\pi_i^{\ast}(\gamma_i)$,
and argue by induction on the number of diagonal factors.
If there are no diagonal factors the claim follows by Proposition~\ref{prop: comparision}, this is the base.
In general, let $\gamma$ be written as a product of $L>0$ diagonal factors and assume the statement whenever there are less than $L$ factors.
Without loss of generality, we may write
\[ \gamma = \Delta^{\rel}_{12} \gamma' \]
where $\gamma'$ is written as a product of $(L-1)$-diagonal factors.
Write also $\alpha = (k_1+1, \ldots, k_r+1)$.

\vspace{5pt}
\noindent \textbf{Rationality.}
We consider the PT side and apply the splitting formula. The result is:
\begin{multline} \label{3sdf11}
Z^{(X,D)}_{\PT, \beta} \left( \, \lambda \, \big| \, \tau_{k_1} \cdots \tau_{k_r}( \Delta_{12}^{\rel} \cdot \gamma) \right)
=
Z^{(X,D)}_{\PT, \beta} \left( \, \lambda \, \big| \, \tau_{k_1} \cdots \tau_{k_r}( \Delta_{12} \cdot \gamma) \right)  \\[5pt]
- \sum_{ \substack{\iota_{\ast} \alpha + \beta' = \beta \\ \{ 1, \ldots, r \} = A \sqcup B \\ 1,2 \in A \\ \ell }}
Z^{(\p,D_{0,\infty}), \sim}_{\PT,(\alpha, d)}\left( \Delta_{D^{[d_0]},1}, \lambda \big| \left( {\textstyle \prod_{i \in A} \tau_{k_i} } \right) ( \Delta_{D} \delta_{A,\ell}) \right)
Z^{(X,D)}_{\PT,\beta'}\left( \Delta_{D^{[d_0]},2} \big| \left( {\textstyle \prod_{i \in B} \tau_{k_i} } \right)( \delta'_{A,\ell} ) \right)
\end{multline}
where $d = \beta \cdot D$ and $d_0 = d - \int_{\alpha} c_1(N_{D/X})$ and % $\lambda \in H^{\ast}(D^{[d]})$, and 
$\Delta_{D^{[d_0]},1}, \Delta_{D^{[d_0]},2}$ runs over the K\"unneth decomposition of the diagonal $\Delta_{D^{[d_0]}}$.
Moreover, we used the K\"unneth decomposition
\begin{equation} \xi_{A}^{\ast}( \gamma ) = \sum_{\ell} \delta_{A,\ell} \otimes \delta'_{A,\ell}  \quad \in H^{\ast}( (\p, D_{0,\infty})^{A, \sim} \times (X,D)^{B} ). \label{Kunneth11}
\end{equation}
where for any decomposition $\{1, \ldots, r \} = A \sqcup B$ we have the gluing morphism
\[ \xi_{A} : (\p, D_{0,\infty})^{A,\sim} \times (X,D)^{B} \to (X,D)^r. \]

By Lemma~\ref{lemma:diagonal pullback} all terms on the right are again product of big diagonals and $\pi_i^{\ast}(\gamma_i)$ for some $\gamma_i$.
We can further apply rifidification to conclude that:
\[
Z^{(\p,D_{0,\infty}), \sim}_{\PT,(\alpha, d)}\left( \Delta_1, \lambda \big| \left( {\textstyle \prod_{i \in A} \tau_{k_i} } \right) ( \Delta_{D} \delta_{A,\ell}) \right)
=
Z^{(\p,D_{0,\infty})}_{\PT,(\alpha, d)}\left( \Delta_1, \lambda \big| \left( {\textstyle \prod_{i \in A} \tau_{k_i} } \right) ( \pi_1^{\ast}(D_0) \Delta_{D} f^{\ast}(\delta_{A,\ell})) \right).
\]
where
\[ f : (\p, D_{0,\infty})^{r} \to (\p, D_{0,\infty})^{r,\sim} \]
is the forgetful morphism.

By induction applied to both $(X,D)$ and $(\p, D_{0,\infty})$ we find that
all terms on the right of \eqref{3sdf11} are the Fourier expansion of a rational functions in $p$,
hence so is the left hand side.

\vspace{5pt}
\noindent \textbf{GW/PT relation.}
Recall the diagram
\[
\begin{tikzcd}
\p(N) \ar{d}{g} \ar{r}{j} & (X,D)^2 \ar{d}{\pi} \\
D \times D \ar{r}{i} & X^2.
\end{tikzcd}
\]
where $N = \CO(D)|D \oplus \CO(D)|D$ is the normal bundle of $D \times D \subset X \times X$,
and the splitting
\[ \Delta_{(X,D)}^{\text{rel}} = \pi^{\ast}( \Delta_X ) - j_{\ast} g^{\ast}(\Delta_D). \]
Inserting we hence find
\begin{multline*}
Z^{(X,D)}_{\PT, \beta} \left( \, \lambda \, \big| \, \tau_{k_1} \cdots \tau_{k_r}( \Delta_{12}^{\rel} \cdot \gamma') \right) 
= \\
Z^{(X,D)}_{\PT, \beta} \left( \, \lambda \, \big| \, \tau_{k_1} \cdots \tau_{k_r}( \Delta_{12} \cdot \gamma') \right)
-
Z^{(X,D)}_{\PT, \beta} \left( \, \lambda \, \big| \, \tau_{k_1} \cdots \tau_{k_r}( (j_{\ast} g^{\ast}(\Delta_D))_{12} \cdot \gamma') \right)
\end{multline*}
By induction we have
\[
Z^{(X,D)}_{\PT, \beta} \left( \, \lambda \, \big| \, \tau_{k_1} \cdots \tau_{k_r}( \Delta_{12} \cdot \gamma') \right)
=
Z^{(X,D)}_{\GW, \beta} \left( \, \lambda \, \big| \, \overline{ \tau_{k_1} \cdots \tau_{k_r}( \Delta_{12} \cdot \gamma')} \right)
\]
hence it suffices to prove that:
\begin{equation} \label{ABC}
Z^{(X,D)}_{\PT, \beta} \left( \, \lambda \, \big| \, \tau_{k_1} \cdots \tau_{k_r}( (j_{\ast} g^{\ast}(\Delta_D))_{12} \cdot \gamma') \right)
=
Z^{(X,D)}_{\GW, \beta} \left( \, \lambda \, \big| \, \overline{ \tau_{k_1} \cdots \tau_{k_r}( (j_{\ast} g^{\ast}(\Delta_D))_{12} \cdot \gamma') } \right)
\end{equation}

We consider the term $\overline{ \tau_{k_1} \cdots \tau_{k_r}( (j_{\ast} g^{\ast}(\Delta_D))_{12} \cdot \gamma') }$.
For any set partition $P$ of $\{ 1, \ldots, r \}$ and for every list of partitions $(\hat{\alpha}_T)_{T \in P}$ 
we need to understand
\[ \Gamma_{\alpha, P ,\hat{\alpha}}\big( (j_{\ast} g^{\ast}(\Delta_D))_{12} \cdot \gamma' \big). \]

\begin{lemma} \label{lemma:splitting} In $H^{\ast}((X,D)^{\ell(\hat{\alpha})})$ we have the equality
\[
\Gamma_{\alpha, P ,\hat{\alpha}}\big( (j_{\ast} g^{\ast} \Delta_D)_{12} \cdot \gamma' \big) =
\sum_{A} \xi_{A \ast} (f \times \id)_{\ast}
\Big( \Gamma_{\alpha_A, P_A, \widehat{\alpha}_A}\big( \Delta_{D,12} \pi_1^{\ast}(D_0) f^{\ast}(\delta_{A,\ell}) \big)
\boxtimes \Gamma_{\alpha_B, P_B, \widehat{\alpha}_B}( \delta'_{A,\ell} ) \Big)
\]
where the sum runs over all decompositions $\{ 1, \ldots, r \} = A \sqcup B$ such that
\begin{itemize}[itemsep=0pt]
\item[(i)] $1,2 \in A$ 
\item[(ii)] $T \subset A$ or $T \subset B$ for all $T \in P$.
\end{itemize}
Moreover, we used the K\"unneth decomposition \eqref{Kunneth11} and
\begin{itemize}
\item $\alpha_A$ are the parts of $\alpha$ labeled by $I$,
\item $P_A = \{ T \in P | T \subset A\}$ is the partition of $A$ induced by $P$,
\item and $\widehat{\alpha}_A = (\widehat{\alpha}_T)_{T \subset A}$,
\end{itemize}
and similarly for $B$. The pushforward is along the map:
\[
\begin{tikzcd}
(\p, D_{0,\infty})^{\sqcup_{T \subset A} I_T, \sim} \times (X,D)^{\sqcup_{T \subset B} I_T} \ar{r}{f \times \id} &
(\p, D_{0,\infty})^{\sqcup_{T \subset A} I_T} \times (X,D)^{\sqcup_{T \subset B} I_T} \ar{r}{\xi} &
(X,D)^{\ell(\hat{\alpha})}
\end{tikzcd}
\]
\end{lemma}
\begin{proof}[Proof of Lemma~\ref{lemma:splitting}]
Consider the diagram
\[
\begin{tikzcd}
\bigsqcup_{I}
(\p, D_{0,\infty})^{\tilde{A},\sim} \times (X,D)^{\tilde{B}} \ar{r}{\sqcup_{\tilde{A}} \xi_{\tilde{A}}} & W \ar{r} \ar{d} &  (X,D)^{r + \ell(\hat{\alpha})} \ar{d} \\
& \p(N) \ar{r}{j} & (X,D)^2 
\end{tikzcd}
\]
where the right square is fibered (defining $W$),
and $\tilde{A}$ runs over subsets $\{ 1, \ldots, r+ \ell(\hat{\alpha}) \}$
containing $1,2$ with complement $\tilde{B}$.
By Lemma~\ref{lemma:2diagonal} the gluing map $\xi$ is birational, so that
\[ [W] = \sum_{1,2 \in \tilde{A}} \xi_{\ast}[ (\p, D_{0,\infty})^{\tilde{A},\sim} \times (X,D)^{\tilde{B}} ]. \]

For every $T \in P$ we have
\begin{equation} \label{Cases} \xi_{\tilde{A}}^{\ast}( \Delta_{I \sqcup I_T}^{\rel} ) = 
\begin{cases}
\Delta_{I \sqcup I_T}^{\rel} & \text{if } T \sqcup I_T \subset \tilde{A} \text{ or } T \sqcup I_T \subset \tilde{B} \\
0 & \text{otherwise}.
\end{cases}
\end{equation} 

The decompositions $\{ 1, \ldots, r+\ell(\hat{\alpha}) \} = \tilde{A} \sqcup \tilde{B}$ with $1,2 \in \tilde{A}$ and satisfying the first condition in \eqref{Cases}
are in $1$-to-$1$ correspondence with decompositions $\{1, \ldots, r \} = A \sqcup B$
satisfying (i) and (ii).
Indeed, to the decomposition $\{ 1, \ldots, r \} = A \sqcup B$ satisfying (i,ii)
we can associate 
\begin{equation} \tilde{A} = A \sqcup \bigsqcup_{\substack{T \in P \\ T \subset A}} I_T. \label{Atilde} \end{equation}
With $\rho_1 : (X,D)^{r + \ell(\hat{\alpha})} \to (X,D)^r$ the projection to the first $r$ factors, 
we hence have
\begin{multline}\label{XXY}
\rho_1^{\ast} \left( (j_{\ast} g^{\ast} \Delta_D)_{12} \cdot \gamma' \right) \cdot \Gamma_{\alpha, P , \widehat{\alpha}}
=\\
\sum_{A}
\sum_{\ell}
\xi_{\tilde{A} \ast} \Bigg( \left[ 
\rho^{\prime \ast}_1( \Delta_{D, 12} \delta_{A,\ell})
\prod_{\substack{ T \in P \\ T \subset A}} 
\pi_{T \sqcup I_T}^{\ast}\left( \pi_1^{\ast}(\K_{\alpha_{T}, \hat{\alpha}_T}\big|_{c_i = c_i(T_D)}) \Delta_{T \sqcup I_T}^{\rel} 
\right) \right] \\
\boxtimes
\left[ 
\rho^{\prime \prime \ast}_1(\delta_{B,\ell})
\prod_{\substack{ T \in P \\ T \subset B}} 
\pi_{T \sqcup I_T}^{\ast}\left( \pi_1^{\ast}(\K_{\alpha_{T}, \hat{\alpha}_T}) \Delta_{T \sqcup I_T}^{\rel}
\right) \right] \Bigg)
\end{multline} 
where $\tilde{A}$ is given by \eqref{Atilde} and we used the forgetful morphisms
\[ \rho'_1 : (\p, D_{0,\infty})^{\tilde{A}} \to (\p,D_{0,\infty})^{A},
\quad \rho^{\prime \prime}_1 : (X,D)^{\tilde{B}} \to (X,D)^B \]
as well as $c_i(T_X[-D]|_{D}) = c_i(T_D)$,
see Section~\ref{subsec:log tangent bundle}, and the K\"unnet decomposition \eqref{Kunneth11}

Write $\tilde{A} = A \sqcup \hat{A}$ and $\tilde{B} = B \sqcup \hat{B}$ and consider the commutative diagram
\[
\begin{tikzcd}
(\p,D_{0,\infty})^{\widehat{A}} \times (X,D)^{\widehat{B}} \ar{r}{f \times \id}  & 
(\p,D_{0,\infty})^{\widehat{A}, \sim} \times (X,D)^{\widehat{B}} \ar{r}{\xi_{\widehat{A}}} &
(X,D)^{\ell(\hat{\alpha})} \\
(\p,D_{0,\infty})^{\widetilde{A}} \times (X,D)^{\widetilde{B}} \ar{r}{f \times \id} \ar{u}{\rho_2' \times \rho_2^{\prime \prime}} \ar[swap]{d}{\rho_1' \times \rho_1^{\prime \prime}} & 
(\p,D_{0,\infty})^{\widetilde{A}, \sim} \times (X,D)^{\widetilde{B}} \ar{r}{\xi_{\widetilde{A}}} \ar{u}{\rho_2' \times \rho_2^{\prime \prime}} \ar[swap]{d}{\rho_1' \times \rho_1^{\prime \prime}} &
(X,D)^{r + \ell(\hat{\alpha})} \ar{u}{\rho_2} \ar[swap]{d}{\rho_1} \\
(\p,D_{0,\infty})^{A} \times (X,D)^{B} \ar{r}{f \times 1}  & 
(\p,D_{0,\infty})^{A,\sim} \times (X,D)^{B} \ar{r}{\xi_{A}} &
(X,D)^{r}.
\end{tikzcd}
\]
By Rigidification (Lemma~\ref{lemma:point rigidification}) and Lemma~\ref{lemma:c_i restriction}
we have that
\begin{multline} \label{rigi statement}
\rho^{\prime \ast}_1( \Delta_{D, 12} \delta_{A,\ell})
\prod_{\substack{ T \in P \\ T \subset A}} 
\pi_{T \sqcup I_T}^{\ast}\left( \pi_1^{\ast}(\K_{\alpha_{T}, \hat{\alpha}_T}\big|_{c_i = c_i(T_D)}) \Delta_{T \sqcup I_T}^{\rel} 
\right) \\
=
f_{\ast}\left[ \rho^{\prime \ast}_1( \pi_1^{\ast}([D_0]) \Delta_{D, 12} f^{\ast}(\delta_{A,\ell}))
\prod_{\substack{ T \in P \\ T \subset A}} 
\pi_{T \sqcup I_T}^{\ast}\left( \pi_1^{\ast}(\K_{\alpha_{T}, \hat{\alpha}_T}) \Delta_{T \sqcup I_T}^{\rel}
\right) \right]
\end{multline}
where we used the standard insertion $c_i = c_i(T_{\p}[-D_{0,\infty}])$ fof $\K_{\alpha_T, \hat{\alpha}_T}$.
The claim of the lemma now follows from \eqref{XXY} and \eqref{rigi statement}
by pushing forward to via $\rho_2$ 
\end{proof}

We now prove the desired equality:
\[
Z^{(X,D)}_{\PT, \beta} \left( \, \lambda \, \big| \, \tau_{k_1} \cdots \tau_{k_r}( (j_{\ast} g^{\ast}(\Delta_D))_{12} \cdot \gamma') \right)
=
Z^{(X,D)}_{\GW, \beta} \left( \, \lambda \, \big| \, \overline{ \tau_{k_1} \cdots \tau_{k_r}( (j_{\ast} g^{\ast}(\Delta_D))_{12} \cdot \gamma') } \right)
\]
The left hand side we computed in the Rationality part of the proof to be
\begin{multline*}
Z^{(X,D)}_{\PT, \beta} \left( \, \lambda \, \big| \, \tau_{k_1} \cdots \tau_{k_r}( (j_{\ast} g^{\ast}(\Delta_D))_{12} \cdot \gamma') \right) = \\
\sum_{ \substack{\iota_{\ast} \alpha + \beta' = \beta \\ \{ 1, \ldots, r \} = A \sqcup B \\ 1,2 \in A \\ \ell }}
Z^{(\p,D_{0,\infty})}_{\PT,(\alpha, d)}\left( \Delta_{D^{[d_0]}, 1}, \lambda \big| \left( {\textstyle \prod_{i \in A} \tau_{k_i} } \right) ( \pi_1^{\ast}(D_0) \Delta_{D} f^{\ast}(\delta_{A,\ell})) \right) \cdot \\
\cdot Z^{(X,D)}_{\PT,\beta'}\left( \Delta_{D^{[d_0]},2} \big| \left( {\textstyle \prod_{i \in B} \tau_{k_i} } \right)( \delta'_{A,\ell} ) \right)
\end{multline*}
For the right hand side, we apply the claim and the usual splitting law of \cite{Li1, Li2} which gives that
\begin{multline*}
Z^{(X,D)}_{\GW, \beta} \left( \, \lambda \, \big| \, \overline{ \tau_{k_1} \cdots \tau_{k_r}( (j_{\ast} g^{\ast}(\Delta_D))_{12} \cdot \gamma') } \right) \\
=
\sum_{ \substack{\iota_{\ast} \alpha + \beta' = \beta \\ \{ 1, \ldots, r \} = A \sqcup B \\ 1,2 \in A \\ \ell }}
Z^{(\p,D_{0,\infty})}_{\GW,(\alpha, d)}\left( \Delta_{D^{[d_0]}, 1}, \lambda \big| \overline{ \left( {\textstyle \prod_{i \in A} \tau_{k_i} } \right) ( \pi_1^{\ast}(D_0) \Delta_{D} f^{\ast}(\delta_{A,\ell}))} \right) \cdot \\
\cdot Z^{(X,D)}_{\GW,\beta'}\left( \Delta_{D^{[d_0]},2} \big| \overline{ \left( {\textstyle \prod_{i \in J} \tau_{k_i} } \right)( \delta'_{A,\ell} ) } \right).
\end{multline*}
We conclude the claim by the induction hypothesis.
\end{proof}

\begin{lemma} \label{lemma:c_i restriction} Let $\pi_D : \p \to D$ be the restriction. Then we have
\[ c_i( T_{\p}[-D_{0,\infty}]) = \pi_D^{\ast}( c_i(T_{D}) ). \]
\end{lemma}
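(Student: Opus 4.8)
The plan is to compute the full Chern class of $T_{\p}[-D_{0,\infty}]$ by separating the vertical (fibrewise) direction of the $\p^1$-bundle $\pi_D : \p \to D$ from the horizontal direction, and to show that the vertical logarithmic tangent bundle is trivial. Since $D_{0,\infty} = D_0 \sqcup D_{\infty}$ consists of two disjoint sections of $\pi_D$, each mapping isomorphically to $D$, the divisor $D_0 + D_{\infty}$ is relatively normal crossing over $D$; in particular its components are \'etale over $D$, so pulling back differentials from $D$ acquires no poles. First I would set up the relative logarithmic tangent sequence
\[
0 \to T_{\p/D}[-D_{0,\infty}] \to T_{\p}[-D_{0,\infty}] \to \pi_D^{\ast} T_D \to 0,
\]
where $T_{\p/D}[-D_{0,\infty}]$ is the dual of the relative logarithmic cotangent sheaf $\Omega_{\p/D}[D_0+D_{\infty}]$. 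This is the logarithmic version of the usual relative tangent sequence, and its exactness holds precisely because the boundary is horizontal and $\pi_D$ is smooth.

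Next I would prove that the relative logarithmic tangent bundle is trivial,
\[
T_{\p/D}[-D_{0,\infty}] \cong \CO_{\p}.
\]
Restricting to a fibre $F \cong \p^1$, the divisor $D_0 + D_{\infty}$ restricts to the two points $0, \infty$, and $T_{F}(-0-\infty) \cong T_{\p^1}(-0-\infty) \cong \CO_{\p^1}$. Hence $T_{\p/D}[-D_{0,\infty}]$ is a line bundle that is trivial on every fibre, so it is of the form $\pi_D^{\ast} L$ for a line bundle $L$ on $D$. To identify $L$ with $\CO_D$, I would use the fibrewise scaling $\BG_m$-action on $\p = \p(N_{D/X} \oplus \CO_D)$: its infinitesimal generator is a global vertical vector field $v$ on $\p$ which, as an ordinary section of $T_{\p/D}$, vanishes transversally exactly along $D_0$ and $D_{\infty}$. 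Viewed as a section of the logarithmic twist $T_{\p/D}[-D_{0,\infty}]$, the field $v$ is therefore nowhere vanishing, which trivializes the line bundle and forces $L \cong \CO_D$.

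Feeding the triviality into the short exact sequence, the total Chern class is multiplicative,
\[
c\big(T_{\p}[-D_{0,\infty}]\big) = c\big(T_{\p/D}[-D_{0,\infty}]\big) \cdot c\big(\pi_D^{\ast} T_D\big) = \pi_D^{\ast} c(T_D),
\]
and extracting degree $2i$ parts yields $c_i(T_{\p}[-D_{0,\infty}]) = \pi_D^{\ast}(c_i(T_D))$, which is the claim. The main obstacle I expect is purely bookkeeping rather than conceptual: verifying cleanly that $D_0 + D_{\infty}$ is relatively normal crossing so that the logarithmic relative tangent sequence is exact, and checking that the scaling vector field $v$ indeed has simple (order one) vanishing along each section so that it becomes a trivializing section after the logarithmic twist. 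Both points are local computations in a fibre coordinate on $\p$, but they are the steps that must be done with care to justify the two displayed isomorphisms.
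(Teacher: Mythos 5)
Your proof is correct, but it takes a genuinely different route from the paper. The paper computes additively with Chern characters: it applies the sequence \eqref{log tangent ses} to get $\ch(\Omega_{\p}[D_0+D_\infty]) = \ch(\Omega_\p) + \ch(\CO_{D_0}) + \ch(\CO_{D_\infty})$, splits $\ch(\Omega_\p)$ via the relative cotangent sequence and the relative Euler sequence, and then uses the identifications $\CO_\p(-1) \cong \CO(-D_0)$ and $N_{D/X}^\vee \otimes \CO_\p(-1) \cong \CO(-D_\infty)$ to see that all fiberwise terms cancel, leaving $\pi_D^\ast \ch(\Omega_D) + 1$; this determines the $c_i$ since the paper works in rational cohomology. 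You instead use the relative \emph{logarithmic} tangent sequence
\[
0 \to T_{\p/D}[-D_{0,\infty}] \to T_{\p}[-D_{0,\infty}] \to \pi_D^{\ast} T_D \to 0,
\]
and trivialize the kernel by the Euler vector field of the fiberwise $\BG_m$-action, whose zero divisor is exactly $D_0 + D_\infty$ with multiplicity one (in a fiber coordinate $z$ it is $z\partial_z = -w\partial_w$), so that it becomes a nowhere-vanishing section of $T_{\p/D}(-D_0-D_\infty)$. Both of your flagged verifications do go through: the two sections are disjoint and each maps isomorphically to $D$, so the pair $(\p, D_0 \sqcup D_\infty)$ is locally a product of $D$ with $(\p^1, \{0,\infty\})$, which gives exactness of the sequence, and the simple vanishing of the scaling field is the local computation just indicated. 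What your approach buys is more structure: it exhibits $T_\p[-D_{0,\infty}]$ as an extension of $\pi_D^\ast T_D$ by $\CO_\p$, so the Whitney formula gives the equality of \emph{total} Chern classes integrally, not merely the rational statement extracted from $\ch$; it also makes visible the geometric source of the trivial summand (the rubber scaling direction), which the paper's restriction formula $\Omega_X[D]|_{D_i} = \CO_{D_i} \oplus \Omega_{D_i}[\,\cdots]$ hints at elsewhere. What the paper's computation buys is brevity: it needs no exactness claim for a relative log sequence and no identification of a trivializing section, only additivity of $\ch$ and the two line-bundle identities, which is entirely sufficient at the level of generality the lemma is used.
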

\begin{proof}
The claim follows from the computation:
\begin{align*}
& \ch(\Omega_{\p}[-D_{0} - D_{\infty}] \\
& = \ch(\Omega_{\p}) + \ch( \CO_{D_0} ) + \ch( \CO_{D_{\infty}} ) \\
& = \pi_D^{\ast} \ch(\Omega_D) + \ch(\Omega_{\p/D}) + \ch( \CO_{D_0} ) + \ch( \CO_{D_{\infty}} ) \\ 
& = \pi_D^{\ast} \ch(\Omega_D) + \ch(N_{D/X}^{\vee} \otimes \CO_{\p}(-1)) + \ch(\CO_{\p}(-1)) - 1 + \ch( \CO_{D_0} ) + \ch( \CO_{D_{\infty}} ) \\ 
& = \pi_D^{\ast} \ch(\Omega_D) + 1
\end{align*}
where in the first step we used the exact sequence \eqref{log tangent ses},
in the third step the dual of the Euler sequence
$0 \to \CO \to \pi_D^{\ast}(N_{D/X} \oplus \CO_D) \otimes \CO(1) \to T_{\p/D} \to 0$.
For the last step one uses that
\[ \xi = c_1(\CO(1)) = D_0 \]
so that
\[ \ch(\CO_{\p}(-1)) = \ch( \CO  ) - \ch(\CO_{D_0}) = 1 - \ch(\CO_{D_0}), \]
and similarly, 
\[ \ch(N_{D/X}^{\vee} \otimes \CO_{\p}(-1)) = \ch( \CO(-D_{\infty})) = 1 - \ch(\CO_{D_{\infty}}). \]
\end{proof}

\subsection{Compatibility with degeneration formula}
Consider a simple degeneration
\[ X \rightsquigarrow X_1 \cup_D X_2. \]
over a smooth curve $B$ with total space $W \to B$.
Consider a fixed class
\[ \gamma \in H^{\ast}( (\CW/B)^r ). \]
Recall the gluing map
\[ \xi = \sqcup \xi_I : \bigsqcup_{\{1, \ldots, r \} = I \sqcup J} (X_1, D)^{I} \times (X_2, D)^J \to (\CW/B)^r_0. \]
The K\"unneth decomposition of the pullback to the $I$-th component will be denoted by
\[ \xi_I^{\ast}(\gamma) = \sum_{\ell} \delta_{I,\ell} \otimes \delta'_{I,\ell} 
\quad \in H^{\ast}((X_1, D)^{I} \times (X_2, D)^J). \]

We show that Conjecture~\ref{conj:relGWPT} is compatible with the degeneration formula,
in the sense that the following two operations yield the same result:
\begin{itemize}
\item[(i)] Apply the correspondence on $X$ for $\gamma|_{X^r}$ (which reduces to the standard correspondence of Conjecture~\ref{conj:GWPT} by K\"unneth decomposition and Lemma~\ref{lemma:correspondence standard case}) and then apply the degeneration formula.
\item[(ii)]  Apply the degeneration formula and then apply the correspondence of Conjecture~\ref{conj:relGWPT} to both $(X_1, D)$ and $(X_2, D)$.
\end{itemize}
This compatibility statement then boils down to the following condition on the GW partition function:
%
%if we apply the correspondence on $X$ (which it reduces to the standard correspondence of Conjecture~\ref{conj:GWPT} by K\"unneth decomposition and Lemma~\ref{lemma:correspondence standard case})
%and then degenerate,

\begin{prop}[Compatibility with the degeneration formula]
\label{prop:Compatibility GWPT with degeneration formula}
\begin{multline*}
\sum_{\substack{ \beta' \in H_2(X,\BZ) \\ \iota_{\ast} \beta' = \beta}}
Z^{X}_{\GW, \beta'}\left( \, \overline{ \tau_{k_1} \cdots \tau_{k_r}( \gamma|_{X^r} ) } \right) = \\
%\sum_{ \{ 1, \ldots, r \} = I \sqcup J }
\sum_{\substack{ \beta_i \in H_2(X_i,\BZ) \\ \iota_{1 \ast} \beta_1 + \iota_{2 \ast} \beta_2 = \beta \\ \beta_1 \cdot D = \beta_2 \cdot D }}
\sum_{\substack{ \{ 1, \ldots, r \} = I \sqcup J \\ \ell }}
Z^{(X_1,D)}_{\GW, \beta_1}\left( \Delta_1 \middle| \overline{ \left( {\textstyle {\prod_{i \in I}} \tau_{k_i} } \right) (\delta_{I,\ell}) } \right)
Z^{(X_2,D)}_{\GW, \beta_2}\left( \Delta_2 \middle| \overline{ \left( {\textstyle {\prod_{i \in J}} \tau_{k_j} } \right) (\delta'_{I,\ell}) } \right)
\end{multline*}
where $\Delta_1, \Delta_2$ runs over the K\"unneth decomposition of $\Delta_{D^{[\beta_1 \cdot D]}} \subset (D^{[\beta_1 \cdot D]})^2$.
\end{prop}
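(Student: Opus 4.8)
The plan is to prove this identity purely on the Gromov--Witten side by a cycle computation, in complete parallel with the compatibility with the splitting formula (Proposition~\ref{prop:relGWPT} and Lemma~\ref{lemma:splitting}), the only difference being that we now degenerate $X$ into the two genuine components $X_1 \cup_D X_2$ instead of splitting off a rubber. First I would expand the left hand side using Definition~\ref{defn:desc tra 2}, so that $Z^{X}_{\GW,\beta'}\big(\overline{\tau_{k_1}\cdots\tau_{k_r}(\gamma|_{X^r})}\big)$ becomes a sum over set partitions $P$ of $\{1,\ldots,r\}$ and choices of partitions $(\hat{\alpha}_T)_{T\in P}$ of the terms $Z^{X}_{\GW,\beta'}\big((\prod_T \tau_{[\hat{\alpha}_T]})(\Gamma_{\alpha,P,\hat{\alpha}}(\gamma|_{X^r}))\big)$. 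To bring the degeneration formula to bear I would regard $\Gamma_{\alpha,P,\hat{\alpha}}(\gamma|_{X^r})$, which lives on $X^{\ell(\hat{\alpha})}$, as the restriction to the generic fibre of the family-level cycle $\Gamma_{\alpha,P,\hat{\alpha}}(\gamma)$ on $(\CW/B)^{\ell(\hat{\alpha})}$ built from the diagonal loci of $(\CW/B)$ and the Chern weights $c_i = c_i(T_W[-W_0])$: on the smooth fibre $W_t$ there is no relative divisor, so the diagonal restricts to the ordinary diagonal and $c_i(T_W[-W_0])|_{W_t} = c_i(T_{W_t})$, recovering $\Gamma^{X}_{\alpha,P,\hat{\alpha}}$.

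Next I would apply Proposition~\ref{prop:degeneration formula GW} to each of these terms. The interior descendents $\psi_i^{k_i}$ restrict to the cotangent line classes of the corresponding markings on each side in the standard way, the virtual class splits along the gluing morphism $\xi$, and the relative markings produce the sum over cohomology weighted partitions $\mu,\mu^{\vee}$, which — after the sign and prefactor bookkeeping carried out in the partition function form of the degeneration formula — assemble into the K\"unneth decomposition of the diagonal $\Delta_{D^{[\beta_1\cdot D]}}$ as in~\eqref{Kunneth diagonal}. The only remaining input is the behaviour of the evaluation class $\Gamma_{\alpha,P,\hat{\alpha}}(\gamma|_{X^r})$ under $\xi$.

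The heart of the argument is therefore the following splitting lemma, the degeneration analogue of Lemma~\ref{lemma:splitting}:
\[
\xi^{\ast}\big(\Gamma_{\alpha,P,\hat{\alpha}}(\gamma|_{X^r})\big)
=\sum_{A\sqcup B}\sum_{\ell}\Gamma^{(X_1,D)}_{\alpha_A,P_A,\hat{\alpha}_A}(\delta_{A,\ell})\boxtimes\Gamma^{(X_2,D)}_{\alpha_B,P_B,\hat{\alpha}_B}(\delta'_{A,\ell}),
\]
where $A\sqcup B=\{1,\ldots,r\}$ runs over the decompositions compatible with $P$ (every part $T\in P$ lies entirely in $A$ or in $B$), $P_A,P_B$ are the induced set partitions, $\hat{\alpha}_A=(\hat{\alpha}_T)_{T\subset A}$, and $\delta_{A,\ell}\otimes\delta'_{A,\ell}$ is the K\"unneth decomposition of $\xi_A^{\ast}\gamma$. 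To prove it I would use two ingredients. First, the relative small diagonal $\Delta^{\rel}_{T\sqcup I_T}$ forces all markings of $T\sqcup I_T$ to a single point of an expansion $W_0[k]$; since such a point never meets the singular locus it sits on exactly one of $X_1,X_2$, so by the degeneration analogue of Lemma~\ref{lemma:diagonal pullback} the pullback of $\Delta^{\rel}_{T\sqcup I_T}$ along a gluing component equals the relative diagonal on one side and vanishes otherwise. As in the proof of Lemma~\ref{lemma:splitting}, this is exactly what matches a decomposition of the enlarged index set $\{1,\ldots,r+\ell(\hat{\alpha})\}$ with a compatible $A\sqcup B$ via the rule $\tilde{A}=A\sqcup\bigsqcup_{T\subset A}I_T$. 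Second, the correspondence matrix is weighted by $c_i=c_i(T_W[-W_0])$ over the family, and by the logarithmic tangent sequence $T_W[-W_0]|_{X_j}=T_{X_j}[-D]$, so $\K_{\alpha_T,\hat{\alpha}_T}$ restricts to the same matrix evaluated at $c_i(T_{X_j}[-D])$ on the side carrying $T$ — the degeneration counterpart of Lemma~\ref{lemma:c_i restriction}.

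Finally I would reorganise the sums. Giving a set partition $P$ of $\{1,\ldots,r\}$ together with a compatible splitting $A\sqcup B$ is the same as giving $A\sqcup B$ together with independent set partitions $P_A$ of $A$ and $P_B$ of $B$, so substituting the splitting lemma and summing over $P$ and $(\hat{\alpha}_T)$ reassembles, on each side and by Definition~\ref{defn:desc tra 2}, exactly $\overline{(\prod_{i\in A}\tau_{k_i})(\delta_{A,\ell})}$ and $\overline{(\prod_{i\in B}\tau_{k_i})(\delta'_{A,\ell})}$. The resulting sum over compatible $A\sqcup B$ and $\ell$, with diagonal insertion on $D^{[\beta_1\cdot D]}$, is identical to the right hand side, where the degeneration formula is applied to $\gamma$ first (producing $\sum_\ell \delta_{I,\ell}\otimes\delta'_{I,\ell}=\xi_I^{\ast}\gamma$) and the transformation applied afterwards to each factor, upon identifying $A=I$, $B=J$. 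I expect the main obstacle to be the bookkeeping in the splitting lemma: tracking the distribution of the auxiliary markings $I_T$ consistently with $P$ and with the gluing structure of $(\CW/B)^r_0$, and verifying that the Chern class substitution is performed on the correct component. These points are delicate but routine, being the exact degeneration analogue of the computation in Lemma~\ref{lemma:splitting}, so no genuinely new difficulty should arise beyond careful index management.
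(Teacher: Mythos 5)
Your proposal is correct and follows essentially the same route as the paper: the paper's (very brief) proof likewise defines the correspondence cycle $\Gamma_{\alpha,P,\hat{\alpha}}$ relatively over $(\CW/B)^r$ with Chern weights $c_i(T_W[-W_0])$, argues that it restricts to $\Gamma^{X}_{\alpha,P,\hat{\alpha}}$ on smooth fibres and splits over the central fibre by the same mechanism as in the proof of Proposition~\ref{prop:relGWPT} and Lemma~\ref{lemma:splitting}, and identifies the restrictions $c_i(T_W[-W_0])|_{W_t}=c_i(T_{W_t})$ and $c_i(T_W[-W_0])|_{X_j}=c_i(T_{X_j}[-D])$ as the key point. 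Your explicit degeneration splitting lemma, the vanishing of pulled-back relative diagonals for markings on opposite sides, and the reindexing of set partitions via $\tilde{A}=A\sqcup\bigsqcup_{T\subset A}I_T$ are exactly the details the paper leaves implicit.
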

\begin{proof}
This boils down to defining the correspondence $\Gamma_{\alpha, P , \hat{\alpha}}$ relatively on $(\CW/B)^r$,
and then showing that it restricts accordingly.
The methods are similarly to what was used in the proof of the claim for Proposition~\ref{prop:relGWPT}.
The key point is that the Chern classes of the log tangent bundle restrict as desired. Indeed, we have
\begin{gather*} c_i( T_W[-W_0] )|_{W_t} = c_i(T_{W_t}) \\
c_i(T_W[-W_0])|_{X_i} = c_i( T_{X_i}[-D] )
\end{gather*}
for all $t \neq 0$ and $i=1,2$.
\end{proof}

\begin{rmk}
In case $\gamma = \pi_1^{\ast}(\gamma_1) \cdots \pi_r^{\ast}(\gamma_r)$
Proposition~\ref{prop:Compatibility GWPT with degeneration formula} was obtained in \cite[Sec.1.4]{PaPix_GWPT}.
\end{rmk}

\subsection{Chow-theoretic correspondence}
It would be very interesting to understand the geometric origin of the Gromov-Witten / Pandharipande-Thomas correspondence.
As such one may ask for the most general framework in which it holds.
Is it just a numerical correspondence
or is it obtained by an algebraic correspondence
on some (possibly infinite-type) moduli space defined in terms of the geometry of the target $X$?
In the later case one would expect that the GW/PT correspondence lifts naturally to the cycle level,
that is holds in the Chow group of algebraic cycles modulo rational equivalence.
We formulate here a cycle-theoretic version of the GW/PT correspondence.
It requires further investigation whether such a correspondence should hold
(that the numerical correspondence of \cite{PaPix_GWPT} admits naturally a formulation on a cycle-level
is not automatic and may be seen as a positive sign).

Consider the GW and PT theory of the threefold $X$ (without relative condition) for simplicity.
Define the partition function of Gromov-Witten classes:
\[
\GW_{\beta}^X(\tau_{1} \cdots \tau_{k_r})
=
(-i z)^{\d_{\beta}} \sum_{g \in \BZ} (-1)^{g-1} z^{2g-2}
\ev_{\ast}\left[ \prod_{i=1}^{r} \psi_i^{k_i} \cap [ \Mbar^{\bullet}_{g,r}(X,\beta) ]^{\vir} \right]
\]
Consider also the partition function of Pandharipande-Thomas classes:\footnote{We could have defined this class
also via the moduli space of $r$-marked stable pairs by
\[ 
\PT_{\beta}^X(\tau_1 \cdots \tau_{k_r}) 
=
\sum_{m \in \frac{1}{2} \BZ} i^{2m} p^m
\ev_{\ast}\left[ \prod_{i=1}^{r} p_i^{\ast}(\ch_{2+k_i}(\BF)) [ P_{m + \frac{1}{2} d_{\beta},\beta, \beta, r}(X) ]^{\vir} \right].
\]
}
\[ 
\PT_{\beta}^X(\tau_1 \cdots \tau_{k_r}) 
=
\sum_{m \in \frac{1}{2} \BZ} i^{2m} p^m
\rho_{\ast}\left[ \pi^{\ast}\left( [ P_{m + \frac{1}{2} d_{\beta},\beta}(X) ]^{\vir} \right) \prod_{i=1}^{r} \ch_{2+k_i}(\BF_i) \right]
\]
where we used the diagram
\[
\begin{tikzcd}
P_{n,\beta}(X) \times X^r \ar{r}{\rho} \ar{d}{\pi} & X^r \\
P_{n,\beta}(X)
\end{tikzcd}
\]
and $(\BF_i,\sigma_i)$ is the pullback of the universal stable pair $(\BF,\sigma)$ from $P_{n,\beta}(X) \times X$
via $\pi$ times the projection to the $i$-th factor of $X^r$.

We work here with algebraic cycles, so both series lie in the Chow ring:
\begin{gather*}
\GW_{\beta}^X(\tau_{1} \cdots \tau_{k_r}) \in A^{\ast}(X^r) \otimes \BC((u)) \\
\quad \PT_{\beta}^X(\tau_1 \cdots \tau_{k_r})  \in A^{\ast}(X^r) \otimes \BC((p)).
\end{gather*}

For any set partition $P$ of $\{1, \ldots, r \}$,
and for any set of partitions $\hat{\alpha}_T$ index by $T \in P$,
recall the correspondence cycle defined by
\[
\Gamma_{\alpha,P,\hat{\alpha}}
=
\prod_{T \in P} \pi_{T \sqcup I_T}^{\ast}\left( \pi_1^{\ast}(\K_{\alpha_{T}, \hat{\alpha}_T}) \cdot \Delta_{T \sqcup I_T} \right)
\quad \in A^{\ast}( X^{r + \ell(\hat{\alpha}) }).
\]
We view the cycle as defining a morphism
\begin{gather*}
\Gamma_{\alpha,P, \hat{\alpha}} : A^{\ast}(X^{\ell(\hat{\alpha})}) \to A^{\ast}( X^r ),
\ \gamma \mapsto \rho_{1 \ast}( \rho_1^{\ast}( \gamma) \cdot \Gamma_{\alpha,P, \hat{\alpha}}),
\end{gather*}
where $\rho_1 : X^{r+\ell(\hat{\alpha})} \to X^r$ and $\rho_2 : X^{r+\ell(\hat{\alpha})} \to X^{\ell(\hat{\alpha})}$ are the projections.

\begin{question} \label{Question:Chow theoretic correspondence}
Is it true that we have the following equality
\[
\PT_{\beta}^X(\tau_1 \cdots \tau_{k_r}) 
=
\sum_{P \textup{ set partition of }\{1,\ldots,r\}} 
\sum_{ \substack{\textup{for all }T \in P \\ \textup{a partition } \hat{\alpha}_T }}
\Gamma_{\alpha,P, \hat{\alpha}}\Big( \GW_{\beta}^X\left( { \textstyle \prod_{T} \tau_{[\hat{\alpha}_T]} }\right) \Big)
\]
in $A^{\ast}(X^r)$ ?
\end{question}

For a different type of cycle-theoretic correspondence
comparing homology classes on the Chow variety $\mathrm{Chow}_{\beta}(X)$ of curves on $X$
see \cite[Conjecture 1]{MOOP} (proven there for toric threefolds).
If one is very optimistic, one could even 
combine Question~\ref{Question:Chow theoretic correspondence} with the conjecture of \cite{MOOP}
by asking for a Chow-theoretic correspondence on $\mathrm{Chow}_{\beta}(X) \times X^r$
by pushforward of the relevant cycles along the natural maps:
\begin{gather*}
\Mbar^{\bullet}_{g,r}(X,\beta) \to X^r \times \mathrm{Chow}_{\beta}(X) \\
P_{n, \beta, r}(X) \to X^r \times \mathrm{Chow}_{\beta}(X).
\end{gather*}
Without concrete evidence all this remains pure speculation for now.

\section{Fano complete intersections} \label{sec:Fano complete intersection}
Let $X \subset \p^n$ be a Fano complete intersection.\footnote{The list of these cases is relatively short:
$X$ can be a degree $2,3,4$ hypersurface in $\p^4$, 
a degree $(2,2)$ or $(2,3)$ complete intersection in $\p^5$,
or a degree $(2,2,2)$ complete intersection in $\p^6$.
Nevertheless, the discussion should be viewed as a special case
of a general strategy that can be applied to any situation where vanishing cohomology is present.}
The goal here is to prove the GW/PT correspondence for $X$.
By the Lefschetz hyperplane theorem
the even cohomology of $X$ is generated by the ample class $H = c_1(\CO_X(1))$,
and hence non-vanishing under any projective degeneration.
In particular, we can identify curve classes on $X$ by their degree:
\[ H_2(X,\BZ) \cong \BZ, \quad \beta \mapsto \int_{\beta} H. \]
The odd cohomology is concentrated in the middle degree $H^3(X,\BZ)$.

\subsection{Reduction to diagonal insertions}
\begin{prop} \label{prop:reduction to non-vanishing}
Let $\gamma \in H^{\ast}(X^r)$ and $\beta \geq 0$. The set of all invariants of the form
\begin{equation} \label{Iinvariantsxxx}
Z^{X}_{\PT, \beta}\left( \tau_{k_1} \cdots \tau_{k_r}(\gamma) \right)
\end{equation}
can be effectively reconstructed from the subset of invariants \eqref{Iinvariantsxxx} where
\begin{equation} \gamma = \Delta_{X,12} \ldots \Delta_{X, 2 a-1, 2a} \prod_{i=2a+1}^{r} \pi_i^{\ast}(\gamma_i ) \label{gamma insertion diagonal} \end{equation}
for any {\em even} $\gamma_i \in H^{\ast}(X)$.

Similarly, the set of all invariants
\[ Z^{X}_{\GW, \beta}\left( \overline{ \tau_{k_1} \cdots \tau_{k_r}(\gamma) } \right)  \]
can be effectively reconstructed from the subset of $\gamma$
of the above form and the reconstruction algorithm is the same as for PT invariants.
In particular, the Gromov-Witten/Pandharipande-Thomas correspondence is compatible with this reconstruction,
that is if Theorem~\ref{thm:Fano complete intersection} holds for the invariants \eqref{Iinvariantsxxx}
where $\gamma$ of the form \eqref{gamma insertion diagonal}, then it holds for any $\gamma$.
\end{prop}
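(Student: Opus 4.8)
The plan is to combine deformation invariance of the two enumerative theories with classical invariant theory for the monodromy action on the vanishing cohomology, following the strategy of \cite{ABPZ}. The starting observation is that both sides of the correspondence are monodromy-invariant multilinear functionals of $\gamma$. Concretely, consider the universal family $\pi : \CX \to B$ of smooth complete intersections of the fixed multidegree. Gromov-Witten and Pandharipande-Thomas invariants are deformation invariant, so for any flat section $\gamma$ of the local system $R^{\ast}\pi_{\ast}\BQ$ the bracket $Z^{X}_{\PT,\beta}(\tau_{k_1}\cdots\tau_{k_r}(\gamma))$ is constant along $B$. Transporting $\gamma$ around a loop replaces it by $g\cdot\gamma$ for the corresponding element $g$ of the monodromy representation $G \to \Aut(H^{\ast}(X))$ while leaving the invariant unchanged, so $Z^{X}_{\PT,\beta}$ is $G$-invariant as a function of $\gamma \in H^{\ast}(X)^{\otimes r} = H^{\ast}(X^r)$. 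The hyperplane class $H$ is a global section and is fixed by $G$; hence $G$ acts trivially on $H^{\mathrm{even}}(X)$ and nontrivially only on $V := H^3(X)$, which carries the antisymmetric cup-product pairing.

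The key geometric input is that the Zariski closure of $G$ acting on $V$ is the full symplectic group $\mathrm{Sp}(V)$; this is the content of Beauville's computation of the monodromy of universal families of complete intersections, and it must be checked for each of the Fano cases listed in the footnote above. Granting this, $G$-invariance of the multilinear functional in its $V$-slots is equivalent to $\mathrm{Sp}(V)$-invariance. By the first fundamental theorem of invariant theory for the symplectic group, the space of $\mathrm{Sp}(V)$-invariant tensors in $V^{\otimes 2b}$ is spanned by products of the symplectic form over the perfect matchings of the $2b$ factors, and it vanishes when the number of $V$-factors is odd. A matching pairing the $i$-th and $j$-th factors corresponds exactly to inserting the odd K\"unneth component of the diagonal $\Delta_{X,ij}$. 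Expanding an arbitrary $\gamma$ in a K\"unneth basis, each tensor factor being a power of $H$ or an element of $V$, and applying this to the $V$-slots expresses $Z^{X}_{\PT,\beta}(\tau_{k_1}\cdots\tau_{k_r}(\gamma))$ as an explicit linear combination of invariants with $\gamma$ of the reduced shape \eqref{gamma insertion diagonal}. Here the even part of each full diagonal is absorbed into the even single insertions $\pi_i^{\ast}(\gamma_i)$ (so that the full diagonal, rather than merely its odd part, can be used), and after reindexing the markings any matching is brought to the standard consecutive-pair form, which is harmless since all descendent profiles $(k_1,\ldots,k_r)$ are ranged over. This expansion is effective and purely representation-theoretic.

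The final point is that this reconstruction depends only on $H^{\ast}(X)$, its monodromy and the intersection pairing, and not on the enumerative theory. On the Gromov-Witten side, the descendent transformation $\gamma \mapsto \overline{\tau_{k_1}\cdots\tau_{k_r}(\gamma)}$ of Definition~\ref{defn:desc tra 1} is built solely from the monodromy-invariant Chern classes of $T_X$ and from diagonal classes, hence it is linear and $G$-equivariant in $\gamma$; consequently $Z^{X}_{\GW,\beta}(\overline{\tau_{k_1}\cdots\tau_{k_r}(\gamma)})$ is again a $G$-invariant multilinear functional of $\gamma$, and the identical symplectic-invariant-theory expansion reconstructs it from the reduced forms. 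Since both sides are reconstructed by the very same linear operation, the validity of Theorem~\ref{thm:Fano complete intersection} for $\gamma$ of the form \eqref{gamma insertion diagonal} propagates automatically to all $\gamma$.

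The main obstacle is the Zariski density of the monodromy in $\mathrm{Sp}(V)$: once this is in hand the argument is formal, but this density is the single nontrivial geometric ingredient and has to be confirmed for the specific, short list of Fano complete intersections (for instance the cubic threefold, where $V$ is $10$-dimensional). I expect the rest—the deformation invariance, the $G$-equivariance of the bar transformation, and the bookkeeping that turns the first fundamental theorem into an explicit reconstruction—to be routine, if somewhat notation-heavy.
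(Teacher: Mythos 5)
Your proposal is correct and follows essentially the same route as the paper: deformation invariance plus big monodromy, Zariski density of the monodromy in $\mathrm{Sp}(V)$ for $V = H^3(X)$, the fundamental theorem of invariant theory for the symplectic group to reduce odd insertions to matchings of diagonals, absorption of the even K\"unneth part of $\Delta_X$ into even insertions by induction, and the observation that the identical linear reconstruction applies verbatim on the GW side since the correspondence matrix only involves monodromy-invariant Chern classes and diagonals. The one ingredient you flag as needing case-by-case verification, the density of the monodromy in $\mathrm{Sp}(V)$, is a general theorem (the paper cites Deligne; your reference to Beauville's computation for universal families of complete intersections works equally well), so no per-case check is required.
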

\begin{proof}
This proof is exactly as in \cite[Thm.4.27]{ABPZ}
with the obvious modification for notation.
To sketch the idea, let $V := H^3(X,\BC)$ be the middle cohomology endowed with the skew-symmetric inner product
$\alpha \cdot \beta := \int_X \alpha \cup \beta$.
The monodromy group of $X$ (defined here as the group generated by all parallel transport operators)
acts on $V$ and by the results of Deligne \cite{Deligne1,Deligne2} its closure is equal to the symplectic group $\mathrm{Sp}(V)$.
By the deformation invariants of GW and PT invariants and since the Chern classes of $X$ are monodromy invariant
we obtain hence for all $g \in \mathrm{Sp}(V)$ that:
\begin{equation}
\label{mono}
\begin{gathered}
Z^{X}_{\PT, \beta}\left( \tau_{k_1} \cdots \tau_{k_r}(\gamma) \right)
=
Z^{X}_{\PT, \beta}\left( \tau_{k_1} \cdots \tau_{k_r}( (g \otimes \ldots \otimes g)\gamma) \right) \\
Z^{X}_{\GW, \beta}\left( \overline{ \tau_{k_1} \cdots \tau_{k_r}(\gamma) } \right)
=
Z^{X}_{\GW, \beta}\left( \overline{ \tau_{k_1} \cdots \tau_{k_r}( (g \otimes \ldots \otimes g)\gamma) } \right)
\end{gathered}
\end{equation} 

Let $\CB$ be a homogeneous basis of $H^{\ast}(X)$.
Consider a general invariant of the form \eqref{Iinvariantsxxx}
where
$\gamma$ is of the form \eqref{gamma insertion diagonal} but
with $\gamma_i$ arbitrary classes in $\CB$.
We assume that the first $f$ factors of $\gamma_i$ lie in $V$, and the remaining factors lie in $H^{\text{even}}(X)$.
We then consider the invariant \eqref{Iinvariantsxxx} as a function
$Z: V^{\otimes f} \to \BC((p))$
given by
\[ v_1 \otimes \ldots \otimes v_{f} \mapsto 
Z^{X}_{\PT, \beta}\left( \tau_{k_1} \cdots \tau_{k_r}(\gamma) \right)\Big|_{\gamma_{2a+i} = v_i \text{ for } i=1,\ldots,f}.
\]
By the monodromy invariance \eqref{mono} we find that
\[ Z \in ((V^{\ast})^{\otimes f})^{\mathrm{Sp}(V)}. \]
Hence $f=2k$ is even, and by the basic representation theoretic fact explained in \cite[Sec.4.6]{ABPZ}
the class $Z$ is determined by its pairings with all big-diagonal classes:
\[ \sigma( \Delta_V \otimes \Delta_V \otimes \cdots \otimes \Delta_V) \in V^{\otimes 2k}. \]
where $\sigma$ runs over all permutations of $2k$, and
$\Delta_V \in V \otimes V$ is the bi-vector dual to the symplectic pairing $\omega \in V^{\vee} \otimes V^{\vee}$.
We have
\[ \Delta_X = \Delta_V + \frac{1}{\left( {\textstyle \int_X H^3} \right)} \sum_{i=0}^{3} H^i \otimes H^{3-i}. \]
Hence by an induction on $f$ the invariant \eqref{Iinvariantsxxx}
is determined when $\gamma_{2a+1} \otimes \ldots \otimes \gamma_{2a+f}$ runs over the classes
\[ \sigma( \Delta_X \otimes \Delta_X \otimes \cdots \otimes \Delta_X) \in (H^{\ast}(X))^{\otimes 2f}. \]
This completes the first part.
The relations we obtain from this process are identical for the Gromov-Witten invariants,
hence the algorithm is compatible with the GW/PT correspondence.
This proves the second part.
\end{proof}

\subsection{Invertibility}
Let $S$ be a smooth projective surface and let $L \in \Pic(S)$ be a line bundle.
We consider the projective bundle
\[ \p_S = \p( L \oplus \CO) \xrightarrow{\pi_S} S. \]
The projection has two canonical sections
\[ S_{0}, S_{\infty} \subset \p \]
called the zero and infinite section specified by the
condition that the zero section has normal bundle $L^{\vee}$ and
the infinite section has normal bundle $L$.
We use the identification
\[ H_2(\p_S, \BZ) \cong H_2(S,\BZ) \oplus \BZ, \quad \beta \mapsto (\pi_{S \ast}(\beta) , {\textstyle \int_{\beta}} D_{\infty} ). \]
We will need a certainly invertability statement for the GW and PT invariants
of $(\p_S, S_{\infty})$.

Given a $H^{\ast}(S)$-weighted partition 
\[ \mu = (\mu_i, \alpha_i)_{i=1}^{\ell(\mu)}, \quad \mu_i \geq 0, \alpha_i \in H^{\ast}(S) \]
define the monomial of descendents
\[ \tau[\mu] := \prod_{i=1}^{\ell(\mu)} \tau_{\mu_i-1}( \iota_{S_{0} \ast}(\alpha_i) ) \]
where $\iota_{S_0} : S_0 \to \p_S$ is the inclusion. 
%where we let $\omega \in H^2(C)$ denote the class of a point on $C$.
Let $\CB$ be a basis of $H^{\ast}(S)$.

\begin{lemma}[{\cite[Proposition 6]{PaPixJap}}] \label{lemma:PaPixJap}
Let $d \geq 1$. The matrix indexed by $\CB$-weighted partitions of $d$ with coefficients
\[ Z^{(\p_S, S_{\infty})}_{\PT, (0,d)}( \lambda | \tau[\mu] ) \]
is invertible.
\end{lemma}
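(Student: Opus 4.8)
The plan is to show that, with respect to a suitable partial order on $\CB$-weighted partitions of $d$, the matrix $M_{\lambda,\mu}:=Z^{(\p_S,S_\infty)}_{\PT,(0,d)}(\lambda\mid\tau[\mu])$ is triangular with an explicitly non-degenerate diagonal. The starting observation is that the curve class $(0,d)$ is purely vertical: it is $d$ times the fiber of $\pi_S$, so every relative stable pair contributing to the invariant is supported on fibers of $\pi_S\colon\p_S\to S$, and the whole theory is governed fiberwise by the relative geometry of a single $\mathbb{P}^1$ over a point of $S$. To make this precise I would degenerate $\p_S$ to the normal cone of the zero section $S_0$ and apply the degeneration formula of Proposition~\ref{prop:degeneration formula PT}. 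This concentrates each descendent $\tau_{\mu_i-1}(\iota_{S_0\ast}\alpha_i)$ onto the newly created $\mathbb{P}^1$-bundle bubble over $S_0$, separated from the relative condition $\lambda$ at $S_\infty$ by a diagonal along the new relative divisor. After rigidification (Corollary~\ref{cor:rigidification}), the leading behaviour of each entry $M_{\lambda,\mu}$ is then computed by the $1$-leg capped (rubber) theory of a single relative fiber, integrated over $S$.

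The key step is the triangularity. Since the relative moduli space has virtual dimension $\int_{(0,d)}c_1(T_{\p_S})=2d$, a dimension count shows that $M_{\lambda,\mu}$ vanishes unless $\ell(\lambda)+\ell(\mu)=\sum_i|\alpha_i|+\sum_j|\delta_j|$, and this balance is exactly the one making the Poincar\'e pairing on $S_\infty^{[d]}\cong S^{[d]}$ between $\lambda$ and the Nakajima class $\prod_i\Fq_{\mu_i}(\alpha_i)\vacuum$ of $\mu$ nonzero; in particular it is satisfied precisely on the diagonal $\lambda=\mu^{\vee}$ (same underlying partition, dual weights). I would then invoke the local relative $\mathbb{P}^1$ descendent/Nakajima correspondence, under which $\tau_{k}(\iota_{S_0\ast}\alpha)$ creates relative multiplicity at most $k+1$ with top-order term the single Nakajima insertion $\Fq_{k+1}(\alpha)$ and strictly lower contributions splitting the parts further. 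Ordering $\CB$-weighted partitions by dominance of the underlying partition, refined by the cohomological degree of the weights, this identifies $\tau[\mu]$ with its associated Nakajima class up to strictly lower terms, so that $M_{\lambda,\mu}$ is triangular with diagonal equal to an explicit non-zero local factor times the Nakajima intersection pairing recorded in~\eqref{Kunneth diagonal} and Example~\ref{example:Nakajima}.

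Granting the triangularity, invertibility is immediate: the diagonal is governed by the pairing $\int_{S^{[d]}}\mu\cdot\nu^{\vee}=\delta_{\mu\nu}(-1)^{d-\ell(\mu)}\tfrac{|\Aut(\mu)|}{\prod_i\mu_i}$ of~\eqref{Kunneth diagonal}, which is manifestly non-degenerate, and a triangular matrix with non-degenerate diagonal is invertible over $\BQ((p))$. The main obstacle is precisely establishing this triangularity together with the non-vanishing of the leading coefficients: one must carry out the dimension bookkeeping on the degenerate and rubber targets to check that the multiplicity-lowering corrections strictly decrease the order, and then evaluate the single relative $\mathbb{P}^1$ cap to verify that the leading coefficient is a non-zero rational function of $p$. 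Both of these inputs are supplied by the local relative $\mathbb{P}^1$ (local curves) computations of Pandharipande and Thomas; as the statement is exactly \cite[Proposition~6]{PaPixJap}, in practice I would perform the reduction above and quote their local evaluation for the leading term rather than recompute the cap.
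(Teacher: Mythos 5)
The paper's entire proof of this lemma is the citation you fall back on in your last sentence: it invokes \cite[Proposition 6]{PaPixJap} and adds the one observation your write-up omits, namely that Proposition 6 of \cite{PaPixJap} is \emph{stated for K3 surfaces}, while the lemma here concerns an arbitrary smooth projective surface $S$ and arbitrary $L \in \Pic(S)$; the citation only works because (as noted right before Section 4.2 of that paper) the argument there is surface-independent. So your claim that ``the statement is exactly \cite[Proposition 6]{PaPixJap}'' is not literally correct, and a citation-based proof must address this mismatch — which is precisely the content of the paper's proof.

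Your independent triangularity sketch, however, has a genuine gap. The dimension constraint $\ell(\lambda)+\ell(\mu)=\sum_i\deg(\alpha_i)+\sum_j\deg(\delta_j)$ is a single linear relation satisfied by many off-diagonal pairs $(\lambda,\mu)$, so it cannot ``in particular'' single out $\lambda=\mu^{\vee}$; no global dimension count yields triangularity. The actual mechanism — visible in the paper's proof of the GW analogue, Lemma~\ref{lemma:PaPixJap_GW} — is a component-by-component geometric analysis: first one shows vanishing when $\ell(\lambda)<\ell(\mu)$ by representing the insertions by cycles and bounding the locus of relative intersection points; then, when $\ell(\lambda)=\ell(\mu)$, one shows each connected component of a contributing curve carries exactly one relative point, and a tangency $k>1$ forces, by a dimension count \emph{on that component}, a descendent insertion with part $>1$ on the same component, giving vanishing unless $\ell_{+}(\lambda)\leq\ell_{+}(\mu)$; finally matching parts and weights forces $\lambda=\sigma(\mu^{\vee})$ on the diagonal blocks, whose nonvanishing comes from the explicit one-part evaluation $\blangle (k,\delta)\,|\,\tau_{k-1}(\gamma)\brangle = \frac{1}{k!}\int_S\delta\cup\gamma$. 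Your normal-cone degeneration and rigidification preamble neither produces these vanishing statements nor is needed, and the ``descendent/Nakajima correspondence'' with leading term $\Fq_{k+1}(\alpha)$ that you invoke is essentially the triangularity being proved, so it cannot be assumed as input. Either repair the sketch along the lines above, or simply cite \cite{PaPixJap} together with the arbitrary-surface remark; granting triangularity with nonzero diagonal, your concluding invertibility step is fine.
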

\begin{proof}
The proof in \cite{PaPixJap} is stated for K3 surfaces,
but as noted right before Section 4.2 it works for arbitrary surfaces as well.
\end{proof}

We will require also the analogue of Lemma~\ref{lemma:PaPixJap} on the Gromov-Witten side.

\begin{lemma} \label{lemma:PaPixJap_GW}
The matrix indexed by $\CB$-weighted partitions of $d$ with coefficients
\begin{equation} Z^{(\p_S,S_{\infty})}_{\GW,(0,d)}( \lambda | \tau[\mu] ) \label{matrix_Coeff} \end{equation}
is invertible.
\end{lemma}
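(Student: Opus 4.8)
The plan is to deduce the statement from its Pandharipande--Thomas counterpart, Lemma~\ref{lemma:PaPixJap}, via the GW/PT correspondence. First I would observe that the correspondence of Conjecture~\ref{conj:GWPT} is available for $(\p_S,S_\infty)$ in the fibre class $(0,d)$: since $\pi_{S\ast}(0,d)=0$, all stable pairs and all stable maps in this class are supported on the fibres of $\p_S\to S$, so the invariants are computed fibrewise by the relative theory of the cap $(\p^1,\infty)$ twisted by $L$. For this relative-$\p^1$ geometry the descendent GW/PT correspondence is a theorem, and it propagates to the family over $S$; thus for every $\CB$-weighted partition $\mu$ of $d$ we have $Z^{(\p_S,S_\infty)}_{\PT,(0,d)}(\lambda\mid\tau[\mu])=Z^{(\p_S,S_\infty)}_{\GW,(0,d)}(\lambda\mid\overline{\tau[\mu]})$.

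Second, I would use the triangularity of the descendent transformation. Because every column has $|\mu|=d$, equation~\eqref{dsfAAA} gives $\overline{\tau[\mu]}=z^{\ell(\mu)-d}\tau[\mu]+(\text{terms }\tau_{[\tilde\alpha]}(\cdots)\text{ with }|\tilde\alpha|<d)$. Writing $M^{\PT},M^{\GW}$ for the two matrices indexed by $\CB$-weighted partitions of $d$, the correspondence reads $M^{\PT}=M^{\GW}\,\mathrm{diag}(z^{\ell(\mu)-d})+C$, where $C$ collects Gromov--Witten vectors of insertions of size $<d$. Expanding the small diagonals in $C$ by K\"unneth and decomposing the resulting $H^\ast(\p_S)$-weights along $H^\ast(\p_S)=\pi_S^\ast H^\ast(S)\oplus\pi_S^\ast H^\ast(S)\cdot[S_0]$ (and using Lemma~\ref{lemma:c_i restriction} for the Chern factors carried by $\K$), I would rewrite $C$ in terms of matrix entries with $|\mu'|<d$. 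The goal is to show that $M^{\PT}$ and $M^{\GW}$ are then related by a transformation that is triangular for the $z$-adic (equivalently, genus) filtration with invertible diagonal $\mathrm{diag}(z^{\ell(\mu)-d})$, so that invertibility of $M^{\PT}$ over $\BQ((z))$ forces that of $M^{\GW}$.

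The main obstacle is exactly the control of $C$: its descendents carry weights not supported on $S_0$, hence a priori lie outside the span of the $\tau[\mu]$, and one must check that they enter only at strictly higher $z$-order than the diagonal term. I expect this to follow from the fact that each drop of $|\tilde\alpha|$ below $d$ in~\eqref{dsfAAA} is paid for by a positive power of $z$, together with the fibre-class reduction that collapses the $\pi_S^\ast$-insertions to lower descendents. A cleaner alternative that avoids the correspondence altogether is to transcribe the proof of Lemma~\ref{lemma:PaPixJap} directly: the argument of \cite{PaPixJap} is a rubber calculus establishing a dominance-order triangularity of the pairing between relative conditions at $S_\infty$ and $S_0$-supported descendents, and the same degeneration to the rubber over $S$, with the one-leg Gromov--Witten vertex in place of its stable-pairs analogue, produces the identical leading term and hence the invertibility.
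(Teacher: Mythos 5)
Your primary route does not go through, for two concrete reasons. First, it presupposes the descendent GW/PT correspondence for $(\p_S,S_{\infty})$ in fibre classes for an \emph{arbitrary} surface $S$. Within this paper that input is only available for $S$ toric or K3 (Theorem~\ref{thm:GWPT for P_S toric K3}, which is exactly what the paper's proof invokes when it notes the correspondence-based shortcut for those two cases); your justification that the invariants are ``computed fibrewise by the cap and propagate to the family'' is not an argument --- the descendent series in class $(0,d)$ still pair the virtual class of the whole family with arbitrary classes of $H^{\ast}(S)$, and you supply no product or universality statement. Second, and decisively, even granting the correspondence, the transfer of invertibility is broken exactly where you flag it. By Definition~\ref{defn:desc tra 1}, the corrections to the leading term in \eqref{dsfAAA} have the form $\tau_{[\tilde\alpha]}\big(\Delta^{\rel}_{1,\ldots,\ell(\tilde\alpha)}\cdot\pi_1^{\ast}(\K_{\mu,\tilde\alpha}\cdot\prod_i\gamma_i)\big)$ with $|\tilde\alpha|<d$: their weights are not supported on $S_0$, they carry diagonal classes, and they have size strictly less than $d$, while the columns of both of your matrices are indexed by partitions of size exactly $d$. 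So the correction matrix $C$ is not expressible through columns of $M^{\GW}$ (nor of smaller copies of the same matrix), and no filtration is exhibited in which $C$ is subordinate: $\K_{\alpha,\tilde\alpha}\in\BQ[i,c_1,c_2,c_3]((z))$ contributes arbitrary powers of $z$, so the hoped-for $z$-adic triangularity with diagonal $\mathrm{diag}(z^{\ell(\mu)-d})$ is unproven --- and even a divisibility bound on $C$ would not suffice, since invertibility of $M^{\GW}\mathrm{diag}(z^{\ell(\mu)-d})+C$ over $\BQ((z))$ does not imply invertibility of $M^{\GW}$ (take $M^{\GW}=0$ and $C$ invertible). Your own caveat (``I expect this to follow'') marks precisely the missing proof.

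The paper avoids the correspondence altogether and proves the lemma directly for arbitrary $S$, which is essentially the content of your closing one-sentence fallback: it orders $\CB$-weighted partitions lexicographically by $(\ell(\lambda),\ell_{+}(\lambda))$ and shows by dimension counts on the moduli of relative stable maps that the entry vanishes when $\ell(\lambda)<\ell(\mu)$ (the relative evaluation points are forced into an empty locus of $S^{\ell(\lambda)}$), and when $\ell(\lambda)=\ell(\mu)$ but $\ell_{+}(\lambda)>\ell_{+}(\mu)$ (each connected component with tangency $k>1$ must carry an interior marking contributing a $\psi$-power, hence a part $\mu_j>1$); finally, the diagonal blocks are, up to permutation, diagonal with nonzero entries by the explicit evaluation $\blangle (k,\delta)\,|\,\tau_{k-1}(\gamma)\brangle = \frac{1}{k!}\int_S \delta\cup\gamma$. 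So your fallback identifies the correct strategy --- a lexicographic triangularity established directly on the Gromov--Witten side --- but as written it is a sketch, not a proof; to repair the proposal you should carry out these three vanishing/nonvanishing steps rather than routing through the correspondence, whose use here the paper deliberately confines to the toric and K3 cases.
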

\begin{proof}
For $S$ a toric surface or a K3 surface 
the statement follows from Lemma~\ref{lemma:PaPixJap}
and the GW/PT correspondence of \cite[Thm.3]{PaPix_GWPT} or 
\cite[Prop.26]{PP_GWPT_Toric3folds} respectively.
However, it is also not difficult to give a direct argument in the general case, parallel to 
the case of the local geometry $(\BC^2 \times \p^1, \BC^2_{\infty})$ discussed in 
\cite[Sec.2.2]{PP_GWPT_Toric3folds}. % for the statement on the Gromov-Witten side in the local geometry $(\BC^2 \times \p^1, \BC^2_{\infty})$.
The idea is to show the matrix is upper-triangular with respect to an ordering on $\CB$-valued partitions
(namely the lexicographic ordering associated to two natural order functions) and then show that the diagonal blocks are invertible.
This proceeds in several steps:

Let us write throughout
\[ \lambda = \big( (\lambda_1, \delta_1) , \ldots, (\lambda_{\ell}, \delta_{\ell} ) \big), \quad \delta_i \in H^{\ast}(S). \]
By assumption $|\lambda| = |\mu| = d$.

\vspace{5pt}
\noindent \emph{Step 1.}
If $\ell(\lambda) < \ell(\mu)$, then the coefficient \eqref{matrix_Coeff} vanishes.

\begin{proof}
Since \eqref{matrix_Coeff} vanish if the degree of the integrand does not match the virtual dimension
we can assume the dimension constraint:
\[ \ell(\lambda) + \ell(\mu) = \sum_i \deg(\gamma_i) + \sum_i \deg(\delta_i). \]

Evaluating a relative stable map $f : C \to \p_S[k]$ at the relative markings
defines an element in $S^{\ell(\lambda)}$. 
For maps $f$ which are incident to topological cycles representing the classes $\delta_i$ and $\gamma_i$,
the $i$-th relative marked point of $f$ has to both lie on the cycle dual to $\delta_i$ as well as on all the cycles dual to $\gamma_{i_j}$, where $i_j$ runs over the interior markings lying on the connected component of the stable map incident to the $i$-th relative marking.
In particular, the point in $S^{\ell(\lambda)}$ must lie on a subspace of dimension
\[ 2 \ell(\lambda) - \sum_i \deg(\delta_i) - \sum_i \deg(\gamma_i) = \ell(\lambda) - \ell(\mu). \]
If $\ell(\lambda) < \ell(\mu)$, then this subspace is empty so \eqref{matrix_Coeff} vanishes.
\end{proof}

\vspace{5pt}
\noindent \emph{Step 2.} If $\ell(\lambda) = \ell(\mu)$ but $\ell_{+}(\lambda) > \ell_{+}(\mu)$, then
\eqref{matrix_Coeff} vanishes.
Here $\ell_{+}(\lambda)$ is the number of parts $\lambda_i$ with $\lambda_i > 1$.

\begin{proof}
By the proof of Step 1 we can represent $\delta_i$ and $\gamma_i$ by topological cycles
such that every stable map incident to the given constraint
has to meet $S_{\infty}$ in a finite number of points $(x_1, \ldots, x_{\ell(\lambda)}) \in S^{\ell(\lambda)}$ where the $x_i$ are pairwise distinct.
It follows that each connected component of the domain of such a stable map
must carry precisely one relative marking, lets say with tangency $k$ to $S_{\infty}$.
The corresponding moduli space (with incidence to the topological cycles imposed)
is then of dimension
$2k - (k-1) - 2 = k-1$. Hence if $k > 1$, the connected component has to carry an interior marked point $q_j$ such that the integrand involves a psi-class $\psi_j$, i.e. such that $\mu_j > 0$.
In particular, there has to be as many $j$ with $\mu_j > 1$ as there are $i$ with $\lambda_i > 1$. This shows the claim.
\end{proof}

\vspace{5pt}
\noindent \emph{Step 3.} 
Let $\CB$ be a homogeneous basis. Let $\lambda$ be a $\CB$ weighted partition and let $\mu$ be a $\CB^{\vee}$-weighted partition such that
$\ell(\lambda) = \ell(\mu)$ and $\ell_{+}(\lambda) = \ell_{+}(\mu)$. Then the series
\[ Z^{(\p_S,S_{\infty})}_{\GW, (0,d)}( \lambda | \tau[\mu] ) 
\]
vanishes unless $\lambda = \sigma(\mu^{\vee})$ for some permutation of $\{ 1, \ldots, n\}$,
in which case it is non-zero.

\begin{proof}
Since every component which carries a relative marking with $\lambda_i > 1$ must also carry an interior marking,
we see for dimension reasons that there exists a permutation $\sigma$ of $\{ 1, \ldots, \ell(\lambda) \}$
such that the connected component carrying the $i$-th relative marking must carry precisely one interior marking $q_{\sigma(i)}$ with $\mu_{\sigma(i)} = \lambda_i$.
Moreover, the number of parts of $\lambda$ labeled $1$ is equal to $\ell(\lambda) - \ell_{+}(\lambda)$,
and hence must be equal to the number of parts of $\mu$. Hence $\sigma( \vec{\mu} ) = \vec{\lambda}$.
This also implies that 
\[ \gamma_{\sigma(i)}^{\vee} = \delta_{i} \]
whenever $\lambda_i > 1$, and by a similar argument also whenever $\delta_i \neq \pt$.
%Here we have assumed that $\CB$ contains $1, \pt$ and the remaining basis vectors are from $H^2(S)$.
But then for dimension reasons we must have $\lambda = \sigma(\mu^{\vee})$, otherwise the series \eqref{matrix_Coeff} vanishes.
The last step, the non-vanishing now follows from the concrete evaluation of
relative invariants:
\[
\blangle (k, \delta) | \tau_{k-1}(\gamma) \brangle^{(S \times \p^1, S_{\infty})}_{0, (0,n)}
=
%\delta_{g,0} 
\frac{1}{k!} \int_S \delta \cup \gamma,
\]
see \cite[(16)]{PP_GWPT_Toric3folds} and \cite[Lemma 7]{OkPan_Unknot}.
\end{proof}

To conclude the claim, define $\lambda < \mu$ if
\[ \ell(\lambda) < \ell(\mu), \quad \text{ or } \quad \ell(\lambda) = \ell(\mu), \quad \ell_{+}(\lambda) > \ell_{+}(\mu). \]
Then the matrix \eqref{matrix_Coeff} is upper-triangular in this ordering.
By Step 3 the diagonal blocks are conjugate to diagonal matrices with non-zero diagonal entries.
This completes the claim.
\end{proof}

We also recall the following basic result:
\begin{thm}[\cite{PaPix_GWPT}] \label{thm:GWPT for P_S toric K3}
Let $S$ be a toric surface or a K3 surface. Then the GW/PT correspondence
of Conjecture~\ref{conj:GWPT} holds for both
$(\p_S, S_{0,\infty})$ and $(\p_S, S_{\infty})$.
\end{thm}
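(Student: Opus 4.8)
This is the relative GW/PT correspondence of Pandharipande--Pixton, and the plan is to reduce it to a few known building blocks and then treat the toric and K3 cases separately. First I would relate the two relative geometries by degenerating $\p_S$ to the normal cone of $S_0$, which breaks $(\p_S, S_\infty)$ into $(\p_S, S_{0,\infty})$ together with a $\p^1$-bundle tube glued along $S_0$. Since the correspondence is compatible with the degeneration formula (Proposition~\ref{prop:Compatibility GWPT with degeneration formula}) and with the splitting of relative diagonals (Proposition~\ref{prop:relGWPT}), it then suffices to establish Conjecture~\ref{conj:GWPT} for the cap $(\p_S, S_{0,\infty})$ with descendent and relative insertions pulled back from $S$, the case of $(\p_S, S_\infty)$ following by gluing.

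For $S$ toric, $\p_S = \p(L \oplus \CO)$ is a nonsingular projective toric threefold and $S_0, S_\infty$ are toric divisors. Here I would simply invoke the GW/PT descendent correspondence for toric threefolds relative to toric divisors \cite[Thm.3]{PaPix_GWPT}, which is proved via the capped descendent vertex, torus localization, and the local cap/tube/pair-of-pants correspondences of \cite{MOOP, OOP, PP_GWPT_Toric3folds}.

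The substantive case is $S$ a K3 surface, where $\p_S$ is no longer toric; here I would split according to the K3 component $\beta$ of the curve class $(\beta, d) \in H_2(S,\BZ) \oplus \BZ$. When $\beta \neq 0$, the pullback $\pi^{\ast}\sigma_S$ of the holomorphic symplectic form gives a cosection of the obstruction theory that is surjective on the relevant moduli spaces, so the ordinary virtual classes vanish; since Conjecture~\ref{conj:GWPT} is phrased for the ordinary (non-reduced) theory, both sides are zero and the correspondence holds trivially. When $\beta = 0$, all stable pairs and stable maps are supported on the $\p^1$-fibers of $\p_S \to S$, the form $\pi^{\ast}\sigma_S$ restricts to zero on such fibers, and the ordinary theory is the correct one; the geometry is then the local relative $\p^1$-theory of $(\BC^2 \times \p^1, \BC^2_\infty)$ of \cite[Prop.26]{PP_GWPT_Toric3folds} fibered over $S$. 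The only $S$-dependence enters through the cohomology weights and the universal correspondence matrix $\K_{\alpha, \widetilde{\alpha}}$, which depends only on the (monodromy-invariant) Chern classes of the logarithmic tangent bundle, and this lets the fiberwise correspondence globalize over $S$.

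The hard part is precisely this K3 analysis: one must verify carefully that the cosection is genuinely surjective for $\beta \neq 0$ (so that the non-reduced statement of Conjecture~\ref{conj:GWPT} is the correct one and is trivially satisfied there), and that for $\beta = 0$ the local fiber correspondence globalizes compatibly with integration over $S$. Everything else is bookkeeping to match the signs and the powers of $z$ and $p$ fixed in Section~\ref{sec:partition functions}, which is already guaranteed by the compatibility results established above.
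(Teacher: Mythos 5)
The paper's own proof of Theorem~\ref{thm:GWPT for P_S toric K3} is a pure citation: the toric case is \cite[Theorem 3]{PaPix_GWPT} and the K3 case is \cite[Sec.3.8]{PaPix_GWPT} (with a correction of a typo in Prop.~10 there). Your toric half therefore coincides with the paper. For the K3 half you attempt an actual argument, and its skeleton --- vanishing of the standard GW and PT theories via the cosection of \cite{MPT, KL} for classes with $\beta \neq 0$, plus a separate analysis of the fiber classes $(0,d)$ --- is indeed the right strategy and in spirit what the cited section of \cite{PaPix_GWPT} does. Your $\beta \neq 0$ step is essentially sound: the pulled-back holomorphic symplectic form gives a surjective cosection on both relative moduli spaces (including over expanded degenerations, where every component is again a $\p^1$-bundle over $S$), so both sides of Conjecture~\ref{conj:GWPT} vanish and the correspondence is trivial there. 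But two of your steps have genuine gaps.

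First, your opening reduction of $(\p_S, S_{\infty})$ to $(\p_S, S_{0,\infty})$ by degenerating to the normal cone of $S_0$ is circular as stated: the central fiber is $\p_S \cup_{S_0} \p(L^{\vee} \oplus \CO)$, so the degeneration formula expresses the cap $(\p_S, S_{\infty})$ in terms of the tube $(\p_S, S_{0,\infty})$ glued to \emph{another cap of exactly the same type}. Closing this loop requires the invertibility of the cap matrix (Lemmas~\ref{lemma:PaPixJap} and~\ref{lemma:PaPixJap_GW}) together with an induction, as in the inversion $Z(X_1,D) \rightsquigarrow Z(X_1) \text{ and } Z(\p_D,D_0)$ used in Step 1 of the proof of Theorem~\ref{thm:Fano complete intersection}; it is not a one-step gluing. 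Second, and more seriously, the $\beta = 0$ case is not a formal globalization of $(\BC^2 \times \p^1, \BC^2_{\infty})$ over $S$: the moduli spaces in fiber classes $(0,d)$ are not fibrations over $S$, since the $d$ fiber supports can collide (producing Hilbert-scheme-of-points-type strata), and descendent insertions $\gamma_i \in H^{\ast}(\p_S)$ couple the fibers, so the invariants are not a fiber integral of the local theory. Making this step precise is exactly the content of the cited results (\cite[Sec.3.8]{PaPix_GWPT}, and the $(0,d)$-class analysis going back to \cite[Prop.26]{PP_GWPT_Toric3folds}) and constitutes a real argument rather than bookkeeping. So your proposal correctly identifies the architecture of the known proof, but as a self-contained proof it is incomplete at precisely these two points.
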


\begin{proof}
For toric surfaces this is \cite[Theorem 3]{PaPix_GWPT}
and for K3 surfaces this is \cite[Sec.3.8] {PaPix_GWPT}
(note the typo in \cite[Prop.10]{PaPix_GWPT}; the correct statement is  $\gamma_i \in H^{\ast}(\mathbf{P}_S)$).
\end{proof}

\subsection{Proof of Theorem~\ref{thm:Fano complete intersection}}
We only discuss the case where $X$ is a hypersurface in $\p^4$ of degree $d$ for $d \in \{ 1, 2,3,4 \}$.
The general case is completely parallel, see for example \cite[Sec.4.8]{ABPZ}.
We argue by induction on $d$.
In the base case $d=1$ we have $X=\p^3$ which is toric, so the result is known by \cite{PP_GWPT_Toric3folds}.
For $d>1$ following \cite[Sec.0.5.4]{MP} one considers a simple degeneration 
\[ X \rightsquigarrow X_1 \cup_D \tilde{X}_2 \]
where
\begin{enumerate}
\item[(i)] $X_1 \subset \p^4$ is a hypersurface of degree $d-1$,
\item[(ii)] $D \subset \p^3$ is a hypersurface of degree $d-1$, and
\item[(iii)] $\tilde{X}_2$ is the blow-up of $\p^3$ along a complete intersection curve of degree $(d,d-1)$.
\end{enumerate}
We analyze now all terms which appear in the degeneration formula.

\vspace{4pt}
\noindent \textbf{Step 1.}
By induction the GW/PT correspondence is known for $X_1$.
Consider the degeneration to the normal cone
\[ X_1 \rightsquigarrow X_1 \cup_D \p( N_{D/X_1} \oplus \CO). \]
Since there is no vanishing cohomology for this degeneration,
all GW and PT invariants of $X_1$ are determined in terms of the invariants 
of the relative theories of $(X_1,D)$ and $(\p_D, D_{0})$ where $\p_D = \p(N_{D/X_1} \oplus \CO)$.
Following \cite[Sec.7.3]{PaPix_GWPT} we write this schematically as
\[ Z( X_1 ) \quad \rightsquigarrow \quad Z(X_1, D) \text{ and } Z(\p_D, D_0) \]
meaning that it applies both to the GW and PT theory.
By the invertibility statement of Lemma~\ref{lemma:PaPixJap} and Lemma~\ref{lemma:PaPixJap_GW}
and an induction argument
%and using an induction over curve classes and insertions ,
this relation can be inverted (see \cite{PaPix_GWPT}) to give that the GW/PT theories of
$X_1$ and $(\p_D, D_0)$ determine the GW/PT theory of $(X_1, D)$.
We write:
\[ Z(X_1, D) \quad \rightsquigarrow \quad Z(X_1) \text{ and } Z(\p_D, D_0). \]

The GW/PT correspondence is known for $X_1$ by induction.
Since $D$ is either isomorphic to $\p^2$, $\p^1 \times \p^1$ or a cubic surface (which is deformation equivalent to a toric surface)
it is also known for $(\p_D, D_0)$ by Theorem~\ref{thm:GWPT for P_S toric K3}.
By the compatibility of the GW/PT correspondence with the degeneration formula,
we conclude that the GW/PT correspondence holds for $(X_1,D)$ for all cohomology classes.

\vspace{4pt}
\noindent \textbf{Step 2.}
Let $Z_{\PT}^{\Delta}(X_1,D)$ denote the list of all the marked relative PT invariants of the pair $(X_1,D)$
with interior insertions given by big diagonals, i.e. from $DH^{\ast}( (\tilde{X}_2, D)^r )$.
Similarly, let $Z_{\GW}^{\Delta}(X_1,D)$ the corresponding Gromov-Witten invariants.
In Section~\ref{subsec:compatibility splitting formula} we proved that these invariants
are determined by those of $(X_1,D)$ and $(\p_D, D_{0,\infty})$,
\[ Z^{\Delta}(X_1,D) \quad \rightsquigarrow \quad Z(X_1, D) \text{ and } Z(\p_D, D_{0,\infty}). \]
Since the GW/PT correspondence is known for both theories on the right by Step 1 and Theorem~\ref{thm:GWPT for P_S toric K3},
by Proposition~\ref{prop:relGWPT}
we conclude that the generalized GW/PT correspondence of Conjecture~\ref{conj:relGWPT} holds for $Z_{\PT}^{\Delta}(X_1,D)$.

\vspace{4pt}
\noindent \textbf{Step 3.}
By \cite[Sec.7.3.3]{PaPix_GWPT} the GW/PT correspondence (in the form of Conjecture~\ref{conj:GWPT})
is known for $(\tilde{X}_2, D)$ for \emph{all cohomology classes}.
Moreover, by Theorem~\ref{thm:GWPT for P_S toric K3} again the GW/PT correspondence is known for the pair
\[ (\p( N_{D/\tilde{X}_2} \oplus \CO ), D_{0,\infty}). \]
Hence by Proposition~\ref{prop:relGWPT} the generalized GW/PT correspondence of Conjecture~\ref{conj:relGWPT} holds for
$(\tilde{X}_2, D)$ for all insertions from $DH^{\ast}( (\tilde{X}_2, D)^r )$,
\[
Z^{\Delta}(\tilde{X}_2,D) \quad \rightsquigarrow \quad Z(\tilde{X}_2, D) \text{ and } Z(\p( N_{D/\tilde{X}_2} \oplus \CO ), D_{0,\infty}).
\]
We conclude the generalized GW/PT correspondence for $Z^{\Delta}(\tilde{X}_2,D)$.

\vspace{4pt}
\noindent \textbf{Final Step.} 
By applying Proposition~\ref{prop:reduction to non-vanishing} and the degeneration formula for
diagonal insertions given in Proposition~\ref{prop:degeneration formula PT} and~\ref{prop:degeneration formula GW} we obtain the reduction:
\begin{align*}
Z(X) \quad & \rightsquigarrow \quad Z^{\Delta}(X_2) \\
& \rightsquigarrow \quad Z^{\Delta}(X_1, D) \text{ and } Z^{\Delta}(\tilde{X}_2,D).
%& \rightsquigarrow \Big( Z(X_1, D) \text{ and } Z(\p_D, D_{0,\infty}) \Big) \text{ and }
%\Big( Z(\tilde{X}_2, D) \text{ and } Z(\p( N_{D/\tilde{X}_2} \oplus \CO ), D_{0,\infty}) \Big)
\end{align*}

Since the GW/PT correspondence is compatible with the first reduction step by Proposition~\ref{prop:reduction to non-vanishing},
with the degeneration formula for marked-relative invariants by Proposition~\ref{prop:Compatibility GWPT with degeneration formula},
and is known for both endpoints of the reduction by the Step 2 and 3 above,
we conclude the GW/PT correspondence for $X$. \qed

\section{Bi-relative residue theory} \label{sec:birelative residue theory}
\subsection{Overview}
Let $R \to B$ be a rational elliptic surface, let $E \subset R$ be a smooth elliptic fiber.
We consider here the relative geometry
\[ (R \times \p^1, R_{\infty}) \]
and the \emph{bi-relative} geometry
\begin{equation} \mathbf{Y} = (R \times \p^1, R \times \{ \infty \} \cup (E \times \p^1) ). \label{3sdfs} \end{equation}

These geometries arise naturally when degenerating $(S \times \p^1, S_{\infty})$ via
the degeneration $S \rightsquigarrow R_1 \cup_{E} R_2$
of an elliptic K3 surface into the union of two rational elliptic surfaces glued along a smooth elliptic fiber,
and will be used in Section~\ref{sec:Proof gwpt k3}.
The full GW and PT theory of the bi-relative geometry \eqref{3sdfs} does not fit the framework of \cite{Li1,Li2,LiWu}.
Fortunately, we only have to consider here capped invariants
for which one only works
with the part of the theory which avoids the intersection $E \times \{ \infty \}$ of the two relative divisors.
The construction of the required moduli spaces for capped invariants
was carried out 'by hand' in Section 5 and 6 of \cite{PaPix_GWPT}
based on the known constructions  \cite{Li1,Li2,LiWu}.
A more general treatment of bi-relative GW and PT theory
uses log geometry. We refer to the recent work \cite{MR}
on log Donaldson-Thomas theory and to \cite{GS,Chen,Rang1} for log Gromov-Witten invariants.

\subsection{Capped descendents} \label{subsec:capped descendents}
Let $S$ be a smooth projective surface (we specialize to a rational elliptic surface later). 
Consider the relative geometry
\[ (S \times \p^1, S_{\infty}). \]
We identify curve classes via the K\"unneth decomposition
\[ H_2(S \times \p^1, \BZ) \cong H_2(S,\BZ) \oplus \BZ [ \p^1 ]. \]
Let $\Gamma = (n, (\beta,d))$ for a curve class $\beta \in H_2(S,\BZ)$. We have
\[ \d_{(\beta,d)} = 2d + \int_{\beta} c_1(T_S). \]

Consider the open subset
\[ U_{\Gamma} \subset P_{\Gamma}( S \times \p^1, S_{\infty}) \]
corresponding to stable pairs which do not carry components of positive $S$-degree in the rubber over $S_{\infty}$.
The open set $U_{\Gamma}$ is invariant under the action of 
the torus
\[ T = \BG_m. \]
which we let act on $\p^1$ with fixed points $0, \infty$ and tangent weight $t$ at $0$.
(Here $t$ is the weight of the standard representation, i.e. $t = c_1(\CO_{B \BG_m}(-1))$.)
The $T$-fixed locus of $U_{\Gamma}^{\BG_m}$ is proper, because it is the union of components of the fixed locus $P_{\Gamma}(S \times \p^1, S_{\infty})^{\BG_m}$
with no $S$-degree over an expansion at $S_{\infty}$.

Given a partition and classes
\[ \mu = (\mu_1, \ldots, \mu_{r}), \quad \Gamma = \gamma_1 \otimes \cdots \otimes \gamma_{r} \in H^{\ast}(S)^{\otimes r}, \]
we define the \emph{$T$-equivariant} insertion:
\begin{align*}
\tau_{\mu}(\Gamma_{0}) 
& = \prod_{i=1}^{\ell(\mu)} \tau_{\mu_i-1}( \gamma_i \cdot [S_0]) \\
& = \left( \tau_{\mu_1-1} \ldots \tau_{\mu_{\ell}-1} \right)\left( \gamma \cdot \pi_1^{\ast}([S_0]) \cdots \pi_{\ell(\mu)}^{\ast}([S_0]) \right)
\end{align*}
where $[S_0] \in H_{T}^{\ast}(S \times \p^1)$ is the class of the fiber over $0 \in \p^1$ in $T$-equivariant cohomology.

\begin{defn}
The capped descendent Pandharipande-Thomas invariants are defined by
\[
C^{(S \times \p^1,S_{\infty})}_{\PT, (\beta,d)}
\left( \lambda \middle| \tau_{\mu}(\Gamma_{0})  \right) \\
=
\sum_{m \in \frac{1}{2} \BZ} i^{2m} p^m
\int_{[ U_{m+\frac{1}{2} \d_{(\beta,d)}, (\beta,d)} ]^{\vir} } (\ev^{\text{rel}}_{S_{\infty}})^{\ast}(\lambda) \tau_{\mu}(\Gamma_{0}) 
\quad \in \BQ(t)((p)).
\]
where $\lambda \in H^{\ast}(S^{[d]})$ and the integral is defined by the virtual localization formula.
\end{defn}
%\[ \star \in \{ \vir, \red, \full \}. \]

Similarly, consider the open subset of the moduli space of relative stable maps
\[
\widetilde{U}_{g,\vec{\lambda},(\beta,d)} \subset \Mbar_{g,r}(S \times \p^1, S_{\infty}, \vec{\lambda})
\]
corresponding to relative stable maps
for which no connected component of the domain is mapped into a bubble over $S_{\infty}$
with positive degree over $S$.
\begin{defn}
The capped Gromov-Witten invariants lie in $\BQ(t)((u))$ and are defined by
\begin{multline*}
C^{(S \times \p^1, S_{\infty})}_{\GW, (\beta,d)}\left( \lambda \middle| \tau_{\mu}(\Gamma_{0}) \right)  \\
=
(-1)^{\ell(\lambda)} z^{d + \ell(\lambda)}
\sum_{g \in \BZ} (-1)^{g-1} z^{2g-2}
\int_{
[ \widetilde{U}_{g,\vec{\lambda},(\beta,d)} ]^{\vir} }
\prod_i \ev_i^{\ast}(\gamma_i [S_0]) \psi_{i}^{\mu_i-1}  
\prod_{j} ( \ev^{\rel}_{\infty,i} )^{\ast}(\delta_i).
\end{multline*}
where $\lambda = ( \lambda_i, \delta_i )_{i=1}^{\ell(\lambda)}$ is a $H^{\ast}(S)$-weighted partition of size $d$.
\end{defn}

A main idea behind these capped invariants
is that they are the correct linear combination of localization terms for which we have the GW/PT correspondence, see \cite{MOOP}.
We state the result for the rational elliptic surface that we need.

\begin{prop}[{\cite[Prop.3]{PaPix_GWPT}}] \label{prop:R capped GWPT}
Let $R$ be a rational elliptic surface.
We have that the series $C^{(R \times \p^1,R_{\infty})}_{\PT, (\beta,d)}\left( \lambda \middle| \tau_{\mu}(\Gamma_{0})  \right)$ is the Laurent expansion
of an element in $\BC(q,t)$
and
\[
C^{(R \times \p^1,R_{\infty})}_{\PT, (\beta,d)}
\left( \lambda \middle| \tau_{\mu}(\Gamma_{0})  \right)
=
C^{(R \times \p^1, R_{\infty})}_{\GW, (\beta,d)}\left( \lambda \middle| \overline{\tau_{\mu}(\Gamma_{0})} \right)
\]
under the variable change $p=e^{z}$.
\end{prop}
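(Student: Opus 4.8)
The plan is to obtain the statement by identifying the capped invariants of Section~\ref{subsec:capped descendents} with those of \cite{PaPix_GWPT} and then invoking \cite[Prop.~3]{PaPix_GWPT} verbatim; the only genuine work is the convention bookkeeping. First I would match the generating-series variables: as recorded in Section~\ref{sec:partition functions}, our $z,p$ are related to the genus and Euler-characteristic variables $u,q$ of \cite{PaPix_GWPT} by $z=iu$ and $q=-p$. I would then check that the explicit sign and power-of-$z$ prefactors built into $C_{\GW}$ and $C_{\PT}$ here reproduce exactly the capped series of loc.\ cit.\ under this substitution, using $i^{2m}=(-1)^m$ on the $\PT$ side. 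Finally I would confirm that the open loci $U_{\Gamma}$ and $\widetilde U_{g,\vec\lambda,(\beta,d)}$ — removing stable pairs and maps with positive $S$-degree in the rubber over $R_{\infty}$ — are the capped loci used there, that the equivariant parameter $t=c_1(\CO(-1))$ agrees, and that the capped insertion $\tau_{\mu}(\Gamma_0)$, weighted by the fiber class $[S_0]$ over $0\in\p^1$, is the capped descendent of \cite{PaPix_GWPT}. Granting these identifications, both the rationality in $\BC(q,t)$ and the correspondence under $p=e^{z}$ are literally the content of the cited proposition.

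For completeness I would recall why the cited result holds, since this is what makes the correspondence accessible for a non-toric surface. Following \cite{MOOP}, the capped theory is arranged so that the $T$-localization contributions assemble into precisely the linear combinations on which the descendent transformation $\overline{(\,\cdot\,)}$ acts term by term: the $[S_0]$-cap concentrates the descendents over $0\in\p^1$, while excluding positive-degree rubber over $\infty$ makes the relevant fixed loci proper. One then degenerates $R\times\p^1$ (relative to $R_\infty$) into pieces on which the toric descendent GW/PT correspondence of \cite{PP_GWPT_Toric3folds} applies — available to us as Theorem~\ref{thm:GWPT for P_S toric K3} — and propagates the correspondence through the degeneration formula. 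The rationality in $q$ and $t$ comes out in parallel, from the finiteness of the capped localization contributions together with the rationality of the entries of the matrix $\K$.

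The main obstacle I expect is purely one of normalization bookkeeping rather than geometry: one must transport every factor — the $\tfrac12\BZ$-indexing of the $\PT$ series by $\ch_3$, the $(-1)^{\ell(\lambda)}z^{d+\ell(\lambda)}$ prefactor on the GW side, and the powers of the equivariant weight $t$ — through the change $p=e^z$, $z=iu$, $q=-p$ so that no spurious sign, factor of $i$, or power of $t$ is left over. A secondary subtlety is that these invariants lie in $\BQ(t)((p))$, so I would verify that rationality is asserted and proved in the \emph{two-variable} field $\BC(q,t)$ and that the stated equality is an identity of rational functions in both $q$ and $t$, not merely after specializing $t$. Once this dictionary is fixed, the argument collapses to the single citation.
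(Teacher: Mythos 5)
Your proposal has a genuine gap: \cite[Prop.\,3]{PaPix_GWPT} is stated for a \emph{toric} surface $S$, and a rational elliptic surface $R$ is not toric, so the citation does not apply ``verbatim'' no matter how carefully the conventions are matched. The entire content of the paper's proof is the one observation you never make: $R$ is deformation equivalent to a toric surface (it is an iterated blow-up of $\p^2$ at $9$ points, and moving those points to fixed points of the standard $(\BG_m)^2$-action yields a toric surface), and since the equivariant parameter $t$ comes from the $\BG_m$-action on the $\p^1$-factor alone --- untouched by deformations of $R$ --- deformation invariance of the capped residue theories transports the toric statement to $R$. Your bookkeeping of $z=iu$, $q=-p$, the prefactors, and the capped loci $U_\Gamma$, $\widetilde U_{g,\vec\lambda,(\beta,d)}$ is fine as far as it goes (and is already recorded in the paper's conventions), but it is not where the mathematical content lies.

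Your fallback sketch in the second paragraph does not repair this. There you propose to degenerate $R\times\p^1$ relative to $R_\infty$ into pieces where the toric correspondence of \cite{PP_GWPT_Toric3folds} applies; but no such degeneration of $R$ into toric pieces is exhibited, and this is exactly the kind of move that is delicate --- degenerations of non-toric surfaces generally produce vanishing cohomology and relative/rubber terms over the new divisor, which is the central difficulty this paper is built to handle (compare the treatment of $S \rightsquigarrow R_1 \cup_E R_2$ in Section~\ref{sec:Proof gwpt k3} and the bi-relative theory of Section~\ref{sec:birelative residue theory}). Moreover, recalling why the cited proposition holds for toric $S$ is not the same as proving it for $R$: the argument you recapitulate (capped localization, degeneration formula, matrix $\K$ rationality) establishes the toric case only. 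The correct and much shorter bridge is deformation equivalence, not degeneration; without it your argument does not reach the stated proposition.
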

\begin{proof}
This follows directly from Proposition 3 of \cite{PaPix_GWPT}
because any rational elliptic surface is deformation equivalent to a toric surface
(A rational surface is the (iterated) blow-up of $9$ points on $\p^2$
so by moving these $9$ points to the fixed points of the standard $(\BG_m)^2$-action on $\p^2$,
the surface becomes toric).
\end{proof}
\begin{rmk} \label{rmk:corr RxP1, Rinf}
The correspondence $\overline{\tau_{\mu}(\Gamma_{0})}$ is applied here for the relative geometry $(R \times \p^1, R_{\infty})$.
However, since all insertions are supported over $R_0$ away from the relative divisor this correspondence
is equivalent to the correspondence for the absolute geometry $R \times \p^1$ (we may use $c_i(T_{R\times \p^1})$ and $(R \times \p^1)^r$
instead of $c_i(T_{R\times \p^1}[-R_{\infty}])$ and $(R \times \p^1,R_{\infty})^r$)
\end{rmk}

\subsection{Vertex term of the bi-relative theory}
Let $\Gamma = (n, (\beta,d))$ for a curve class $(\beta,d) \in H_2(R \times \p^1,\BZ)$.
Let $d_E = \beta \cdot E$. Let
\[
V_{\Gamma,r} \subset P_{\Gamma,r}(R \times \p^1, E \times \p^1)
\]
be the open locus corresponding to $r$-marked stable pairs on $(R \times \p^1)[k]$
which meet the divisor $\widetilde{R}_{\infty} = p^{-1}(\infty)$ transversely,
where $p : (R \times \p^1)[k] \to R \times \p^1 \to \p^1$ is the projection.
The fixed locus of the $T$-action on $V_{\Gamma,r}$ is compact.
Via intersecting a stable pair with $\widetilde{R}_{\infty}$ we obtain an evaluation morphism
\[ \ev^{\rel}_{\widetilde{R}_{\infty}} : V_{\Gamma,r} \to (R,E)^{[d]}. \]
We also have an evaluation morphism at the relative divisor $E \times \p^1$.
Since stable pairs parametrized by $V_{\Gamma,r}$ never meet this relative divisor at $E \times \infty$,
it takes values in
\[
\ev^{\rel}_{E \times \p^1} : V_{\Gamma,r} \to (E \times \BA)^{[d_e]} \subset (E \times \p^1, E_{\infty})^{[d_e]}.
\]
Finally, we have interior evaluation maps:
\[ \ev: V_{\Gamma,r} \to (R \times \p^1, E \times \p^1)^r \]
obtained from restricting $\ev : P_{\Gamma,r}(R \times \p^1, E \times \p^1) \to (R \times \p^1, E \times \p^1)^r$ to $V_{\Gamma,r}$,

For any $\Gamma \in H^{\ast}( (R,E)^{r} )$ and partition
$\mu = (\mu_1, \ldots, \mu_{r})$ let
\begin{align*}
\tau_{\mu}(\Gamma_0) & := \tau_{\mu_1-1} \ldots \tau_{\mu_r-1}\Big( \gamma \cup \pi_1^{\ast}([R_0]) \cdots \pi_r^{\ast}( [R_0]) \Big).
\end{align*}
where we have suppressed the pullback by $(R \times \p^1, E \times \p^1)^r \to (R,E)^r$.
Consider also classes
\[ \lambda \in H^{\ast}( (R,E)^{[d]} ), \quad \rho \in H_{T}^{\ast}( (E \times \BC)^{[d_E]} ) \]
%Given also classes
We define the vertex term
\[ \mathsf{V}_{(\beta,d)}^{\PT}\big( \lambda\, \big| \, \rho \, \big|  \, \tau_{\mu}(\Gamma_0)  \big) 
= \sum_{m} i^{2m} p^m \int_{[ V_{(m+\frac{1}{2} \d_{(\beta,d)}, (\beta,d))} ]^{\vir} } 
\tau_{\mu}(\Gamma_0) \cdot
(\ev^{\rel}_{E \times \p^1})^{\ast}(\rho) \cdot (\ev^{\rel}_{\widetilde{R}_{\infty}})^{\ast}(\lambda)
\]
which lives in $\BC(t)((p))$,
where $i = \sqrt{-1}$ and as in Section~\ref{sec:partition functions} we have taken the holomorphic Euler characteristic of a stable pair to be valued by
\[ n = m + \frac{1}{2} \d_{(\beta,d)} = m + d + \frac{1}{2} \int_{\beta} c_1(T_R). \]

\subsection{Rubber term of the bi-relative theory}
We now consider stable pairs on the relative geometry
\[ (R \times \p^1, R_0 \cup R_{\infty} \cup (E \times \p^1) ) \]
in the curve class $(0,d)$, taken in the rubber sense with respect to the $T$-action on $\p^1$.
Since the stable pairs have degree $0$ over $R$, they never meet the relative divisor $E \times \p^1$,
and the moduli space of these stable pairs, denoted
\begin{equation} P_{(n, (0,d))}( R \times \p^1, R_0 \cup R_{\infty} \cup (E \times \p^1) )^{\sim}, \label{rubber moduli space} \end{equation}
can be constructed with the usual tools, see \cite[Sec.5.4]{PaPix_GWPT} for a discussion.
We have evaluation maps at the relative divisors $R_0$ and $R_{\infty}$,
\begin{gather*}
\ev_{R_0}^{\rel} : P_{(n, (0,d))}( R \times \p^1, R_0 \cup R_{\infty} \cup (E \times \p^1) )^{\sim} \to (R,E)^{[d]}, \\
\ev_{R_{\infty}}^{\rel} : P_{(n, (0,d))}( R \times \p^1, R_0 \cup R_{\infty} \cup (E \times \p^1) )^{\sim} \to (R,E)^{[d]}.
\end{gather*}
Let also $\Psi_{R_0}, \Psi_{R_{\infty}}$ denote the pull-back of the cotangent-line classes
of the divisors $0, \infty \in \p^1$ via the classifying morphism
\[ P_{(n, (0,d))}( R \times \p^1, R_0 \cup R_{\infty} \cup (E \times \p^1) )^{\sim} \to \CT_{(\p^1, 0, \infty)^{\sim}}^{\mathrm{rub}} \]
given by collapsing the $R$-factor. Given classes
\[ \lambda_1, \lambda_2 \in H^{\ast}( (R,E)^{[d]} ) \]
we define the rubber term
\begin{multline*} \mathsf{R}_{(0,d)}^{\PT}\left( \lambda_1, \lambda_2 \, \middle| \, \varnothing \, \middle|  \frac{1}{-\Psi_{D_0} + t}\, \tau_{\mu}(\Gamma_0)  \right) 
= \\
\sum_{m} i^{2m} p^m 
\int_{[ P_{(m+d, (0,d))}( R \times \p^1, R_0 \cup R_{\infty} \cup E \times \p )^{\sim} ]}
(\ev_{D_0}^{\rel})^{\ast}(\lambda_1) 
(\ev_{D_{\infty}}^{\rel})^{\ast}(\lambda_2)
\frac{1}{- \Psi_{D_0} + t}.
\end{multline*}
Here we have the empty insertion $\varnothing$ in the middle factor, because the curve does not meet the divisor $E \times \p^1$.

\subsection{Definition of the bi-relative theory}
Given classes of the form (see Section~\ref{subsec:rel hilb scheme} for the notation)
\begin{gather*}
\lambda = \Nak_{(\lambda_1, \ldots, \lambda_{\ell})}(\delta) \in H^{\ast}( (R,E)^{[d]} ), \quad \delta \in H^{\ast}((R,E)^{\ell}) \\
\rho = \Nak_{(\rho_1, \ldots,\rho_{\ell'})}( \epsilon ) \in H_T^{\ast}( (E \times \BC)^{[d_E]} ), \quad \epsilon \in H^{\ast}_T( (E \times \BC)^{\ell'} ).
\end{gather*}
we define the bi-relative capped descendent residue theory of $\mathbf{Y}$ by
\begin{multline} 
\label{capp def}
C^{\mathbf{Y},\PT}_{(\beta,d)}\big( \lambda\, \big| \, \rho \, \big|  \, \tau_{\mu}(\Gamma_0)  \big)
:=
\mathsf{V}_{(\beta,d)}^{\PT}\big( \lambda\, \big| \, \rho \, \big|  \, \tau_{\mu}(\Gamma_0)  \big)  \\
+ 
\sum_{i}
\mathsf{V}_{(\beta,d)}^{\PT}\big( \phi_i \, \big| \, \rho \, \big|  \, \tau_{\mu}(\Gamma_0)  \big) 
\cdot
\mathsf{R}_{(0,d)}^{\PT}\left( \lambda, \phi_i^{\vee} \, \middle| \, \varnothing \, \middle|  \frac{1}{-\Psi_{D_0} + t}\, \tau_{\mu}(\Gamma_0)  \right)
\end{multline}
where we sum over the K\"unneth decomposition of the diagonal
\[ [\Delta] = \sum_{i} \phi_i \otimes \phi_i^{\vee} \in H^{\ast}( (R,E)^{[d]} \times (R,E)^{[d]}). \]

The definition of these invariants on the Gromov-Witten side goes completely parallel,
we just need to interpret the insertions $\lambda$ and $\rho$ as partitions of $d, d_E$
weighted by the cohomology of $(R,E)^{d}$ and $(E \times \p^1, E_{\infty})$ respectively.
Moreover, the invariants of the genus $g$ invariant, degree $(\beta,d)$ moduli space of stable maps
is weighted according to the conventions of Section~\ref{sec:partition functions}, that is by
\[ (-i)^{\d_{(\beta,d)}} (-1)^{ \ell(\lambda) - |\lambda| + \ell(\rho) - |\rho| }
z^{\d_{(\beta,d)} + \ell(\lambda) - |\lambda| + \ell(\rho) - |\rho| }
(-1)^{g-1} z^{2g-2}. \]
This yields cappend descendent invariants.
\[ C^{\mathbf{Y},\GW}_{(\beta,d)}\big( \lambda\, \big| \, \rho \, \big|  \, \tau_{\mu}(\Gamma_0)  \big) \in \BC(t)((u)). \]

\begin{rmk}
The definition \eqref{capp def} has one more term than the definition in \cite[Sec.5.5]{PaPix_GWPT}.
This is because for $n=d$, the rubber moduli space \eqref{rubber moduli space}
is empty according to our definition (all stable pairs in this class are pulled back from $(R,E)^{[d]}$,
so have infinite automorphism groups). This degenerate term is included in \cite{PaPix_GWPT}
in the definition of $\mathsf{R}$.
\end{rmk}
\begin{rmk}
For K3 surfaces $S$ non-trivial GW and PT invariants for $S \times \p^1$ and class $(\beta,d)$
can only occur in case $\beta=0$ and where the Euler characteristic $n$ is minimal (see Section~\ref{sec:GWPT K3} below).
One has something similar for rational elliptic surfaces, see \cite[Proof of Lemma 27]{OPix2}.
Using a log-symplectic form, one can show that all invariants of $(R \times \p^1, E \times \p^1)$
vanish for all curve class $(\beta,d)$ where $\beta \cdot E = 0$ and the Euler characteristic $n$ is not minimal.
In particular, for the rubber term the only term that can contribute is where $n$ is minimal,
that is $n=d$, but here the moduli space is empty. Thus we see that:
\[
\mathsf{R}_{(0,d)}^{\PT}\left( \lambda_1, \lambda_2 \, \middle| \, \varnothing \, \middle|  \frac{1}{-\Psi_{D_0} + t}\, \tau_{\mu}(\Gamma_0)  \right) 
=
0.
\]

However, on the Gromov-Witten side, the rubber term is non-trivial
and to make our discussion as parallel as possible we have included this term also on the PT side.
(We will do the same in Section~\ref{sec:capped descendents K3} below, when we talk about capped descendents of $(S \times \p^1, S_{\infty})$.)
\end{rmk}

\subsection{GW/PT correspondence}
Let $B$ be the class of a section of $R \to \p^1$, and let $F$ be the fiber class.
We specialize the discussion now to the curve class
\[ \beta_h = B+hF \in H_2(R,\BZ). \]
Then (see e.g. \cite{BL}) every curve on $R$ in class $\beta_h$ meets the elliptic fiber
in the point $0_E := B \cap E$.
Therefore the stable pairs parametrized by
\[ V_{n, (\beta_h,d),r} \subset P_{n, (\beta_h,d),r}(R \times \p^1, E \times \p^1) \]
meet the relative divisor $E \times \p^1$ in the single point $(0_E, 0_{\p^1})$.
We hence specialize our insertion on the divisor $(E \times \p^1,E_{\infty})$ to be
\[ \rho = 1. \]
We have the following:

\begin{thm}  \label{thm:GWPT for birelative theory}
For any insertions
$\lambda = \Nak_{(\lambda_1, \ldots, \lambda_{\ell})}(\delta)$
and $\tau_{\mu}(\Gamma_0)$ with
\begin{equation} \label{delta Gamma}
\begin{gathered}
\delta = \Delta_{(R,E),12}^{\rel} \cdots \Delta_{(R,E),2a-1,2a}^{\rel} \prod_{i=2a+1}^{\ell} \pi_{i}^{\ast}(\delta_i) \\
\Gamma = \Delta_{(R,E),12}^{\rel} \cdots \Delta_{(R,E),2b-1,2b}^{\rel} \prod_{i=2b+1}^{r}  \pi_i^{\ast}(\gamma_i )
\end{gathered}
\end{equation}
for $\delta_i, \gamma_i \in H^{\ast}(R)$ and $a,b \geq 0$, we have that
$C^{\mathbf{Y},\PT}_{(\beta_h,d)}\big( \lambda\, \big| \, 1 \, \big|  \, \tau_{\mu}(\Gamma_0)  \big)$
lies in $\BC(p,t)$ and
\[
C^{\mathbf{Y},\PT}_{(\beta_h,d)}\big( \lambda\, \big| \, 1 \, \big|  \, \tau_{\mu}(\Gamma_0)  \big)
=
 C^{\mathbf{Y},\GW}_{(\beta_h,d)}\big( \lambda\, \big| \, 1 \, \big|  \, \overline{ \tau_{\mu}(\Gamma_0) } \big)
\]
under the variable change $p=e^{z}$.
\end{thm}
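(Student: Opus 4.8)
The plan is to deduce the statement from the capped GW/PT correspondence for $(R \times \p^1, R_{\infty})$ established in Proposition~\ref{prop:R capped GWPT}. Two geometric features of the class $\beta_h$ render the extra relative divisor $E \times \p^1$ invisible. First, $\beta_h \cdot E = 1$, so the relative degree along $E \times \p^1$ is $d_E = 1$; combined with the trivial weight $\rho = 1$, this forces the intersection with $E \times \p^1$ to be a single reduced point and prevents any nontrivial expansion of the target along $E \times \p^1$, so that bubbling over $E \times \p^1$ contributes nothing. Second, every curve in class $\beta_h$ meets $E$ only in the point $0_E = B \cap E$, so the incidence with $E \times \p^1$ is automatic. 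By Lemma~\ref{lemma:c_i restriction} the Chern classes of $T_{R \times \p^1}[-(E \times \p^1)]$ agree with those of $T_{R \times \p^1}$ away from $E$, and by Remark~\ref{rmk:corr RxP1, Rinf} the correspondence for $(R \times \p^1, R_{\infty})$ with insertions supported over $R_0$ coincides with the absolute correspondence on $R \times \p^1$; hence the correspondence matrices $\K_{\alpha, \widetilde{\alpha}}$ used in the two theories agree on all insertions appearing here.

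First I would settle the case $a = b = 0$, where $\delta = \prod_i \pi_i^{\ast}(\delta_i)$ and $\Gamma = \prod_i \pi_i^{\ast}(\gamma_i)$ carry no relative diagonals. Here I would run virtual $T$-localization for the $\BG_m$-action on the $\p^1$-factor on both the bi-relative moduli spaces and on the capped moduli spaces of $(R \times \p^1, R_{\infty})$. The fixed loci with all $R$-degree concentrated over $R_0$ produce the vertex term $\mathsf{V}$, and the loci carrying an expansion over $R_{\infty}$ produce the rubber term $\mathsf{R}$, assembled exactly as in the definition \eqref{capp def}. By the two observations above, the fixed-point data for $\mathbf{Y}$ and for $(R \times \p^1, R_{\infty})$ coincide, so $C^{\mathbf{Y}, \PT}_{(\beta_h, d)}(\lambda \mid 1 \mid \tau_{\mu}(\Gamma_0))$ equals the corresponding capped invariant of $(R \times \p^1, R_{\infty})$, and likewise on the Gromov--Witten side with $\overline{\tau_{\mu}(\Gamma_0)}$. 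The correspondence in this base case then follows at once from Proposition~\ref{prop:R capped GWPT}.

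Next I would remove the relative diagonals by induction on $a + b$. For a diagonal factor $\Delta^{\rel}_{(R,E),12}$ in the interior insertion $\Gamma$ I would apply the relative-diagonal splitting formula (Proposition~\ref{prop:splitting relative diagonal} on the PT side and Proposition~\ref{prop:splitting relative diagonal GW} on the GW side), whose class-level input is Lemma~\ref{lemma:rel splitting}: this replaces the invariant by the one with the absolute diagonal $\Delta_{12}$, which expands by the K\"unneth formula \eqref{Kunneth diagonal} into products of pulled-back classes, minus rubber contributions over $E$, each with one diagonal factor fewer after Lemma~\ref{lemma:diagonal pullback} and with the rubber insertions rigidified by Corollary~\ref{cor:rigidification}. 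The diagonals entering through the relative insertion $\lambda = \Nak(\delta)$ over $\widetilde{R}_{\infty}$ are reduced in the same manner, applying Lemma~\ref{lemma:rel splitting} for the surface pair $(R,E)$ to split each surface diagonal into an absolute diagonal plus a lower rubber term. By the compatibility argument of Section~\ref{subsec:compatibility splitting formula}, the descendent transformation $\overline{(\,\cdot\,)}$ intertwines the two splittings, so the correspondence for an invariant with $a + b$ diagonal factors reduces to the correspondence for invariants with strictly fewer, which hold by the inductive hypothesis together with the base case.

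The main obstacle is the bookkeeping of the second paragraph: one must verify that the vertex--rubber decomposition \eqref{capp def} reproduces, term by term and with the normalizations of Section~\ref{sec:partition functions}, the $T$-localization of the capped $(R \times \p^1, R_{\infty})$ invariants --- in particular that the cotangent-line factor $\frac{1}{-\Psi_{D_0}+t}$ in the rubber term matches and that the degenerate $n = d$ contribution (discussed in the remark following \eqref{capp def}) is treated consistently on both sides. This is exactly where the hypotheses $\beta_h \cdot E = 1$ and $\rho = 1$ are indispensable: they ensure that the $E \times \p^1$-relative structure never enters the fixed-point contributions, so that once the identification of fixed-point data is in place the correspondence is inherited directly from Proposition~\ref{prop:R capped GWPT}.
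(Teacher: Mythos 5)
There is a genuine gap in your base case $a=b=0$. You claim that for the class $\beta_h$ with $\rho=1$ the divisor $E\times\p^1$ is ``invisible,'' so that the $T$-fixed data of the bi-relative moduli spaces $V_{\Gamma,r}$ coincide with those of the capped moduli spaces $U_{\Gamma}$ of $(R\times\p^1,R_\infty)$, and hence that $C^{\mathbf{Y},\PT}_{(\beta_h,d)}\big(\lambda\,|\,1\,|\,\tau_\mu(\Gamma_0)\big)$ literally equals the capped invariant $C^{(R\times\p^1,R_\infty),\PT}_{(\beta_h,d)}\big(\lambda\,|\,\tau_\mu(\Gamma_0)\big)$. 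This identification is false. Although $\beta_h\cdot E=1$ forces set-theoretic incidence with $E\times\p^1$ at the single point $(0_E,0_{\p^1})$, curves in class $\beta_h$ are combs $B+F_1+\cdots+F_h$, and a fiber component may be $E$ itself, i.e.\ lie entirely inside the relative divisor. Such stable pairs (and stable maps) belong to $U_\Gamma$ but violate the normality condition defining $V_{\Gamma,r}$; in the bi-relative theory they are replaced by expanded configurations with degree pushed onto the bubble over $E\times\p^1$, so bubbling along $E\times\p^1$ does occur even in relative multiplicity one, and the fixed loci genuinely differ. This discrepancy is exactly what the paper's degeneration formula \eqref{deg bir} records: degenerating $R\rightsquigarrow R\cup_E Q$ with $Q=E\times\p^1$, the capped $(R\times\p^1,R_\infty)$ invariant equals $t\cdot C^{\mathbf{Y}}_{(\beta_h,d)}(\cdots)$ times the nonzero invariant $C^{(Q\times\p^1,Q_\infty\cup E\times\p^1)}_{((0,1),0)}\big(1\,|\,(1,[0_E])\,|\,\big)=\tfrac1t$, \emph{plus} lower-order terms with $h_2>0$ involving the bi-relative theory of $(Q\times\p^1,Q_\infty\cup E\times\p^1)$ --- and these corrections do not vanish. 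Accordingly the paper does not identify the two theories; it inverts this upper-triangular relation, deducing the correspondence for $C^{\mathbf{Y}}$ from Proposition~\ref{prop:R capped GWPT} on the left-hand side, from the correspondence for the $Q$-factor established in \cite[Sec.6.6]{PaPix_GWPT}, and from the compatibility of the correspondence with the degeneration formula. Your argument is missing both the degeneration step and this external input for the $Q$-geometry, and no localization bookkeeping can substitute for them, since the underlying equality of invariants it is meant to verify is not true.

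Your second step, removing the relative diagonals by induction on $a+b$ via the splitting formula and Proposition~\ref{prop:relGWPT}, does match the paper's treatment of the general case. One caution even here: the rubber terms produced by the splitting live on the $\p^1$-bundle geometry over $E\times\p^1$, not on $\mathbf{Y}$, so they are not reached by your inductive hypothesis; the correspondence for them is precisely the hypothesis of Proposition~\ref{prop:relGWPT} and must again be imported from \cite[Sec.6.6]{PaPix_GWPT}. With the base case repaired as in the paper, this second half goes through.
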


\begin{rmk}
Theoretically one should use here the generalized correspondence of Section~\ref{sec:generalized correspondence}
for the birelative geometry $(R \times \p^1, R_{\infty} \cup (E \times \p^1))$.
However, because all descendent insertions are supported over $R_0$
it suffices to use the correspondence for $(R \times \p^1, E \times \p^1)$,
see also Remark~\ref{rmk:corr RxP1, Rinf}
\end{rmk}

\begin{proof}
Assume first that $\lambda, \delta$ are of the form
\[
\delta = \prod_{i=1}^{\ell} \pi_i^{\ast}(\delta_i), \quad \Gamma = \prod_{i=1}^{r} \pi_i^{\ast}( \gamma_i ).
\]
so that in particular
\[
\tau_{\mu}(\Gamma_0) = \prod_{i=1}^{r} \tau_{\mu_{i}-1}( [R_0] \cdot \gamma_i ).
\]
By Proposition~\ref{prop: comparision}
the capped bi-relative residue invariants
\[ C^{\mathbf{Y},\PT/\GW}_{(\beta,d)}\big( \lambda\, \big| \, \rho \, \big|  \, \tau_{\mu}(\Gamma_0)  \big) \]
specialize precisely to the theory discussed in \cite[Section 5.8]{PaPix_GWPT}.
Moreover, the claimed correspondence is then precisely
Conjecture 5 of \cite{PaPix_GWPT}
for $S$ the rational elliptic surface, in the case of curve class $\beta_h$.

Consider the cappend descendent series of the relative geometry $(R \times \p^1, R_{\infty})$,
%We now use the notation of \cite{PaPix_GWPT} for the capped descendent invariants
%(with the only change that we use the variable conventions of Section~\ref{sec:partition functions}
%which allow for simpler degeneration statements).
%In particular, let
%\[
%C^{(R \times \p^1, R_{\infty}),\PT}_{(\beta_h,d)}\big( \lambda\, \big|  \, \tau_{\mu}(\Gamma_0)  \big).
%\]
%be the cappend descendent series of the relative geometry $(R \times \p^1, R_{\infty})$,
Consider the degeneration to the normal cone of the elliptic fiber $E \subset R$,
\[ R \rightsquigarrow R \cup_{E} Q, \quad Q = E \times \p^1. \]
where we take $E \subset Q$ to be the fiber over $0_{\p^1} \in \p^1$. We use the K\"unneth decomposition
\[ H_2(Q,\BZ) \cong H_2(E, \BZ) \oplus H_2(\p^1,\BZ) \cong \BZ \oplus \BZ. \]
By the degeneration formula for cappend descendent invariants
in \cite[Sec.5.8]{PaPix_GWPT} we have
\begin{multline} \label{deg bir}
C^{(R \times \p^1, R_{\infty}),\PT}_{(\beta_h,d)}\big( \lambda\, \big|  \, \tau_{\mu}(\Gamma_0)  \big)
= \\
t \sum_{h=h_1+h_2} \sum_{\substack{ \{ 1 , \ldots, \ell \} = A_1 \sqcup A_2 \\ \{ 1, \ldots, r \} = B_1 \sqcup B_2 }}
C^{(R \times \p^1, R_{\infty} \cup (E \times \p^1)),\PT}_{(\beta_{h_1},|\lambda_{A_1}|)}\left( \lambda_{A_1}\, \middle| 1 \middle| \, \prod_{i \in B_1} \tau_{\mu_i-1}( [R_0] \gamma_i ) \right) \\
\times C^{(Q \times \p^1, Q_{\infty} \cup (E \times \p^1)),\PT}_{((h_2,1),|\lambda_{A_2}|)}\left( \lambda_{A_2}\, \middle| (1, [0_E]) \middle| \,
\prod_{i \in B_2} \tau_{\mu_i-1}( [Q_0] \gamma_i ) \right),
\end{multline}
where the $t$ factor comes from the restriction $\Delta_{(\p^1, \infty)}$ to the point $(0_{\p^1}, 0_{\p^1}) \in (\p^1,\infty)^2$.

In Section 6.6 of \cite{PaPix_GWPT},
precisely the bi-relative residue theory of
\[ (Q \times \p^1, Q_{\infty} \cup (E \times \p^1)) \]
was proven to satisfy the GW/PT correspondence.\footnote{In fact, we only need Conjecture 5 of \cite{PaPix_GWPT}
for even cohomology on the elliptic curve, which is proven in \cite[Sec.6.4]{PaPix_GWPT}.}
Moreover, by Proposition~\ref{prop:R capped GWPT}
the GW/PT correspondence holds for the capped theory:
\[ C^{(R \times \p^1, R_{\infty}),\PT}_{(\beta_h,d)}\big( \lambda\, \big|  \, \tau_{\mu}(\Gamma_0)  \big). \]
We can write the degeneration \eqref{deg bir} as
\begin{multline*}
C^{(R \times \p^1, R_{\infty}),\PT}_{(\beta_h,d)}\big( \lambda\, \big|  \, \tau_{\mu}(\Gamma_0)  \big)
= \\
t 
C^{(R \times \p^1, R_{\infty}),\PT}_{(\beta_h,d)}\big( \lambda\, \big| \, 1 \, \big|  \, \tau_{\mu}(\Gamma_0)  \big)
\times C^{(Q \times \p^1, Q_{\infty} \cup E \times \p^1),\PT}_{((0,1),0)}\left( 1 \, \middle| (1, [0_E]) \middle| \,
\right) 
+ \ldots
\end{multline*}
where the dots stand for terms which are lower in
an ordering by a lexicographic ordering on $h$, $d$, and the degree of $\tau_{\mu}(\Gamma_0)$.
It is straightforward to see that\footnote{For the minimal Euler characteristic
$n=1$ the moduli space is isomorphic to $E \times \p^1$,
and the equivariant integral over $[0_E]$ yields the invariant $\frac{1}{t}$.}
\[ 
C^{(Q \times \p^1, Q_{\infty} \cup E \times \p^1),\PT}_{(0,1,0)}\left( 1 \, \middle| (1, [0_E]) \middle| \,
\right)  \neq 0. \]
A similar degeneration applies on the Gromov-Witten side.
By the compatibility of the GW/PT correspondence with the degeneration formula
(as stated in \cite[Sec.5.8, p.436]{PaPix_GWPT}) we conclude that the claim holds.

We now consider the case of $\delta$ and $\Gamma$ as in the \eqref{delta Gamma}.
Whenever all $\Gamma$ are of the form $\pi_1^{\ast}(\gamma_1) \cdots \pi_r^{\ast}(\gamma_r)$,
we know that the GW/PT correspondence holds for the cappend descendent residue invariants of
$(Q \times \p^1, Q_{\infty} \cup (E \times \p^1))$ by \cite[Sec.6.6]{PaPix_GWPT},
and also for $\mathbf{Y}$ in curve classes $\beta_h$ by the above.
The general case hence follows by the compatibility of the splitting formula with the generalized GW/PT correspondence
proven in Proposition~\ref{prop:relGWPT}: Indeed, the insertion $\tau_{\mu}(\Gamma_0)$
is supported over $R_0$ and hence does not interact with the divisor $R_{\infty}$,
hence the splitting formula only takes place relative to the single divisor $E \times \p^1$.
(This is also reflected that the relative part of our insertions is pulled back from $(R,E)^{r}$.)
\end{proof}

\section{$K3 \times \mathrm{Curve}$: Primitive case} 
%\section{GW/PT correspondence for $K3 \times \mathrm{Curve}$} 
\label{sec:GWPT K3}
Let $S$ be a smooth projective K3 surface, let
$C$ be a smooth curve and let $z=(z_1, \ldots, z_N)$ be a tuple of distinct points $z_i \in C$.
We consider the 
relative geometry
\begin{equation} (S \times C, S_z), \quad S_{z} = \bigsqcup S \times \{ z_i \}. \label{relative geometry} \end{equation}
The curve classes will be denoted by
\[ (\beta, d) = \iota_{\ast} \beta + d [C] \in H_2(S \times C, \BZ) \cong H_2(S,\BZ) \oplus \BZ [C]. \]
We have
\[ \d_{(\beta,d)} = \int_{(\beta,d)} c_1(T_{S \times C}) = d (2-2g(C)). \]

\subsection{Pandharipande-Thomas theory} \label{sec:K3xC PT theory}
Let $\Gamma = (n, (\beta,d))$ and consider the moduli space $P_{\Gamma}(S \times C,S_z)$ of relative stable pairs on \eqref{relative geometry},
which is of virtual dimension
\[ \vd = \d_{(\beta,d)} = 2 d (1-g(C)). \]
Since the K3 surface $S$ carries a everywhere non-vanishing holomorphic $2$-form,
the obstruction sheaf on $P_{\Gamma}(S \times C,S_z)$ admit
naturally a cosection.
We refer to \cite{MPT} for the construction in case $\beta \neq 0$ and to \cite[Sec.4.3]{PaPixJap} in case $\beta =0$,
see also the recent treatment of Nesterov \cite{N2}.
As these references show, the cosection can be choosen to be everywhere surjective
unless we are in the minimal case
\begin{equation} \beta = 0, \quad n = \frac{1}{2} \d_{(\beta,d)} = d ( 1-g(C)), \label{minimal case} \end{equation}
in which case the moduli space parametrizes stable pairs which are obtained by pulling back a length $d$ subscheme of $S$.
In the non-minimal case $(\beta,n) \neq (0, d (1-g(C)))$
we obtain by the results of Kiem and Li \cite{KL}
the the standard virtual class vanishes,
%we have
\[
[ P_{\Gamma}(S \times C,S_z) ]^{\vir} = 0,
\]
as well as that there exists a reduced virtual fundamental class:
\[
[ P_{\Gamma}(S \times C,S_z) ]^{\red} \in A_{\mathsf{rvd}}(P_{\Gamma}(S \times C,S_z)), \quad \mathsf{rvd} = \vd + 1.
\]

We will need here both the standard and the reduced virtual class.
The reasons is that they interact with each other in the degeneration formula, the splitting formulas, etc.
To capture this interaction it is handy to introduce a formal variable $\epsilon$ living in the ring of dual numbers
\[ \BQ[\epsilon]/ (\epsilon^2), \]
see \cite{HilbK3} for the origins of the idea.
We also extend the definition of the reduced virtual class to the minimal case as follows:
\[ [ P_{\Gamma}(S \times C,S_z) ]^{\red} := 0, \quad \text{ if } (\beta,n) = (0, d(1-g(C))). \]
%Conceptually their interaction is best understood by using a formal variable $\epsilon$ in the ring of dual numbers
%\[ \BQ[\epsilon]/ (\epsilon^2), \]
%see \cite{HilbK3} for the origins of the idea.
One defines the $\full$ virtual fundamental class for all $n,\beta$ by:
\[
[ P_{\Gamma}(S \times C,S_z) ]^{\full} := [ P_{\Gamma}(S \times C,S_z) ]^{\vir} + \epsilon [ P_{\Gamma}(S \times C,S_z) ]^{\red}
\]
in $A_{\ast}(P_{\Gamma}(S \times C,S_z)) \otimes \BQ[\epsilon]/\epsilon^2$.
%for all $\beta$ and $n \in \BZ$.
By a basic check (see e.g. \cite{MPT, HilbK3} and Remark~\ref{rmk:epsilon} below) 
the full virtual class satisfies exactly the same splitting rules and degeneration formulas as the standard virtual class.
We will denote the Pandharipande-Thomas brackets, partitions functions
defined using the reduced virtual class  $[ - ]^{\red}$ and the full virtual class $[ - ]^{\full}$ 
with the supscripts $\red$ and $\full$ respectively.
If we want to stress that our invariants are defined using the standard virtual class, we will add the supscript $\vir$.

\begin{rmk} \label{rmk:epsilon}
Let $\pi : \CS \to \p^1$ be the (non-algebraic) twistor family of the K3 surface $S$,
and let $\BE = \pi_{\ast} \Omega_{\CS/\p^1}$ be the Hodge bundle with fiber $H^0(S_t, \Omega_{S_t}^2)$. 
Then we may identify $\epsilon$ with the first Chern class $c_1(\BE) \in A^1(\p^1)$.
Under this identification,
the full virtual class can then be viewed as the virtual class of the moduli space of stable pairs on the total space of the family $\CX \times C \to \p^1$.
The invariants are defined by pushforward to $\p^1$ and hence take value in $\BQ[\epsilon]/\epsilon^2$.
This gives a conceptual proof for the 'basic check' alluded to above,
and also connects to the origins of the reduced Gromov-Witten invariants in the work of Bryan and Leung \cite{BL},
see also \cite{KT1} for an algebraic treatment. \qed
\end{rmk}

The partition functions take on the usual form:
Since the relative divisor $S_z$ has $N$ connected components,
we need to specify the relative conditions
\[ \lambda_1, \ldots, \lambda_N \in H^{\ast}(S^{[d]}). \]
Let also $\gamma \in H^{\ast}( (S \times C, S_z)^r )$ be a class.
We then define:
\begin{multline*}
Z^{(S \times C,S_z), \vir/\red/\full}_{\PT, (\beta,d)}
\left( \lambda_1, \ldots, \lambda_N \middle| \tau_{k_1} \cdots \tau_{k_r}(\gamma) \right) \\
=
\sum_{m \in \BZ} (-p)^m
\big\langle \, \lambda_1, \ldots, \lambda_N \, \big| \, \tau_{k_1} \cdots \tau_{k_r} (\gamma) \big\rangle^{(S \times C,S_z), \PT, \vir/\red/\full}_{m + d(1-g(C)),(\beta,d)}.
\end{multline*}
Since for the $\vir$-invariants, only the minimal degree $(\beta,n) = (0,\frac{1}{2} \d_{\beta})$ contributes,
we have
\[
Z^{(S \times C,S_z), \vir}_{\PT, (\beta,d)}
\left( \lambda_1, \ldots, \lambda_N \middle| \tau_{k_1} \cdots \tau_{k_r}(\gamma) \right) \in \BQ.
\]

To give an example how to work with the full invariants, we state the degeneration formula:
Let $C \rightsquigarrow C_1 \cup_x C_2$ be a degeneration of $C$.
%and assume that $C_2$ is rational: $C_2 \cong \p^1$.
Let
\[ \{ 1, \ldots, N \} = A_1 \sqcup A_2 \]
be a partition of the index set of relative divisors,
and write $z(A_i) = \{ z_j | j \in A_i \}$.
We assume that the points in $A_i$ specialize to the curve $C_i$ disjoint from $x$.
\begin{prop} \label{prop: K3 degeneration formula}
For any curve class $\beta \in H_2(S,\BZ)$ we have:
\begin{gather*}
Z^{(S \times C, S_z), \full}_{\PT, (\beta,n)}\left( \lambda_1, \ldots, \lambda_N \middle| \prod_{i} \tau_{k_i}(\alpha_i) \right) =
\sum_{ \{ 1, \ldots, r \} = B_1 \sqcup B_2 } \sum_{\beta = \beta_1 + \beta_2} \\
Z^{(S \times C_1, S_{z(A_1),x}), \full}_{\PT,(\beta_1,n)}\left( \prod_{i \in A_1} \lambda_i, \Delta_1 \middle| \prod_{i \in B_1} \tau_{k_i}( \alpha_i ) \right) 
Z^{(S \times C_2, S_{z(A_2),x}), \full}_{\PT,(\beta_2,n)}\left( \prod_{i \in A_2} \lambda_i, \Delta_2 \middle| \prod_{i \in B_2} \tau_{k_i}( \alpha_i ) \right).
\end{gather*}
where $(\Delta_1, \Delta_2)$ stands for summing over the K\"unneth decomposition of the diagonal class $\Delta_{S^{[n]}} \in H^{\ast}((S^{[n]})^2)$.
\end{prop}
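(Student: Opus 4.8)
The plan is to realize the base-curve degeneration as a simple degeneration of threefolds and then reduce the statement for the full class to the already-available degeneration formula for marked relative invariants, handling the reduced part through the twistor-family formalism of Remark~\ref{rmk:epsilon}.

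First I would set up the geometry. A degeneration $C \rightsquigarrow C_1 \cup_x C_2$ of the base curve induces a simple degeneration $\widetilde{\epsilon}: W \to B$ of the total space $S \times C$ whose general fiber is $S \times C$ and whose central fiber is $(S \times C_1) \cup_D (S \times C_2)$, glued along the divisor $D = S \times \{ x\} \cong S$. Since the marked points $z_j$ specialize to $C_i$ away from $x$, the relative divisor $S_z$ distributes into $S_{z(A_1)}$ on the first piece and $S_{z(A_2)}$ on the second, and both pieces acquire the new relative divisor $S \times \{ x \}$. Note that $(\beta,n) \cdot D = n$, so the relative Hilbert scheme at $D$ is $D^{[n]} = S^{[n]}$, and $\beta_1 \cdot D = \beta_2 \cdot D = n$ for any splitting $\beta = \beta_1 + \beta_2$; this is the source of the diagonal $\Delta_{S^{[n]}}$ and of the matching degree $n$ on both factors. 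For the \emph{standard} virtual class the degeneration formula of Proposition~\ref{prop:degeneration formula PT}, put in the clean partition-function form of Section~\ref{sec:partition functions} (where the signs, the Euler-characteristic shift and the $(-p)^m$ prefactors are absorbed), then yields exactly the asserted identity at the $\epsilon^0$ level.

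The main work is to upgrade this to the full class $[\,\cdot\,]^{\full} = [\,\cdot\,]^{\vir} + \epsilon [\,\cdot\,]^{\red}$. Here I would invoke the twistor-family description of Remark~\ref{rmk:epsilon}: identify $\epsilon = c_1(\BE) \in A^1(\p^1)$ for the Hodge bundle $\BE$ of the twistor family $\pi : \CS \to \p^1$, and realize the full invariants as pushforwards to $\p^1$ of honest (non-reduced) stable-pair invariants of the total space $\CS \times C \to \p^1$. Crucially, the twistor deformation moves only the K3 factor and leaves $C$, hence the degeneration $C \rightsquigarrow C_1 \cup_x C_2$, untouched. I would therefore perform the base-curve degeneration relative to $\p^1$, obtaining a simple degeneration of the family $\CS \times C$ to which the Li--Wu degeneration formula \cite{LiWu, Li1, Li2} applies with ordinary virtual classes. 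Pushing the resulting identity forward to $\p^1$ and reading off the coefficients of $1$ and of $c_1(\BE)$ produces the degeneration formula for the full class: because $\epsilon^2 = 0$, the $\epsilon$-linear part distributes the reduced class over the two factors by the Leibniz rule $(\red \otimes \vir) + (\vir \otimes \red)$, which is precisely what the product $Z^{\full}_1 \cdot Z^{\full}_2$ expands to in $\BQ[\epsilon]/\epsilon^2$.

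The main obstacle I anticipate is making the twistor-family argument fully rigorous, since $\CS \to \p^1$ is non-algebraic and both "virtual class" and "degeneration formula" must be interpreted relative to the base. I would address this exactly as in \cite{MPT, HilbK3}: work with the relative obstruction theory over $\p^1$, for which the cosection coming from the fixed symplectic form on $S$ is genuinely absorbed, so that the fiberwise reduced class is the restriction of an honest relative class; and then observe that, because this cosection is pulled back from the \emph{unchanged} $S$-factor, it is compatible both with the gluing morphism $\xi$ and with the Gysin map $\Delta^{!}_{S^{[n]}}$ along the relative divisor. This compatibility is what forces exactly one of the two glued pieces to carry the reduced class, i.e. it guarantees that no $\epsilon^2$-term is produced. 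Once this is in place, matching the remaining combinatorial data---the sum over $\beta = \beta_1 + \beta_2$ and $\{ 1, \dots, r \} = B_1 \sqcup B_2$, together with the diagonal insertion $(\Delta_1, \Delta_2)$ over $S^{[n]}$---is routine bookkeeping identical to the standard case.
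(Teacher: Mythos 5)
Your proposal is correct and follows essentially the same route as the paper: the paper gives no separate argument for this proposition, instead invoking the ``basic check'' that the full class $[\,\cdot\,]^{\full} = [\,\cdot\,]^{\vir} + \epsilon[\,\cdot\,]^{\red}$ obeys the same degeneration formulas as the standard class (Proposition~\ref{prop:degeneration formula PT}), justified conceptually via exactly the twistor-family identification $\epsilon = c_1(\BE)$ of Remark~\ref{rmk:epsilon} and the cosection compatibility of \cite{MPT, HilbK3}. Your write-up merely makes explicit the same Leibniz-rule bookkeeping in $\BQ[\epsilon]/\epsilon^2$ that the paper records in the remark following the proposition.
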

\begin{rmk}
By taking the $\epsilon^0$ and $\epsilon^1$ coefficient the degeneration formula for the standard and reduced invariants respectively.\\
%Assume $\beta \neq 0$.

\noindent 
\textbf{Non-reduced case:}
\begin{gather*}
Z^{(S \times C, S_z), \vir}_{\PT, (0,n)}\left( \lambda_1, \ldots, \lambda_N \middle| \prod_{i} \tau_{k_i}(\alpha_i) \right) =
\sum_{ \{ 1, \ldots, r \} = B_1 \sqcup B_2 } \\
Z^{(S \times C_1, S_{z(A_1),x}), \vir}_{\PT,(0,n)}\left( \prod_{i \in A_1} \lambda_i, \Delta_1 \middle| \prod_{i \in B_1} \tau_{k_i}( \alpha_i ) \right) 
Z^{(S \times C_2, S_{z(A_2),x}), \vir}_{\PT, (0,n)}\left( \prod_{i \in A_2} \lambda_i, \Delta_2 \middle| \prod_{i \in B_2} \tau_{k_i}(  \alpha_i ) \right)
\end{gather*}
\textbf{Reduced case:}
\begin{gather*}
Z^{(S \times C, S_z), \red}_{\PT, (\beta,n)}\left( \lambda_1, \ldots, \lambda_N \middle| \prod_{i} \tau_{k_i}(\alpha_i) \right) = 
%Z^{S \times C/S_{z_1, \ldots, z_k}}_{\PT,(\beta,n)}\left( \prod_{i=1}^{\ell} \tau_{k_i}( \alpha_i ) \middle| \lambda_1, \ldots, \lambda_k \right) = 
\sum_{ \{ 1, \ldots, r \} = B_1 \sqcup B_2 } \Bigg( \\
\phantom{+}
Z^{(S \times C_1,S_{z(A_1),x}), \red}_{(\beta,n)}\left( \prod_{i \in A_1} \lambda_i, \Delta_1 \middle| \prod_{i \in B_1} \tau_{k_i}( \alpha_i ) \right)
Z^{(S \times C_2,S_{z(A_2),x}), \vir}_{(0,n)}\left( \prod_{i \in A_2} \lambda_i, \Delta_2 \middle| \prod_{i \in B_2} \tau_{k_i}(  \alpha_i ) \right) \phantom{\Bigg)}\\
+
Z^{(S \times C_1,S_{z(A_1),x}),\vir}_{(0,n)}\left( \prod_{i \in A_1} \lambda_i, \Delta_1 \middle| \prod_{i \in B_1} \tau_{k_i}( \alpha_i ) \right)
Z^{(S \times C_2,S_{z(A_2),x}),\red}_{(\beta,n)}\left( \prod_{i \in A_2} \lambda_i, \Delta_2 \middle| \prod_{i \in B_2} \tau_{k_i}(  \alpha_i ) \right) \Bigg).
\end{gather*}
\qed
\end{rmk}

\subsection{Gromov-Witten theory}
\label{Section:Relative_Gromov_Witten_theory_of_P1K3}
For $i \in \{ 1, \ldots, N \}$, consider $H^{\ast}(S)$-weighted partitions
\[ \lambda_i =  \big((\lambda_{i,j}, \delta_{i,j}) \big)_{j=1}^{\ell(\lambda_i)} \]
of size $d$ with underlying partition $\vec{\lambda}_i$.

The discussion of the reduced classes on the Gromov-Witten side is very similar to the stable pairs case.
By \cite{MP, MPT} the obstruction sheaf of the moduli space of relative stable maps
$\Mbar_{g,r,(\beta,d)}(S \times C, S_{z}, \vec{\lambda}_i)$
has a surjective cosection whenever $\beta > 0$.
By \cite{KL} we obtain a reduced virtual class, and reduced invariants.
For $\beta = 0$ we define the reduced virtual class to be the zero class.
The full invariants are then identical to before,
and satisfy the same degeneration formula and splitting rules as ordinary Gromov-Witten invariants.

The partition function takes the usual form:
\begin{multline}
Z^{(S \times C, S_z), \vir/\red/\full}_{\GW, (\beta,d)}\left( \lambda_1, \ldots, \lambda_N \middle| \tau_{k_1} \cdots \tau_{k_r}(\gamma) \right)  \\
%= (-1)^{(1-g(C))n} (-z)^{(2-2g(C)-N)n + \sum_i \ell(\lambda_i)} \sum_{g \in \BZ} (-1)^{g-1} z^{2g-2}
%\Big\langle \, \lambda_1, \ldots, \lambda_N \, \Big| \, \prod_{i=1}^{r} \tau_{\ell_i}(\alpha_i) \Big\rangle^{S \times C / S_{z_1, \ldots, z_N}, \bullet}_{g, (\beta,n)} \\
=
(-1)^{d(1-g(C)) + \sum_i (\ell(\lambda_i) - |\lambda_i|)}
z^{2d(1-g(C)) + \sum_i ( \ell(\lambda_i) - |\lambda_i|)} \\
\sum_{g \in \BZ} (-1)^{g-1} z^{2g-2}
\left\langle \, \lambda_1, \ldots, \lambda_N \, \middle| \, \tau_{k_1} \ldots \tau_{k_r}(\gamma) \right\rangle^{(S \times C,S_z), \bullet, \vir/\red/\full}_{g, (\beta,d)}
\end{multline}
corresponding to the standard (i.e. non-reduced), the reduced, or the full invariants.

\subsection{GW/PT correspondence (primitive case)} \label{subsec:GWPT K3}
Given a class $\alpha \in H^{\ast}(S)$ and $\alpha' \in H^{\ast}(C)$ we use the shorthand
\[ \alpha \cdot \alpha' := \pi_{S}^{\ast}(\alpha) \cdot \pi_{C}^{\ast}(\alpha') \]
where $\pi_S, \pi_C$ the projections of $S \times C$ to the factors.
Let also
\[ \omega \in H^2(C,\BZ) \]
be the class of a point.

We will prove the following GW/PT correspondence in the primitive case:

\begin{thm} \label{thm:GWPT K3 general} 
Let $\lambda_1, \dots, \lambda_N$ be $H^{\ast}(S)$-weighted partition, let $\alpha_i \in H^{\ast}(S)$, and let $\beta \in H_2(S,\BZ)$ be a primitive effective curve class.
Then
\[
Z^{(S \times C,S_{z}),\red}_{\PT, (\beta,n)}\left( \lambda_1, \ldots, \lambda_N \middle| \prod_{i} \tau_{k_i}(\alpha_i \omega) \right)
\]
is the Laurent expansion of a rational function in $p$ and we have that
\[ 
Z^{(S \times C,S_{z}),\red}_{\PT, (\beta,n)}\left( \lambda_1, \ldots, \lambda_N \middle| \prod_{i} \tau_{k_i}(\alpha_i \omega) \right)
=
Z^{(S \times C,S_{z}),\red}_{\GW, (\beta,n)}\left( \lambda_1, \ldots, \lambda_N \middle| \overline{\prod_{i} \tau_{k_i}(\alpha_i \omega)} \right)
\]
under the variable change $p=e^{z}$.
\end{thm}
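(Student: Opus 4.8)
The plan is to prove Theorem~\ref{thm:GWPT K3 general} by mimicking the proof of the Fano case (Theorem~\ref{thm:Fano complete intersection}): degenerate the K3 surface into two rational elliptic surfaces, use the birelative residue theory of Section~\ref{sec:birelative residue theory} as the basic building block, and control the vanishing cohomology by the monodromy of the K3 lattice, where the \emph{orthogonal} monodromy now plays the role that the symplectic monodromy played in Proposition~\ref{prop:reduction to non-vanishing}. First I would reduce the curve $C$ to $\p^1$: degenerating $C \rightsquigarrow C_1 \cup_x C_2$ and applying the full degeneration formula of Proposition~\ref{prop: K3 degeneration formula} together with its compatibility with the correspondence (Proposition~\ref{prop:Compatibility GWPT with degeneration formula}), it suffices to establish the correspondence for the basic building blocks over $\p^1$, namely $(S \times \p^1, S_\infty)$ and $(S \times \p^1, S_0 \cup S_\infty)$. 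Since all descendent insertions are supported at the point class $\omega$, they distribute over the components in the usual TQFT fashion, and the relative divisors $S_z$ become copies of $S_0, S_\infty$.

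The heart of the argument is the degeneration of the K3 surface itself, $S \rightsquigarrow R_1 \cup_E R_2$, into two rational elliptic surfaces glued along a smooth elliptic fibre $E$, which induces a degeneration of $(S \times \p^1, S_\infty)$ with an additional relative divisor $E \times \p^1$. After a deformation using the invariance of the reduced theory, the primitive class $\beta$ may be arranged to restrict to section-plus-fibre classes $\beta_{h_i} = B + h_i F$ on the two pieces, so that $\beta_1 \cdot E = \beta_2 \cdot E = 1$ and the two halves are precisely the birelative geometries $\mathbf{Y}_i = (R_i \times \p^1, R_{i,\infty} \cup (E \times \p^1))$. Working with the full virtual class $[-]^{\full} = [-]^{\vir} + \epsilon [-]^{\red}$ (Remark~\ref{rmk:epsilon}) the degeneration formula distributes the reduced class as ``reduced on one factor times standard on the other''; since the standard invariants of $R_i \times \p^1$ vanish away from minimal Euler characteristic, the surviving contributions are exactly the birelative capped residue invariants of $\mathbf{Y}_i$.

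Before this degeneration can be applied I must handle the vanishing cohomology: a general insertion $\alpha_i \in H^2(S)$ (and a general weight in the $\lambda_i$) does not extend over the degeneration. Here I would use that the reduced invariants are deformation invariant, hence invariant under the monodromy group of $S$, whose Zariski closure on $\beta^\perp$ (for primitive $\beta$) is the full orthogonal group $O(\beta^\perp)$. By the orthogonal analogue of the invariant-theoretic argument of Proposition~\ref{prop:reduction to non-vanishing} — the first fundamental theorem for $O(V)$, whose invariant tensors are generated by the quadratic form — the invariants are determined by their values on diagonal insertions $\Delta_S$. Writing $\Delta_S = \Delta_V + (\text{algebraic part in } 1, \mathrm{pt}, \beta)$ then reduces the general statement to the case where all $S$-cohomology insertions, both in the relative conditions $\lambda_i$ and in the descendents $\tau_{k_i}(\alpha_i \omega)$, are components of the diagonal. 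These diagonal classes do extend over the degeneration, and under the degeneration and gluing they split, via Lemma~\ref{lemma:rel splitting} and Corollary~\ref{cor:diagonal splitting general}, into diagonal classes on the $R_i$ together with correction terms supported along $E$, all of the diagonal form required by Theorem~\ref{thm:GWPT for birelative theory}.

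Finally, Theorem~\ref{thm:GWPT for birelative theory} supplies the GW/PT correspondence and rationality for the birelative residue invariants of $\mathbf{Y}_i$ in the classes $\beta_{h_i}$ with diagonal insertions, while Proposition~\ref{prop:R capped GWPT} provides the capped correspondence for the $R_i \times \p^1$ factors. Combining these with the compatibility of the correspondence with the degeneration formula (Proposition~\ref{prop:Compatibility GWPT with degeneration formula}) and with the splitting formula (Proposition~\ref{prop:relGWPT}) yields the correspondence for $(S \times \p^1, S_\infty)$, and then, undoing the reduction of the first paragraph, for $(S \times C, S_z)$; rationality follows because the whole argument expresses the partition function as a finite $\BQ$-combination of the manifestly rational building-block series. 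I expect the main obstacle to be the monodromy/invariant-theory reduction of the third paragraph, which must be organized so that it is simultaneously compatible with the $\epsilon$-formalism for the reduced class and with the splitting of the diagonal along the \emph{new} divisor $E \times \p^1$ — that is, so that the ``vanishing'' piece $\Delta_V$ is genuinely controlled by the orthogonal monodromy while the surviving algebraic piece feeds exactly into the birelative theorem.
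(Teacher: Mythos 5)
Your overall strategy matches the paper's (degenerate $C$, use monodromy to kill the vanishing cohomology, degenerate $S \rightsquigarrow R_1 \cup_E R_2$, and feed everything into Theorem~\ref{thm:GWPT for birelative theory}), and your monodromy paragraph is essentially the paper's Step~3, with the orthogonal group $O(\beta^{\perp}\otimes\BC)$ replacing $\mathrm{Sp}$. But there are two genuine gaps. First, your reduction of $C$ to $\p^1$ via degeneration alone cannot eliminate the tube $(S \times \p^1, S_0 \cup S_{\infty})$ from your list of building blocks: degenerating $\p^1$ only ever trades tubes for tubes and caps, and if you then apply $S \rightsquigarrow R_1 \cup_E R_2$ to the tube you land in the \emph{tri-relative} geometries $(R_i \times \p^1, R_{i,0} \cup R_{i,\infty} \cup (E \times \p^1))$, which Theorem~\ref{thm:GWPT for birelative theory} does not cover. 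The paper's Step~1 avoids this by an induction on $(g(C),N)$ that uses the invertibility of the cap matrix (Lemma~\ref{lemma:PaPixJap} on the PT side and Lemma~\ref{lemma:PaPixJap_GW} on the GW side) to trade each relative insertion $\lambda_k$ for a descendent insertion $\tau[\lambda_k]$, reducing everything to the single-divisor cap $(S \times \p^1, S_{\infty})$; without this step your first paragraph does not close.

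Second, you skip the capping step entirely (the paper's Step~2, Proposition~\ref{prop:reduction to capped}, following the scheme of Pandharipande--Pixton), and this is not cosmetic: the full bi-relative stable pairs theory of $\mathbf{Y}_i = (R_i \times \p^1, R_{i,\infty} \cup (E \times \p^1))$ does not exist in the Li--Wu framework --- only the capped residue part avoiding $E \times \{\infty\}$ is constructed --- so the degeneration formula you invoke in your second paragraph is not even defined until the nonequivariant insertions $\tau_{k}(\alpha\,\omega)$ have been converted into $T$-equivariant capped insertions $\tau_{\mu}(\Gamma_0)$ supported over $0$; it is also the capping that forces curves to meet $E \times \p^1$ at the fixed point $(0_E,0_{\p^1})$, producing the factor $t$ in the splitting. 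Relatedly, your description of how the reduced class behaves under $S \rightsquigarrow R_1 \cup_E R_2$ is wrong: the $\epsilon$-formalism (``reduced on one side, standard on the other'') applies to degenerations of the \emph{curve} factor as in Proposition~\ref{prop: K3 degeneration formula}, where both sides remain K3 geometries. Here neither $R_i$ carries a reduced theory; the correct statement, due to Maulik--Pandharipande--Thomas, is that the reduced class splits into a product of two \emph{ordinary} virtual classes with the diagonal $\Delta_E$ replaced by $1 \otimes 1$ (the Gysin pullback $\Delta_E^{!}$ is not applied, its excess cancelling the reduction). Your vanishing claim for the standard invariants of $R_i \times \p^1$ is likewise misattributed: the relevant vanishing in the paper concerns $(R \times \p^1, E \times \p^1)$ in classes with $\beta \cdot E = 0$, via a log-symplectic form, and enters only in the rubber term.
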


\section{Proof of Theorem~\ref{thm:GWPT K3 general}} \label{sec:Proof gwpt k3}
\subsection{Strategy}
For the proof of Theorem~\ref{thm:GWPT K3 general} we will argue in the following 4 steps:
\begin{enumerate}
\item By the invertibility (Lemma~\ref{lemma:PaPixJap}, Lemma~\ref{lemma:PaPixJap_GW}) and the degeneration formula
we reduce to the case $(S \times \p^1, S_{\infty})$.
\item By a dimension induction argument we reduce to the GW/PT correspondence for capped descendents of $(S \times \p^1, S_{\infty})$.
%a certain sum of contributions in the localization formula for $(S \times \p^1, S_{\infty})$.
\item Use the monodromy of the K3 surface $S$ and the arguments of \cite{ABPZ} 
we reduce to capped descendent invariants on an elliptic K3 surface $S$ where all the insertions
given by diagonal, unit, point, section or fiber classes.
\item We apply the degeneration formula
for the degeneration $S \rightsquigarrow R_1 \cup_E R_2$ of $S$ into the union of two rational elliptic surfaces glued along an elliptic curve.
We are reduced to the marked relative invariants on the birelative residue theory of
$(R \times \p^1, R_{\infty} \cup E \times \p^1)$ for $R \in \{ R_1, R_2 \}$,
which have been studied in Section~\ref{sec:birelative residue theory}.
\end{enumerate}

\subsection{Step 1: Reduction to $(S \times \p^1, S_{\infty})$} \label{sec:induction scheme}
Let $\beta \in H_2(S,\BZ)$ be an effective curve class and assume first that $g(C) > 0$ or $N>0$.
All Gromov-Witten and Pandharipande-Thomas partition functions of the form
\begin{equation} Z^{(S \times C,S_{z}), \red}_{(\beta,d)}(I | \lambda_1, \ldots, \lambda_N ),
\quad I = \prod_{i=1}^{\ell} \tau_{k_i}( \omega \alpha_i )
\label{Z general}
 \end{equation}
can be expressed in terms of the invariants of $(S \times \p^1, S_{\infty})$ of the form
\begin{equation} Z^{(S \times \p^1,S_{\infty}),\red}_{(\beta,d)}(I' | \lambda' ), \quad I' = \prod_{i} \tau_{k'_i}( \omega \alpha'_i ), \label{Z cap} \end{equation}
where we have used again the convention that if we do not have a $\GW$ or $\PT$
subscript, the claim should hold for both theories.
The idea for this reduction goes back to work of Okounkov and Pandharipande, e.g. \cite{OPLocal, OkPandVir}.

\vspace{6pt}
\noindent
\textbf{Reduction scheme:}
We reduce the general invariants \eqref{Z general} to invariants \eqref{Z cap}
by induction on the genus $g(C)$ of the curve $C$ and the number of relative markings $k$, ordered lexicographically.
If $g(C) > 0$ we degenerate $C$ to a curve with a single node, and apply the degeneration formula in this case. This reduces the genus.
If $g(C) = 0$ and $k \geq 2$, consider the invariant
\begin{equation} \label{3wefsf}
Z^{(S \times \p^1,S_{z_1, \ldots, z_{k-1}})}_{(\beta,d)}(I \cdot \tau[\lambda_k] | \lambda_1, \ldots, \lambda_{k-1} ).
\end{equation} 
where we let 
\[ \tau[\lambda] := \prod_{i=1}^{\ell(\lambda)} \tau_{\lambda_i-1}( \delta_i \cdot \omega ) \]
for an arbitrary $H^{\ast}(S)$-weighted partition $\lambda = (\lambda_i, \delta_i)_{i=1}^{\ell(\lambda)}$.

The invariant \eqref{3wefsf} is known to reduce to $(S \times \p^1, S_{\infty})$
by the induction hypothesis. Applying the degeneration formula in the reduced case (Proposition~\ref{prop: K3 degeneration formula}) yields:
\begin{gather*}
Z^{(S \times \p^1,S_{z_1, \ldots, z_{k-1}}),\red}_{(\beta,d)}(I \tau[\lambda_k] | \lambda_1, \ldots, \lambda_{k-1} )
= \\
  Z^{(S \times \p^1,S_{z_1, \ldots, z_{k-1},x}), \red}_{(\beta,d)}(I | \lambda_1, \ldots, \lambda_{k-1}, \Delta_1 ) Z^{(S \times \p^1,S_{x}), \vir}_{(0,d)}    (\tau[\lambda_k] | \Delta_2 ) \\
+ Z^{(S \times \p^1,S_{z_1, \ldots, z_{k-1},x}), \vir}_{(0,d)}    (I | \lambda_1, \ldots, \lambda_{k-1}, \Delta_1 ) Z^{(S \times \p^1,S_{x}), \red}_{(\beta,d)}(\tau[\lambda_k] | \Delta_2 ).
\end{gather*}
Subtracting the second term on the right of the equality,
and using the invertibility of the relative cap matrix (Lemma~\ref{lemma:PaPixJap} and Lemma \ref{lemma:PaPixJap_GW} for the GW and PT side respectively)
we see that
\eqref{Z general} is a ($\BQ$-linear for PT, and $\BQ((u))$-linear for GW) combination of terms which are lower in the ordering.
Since the base case is $(g(C),k) = (0,1)$, this concludes the scheme. \qed
% (for us it is important that it is a $\BQ$-linear combination; this is what fails for non-reduced theories).

We obtain the following:

\begin{prop}  \label{prop:reduction to SxP1}
If Theorem~\ref{thm:GWPT K3 general} holds for $(S \times \p^1, S_{\infty})$,
then it holds for all geometries $(S \times C, S_z)$.
\end{prop}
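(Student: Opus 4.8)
The statement formalizes the reduction scheme already carried out for the invariants in Section~\ref{sec:induction scheme}; the content of the proposition is that this scheme is compatible with the GW/PT correspondence, so that the correspondence for the base geometry propagates. The plan is to run the same lexicographic induction on the pair $(g(C), N)$ as in Section~\ref{sec:induction scheme}, but now carrying the transformation $\overline{(\cdot)}$ through every step. The base case is $(g,N) = (0,1)$, namely $(S \times \p^1, S_{\infty})$, for which Theorem~\ref{thm:GWPT K3 general} holds by hypothesis; the degenerate case $N=0$, $g=0$ of the absolute geometry $S \times \p^1$ reduces to it by degenerating to the normal cone of a fibre $S \times \{x\}$ and applying Proposition~\ref{prop: K3 degeneration formula}. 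A feature that makes the two sides match combinatorially is that every interior insertion here has the form $\tau_{k_i}(\alpha_i \omega)$ with $\omega$ the point class of the curve factor, so all descendents are supported over points of $C$ and couple to the relative divisors only through the diagonal $\Delta_{S^{[d]}}$.

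For the genus reduction step ($g(C) > 0$) I would degenerate the base curve $C$ to a nodal curve: a separating node $C \rightsquigarrow C_1 \cup_x C_2$ with $g(C_1), g(C_2) < g(C)$ when $g(C) \geq 2$, and a non-separating node dropping to geometric genus $0$ when $g(C) = 1$ (the latter a self-gluing that produces two extra relative markings glued by $\Delta_{S^{[d]}}$). Both are instances of the Li--Wu degeneration of $S \times C$ along $S \times \{x\}$, and the reduced and standard classes are handled simultaneously by the $\full = \vir + \epsilon\,\red$ device of Section~\ref{sec:K3xC PT theory}. Applying Proposition~\ref{prop: K3 degeneration formula} (and its self-gluing variant) expresses the invariant of $(S \times C, S_z)$ through invariants strictly smaller in the lexicographic order, which satisfy the correspondence by induction. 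Since the partition functions were normalised so that the degeneration formula takes the identical shape on both theories, and $\overline{(\cdot)}$ is compatible with the degeneration formula by Proposition~\ref{prop:Compatibility GWPT with degeneration formula}, the correspondence propagates to $(S \times C, S_z)$.

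For the marking reduction step ($g(C)=0$, $N=k \geq 2$) I would use the degeneration $\p^1 \rightsquigarrow \p^1 \cup_x \p^1$ exactly as in Section~\ref{sec:induction scheme}, together with the reduced degeneration formula, to obtain the identity displayed there: it writes the $(k-1)$-marked invariant carrying an extra interior descendent $\tau[\lambda_k]$ as a sum of two products over the K\"unneth decomposition of $\Delta_{S^{[d]}}$. The left-hand side, with $k-1$ relative markings, satisfies the correspondence by induction; the second product on the right is a standard ($\vir$, hence $\beta=0$) cap times a one-marked reduced invariant, both of which satisfy the correspondence --- the latter as a base-case invariant, the former by Theorem~\ref{thm:GWPT for P_S toric K3} applied to the K3 surface $S$ (with $\p_S = S \times \p^1$, $L = \CO$). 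The remaining factor is the relative cap matrix $Z^{\vir}_{(0,d)}(\tau[\lambda_k] \mid \Delta_2)$, invertible on both sides by Lemma~\ref{lemma:PaPixJap} and Lemma~\ref{lemma:PaPixJap_GW}; inverting it and solving yields the target $k$-marked invariant.

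I expect this inversion to be the main obstacle. One must check that applying the PT inversion and then $\overline{(\cdot)}$ agrees with applying $\overline{(\cdot)}$ first and then the GW inversion; this needs not merely invertibility of the two cap matrices but that they are the GW and PT specialisations of one correspondence-compatible object, which is exactly why the GW/PT correspondence for the standard $(0,d)$-caps of $(S \times \p^1, S_x)$ (Theorem~\ref{thm:GWPT for P_S toric K3}, available since $S$ is K3) is indispensable. A secondary subtlety is the careful accounting, in each degeneration term, of which factor carries the reduced and which the standard class, so that only the intended reduced contribution survives; the $\full = \vir + \epsilon\,\red$ formalism of Section~\ref{sec:K3xC PT theory} is the clean mechanism enforcing this.
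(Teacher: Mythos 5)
Your proposal follows the paper's proof essentially verbatim: the paper likewise establishes the proposition by running the lexicographic induction of Section~\ref{sec:induction scheme} (genus reduction via degenerating $C$ to a nodal curve, marking reduction via $\p^1 \rightsquigarrow \p^1 \cup \p^1$ together with the invertibility Lemmas~\ref{lemma:PaPixJap} and~\ref{lemma:PaPixJap_GW}), invoking Proposition~\ref{prop:Compatibility GWPT with degeneration formula} so that the transformation $\overline{(\,\cdot\,)}$ is carried through each degeneration step, and handling the absolute case $S \times \p^1$ by the same degeneration $\p^1 \rightsquigarrow \p^1 \cup \p^1$. Your extra care — making the non-separating-node (self-gluing) variant for $g(C)=1$ explicit, and noting that inverting the cap matrix commutes with the correspondence because the $\vir$ caps themselves satisfy GW/PT (Theorem~\ref{thm:GWPT for P_S toric K3}, propagated to several relative divisors by the same induction) — only spells out details the paper leaves implicit, rather than taking a different route.
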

\begin{proof}
If $g(C) > 1$ or $N>0$ this follows by the compatibility of the GW/PT correspondence with the degeneration formula (Proposition~\ref{prop:Compatibility GWPT with degeneration formula})
and the above reduction scheme.
For the case of the absolute theory of $S \times \p^1$ we apply the degeneration formula for the degeneration $\p^1 \rightsquigarrow \p^1 \cup \p^1$,
which reduces us again to $(S \times \p^1, S_{\infty})$.
\end{proof}

\subsection{Step 2: Reduction to capped invariants of $(S \times \p^1, S_{\infty})$} \label{sec:capped descendents K3}
Let $\Gamma = (n, (\beta,d))$ for $\beta \in H_2(S,\BZ)$.
We define the capped descendent invariants as in Section~\ref{subsec:capped descendents}. 
Concretely,
for both $\star \in \{ \vir, \red \}$ we let
\[
C^{(S \times \p^1,S_{\infty}), \star}_{\PT, (\beta,d)}
\left( \lambda \middle| \tau_{\mu}(\Gamma_{0})  \right) \\
=
\sum_{m \in \BZ} (-p)^m
\int_{[ U_{m+d, (\beta,d)} ]^{\star} } (\ev^{\text{rel}}_{S_{\infty}})^{\ast}(\lambda) \tau_{\mu}(\Gamma_{0}) 
\quad \in \BQ(t)((p)).
\]
on the Pandharipande-Thomas side, and 
\begin{multline*}
C^{(S \times \p^1, S_{\infty}), \star}_{\GW, (\beta,d)}\left( \lambda \middle| \tau_{\mu}(\Gamma_{0}) \right)  \\
%= (-1)^{(1-g(C))n} (-z)^{(2-2g(C)-N)n + \sum_i \ell(\lambda_i)} \sum_{g \in \BZ} (-1)^{g-1} z^{2g-2}
%\Big\langle \, \lambda_1, \ldots, \lambda_N \, \Big| \, \prod_{i=1}^{r} \tau_{\ell_i}(\alpha_i) \Big\rangle^{S \times C / S_{z_1, \ldots, z_N}, \bullet}_{g, (\beta,n)} \\
=
(-1)^{\ell(\lambda)} z^{d + \ell(\lambda)}
\sum_{g \in \BZ} (-1)^{g-1} z^{2g-2}
\int_{
[ \widetilde{U}_{g,\vec{\lambda},(\beta,d)} ]^{\star} }
\prod_i \ev_i^{\ast}(\gamma_i [S_0]) \psi_{i}^{\mu_i-1}  
\prod_{j} ( \ev^{\rel}_{\infty,i} )^{\ast}(\delta_i).
\end{multline*}
on the Gromov-Witten side.

In Section~\ref{sec:conclusion} below we will prove the following:
\begin{thm} \label{thm: capped GWPT}
For $\star \in \{ \vir, \red \}$ we have that
\[ C^{(S \times \p^1,S_{\infty}), \star}_{\PT, (\beta,d)} \left( \lambda \middle| \tau_{\mu}(\Gamma_{0}) \right) \]
is the Laurent expansion of a rational function of $p,t$ (i.e. lies in $\BQ(p,t)$) and
\[
C^{(S \times \p^1,S_{\infty}), \star}_{\PT, (\beta,d)} \left( \lambda \middle| \tau_{\mu}(\Gamma_{0}) \right)
=
C^{(S \times \p^1, S_{\infty}), \star}_{\GW, (\beta,d)}\left( \lambda \middle| \overline{ \tau_{\mu}(\Gamma_{0}) } \right)
\]
under the variable change $p=e^{z}$, under the following conditions:
\begin{enumerate}
\item[(i)] $\beta \in H_2(S,\BZ)$ is effective and primitive and $\star = \red$, or
\item[(ii)] $\beta = 0$, and $\star = \vir$.
\end{enumerate}
\end{thm}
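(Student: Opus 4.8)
The plan is to establish both cases along Steps 3 and 4 of Section~\ref{sec:Proof gwpt k3}, treating case (ii) as a degenerate instance of the argument for case (i). In case (ii) we have $\beta = 0$ and work with the standard virtual class; here every stable pair (resp. stable map) of non-minimal Euler characteristic is forced entirely into the $\p^1$-direction, so the invariants reduce to the local theory of $S^{[d]}$ fibered over $\p^1$, for which the correspondence and rationality are already available by deformation to a toric surface (Theorem~\ref{thm:GWPT for P_S toric K3}) together with the absolute/relative inversion of Lemma~\ref{lemma:PaPixJap} and Lemma~\ref{lemma:PaPixJap_GW}. The substance is case (i), with $\beta$ primitive and the reduced class. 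Since reduced invariants are deformation invariant along families of K3 surfaces preserving $\beta$, both sides depend only on the deformation class of $(S,\beta)$, and in particular are invariant under the monodromy group $\Gamma_\beta$ acting on $H^{\ast}(S)$ and fixing $\beta$ and the Chern data.

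First I would carry out the monodromy reduction, following \cite{ABPZ}. Writing $V = \beta^{\perp} \subset H^2(S,\BC)$ for the orthogonal complement of $\beta$ under the intersection form, the group $\Gamma_\beta$ acts on $V$ preserving the symmetric pairing, and by the arithmeticity of the K3 monodromy of a generic $\beta$-polarized family its Zariski closure is the full orthogonal group $\mathrm{O}(V)$. Viewing a capped invariant with $f$ insertions drawn from $V$ as a multilinear function on $V^{\otimes f}$, monodromy invariance forces it to lie in $((V^{\ast})^{\otimes f})^{\mathrm{O}(V)}$; by the first fundamental theorem for the orthogonal group this space is spanned by products of the metric tensor, that is by diagonal contractions. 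Decomposing the diagonal $\Delta_S$ into its $V$-part plus the diagonal of the complement of $V$ in $H^{\ast}(S)$ (spanned by $1$, $\pt$, $\beta$, and a dual curve class), exactly as in the symplectic argument of Proposition~\ref{prop:reduction to non-vanishing}, expresses all capped invariants through those whose $S$-insertions are the unit, the point, $\beta$, and relative diagonals. The relations produced are identical on the GW and PT sides, so this reduction is compatible with the correspondence.

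Next I would carry out the degeneration. After deforming $S$ to an elliptic K3 surface with a section, so that $\beta$ becomes $\beta_h = B + hF$ and the surviving distinguished classes are expressed through the section $B$ and fiber $F$, I would degenerate $S \rightsquigarrow R_1 \cup_E R_2$ into two rational elliptic surfaces glued along a smooth elliptic fiber and apply the degeneration formula for the full virtual class (Proposition~\ref{prop: K3 degeneration formula}). In the reduced setting the $\epsilon$-coefficient distributes the reduced class onto one factor and the standard class onto the other, so each term pairs the reduced capped theory of one surface against the standard (non-reduced) capped theory of the other; after a further degeneration to the normal cone of $E$ these become precisely the bi-relative capped residue invariants of $(R \times \p^1, R_{\infty} \cup (E \times \p^1))$ for $R \in \{R_1, R_2\}$, with insertions of the diagonal form \eqref{delta Gamma}. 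For these, Theorem~\ref{thm:GWPT for birelative theory} supplies both rationality in $(p,t)$ and the GW/PT correspondence. Compatibility of the correspondence with the degeneration formula (Proposition~\ref{prop:Compatibility GWPT with degeneration formula}) and with the splitting of relative diagonals (Proposition~\ref{prop:relGWPT}) then propagates the equality back up to the capped invariants of $(S \times \p^1, S_{\infty})$, rationality being preserved because every step is a finite sum of rational functions.

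The main obstacle I anticipate is the monodromy step. Two points require care: verifying that the image of $\Gamma_\beta$ is Zariski dense in $\mathrm{O}(V)$ for primitive $\beta$, so that the orthogonal invariant-theory reduction applies; and checking that the standard-form insertions it produces match exactly the class \eqref{delta Gamma} permitted by Theorem~\ref{thm:GWPT for birelative theory}, i.e. that the diagonal contractions forced by $\mathrm{O}(V)$-invariance, together with the section and fiber classes of the elliptic specialization, are carried by the two successive degenerations into relative $(R,E)$-diagonals and $H^{\ast}(R)$-pullbacks rather than into some insertion outside the proven bi-relative range. Tracking the $S$-weights of the descendent insertions $\tau_\mu(\Gamma_0)$ through both the monodromy argument and the degenerations is the most delicate bookkeeping.
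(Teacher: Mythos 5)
Your skeleton matches the paper's actual route (monodromy reduction in the style of \cite{ABPZ}, then the Kulikov degeneration $S \rightsquigarrow R_1 \cup_E R_2$ feeding into Theorem~\ref{thm:GWPT for birelative theory}), but the mechanism you propose for handling the reduced class in the degeneration step is wrong, and this is the crux of the theorem. The $\epsilon$-formalism of Proposition~\ref{prop: K3 degeneration formula} --- ``$\full = \vir + \epsilon \cdot \red$'', with the $\epsilon$-coefficient distributing the reduced class onto one factor and the standard class onto the other --- applies only to degenerations of the \emph{curve} factor $C$, where both components of the special fiber are again K3 geometries carrying reduced classes. When the \emph{surface} degenerates, the rational elliptic surfaces $R_i$ are not holomorphic symplectic: there is no reduced theory of $\mathbf{Y}_i = (R_i \times \p^1, R_{i,\infty} \cup E \times \p^1)$ to place on either factor, and the naive standard degeneration formula would compute the standard invariants of the K3 geometry, which vanish. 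The correct mechanism, due to Maulik--Pandharipande--Thomas and used in the paper's Step 4, is that the reduced class splits into a product of two \emph{ordinary} virtual classes with a modified diagonal along $E \times \p^1$: the Gysin pullback $\Delta_E^{!}$ is \emph{not} applied --- its one-dimensional excess (every curve in class $\beta_h$ is a comb meeting $E$ automatically in matching points) exactly matches the trivial piece of the obstruction sheaf removed by reduction --- so $\Delta_E$ is replaced by $1_E \otimes 1_E$, and since capped configurations meet $E \times \p^1$ only at the fixed point $(0_E, 0_{\p^1})$, the surviving factor $\Delta^{\rel}_{(\p^1,\infty)}$ contributes the overall factor $t$. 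Without this ingredient your degeneration step does not go through. (Also, the extra ``degeneration to the normal cone of $E$'' you interpose is not needed at this stage: the surface degeneration directly produces the bi-relative geometries, and the normal-cone degeneration $R \rightsquigarrow R \cup_E (E\times\p^1)$ belongs inside the proof of Theorem~\ref{thm:GWPT for birelative theory}.)

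Two smaller points. In the monodromy step you take $V = \beta^{\perp}$, but for primitive $\beta$ with $\beta^2 \leq 0$ (e.g.\ $\beta^2 = 0$, which occurs for $B+F$) the intersection form on $\beta^{\perp}$ is degenerate ($\beta \in \beta^{\perp}$), and the first fundamental theorem for $O(V)$, which you invoke, requires a nondegenerate form; the paper avoids this by deforming to the elliptic K3 first and taking $V = \mathrm{Span}(B,F)^{\perp}$, which is nondegenerate, with the complement of $V$ spanned by $1, \pt, B, F$. And your treatment of case (ii) via ``deformation to a toric surface'' is not available: a K3 surface is not deformation equivalent to any toric surface --- Theorem~\ref{thm:GWPT for P_S toric K3} covers K3 surfaces as a separate case via \cite[Sec.3.8]{PaPix_GWPT}, and in any event concerns the non-equivariant relative theory rather than the capped residue invariants appearing here. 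In the paper, case (ii) runs through the same two steps as case (i): the monodromy reduction with the full orthogonal group $O(H^2(S,\BC))$ (there is no class to stabilize when $\beta = 0$), followed by the degeneration, where curves in class $(0,d)$ never meet $E \times \p^1$ and one lands on the capped descendent theory of the rational elliptic surfaces handled by Proposition~\ref{prop:R capped GWPT}.
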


In this section we prove the following reduction:
\begin{prop} \label{prop:reduction to capped}
Theorem~\ref{thm: capped GWPT} implies Theorem~\ref{thm:GWPT K3 general} for $(S \times \p^1, S_{\infty})$.
\end{prop}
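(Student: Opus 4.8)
The plan is to compute the full relative invariants
$Z^{(S \times \p^1, S_\infty),\red}_{(\beta,d)}\big(\lambda \,\big|\, \prod_i \tau_{k_i}(\alpha_i \omega)\big)$
by virtual localization with respect to the torus $T = \BG_m$ acting on the $\p^1$-factor, and to recognize the fixed-point contributions as capped descendent invariants, for which the correspondence is supplied by Theorem~\ref{thm: capped GWPT}. First I would represent the point class $\omega \in H^2(\p^1)$ by the fiber $[S_0]$ over $0 \in \p^1$ and lift it to the $T$-equivariant class $[S_0]$ used in the definition of the capped invariants. The full invariant is then the restriction to $t=0$ of the $T$-equivariant integral $\int_{[P_\Gamma]^{\red}} (\ev^{\rel})^\ast(\lambda)\prod_i \tau_{k_i}([S_0]\,\alpha_i)$ over the proper moduli space; since this equivariant integral will be expressed by localization as a sum of capped pieces that are rational in $(p,t)$, and the sum is regular at $t=0$, rationality of $Z$ in $p$ will follow at once.

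Next I would run the reduced localization formula (using the cosection/$\epsilon$-bookkeeping of Remark~\ref{rmk:epsilon}, which lets me carry the standard and reduced classes together through the full class $[\,\cdot\,]^{\full}$, \cite{KL, GP}). Each $T$-fixed stable pair (resp.\ stable map) splits into a part $V_0$ concentrated over $0$, carrying $S$-degree $\beta_0$ together with all descendent markings, and a rubber part $R_\infty$ over $\infty$ carrying $S$-degree $\beta_\infty$, with $\beta_0 + \beta_\infty = \beta$. The decisive input is that in the full-class splitting the standard (non-reduced) factor vanishes unless its $S$-degree is zero. Hence only two families survive: type (a), $\beta_0 = \beta,\ \beta_\infty = 0$, which is exactly the open locus $U$ with no positive $S$-degree rubber over $S_\infty$ and reproduces the capped invariant $C^{\red}_{(\beta,d)}(\lambda \,|\, \tau_\mu(\Gamma_0))$; and type (b), $\beta_0 = 0,\ \beta_\infty = \beta$, a product of the degree-$0$ capped descendent vertex $C^{\vir}_{(0,d)}$ with a reduced rubber integral over $\infty$ in class $\beta$.

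I would then dispose of the type-(b) terms by rigidification (Proposition~\ref{prop:rigidification}), which converts the reduced rubber over $\infty$ into a capped-type reduced invariant with one marking pinned to the relative divisor; as $\beta$ is primitive this is again governed by case (i) of Theorem~\ref{thm: capped GWPT} after an induction on the virtual dimension that peels off the extra marking. The crucial structural point is that the entire localization identity is formally the same on the Gromov-Witten and Pandharipande-Thomas sides, and that the descendent transformation $\overline{(\,\cdot\,)}$ of Section~\ref{sec:generalized correspondence} is applied only at the $0$-vertex and therefore commutes with the edge and rubber gluing (compare Remark~\ref{rmk:corr RxP1, Rinf} and the compatibility statements of Propositions~\ref{prop:relGWPT} and~\ref{prop:Compatibility GWPT with degeneration formula}). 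Once each capped building block of types (a) and (b) is matched under $p = e^z$ by Theorem~\ref{thm: capped GWPT}, the two localization sums agree, and specializing at $t=0$ yields both the rationality and the correspondence for $Z$.

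The main obstacle I anticipate is the correct interaction of the two localizations, cosection-reduced in the K3-direction and $\BG_m$-equivariant in the $\p^1$-direction, and in particular the justification that the reduced class splits with exactly one reduced factor on each fixed locus. The $\epsilon$-formalism of Remark~\ref{rmk:epsilon} is what makes this tractable, but verifying that no spurious poles in $t$ survive in the reassembled sum, and that the type-(b) rubber contributions genuinely collapse to capped invariants already controlled by Theorem~\ref{thm: capped GWPT}, is the technical heart of the proof.
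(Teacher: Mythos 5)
Your proposal is correct and takes essentially the same route as the paper: the paper's proof simply defers to the localization/rubber-calculus induction of \cite[Sec.2.4]{PaPix_GWPT}, run with the full ($\epsilon$-twisted) partition functions of Section~\ref{sec:K3xC PT theory} so that the reduced class is carried through exactly as in your type (a)/(b) splitting (standard factors dying in positive $S$-degree), with the rubber contributions over $S_{\infty}$ removed by rigidification and a dimension induction, for which the paper only records the modified Case I/II inequalities of Sec.~2.4.6 of \emph{loc.\ cit.} Your sketch makes explicit the mechanism the paper outsources to that reference.
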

\begin{proof}
The argument in the non-reduced case is precisely given in \cite[Sec.2.4]{PaPix_GWPT}, see also Sec.2.5 of \emph{loc.cit} for a discussion.
The argument in the reduced case works like-wise: One uses the '$\full$ partition functions' introduced in Section~\ref{sec:K3xC PT theory},
which have the same formal properties as the non-reduced invariants, and then follows Section 2.4 of \cite{PaPix_GWPT}.
Since we can hardly achieve a better presentation here than given there, we simply refer to \cite[Sec.2.4]{PaPix_GWPT}
and note that one needs the following slightly modified cases in Section 2.4.6:
\begin{itemize}
\item[] Case I: $|\hat{\alpha}| - 2 \ell(\hat{\alpha}) + \deg(\hat{\Gamma}) \geq d_{\beta} - \theta(\nu) - \theta(\mu) + 1$
\item[] Case II: $|\hat{\alpha}| - 2 \ell(\hat{\alpha}) + \deg(\hat{\Gamma}) < d_{\beta} - \theta(\nu) - \theta(\mu) + 1$.
\end{itemize}
\end{proof}

\subsection{Step 3: Reduction to non-vanishing cohomology}
By deformation invariants of GW and PT invariants we can assume that
$S$ is an elliptic K3 surface with section, and that
\[ \beta = B + hF \]
where $B,F$ is the section and fiber class respectively.
Our goal in this step is to reduce Theorem~\ref{thm: capped GWPT} to the case
where all cohomological insertions on the K3 side are given by $1, \pt, F, B$ and $\Delta_{S}$.
This will be useful in using the degeneration formula in Step 4.

%We have the following result.
% which we state here for the reduced invariants.
%The statement and proof in the non-reduced case is identical.

\begin{prop} \label{prop:reduction to non-vanishing}
Let $\beta \in H_2(S,\BZ)$ be any curve class and $\star \in \{ \vir, \red \}$.
Let
\begin{gather*}
\lambda = \Fq_{\lambda_1} \ldots \Fq_{\lambda_{\ell}} (\delta) \in H^{\ast}(S^{[d]}), \quad \delta \in H^{\ast}(S^{\ell(\lambda)}) \\
\tau_{\mu}(\Gamma_{0}) = \left( \tau_{\mu_1-1} \ldots \tau_{\mu_{\ell}-1} \right)\left( \gamma \cdot \pi_1^{\ast}([S_0]) \cdots \pi_{\ell}^{\ast}([S_0]) \right),
\quad \gamma \in H^{\ast}(S^{\ell}).
\end{gather*}
%\Gamma \in H^{\ast}(S^{\ell})$ takes the following form:
The set of all invariants of the form
\begin{equation} \label{Iinvariants}
C^{(S \times \p^1,S_{\infty}), \star}_{\PT, (\beta,d)} \left( \lambda \middle| \tau_{\mu}(\Gamma_{0}) \right)
\end{equation}
can be effectively reconstructed from the subset of invariants \eqref{Iinvariants} where
\begin{equation} \label{type}
\begin{gathered}
\delta = \Delta_{S,12} \Delta_{S,34} \ldots \Delta_{S, 2a-1,2a} \prod_{i=2a+1}^{\ell(\lambda)} \pi_{i}^{\ast}(\delta_i) \\
\Gamma = \Delta_{S,12} \ldots \Delta_{S, 2 b-1, 2b} \prod_{i=2b+1}^{\ell} \pi_i^{\ast}(\gamma_i )
\end{gathered}
\end{equation}
for any $a,b$ and $\delta_i, \gamma_i \in \{ 1, F, B, \pt \}$ together with their permutations.

Similarly, the set of all invariants
\[ C^{(S \times \p^1, S_{\infty}), \star}_{\GW, (\beta,d)}\left( \lambda \middle| \overline{ \tau_{\mu}(\Gamma_{0}) } \right) \]
can be effectively reconstructed from the subset of $\lambda,\gamma$
of the above form, and moreover, the reconstruction algorithm is the same as for PT invariants.
In particular, the Gromov-Witten/Pandharipande-Thomas correspondence is compatible with this reconstruction,
that is if Theorem~\ref{thm: capped GWPT} holds
for all invariants for $\lambda, \Gamma$ of the form \eqref{type}, then it holds for all $\lambda, \Gamma$.
\end{prop}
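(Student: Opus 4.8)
The plan is to run the monodromy argument of \cite[Thm.~4.27]{ABPZ} in the orthogonal rather than the symplectic setting, exactly parallel to the Fano case of Section~\ref{sec:Fano complete intersection}. Having deformed $S$ to an elliptic K3 surface with section and set $\beta = B + hF$, I would first isolate the \emph{variable} cohomology. Write $H^2(S,\BC) = \langle B, F\rangle \oplus V$, where $V = \langle B, F\rangle^{\perp}$ with respect to the intersection pairing, so that $H^*(S,\BC) = \langle 1\rangle \oplus \langle B, F\rangle \oplus V \oplus \langle \pt\rangle$ and the classes $1, \pt, B, F$ are precisely those allowed in \eqref{type}. The intersection pairing restricts to a non-degenerate symmetric form on $V$. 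All other data entering the invariants -- the parts $\mu$, the underlying partition $\vec\lambda$, and the descendent orders -- are fixed combinatorial spectators; only the cohomology weights of $\delta$ and $\Gamma$ move.

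Next I would set up the monodromy action. By deformation invariance of the reduced (resp.\ standard) virtual classes in families of K3 surfaces keeping $\beta$ of type $(1,1)$ (see \cite{MPT, KL}), and since $c_2(T_S) = 24\,\pt$ together with $[S_0], 1, \pt, B, F$ are monodromy invariant, both $C^{\star}_{\PT}$ and $C^{\star}_{\GW}$ are invariant under the group $G$ of monodromy transformations fixing $B$ and $F$, acting diagonally on all insertions. The input I rely on is that the Zariski closure of $G$ acts on $V$ as the full orthogonal group $O(V)$; this is the K3 analogue of Deligne's theorem used in the Fano case, and follows from the surjectivity of the period map for lattice-polarized K3 surfaces. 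Regarding the weight tensor $\delta \otimes \Gamma \in H^*(S^{\ell(\lambda)+\ell})$ as a functional on its $V$-valued factors, monodromy invariance then forces it to lie in $\big((V^*)^{\otimes f}\big)^{O(V)}$, which by the first fundamental theorem of invariant theory for the orthogonal group is spanned by the complete contractions, i.e.\ by the images of $\Delta_V^{\otimes k}$ under permutations (in particular $f = 2k$ is even).

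I would then convert $\Delta_V$ back into $\Delta_S$. Using the Künneth decomposition
\[
\Delta_S = \Delta_V + 1\otimes \pt + \pt \otimes 1 + B\otimes F + F\otimes B + 2\, F\otimes F,
\]
an induction on $f$ replaces each occurrence of $\Delta_V$ by $\Delta_S$ minus the fixed-class correction terms, expressing an arbitrary invariant \eqref{Iinvariants} through invariants whose insertions are of the form \eqref{type} with $\delta_i, \gamma_i \in \{1, F, B, \pt\}$ and diagonal factors $\Delta_{S,ab}$. This produces the explicit reconstruction algorithm. Crucially, the relations so obtained are relations purely among the \emph{insertions}, hence hold for any linear functional on the insertion space; they therefore survive application of the descendent transformation $\overline{(\,\cdot\,)}$, since the correspondence matrices $\K_{\alpha,\hat\alpha}$ depend only on the monodromy-invariant Chern classes $c_i(T_{S\times\p^1}[-S_{\infty}])$ and so commute with the $G$-action. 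Thus the very same algorithm reconstructs $C^{\star}_{\GW}(\lambda \mid \overline{\tau_\mu(\Gamma_0)})$, which yields the asserted compatibility with the GW/PT correspondence.

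The main obstacle is the input monodromy statement together with the reduced-class subtlety: one must verify that restricting to deformations preserving $\beta$ as a $(1,1)$-class -- which is forced on us in the reduced theory -- still produces a group whose Zariski closure is all of $O(V)$, and in particular not merely $SO(V)$, where the extra determinant (volume-form) invariant appearing at tensor degree $\dim V$ would obstruct the ``contractions only'' conclusion. Establishing the density in the full $O(V)$ and confirming that reduced PT and GW invariants are genuinely deformation invariant within such lattice-polarized families is where the real work lies; once these are in hand, the invariant-theoretic reduction and the GW/PT compatibility are formal, exactly as in \cite{ABPZ}.
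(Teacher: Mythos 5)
Your proposal follows essentially the same route as the paper's proof: deform to an elliptic K3 with section, invoke monodromy invariance of the capped invariants over deformations keeping $\beta$ algebraic, apply the first fundamental theorem of invariant theory for $O(V)$ on $V = \mathrm{Span}(B,F)^{\perp}$, and trade $\Delta_V$ for $\Delta_S$ via the identical K\"unneth correction and induction, with GW-side compatibility coming from monodromy invariance of the Chern classes so that the same linear relations survive the transformation $\overline{(\,\cdot\,)}$. The density point you flag as the remaining work is settled in the paper exactly along the lines you anticipate: by the global Torelli theorem the stabilizer $\Mon_{\beta}(S) = \widetilde{O}(H^2(S,\BZ))_{\beta}$ is an arithmetic subgroup of $O(\beta^{\perp} \otimes \BC)$ — and it contains determinant $-1$ elements such as reflections in $(-2)$-classes orthogonal to $\beta$, so it is not confined to $SO$ — hence is Zariski dense in the full orthogonal group.
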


As preparation we have to recall basic statements about the monodromy of K3 surfaces.
Assume that $\beta \in H_2(S,\BZ)$ is a effective curve class (the case $\beta = 0$ is parallel).

Let $\Mon(S) \subset O(H^2(S,\BZ))$ be the subgroup generated by all monodromy operators.
By the global Torelli theorem for K3 surface (see \cite{HuybrechtsK3} for an introduction) we have
\[ \Mon(S) \cong \widetilde{O}(H^2(S,\BZ)) \]
where $\widetilde{O}(H^2(S,\BZ))$ is the subgroup of orientation-preserving\footnote{Let
$\CC = \{ x \in H^2(S,\BR) | \langle x, x \rangle > 0 \}$ be the positive cone. Then $\CC$ is homotopy equivalent to the $2$-sphere $S^2$. An automorphism is orientation preserving
if it acts by $+1$ on $H^2(\CC) = \BZ$.} lattice isomorphisms of $H^2(S,\BZ)$.
We are interested here in the stabilizer of $\beta$ in the monodromy group 
\[ \Mon_{\beta}(S) = \widetilde{O}(H^2(S,\BZ))_{\beta}. \]
Using the Torelli theorem again, it is generated by monodromies for which the class $\beta$ stays of Hodge type on all fibers.
By the deformation invariants of reduced Pandharipande-Thomas invariants for deformations of $(S,\beta)$ for which $\beta$ stays algebraic on all fibers,
we have that
\begin{equation} \label{invariance}
\forall \varphi \in \Mon_{\beta}(S):\quad 
C^{(S \times \p^1,S_{\infty}), \red}_{\PT, (\beta,d)} \left( \lambda \middle| \tau_{\mu}(\Gamma_{0}) \right)
=
C^{(S \times \p^1,S_{\infty}), \red}_{\PT, (\beta,d)} \left( \varphi(\lambda) \middle| \tau_{\mu}(\varphi(\Gamma)_{0}) \right),
%C^{(S \times \p^1,S_{\infty}), \red}_{(\beta,d)} \left( \lambda \middle| \tau_0[\mu] \right),
%=
%C^{(S \times \p^1,S_{\infty}), \red}_{(\beta,d)} \left( \varphi(\lambda) \middle| \tau_0[\varphi(\mu)] \right)
%
%=
%Z^{(S \times \p^1,S_{\infty}),\red}_{(\beta,d)}\left( {\textstyle \prod_{i}} \tau_{k_i}( \omega \varphi(\alpha_i) ) | \varphi(\lambda) \right) 
\end{equation}
where $\varphi$ acts factorwise on $\lambda$ and $\Gamma$, that is
\begin{align*}
\varphi(\lambda) & = \Fq_{\lambda_1} \ldots \Fq_{\lambda_{\ell}} ( \varphi \otimes \ldots \otimes \varphi(\delta) ) \\
\varphi(\Gamma) & = \varphi \otimes \ldots \otimes \varphi(\Gamma)
\end{align*}
(The first line matches the induced action of $\varphi$ on $H^{\ast}(S^{[d]})$.).
%\]
%where for an $H^{\ast}(S)$ weighted partition $\lambda = (\lambda_i, \delta_i)$ we have written
%\[ \varphi(\lambda) = ( \lambda_i, \varphi(\delta_i)). \]
%(This matches the induced action of $\varphi$ on $H^{\ast}(S^{[d]})$.)
%
%for all $\varphi \in \Mon_{\beta}(S)$, where we have written $\varphi$ also for the induced action on the cohomology of the Hilbert scheme.
Since $\Mon_{\beta}(S)$ is an arithmetic subgroup of $O(H^2(S,\BC))_{\beta} = O(\beta^{\perp} \otimes \BC)$ it is Zariski dense in $O(\beta^{\perp})$.
We find that \eqref{invariance} holds for all $\varphi \in O(\beta^{\perp} \otimes \BC)$.

Similarly, since $c(T_{S})$ is monodromy invariant, we have that:
\[
\forall \varphi \in O(\beta^{\perp} \otimes \BC): \quad 
 C^{(S \times \p^1, S_{\infty}), \red}_{\GW, (\beta,d)}\left( \lambda \middle| \overline{ \tau_{\mu}(\Gamma_{0}) } \right) 
= C^{(S \times \p^1, S_{\infty}), \red}_{\GW, (\beta,d)}\left( \varphi(\lambda) \middle| \overline{ \tau_{\mu}(\varphi(\Gamma)_{0}) } \right) \]

\begin{proof}
The proof follows again closely the ideas of \cite[Section 4]{ABPZ},
and we refer to \emph{loc.cit.} for further details and references on the representation theory that is used below.

Consider the ortogonal complement,
\[ V = \mathrm{Span}(B,F)^{\perp} \subset H^2(S,\BC). \]
Let $\{ e_i \}_{i=1}^{20}$ be a basis of $V$ and consider the basis
\[ \CB = \{ 1, \pt, B, F \} \cup \{ e_i \}_{i=1}^{20}. \]
We weigh our cohomology partitions by elements of $\CB$.
Consider a general invariant
\begin{equation}
C^{(S \times \p^1,S_{\infty}), \star}_{\PT, (\beta,d)} \left( \lambda \middle| \tau_{\mu}(\Gamma_{0}) \right) \label{dgsdg} \end{equation}
where $\lambda, \mu$ are of the form \eqref{type} but with
$\delta_i$ and $\gamma_i$ arbitrary elements of the basis $\CB$.
%
%\begin{equation} C^{(S \times \p^1,S_{\infty}), \red}_{(\beta,d)} \left( \lambda' \middle| \tau_0[\mu'] \right), \label{dgsdg} \end{equation}
%where the $\delta'_i$ and $\alpha'_i$ are taken from the basis $\CB$.
%We argue by induction on the number of insertions from $V$.
Assume that the first $f_1$ of the elements $\delta_i$ lie in $V$, and the remainder are in $\{ 1, \pt, B, F\}$,
and similarly, that the first $f_2$ factors of $\gamma_i$ are taken from $V$, and the remainder not.
We then consider the invariant \eqref{dgsdg} as a function
\[ C: V^{\otimes (f_1+f_2)} \to \BQ(t)((p)) \]
given by
\[ v_1 \otimes \ldots \otimes v_{f_1+f_2} \mapsto 
C^{(S \times \p^1,S_{\infty}), \star}_{\PT, (\beta,d)} \left( \lambda \middle| \tau_{\mu}(\Gamma_{0}) \right)\Big|_{\substack{ \delta_{2a+i} = v_i \text{ for } i=1,\ldots,f_1 \\ \gamma_{2b+i} = v_{i} \text{ for } i=1,\ldots, f_2}.}
\]

By the monodromy invariance \eqref{invariance} (in case $\beta=0$ the invariant \eqref{dgsdg} is invariant under $O(H^2(S,\BC))$) we find that
\[ C \in ((V^{\ast})^{\otimes (f_1+f_2)})^{O(V)}. \]
Hence by standard invariant theory of the orthogonal group,
$C$ lies in the subring of 
\[ T(V) = \bigoplus_{n \geq 0} (V^{\ast})^{\otimes n} \]
generated by the pullbacks
\[ Q_{ij} = (\pr_i^{\ast} \otimes \pr_j^{\ast})( Q ) \]
by the projection $\pr_i \otimes \pr_j : (V^{\ast})^{\otimes n} \to V^{\vee} \otimes V^{\vee}$ of the class
of the inner pairing:
\[ Q \in V^{\ast} \otimes V^{\ast}, \quad Q(v,w) = \langle v, w \rangle. \]
In particular $2k=f_1+f_2$ is even.
Moreover, a basic representation-theoretic fact (see \cite{ABPZ}) is that the class
$C$ is then determined by its intersections agains all big-diagonal classes:
\[ \sigma( \Delta_V \otimes \Delta_V \otimes \cdots \otimes \Delta_V) \in V^{\otimes 2k}. \]
where $\sigma$ runs over all permutations of $2k$, and
$\Delta_V = Q^{\vee} \in V \otimes V$ is the dual of $Q$.

We have
\[ \Delta_S = \Delta_V + \pt \otimes 1 + 1 \otimes \pt + (B + F) \otimes F + F \otimes (B+F) \]
Hence by the induction hypothesis, the invariant \eqref{dgsdg}
is determined when
\[ \delta_{2a+1} \otimes \ldots \otimes \delta_{2a+f_1} \otimes \gamma_{2b+1} \otimes \ldots \otimes \gamma_{2b+f_2} \]
run over the classes
\[ \sigma( \Delta_S \otimes \Delta_S \otimes \cdots \otimes \Delta_S) \in (H^{\ast}(S))^{\otimes 2k}. \]

This completes the first part.
The relations we obtain from this process are identical for the Gromov-Witten invariants,
hence the algorithm is compatible with the GW/PT correspondence.
This proves the second part.
\end{proof}

\subsection{Step 4: Degeneration formula}
Let $S$ be the elliptic K3 surface with section, and consider the degeneration
\begin{equation} S \rightsquigarrow R_1 \cup_E R_2 \label{Sdeg} \end{equation}
of $S$ to the union of two rational elliptic surfaces $R_i \to \p^1$ glued a long a smooth elliptic fiber $E \subset R_i$.
The degeneration can be choosen such that the total space of the degeneration $W \to \Delta$
admits an elliptic fibration with section which restricts to the given section on $S$:
\[ \pi : W \to B \times \Delta, \quad j : B \times \Delta \to W, \quad \pi \circ j = \id. \]
In particular, there exists cohomology classes
\[ \widetilde{F}, \widetilde{B}, \widetilde{\pt}, \widetilde{1} \in H^{\ast}(W) \]
such that
\[ \widetilde{F}|_{S} = F, \quad \widetilde{B}|_{S} = B, \quad \widetilde{\pt}|_{S} = \pt, \quad \widetilde{1}|_{S} = 1 \in H^{\ast}(S).. \]

The degeneration \eqref{Sdeg} is famously used in \cite{MPT} to prove the GW/PT correspondence for $S \times \BC$.
Here we will use it also.
We also refer to \cite{Greer} for a study of the Hodge structure of the degeneration
(it is a standard type $2$ Kulikov degeneration),
and to \cite[Sec.4.3.2]{Baltes} for an explicit presentation of the degeneration in terms of equations.
One can show that the image of
\[ H^2(W, \BC) \to H^2(S,\BC) \]
is a sublattice of rank $20$ (so has codimension $2$),
hence there exists vanishing cohomology in $H^2(S,\BC)$ for the degeneration.

The degeneration \eqref{Sdeg} induces a degeneration
%Consider the degeneration $S \rightsquigarrow R_1 \cup_E R_2$ and the induced degeneration:
\[
S \times \p^1 \rightsquigarrow (R_1 \times \p^1) \cup_{E \times \p^1} (R_2 \times \p^1).
\]
We obtain the degeneration of relative theories:
\begin{equation} \label{3sdf}
(S \times \p^1, S_{\infty}) \rightsquigarrow \mathbf{Y}_1 \cup_{(E \times \p^1,E \times \infty)} \mathbf{Y}_2
\end{equation}
where
\[ \mathbf{Y}_i = (R_i \times \p^1, R_{i, \infty} \cup E \times \p^1)
% (R_1 \times \p^1, R_{1, \infty} \cup E \times \p^1) \cup_{E \times \p^1} (R_2 \times \p^1, R_{2, \infty} \cup E \times \p^1).
\]

For $i \in \{ 1, 2 \}$, let $B_i = \widetilde{B}|_{R_i}$ and let $F_i$ be the fiber class on $R_i \to \p^1$
We use the curve classes
\[ \beta_h = B+hF \in H_2(S,\BZ), \quad \beta^{R_i}_{h} = B_i + h F_i \quad \in H_2(R_i,\BZ). \]
When clear from notation we drop the supscript $R_i$.

We want to apply the degeneration formula for the degeneration \eqref{3sdf} to the invariants:
\begin{equation}
C^{(S \times \p^1,S_{\infty}), \red}_{\PT, (\beta_h,d)} \left( \lambda \middle| \tau_{\mu}(\Gamma_{0}) \right),
\quad
C^{(S \times \p^1, S_{\infty}), \red}_{\GW, (\beta_h,d)}\left( \lambda \middle| \overline{ \tau_{\mu}(\Gamma_{0}) } \right)
\end{equation}
where
\begin{gather*}
\lambda = \Fq_{\lambda_1} \ldots \Fq_{\lambda_{\ell}} (\delta) \vacuum \in H^{\ast}(S^{[d]}), \quad \delta \in H^{\ast}(S^{\ell(\lambda)}) \\
\tau_{\mu}(\Gamma_{0}) = \left( \tau_{\mu_1-1} \ldots \tau_{\mu_{\ell}-1} \right)\left( \gamma \cdot \pi_1^{\ast}([S_0]) \cdots \pi_{\ell}^{\ast}([S_0]) \right),
\quad \gamma \in H^{\ast}(S^{\ell}).
\end{gather*}
with
\begin{equation}
\begin{gathered}
\delta = \Delta_{S,12} \Delta_{S,34} \ldots \Delta_{S, 2a-1,2a} \prod_{i=2a+1}^{\ell(\lambda)} \pi_{i}^{\ast}(\delta_i) \\
\Gamma = \Delta_{S,12} \ldots \Delta_{S, 2 b-1, 2b} \prod_{i=2b+1}^{\ell} \pi_i^{\ast}(\gamma_i )
\end{gathered}
\end{equation}
for any $a,b$ and $\delta_i, \gamma_i \in \{ 1, F, B, \pt \}$ together with their permutations.

We consider the lifts:
\begin{gather*}
\widetilde{\lambda} = \Nak_{\vec{\lambda}}( \widetilde{\delta} ) \in H^{\ast}( (\CW/\Delta)^{[\ell(\lambda)]} )  \\
\widetilde{\Gamma} =  \Delta^{\rel}_{(\CW/\Delta),12} \ldots \Delta_{(\CW/\Delta), 2 b-1, 2b}^{\rel} \prod_{i=2b+1}^{\ell} \pi_i^{\ast}(\widetilde{\gamma}_i )
\end{gather*}
where $\vec{\lambda} = (\lambda_1, \ldots, \lambda_{\ell})$ is the partition underlying $\lambda$, and
\[
\widetilde{\delta} = \Delta^{\rel}_{(\CW/\Delta),12} \ldots \Delta^{\rel}_{(\CW/\Delta),2a-1,2a} 
\prod_{i=2a+1}^{\ell(\lambda)} \pi_{i}^{\ast}(\widetilde{\delta}_i)
\in H^{\ast}( (\CW/\Delta)^{\ell(\lambda)}.
\]

Given a splitting $\{ 1, \ldots, \ell \} = B_1 \sqcup B_2$ recall the gluing morphism
\[ \xi_{A_1} : (R_1,E)^{B_1} \times (R_2, E)^{B_2} \to (\CW/\Delta)^{\ell}. \]
We then have
\[ 
\xi_{B_1}^{\ast}(\widetilde{\Gamma}) = 
\begin{cases}
\Gamma_{B_1} \otimes \Gamma_{B_2} & \text{ if } \forall i \in \{1, \ldots, b\} : 2i-1, 2i \in B_1 \text{ or } 2i-1, 2i \in B_2 \\
0 & \text{ else }.
\end{cases}
\]
where for $t \in \{1,2 \}$ we have:
\[
\Gamma_{B_t} = \prod_{\substack{ 1 \leq c \leq b \\ 2c-1 \in A_j}} \Delta^{\rel}_{(R_t,E),2c-1, 2c} \prod_{\substack{2b+1 \leq j \leq \ell \\ j \in B_t }} \pi_{j}^{\ast}( \widetilde{\gamma}_j|_{R_t} ).
\]

Similarly, for any splitting $\ell(\lambda) = k_1 + k_2$ recall the gluing
\[
\xi_{k_1} : (R_1,E)^{[k_1]} \times (R_2, E)^{[k_2]} \to (\CW/\Delta)^{[ \ell(\lambda) ]}
\]
By Lemma~\ref{lemma:rel hilb gluing} we then have:
\[
\xi_{k_1}^{\ast}(\widetilde{\lambda})
=
\sum_{A_1, A_2}
\Nak_{\vec{\lambda}_{A_1}}( \delta_{A_1} ) \otimes \Nak_{\vec{\lambda}_{A_2}}( \delta_{A_2} ),
\]
where the sum runs over all splittings 
$\{1 ,\ldots , \ell(\lambda)\} = A_1 \sqcup A_2$ such that 
\begin{itemize}[itemsep=0pt]
\item $|\vec{\lambda}_{A_t}| = k_t$ for $t=1,2$, and
\item for all $i \in \{ 1, \ldots, a \}$ we have $2i-1,2i \in A_{1}$ or $2i-1,2i \in A_2$.
\end{itemize}
Moreover, $\delta_{A_t}$ is defined similar to $\Gamma_{B_t}$, that is
\[
\delta_{A_t} = \prod_{\substack{ 1 \leq c \leq a \\ 2c-1 \in A_j}} \Delta^{\rel}_{(R_t,E),2c-1, 2c} \prod_{\substack{2b+1 \leq j \leq \ell \\ j \in A_t }} \pi_{j}^{\ast}( \widetilde{\delta}_j|_{R_t} ).
\]
We write
\[ \lambda_{A_t} := \Nak_{\vec{\lambda}_{A_t}}( \delta_{A_t} ), \quad t=1,2. \]

%
%\begin{equation} \label{Jinvariants2}
%C^{(S \times \p^1,S_{\infty}), \red}_{(\beta,d)} \left( \lambda \middle| \tau_0[\mu] \right),
%\end{equation}
%where
%\begin{gather*}
%\lambda = \Fq_{a_1} \Fq_{a_1'}(\Delta_S) \cdots \Fq_{a_{s}} \Fq_{a_s'}(\Delta_S) \Fq_{\ell_1}( \delta_1 ) \ldots \Fq_{\ell_{s'}}(\delta_{s'}) \vacuum \\
%\tau_0[\mu] = \prod_{i} \tau_{b_i} \tau_{b_i'}\left( \Delta_S \cdot \pi_1^{\ast}([S_0]) \pi_2^{\ast}([S_0]) \right) \cdot \prod_j \tau_{\ell_j}([S_0] \alpha_j).
%\end{gather*}
%where $\alpha_j, \delta_j \in \{ 1, F, B, \pt \}$.
We now apply the degeneration formula of \cite{Li1, Li2, LiWu, MPT},
adapted to the setting of capped descendents as discussed in \cite{PaPix_GWPT} for the invariant
\[ C^{(S \times \p^1,S_{\infty}), \red}_{\PT, (\beta_h,d)} \left( \lambda \middle| \tau_{\mu}(\Gamma_{0}) \right). \]
As shown in \cite{MPT} the reduced virtual class of the moduli space splits here into the product of
two usual virtual classes associated to the factors $R_1, R_2$,
but with a modified diagonal splitting.
Concretely, on the elliptic curve factor one replaces the diagonal $\Delta_E \in H^{\ast}(E \times E)$
by the insertion $1_E \otimes 1_E \in H^{\ast}(E \times E)$.\footnote{
Any curve on $R_i$ in class $\beta_h$ is a comb curve (i.e. as a divisor of the form $B_i+F_1 + \ldots + F_h$ for some fibers $F_i$ of $R_i \to \p^1$, see e.g. \cite{BL}).
That means for curves $C_i \subset R_i$ in class $\beta_{h_i}$ the condition that $C_1 \cap E = C_2 \cap E$
is automatically satisfied,
or in other words that the Gysin pullback by $\Delta_E^{!}$ has a $1$-dimensional trivial excess contribution.
Maulik, Pandhariande, and Thomas in \cite{MPT} show then that this excess contribution precisely matches 
%The obstruction to deforming $B_1 \cup B_2 \subset R_1 \cup_E R_2$ so that it does not meet $E$ in the point $0_E \in E$
%precisely matches 
the trivial piece in the obstruction sheaf on the K3 side,
hence that when working with the reduced virtual class the Gysin pullback $\Delta_E^{!}$ is \emph{not} applied.}
Hence on the product $(E \times \p^1, E_{\infty})$ we replace the diagonal term $\Delta_{(E \times \p^1,E_{\infty})}^{\rel} = \Delta_E \cdot \Delta_{(\p^1,\infty)}^{\rel}$
by the class $\Delta_{(\p^1,\infty)}^{\rel}$,
where we suppressed the pulled back by the projections from $(E \times \p^1,E_{\infty})^2$ to $E^2$ and $(\p^1, \infty)^2$ respectively.
The curve components
which meet the relative divisor $(E \times \p^1, E_{\infty})$ 
have non-trivial degree over $R_1$ or $R_2$,
hence for the capped invariants they must meet the relative divisor in the fixed point $\mathbf{0} = (0_E, 0_{\p^1}) \in E \times \p^1$.
We hence find the diagonal splitting term contributes
\[ \Delta_{(\p^1,\infty)}^{\rel}|_{\mathbf{0}, \mathbf{0}} = t. \]
In total, the result is the following: % (for both Gromov-Witten and Pandharipande-Thomas invariants):
\begin{multline*}
C^{(S \times \p^1,S_{\infty}), \red}_{\PT, (\beta_h,d)} \left( \lambda \middle| \tau_{\mu}(\Gamma_{0}) \right)
= \\
t \sum_{h = h_1 + h_2} 
\sum_{\substack{ \{1 , \ldots , \ell(\lambda) \} = A_1 \sqcup A_2  \\ \{ 1 , \ldots, \ell(\mu) \} = B_1 \sqcup B_2 }}
C^{\mathbf{Y}_1}_{(\beta_{h_1},d)}\Big( \lambda_{A_1}\, \Big| \, 1 \, \Big| \, \tau_{\mu_{B_1}}\big( (\Gamma_{B_1})_{0} \big) \, \Big)
C^{\mathbf{Y}_2}_{(\beta_{h_2},d)}\Big( \lambda_{A_2}\, \Big| \, 1 \, \Big| \, \tau_{\mu_{B_2}}\big( (\Gamma_{B_2})_{0} \big) \, \Big).
\end{multline*}

The discussion on the Gromov-Witten side is completely parallel.
As we have seen in 
Proposition~\ref{prop:Compatibility GWPT with degeneration formula}
the $\overline{ ( \ - \ ) }$ operation is compatible with the degeneration formula.
Hence (with the obvious notation for $\lambda_{A_1}, \lambda_{A_2}$) one obtains:
\begin{multline*}
C^{(S \times \p^1, S_{\infty}), \red}_{\GW, (\beta_h,d)}\left( \lambda \middle| \overline{ \tau_{\mu}(\Gamma_{0}) } \right)
=
\\
t \sum_{h = h_1 + h_2} 
\sum_{\substack{ \{1 , \ldots , \ell(\lambda) \} = A_1 \sqcup A_2  \\ \{ 1 , \ldots, \ell(\mu) \} = B_1 \sqcup B_2 }}
C^{\mathbf{Y}_1}_{(\beta_{h_1},d)}\Big( \lambda_{A_1}\, \Big| \, 1 \, \Big| \, \overline{\tau_{\mu_{B_1}}\big( (\Gamma_{B_1})_{0} \big)} \, \Big)
C^{\mathbf{Y}_2}_{(\beta_{h_2},d)}\Big( \lambda_{A_2}\, \Big| \, 1 \, \Big| \, \overline{\tau_{\mu_{B_2}}\big( (\Gamma_{B_2})_{0} \big)} \, \Big).
\end{multline*}

\subsection{Proof of Theorem~\ref{thm: capped GWPT} and conclusion} \label{sec:conclusion}
The above steps established Theorem~\ref{thm:GWPT K3 general} as follows:
By Proposition~\ref{prop:reduction to SxP1} 
and Proposition~\ref{prop:reduction to capped} (Step 1 and 2)
we reduced the proof of the theorem to the GW/PT correspondence for the capped invariants stated in Theorem~\ref{thm: capped GWPT}.
To prove Theorem~\ref{thm: capped GWPT},
by Proposition~\ref{prop:reduction to non-vanishing}
it suffices to only consider capped descendents with non-vanishing cohomology.
By the degeneration formula in Step 4 finally,
the GW/PT correspondence for these invariants are
reduced to the GW/PT correspondence for the birelative theory
$(R \times \p^1, R_{\infty} \cup (E \times \p^1)$.
This correspondence was established in Theorem~\ref{thm:GWPT for birelative theory}. 

This finishes the proof of  Theorem~\ref{thm: capped GWPT} and Theorem~\ref{thm:GWPT K3 general}.
\qed

\section{$K3 \times \mathrm{Curve}$: Imprimitive case} \label{sec:K3 Curve imprimitive case}
%\subsection{Overview}
Let $S$ be a K3 surface. We consider here again the GW/PT correspondence of the pair
\begin{equation} (S \times C, S_z), \quad S_{z} = \bigsqcup S \times \{ z_i \}. \label{relative geometryXX} \end{equation}
but for curve classes $(\beta,d)$ where $\beta$ is allowed to have arbitrary divisibility.

\subsection{Rationality}
By \cite[Thm.5.1]{QuasiK3}
the Pandharipande-Thomas invariants of \eqref{relative geometryXX}
satisfy the following multiple cover formula:
\begin{equation} \label{eqn:mcf pt}
Z^{(S \times C,S_{z}),\red}_{\PT, (\beta,d)}\left( \lambda_1, \ldots, \lambda_N \right)
=
\sum_{k|r} Z^{(S \times C,S_{z}),\red}_{\PT, (\varphi_k(\beta/k),d)}\left( \varphi_k(\lambda_1), \ldots, \varphi_k(\lambda_N) \right)(p^k).
\end{equation}

By the first part of Theorem~\ref{thm:GWPT K3 general} we immediately obtain:
%By the multiple cover formula for the Pandharipande-Thomas invariants of $(S \times C, S_z)$
%proven in \cite[Thm 5.1]{QuasiK3},
%the first part of the Theorem immediately implies the following:

\begin{lemma} \label{cor:PT rationality K3}
For any curve class $\beta \in H_2(S,\BZ)$ the series
\[
Z^{(S \times C,S_{z}),\red}_{\PT, (\beta,n)}\left( \lambda_1, \ldots, \lambda_N \right)
\]
is the Laurent expansion of a rational function in $p$.
\end{lemma}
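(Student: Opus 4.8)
The plan is to derive the statement as an essentially immediate consequence of the multiple cover formula \eqref{eqn:mcf pt} together with the rationality already established in the primitive case. Throughout I write the series in question as $Z^{(S \times C,S_{z}),\red}_{\PT, (\beta,d)}\left( \lambda_1, \ldots, \lambda_N \right)$.

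First I would apply the multiple cover formula \eqref{eqn:mcf pt}, which expresses $Z^{(S \times C,S_{z}),\red}_{\PT, (\beta,d)}\left( \lambda_1, \ldots, \lambda_N \right)$ as a finite sum, indexed by the divisors $k \mid r$ of the divisibility $r$ of $\beta$, of the series $Z^{(S \times C,S_{z}),\red}_{\PT, (\varphi_k(\beta/k),d)}\left( \varphi_k(\lambda_1), \ldots, \varphi_k(\lambda_N) \right)$ with $p$ replaced by $p^{k}$. The key point is that each class $\varphi_k(\beta/k)$ occurring on the right-hand side is primitive, so that the first part of Theorem~\ref{thm:GWPT K3 general}, in the case $r=0$ of no interior descendent insertions, applies to every summand and shows that each of these series is the Laurent expansion of a rational function in its formal variable.

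It then remains to observe that the two operations used in assembling the right-hand side preserve rationality: substituting $p \mapsto p^{k}$ carries the Laurent expansion of a rational function of one variable to the Laurent expansion of a rational function of $p$, and a finite sum of Laurent expansions of rational functions is again the Laurent expansion of a rational function. Combining these observations with the previous paragraph yields the claim.

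I do not expect a genuine obstacle here, since the substance is already contained in the multiple cover formula of \cite{QuasiK3} and in the primitive case of Theorem~\ref{thm:GWPT K3 general}. The only point requiring care is to confirm that the classes $\varphi_k(\beta/k)$ are indeed primitive, so that Theorem~\ref{thm:GWPT K3 general} is genuinely applicable to each term; should one prefer to avoid relying on this directly, the same computation can equally be organized as an induction on the divisibility of $\beta$, with the primitive classes handled by Theorem~\ref{thm:GWPT K3 general} serving as the base case.
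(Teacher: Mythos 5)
Your proof is correct and is essentially the paper's own argument: the multiple cover formula \eqref{eqn:mcf pt} reduces the series to primitive classes, the first part of Theorem~\ref{thm:GWPT K3 general} gives rationality for each summand, and the substitution $p \mapsto p^{k}$ together with finite summation preserves rationality. Your one stated worry requires no verification: the isometries $\varphi_k$ are \emph{chosen} in Section~\ref{subsec:MCF K3} precisely so that $\varphi_k(\beta/k)$ is a primitive effective class, so primitivity holds by construction.
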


\subsection{Multiple cover formula for K3 surfaces} \label{subsec:MCF K3}
Consider the reduced Gromov-Witten invariants of a K3 surface:
\[
\big\langle \, \taut ; \gamma_1, \ldots, \gamma_r \big\rangle^{S,\GW,\red}_{g,\beta}
=
\int_{ [ \Mbar_{g,r,\beta}(S) ]^{\red} }
\tau^{\ast}(\taut) \prod_{i=1}^{r} \ev_i^{\ast}(\gamma_i).
\]
where we have twisted by a {\em tautological class} $\taut \in R^{\ast}(\Mbar_{g,r})$
pulled back by the forgetful morphism $\tau : \Mbar_{g,r}(S,\beta) \to \Mbar_{g,r}$,
and with the convention that in the unstable case $2g-2+r \leq 0$ we define $\tau$ to be the projection to the point and $R^{\ast}(pt) = H^{\ast}(pt)$.

For every divisor $k|\beta$ let $S_k$ be a K3 surface and let
\[ \varphi_k : H^{2}(S,\BC) \to H^{2}(S_k, \BC) \]
be a {\em complex} isometry such that $\varphi_k(\beta/k) \in H_2(S_k,\BZ)$
is a primitive effective curve class.
We extend $\varphi_k$ to the full cohomology lattice
by $\varphi_k(\pt) = \pt$ and $\varphi_k(1) = 1$.

The following was conjectured in \cite[Conj.C2]{K3xE}:
\begin{conj}[Multiple Cover Formula] \label{conj:MCF}
\[
\big\langle \, \tau_{k_1}(\gamma_1) \cdots \tau_{k_r}(\gamma_r) \big\rangle^{S,\GW,\red}_{g,\beta}
= \sum_{k | \beta}  
%k^{3g-3+r- \sum_i k_i}
k^{2g-3+\sum_i \deg(\gamma_i)}
\big\langle \, \tau_{k_1}(\varphi_k(\gamma_1)) \cdots \tau_{k_r}(\varphi_k(\gamma_r)) \big\rangle^{S,\GW,\red}_{g, \varphi_r\left( \frac{\beta}{k} \right)}
\]
\end{conj}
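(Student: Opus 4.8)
The plan is to deduce Conjecture~\ref{conj:MCF} from the multiple cover formula for stable pairs \eqref{eqn:mcf pt}, already established in \cite[Thm.5.1]{QuasiK3}, by transporting it across the Gromov--Witten/Pandharipande--Thomas correspondence proven in the primitive case (Theorem~\ref{thm:GWPT K3 general}) together with Nesterov's Hilb/PT correspondence (Theorem~\ref{thm:Nesterov}). The first task is to repackage the absolute reduced invariants $\langle \taut; \gamma_1, \ldots, \gamma_r \rangle^{S,\GW,\red}_{g,\beta}$ of the conjecture into the relative framework of $(S \times C, S_z)$ used throughout the paper. Since $\taut$ is pulled back from $\Mbar_{g,r}$, I would realise it by fixing the stabilised domain to a chosen marked curve $(C,z)$, as in the third vertex of the triangle of Figure~\ref{figure}, and express the $\psi$- and $\kappa$-monomials making up $\taut$ through the $C$-direction of $S \times C$. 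A degeneration of $C$ combined with the reduction to non-vanishing cohomology (Proposition~\ref{prop:reduction to non-vanishing}) should then reduce the tautological insertions to the marked relative descendents $\tau_{\mu}(\Gamma_0)$ for which the correspondence is available.

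Next I would run, for each divisor $k \mid \beta$, the following loop. On the Pandharipande--Thomas side one has, via the descendent refinement of \eqref{eqn:mcf pt}, an expression of the imprimitive relative invariants in terms of the primitive classes $\varphi_k(\beta/k)$ under the substitution $p \mapsto p^{k}$ and the isometry $\varphi_k$. For each primitive summand, Theorem~\ref{thm:GWPT K3 general} converts the stable-pairs series into the Gromov--Witten series under $p = e^{z}$. The crucial computation is the matching of prefactors: after the exponential change of variables the substitution $p \mapsto p^{k}$ becomes $z \mapsto kz$, and one must verify that combining this with the normalising powers of $z$ in the definition of $Z^{(S \times C,S_z),\red}_{\GW}$ and with the genus expansion $\sum_g (-1)^{g-1} z^{2g-2}$ reproduces exactly the weight $k^{2g-3+\sum_i \deg(\gamma_i)}$ of Conjecture~\ref{conj:MCF}. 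The shift by $\sum_i \deg(\gamma_i)$ should emerge from the dimension constraint on the reduced virtual class together with the action of $\varphi_k$ on the cohomological weights, while the action of $\varphi_k$ on insertions is tracked through the correspondence since the Chern classes of $S$ are monodromy invariant.

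The main obstacle is that Theorem~\ref{thm:GWPT K3 general} provides the correspondence only for primitive curve classes, so the deduction above is close to circular: it is essentially the converse of Proposition~\ref{prop:MCF implies GWPT}, and without an independent input it merely relates the two sides of the correspondence rather than establishing the multiple cover formula outright. To break the circularity one genuinely needs an external handle on the imprimitive reduced Gromov--Witten invariants of $S$. The feasible route is to invoke unconditional results: by deformation invariance and the global Torelli/monodromy argument (cf. Step~3 in the proof of Theorem~\ref{thm: capped GWPT}) these invariants depend only on $\beta^2$ and the divisibility, and for divisibility two the formula is proven in \cite{BB}; together with the rationality already secured in Lemma~\ref{cor:PT rationality K3} this yields the conjecture in those cases. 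The remaining, genuinely hard content --- arbitrary divisibility --- would require either the full imprimitive GW/PT correspondence or an independent evaluation of the reduced K3 series, for instance through the $S \times E$ and Igusa cusp form package, and lies beyond the tools assembled here.
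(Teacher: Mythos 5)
The statement you were asked to prove is Conjecture~\ref{conj:MCF}: the paper does not prove it, but recalls it from \cite[Conj.C2]{K3xE} and uses it only as a hypothesis, so there is no internal proof to compare against, and your final diagnosis is the correct one. Your first two paragraphs, read as a proof attempt, fail for exactly the reason you identify in the third: transporting the stable-pairs multiple cover formula \eqref{eqn:mcf pt} across the GW/PT correspondence requires that correspondence for \emph{imprimitive} $\beta$, and the paper's only access to the imprimitive correspondence is Proposition~\ref{prop:MCF implies GWPT}, which takes Conjecture~\ref{conj:MCF} as input. The paper's logic runs strictly in the direction you call the converse: the multiple cover formula for $\beta$, together with the primitive correspondence (Theorem~\ref{thm:GWPT K3 general}) and the PT formula of \cite{QuasiK3}, yields the imprimitive correspondence, with precisely the prefactor bookkeeping you sketch ($p \mapsto p^k$ becoming $z \mapsto kz$, and the weight $k^{2g-3+\sum_i \deg(\gamma_i)}$ emerging from the dimension constraint) carried out in Proposition~\ref{prop:mcf implies mcf for SxC} --- but always with the conjecture as hypothesis, never as conclusion.

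Two further caveats on your outline. First, even granting an imprimitive GW/PT correspondence, the formula \eqref{eqn:mcf pt} concerns the relative invariants of $(S \times C, S_z)$ with Nakajima-type insertions; to conclude the conjecture for arbitrary descendent or tautological insertions on $S$ you would additionally need that every bracket $\langle \taut; \gamma_1, \ldots, \gamma_r \rangle^{S,\GW,\red}_{g,\beta}$ is captured by such relative invariants, i.e., you would have to invert the product-formula reduction of Proposition~\ref{prop:mcf implies mcf for SxC}; the fixed-domain triangle of Figure~\ref{figure} is suggestive here but is established in the paper only for primitive $\beta$. Second, your monodromy remark shows the invariants depend only on $\beta^2$ and the divisibility of $\beta$, which is consistent with the conjecture but does not prove it. What is true, and what the paper actually records, is exactly your fallback position: the conjecture is known for divisibility $1$ (vacuously, as the sum has a single term) and for divisibility $2$ by \cite{BB}, and for higher divisibility it remains open; your assessment that the genuinely hard content lies beyond the tools assembled in this paper is accurate.
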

\begin{rmk} \label{rmk:K3 MCF}
%(i) The formulation above is equivalent to the formulation in \cite{K3xE}
%using the dimension constraint $g+r = \sum_i k_i + \deg(\gamma_i)$. \\
The (ordinary) virtual class of the moduli space of stable maps to a K3 surface
satisfies
\[
[ \Mbar_{g,r}(S,0) ]^{\vir}
=
\begin{cases}
[ \Mbar_{0,r} \times S] & \text{ if } g=0 \\
c_2(S) \cap [ \Mbar_{1,r} \times S] & \text{ if } g=1 \\
0 & \text{ if } g \geq 2.
\end{cases}
\]
It follows that the Multiple Cover Formula of Conjecture~\ref{conj:MCF}
is equivalent to the same statement but where we worked with the disconnected Gromov-Witten invariants
$\langle \cdots \rangle^{S,\GW,\bullet}$.
%implies the same statement for the disconnected invariants
\end{rmk}

\subsection{GW/PT correspondence}
We prove here the following:
\begin{prop} \label{prop:mcf implies mcf for SxC}
Assume the multiple cover formula of Conjecture~\ref{conj:MCF} holds for
an effective curve class $\beta \in H_2(S,\BZ)$.
Then we have
\begin{gather*} 
Z^{(S \times C,S_{z}),\red}_{\GW, (\beta,d)}\left( \lambda_1, \ldots, \lambda_N \right)(z)
=
\sum_{k|\beta} Z^{(S \times C,S_{z}),\red}_{\GW, (\varphi_k(\beta),d)}\left( \varphi_k(\lambda_1), \ldots, \varphi_k(\lambda_N) \right)(kz)
\end{gather*}
for any $H^{\ast}(S)$-weighted partitions $\lambda_i = (\lambda_{i,j}, \delta_{i,j})_{j=1}^{\ell(\lambda_i)}$,
where $\varphi_r(\lambda_i) = (\lambda_{i,j}, \varphi_r(\delta_{i,j}))$.
\end{prop}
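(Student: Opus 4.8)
The plan is to propagate the multiple cover formula from the K3 surface $S$ to the relative geometry $(S \times C, S_z)$ by isolating all dependence on the curve class $\beta$ and on the $H^{\ast}(S)$-weights into reduced Gromov--Witten integrals over the K3 surface itself, applying Conjecture~\ref{conj:MCF} to each such integral, and then reassembling. Since the invariants of $(S \times C, S_z)$ are defined with disconnected domains, I would work throughout with the disconnected form of the multiple cover formula, which is equivalent to Conjecture~\ref{conj:MCF} by Remark~\ref{rmk:K3 MCF}.

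First I would reduce to reduced invariants of $S$. Following the reduction scheme of Section~\ref{sec:induction scheme} together with the degeneration formula of Proposition~\ref{prop: K3 degeneration formula}, one degenerates $C$ and reduces the relative invariants of $(S \times C, S_z)$ to those of $(S \times \p^1, S_\infty)$, and then---using the structure of the reduced virtual class established in \cite{MPT}---expresses the $\beta$-carrying part of every resulting invariant as a reduced Gromov--Witten integral over $\Mbar_{g',r',\beta}(S)$ twisted by a tautological class. The outcome I would record is a universal expansion
\[
Z^{(S \times C,S_{z}),\red}_{\GW, (\beta,d)}\left( \lambda_1, \ldots, \lambda_N \right)(z)
=
\sum_{\nu} c_{\nu}(z)\, \big\langle \, \taut_{\nu} ; \Gamma_{\nu} \big\rangle^{S,\GW,\red}_{g_{\nu},\beta},
\]
where the $\Gamma_{\nu}$ are cohomology insertions on $S$ assembled from the weights $\delta_{i,j}$, the $\taut_{\nu}$ are tautological classes, and---crucially---the coefficients $c_{\nu}(z)$ depend only on $C$, $z$, $d$, $N$ and the underlying partitions, but neither on $\beta$ nor on the complex isometry class of $(S,\beta)$.

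Second, I would apply Conjecture~\ref{conj:MCF} to each factor $\big\langle \taut_{\nu}; \Gamma_{\nu}\big\rangle^{S,\GW,\red}_{g_{\nu}, \beta}$ (with $\psi$-classes absorbed into $\taut_{\nu}$), rewriting it as $\sum_{k|\beta} k^{\,2g_{\nu} - 3 + \deg(\Gamma_{\nu})} \big\langle \taut_{\nu}; \varphi_k(\Gamma_{\nu})\big\rangle^{S,\GW,\red}_{g_{\nu}, \varphi_k(\beta/k)}$, where $\deg(\Gamma_{\nu})$ is the total degree of the $S$-cohomology insertions. Because $\varphi_k$ fixes $1$ and $\pt$ and acts as an isometry on $H^2(S)$, it commutes with the assembly of $\Gamma_{\nu}$ from the $\delta_{i,j}$ and with the Nakajima presentation of the $\lambda_i$, so that $\varphi_k(\lambda_i)$ appears exactly as required on the right-hand side. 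The remaining bookkeeping is to check that the weight $k^{\,2g_{\nu}-3+\deg(\Gamma_{\nu})}$, once carried through the universal coefficients $c_{\nu}(z)$ and the normalization of $Z^{(S\times C)}_{\GW}$, reproduces precisely the substitution $z \mapsto kz$: combining the prefactor $(-z)^{\sum_i(\ell(\lambda_i)-|\lambda_i|)}$ with the genus weight $z^{2g-2+2d}$, the rescaling $z\mapsto kz$ multiplies the genus-$g$ term by $k^{\,2g-2+2d+\sum_i(\ell(\lambda_i)-|\lambda_i|)}$, and one verifies via the reduced dimension constraint on $\Mbar_{g_{\nu},r_{\nu},\beta}(S)$ that this matches the accumulated $k$-powers from the K3 factors term by term. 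Summing over $k|\beta$ then yields exactly the claimed identity.

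The main obstacle is the reduction step: establishing the factorization that concentrates all $\beta$- and $S$-cohomology dependence into reduced K3 invariants with $\beta$-independent, isometry-independent curve coefficients. One must ensure both that the curve contributions are genuinely untouched by $\varphi_k$ (so that applying $\varphi_k$ in each K3 factor agrees with applying it globally to the $\lambda_i$) and that the genus and degree shifts introduced by the degeneration and reduction steps line up, so that the per-factor exponent $2g_{\nu}-3+\deg(\Gamma_{\nu})$ aggregates to the global exponent governing $z \mapsto kz$. The tools for the factorization are already in place (Proposition~\ref{prop: K3 degeneration formula}, the reduction scheme of Section~\ref{sec:induction scheme}, and the reduced splitting of \cite{MPT}); the delicate part is the exponent matching, which is ultimately a dimension count but must be carried out carefully to confirm that no spurious powers of $k$ survive.
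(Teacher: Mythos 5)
Your second step (applying Conjecture~\ref{conj:MCF} factorwise and matching $k$-powers against $z \mapsto kz$ via a dimension constraint) is exactly right, and your use of the disconnected form via Remark~\ref{rmk:K3 MCF} matches the paper. But your first step --- producing the factorization by degenerating $C$ through the reduction scheme of Section~\ref{sec:induction scheme} and Proposition~\ref{prop: K3 degeneration formula} --- has a genuine gap, and it is precisely at the point you flag as ``ultimately a dimension count.'' The degeneration formula splits the genus, $g = g_1 + g_2 + \ell(\mu) - 1$, and in the reduced case pairs the $\beta$-carrying reduced factor with standard $(0,d)$-factors of $(S\times\p^1, S_\infty)$. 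Because the virtual dimension of a threefold is the $c_1$-integral and hence genus-independent, these degree-zero factors are nontrivial power series in $z$, summing over all $g_2$ with fixed insertions; moreover the inversion of the relative cap matrix on the Gromov--Witten side is only $\BQ((z))$-linear, as noted in Section~\ref{sec:induction scheme}. Consequently your universal coefficients $c_\nu(z)$ are genuinely non-monomial series, and the termwise identity you need, $c_\nu(kz) = k^{2g_\nu - 3 + \deg(\Gamma_\nu)} c_\nu(z)$, fails: pushing the bracket-level multiple cover formula through the degeneration produces the exponent $2g_1 - 3 + \deg_1$ on the reduced factor, while the target requires $2g - 3 + \deg$, and the discrepancy contains $2g_2$, which varies freely. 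Equivalently, the substitution $z \mapsto kz$ does not commute with the degeneration formula, since the $(0,d)$-factors would be evaluated at $kz$ on one side and at $z$ on the other. The identity only re-emerges after a resummation that your plan gives no means to control.

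The paper avoids all of this with a tool you never invoke: the product formula of \cite{LQ}. After reducing to connected invariants (parallel to Remark~\ref{rmk:K3 MCF}), each genus-$g$ connected bracket of $(S \times C, S_z)$ equals a \emph{single} reduced K3 bracket $\langle \taut; (\delta_{ij})_{i,j}\rangle^{S,\GW,\red}_{g,\beta}$ at the \emph{same} genus, with all $C$-dependence absorbed into one tautological class $\taut$ --- no genus splitting, no degree-zero factors, and a coefficient that is the explicit monomial prefactor of the partition function. Then the reduced dimension constraint
\begin{equation*}
2d\,(1-g(C)) + \sum_i \big(\ell(\lambda_i) - |\lambda_i|\big) + 1 = \sum_{i,j} \deg(\delta_{ij}),
\end{equation*}
which is genus-independent, converts the multiple cover exponent $k^{2g-3+\sum_{i,j}\deg(\delta_{ij})}$ into exactly the factor $k^{2g-2+2d(1-g(C))+\sum_i(\ell(\lambda_i)-|\lambda_i|)}$ produced by $z \mapsto kz$ in the prefactor and the genus variable, term by term in $g$. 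If you replace your degeneration-based factorization with this product-formula factorization, the rest of your argument goes through verbatim.
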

%Here if $\lambda = (\lambda_i, \delta_i)$ is an $H^{\ast}(S)$-weighted partition and $\varphi : H^{\ast}(S) \to H^{\ast}(S')$ a morphism,
%we write $\varphi(\lambda) = (\lambda_i, \varphi(\delta_i))$.
\begin{proof}
%Write 
Recall the partition function
\begin{multline*}
Z^{(S \times C,S_{z}),\red}_{\GW, (\beta,d)}\left( \lambda_1, \ldots, \lambda_N \right)
=
(-1)^{d(1-g(C)) + \sum_i (\ell(\lambda_i) - |\lambda_i|)}
z^{2d(1-g(C) + \sum_i ( \ell(\lambda_i) - |\lambda_i|)} \\
\cdot \sum_{g \in \BZ} (-1)^{g-1} z^{2g-2}
\left\langle \, \lambda_1, \ldots, \lambda_N \, \right\rangle^{(S \times C,S_z), \bullet, \red}_{g, (\beta,d)}
\end{multline*}
We derive the multiple cover formula for the invariant
$\left\langle \, \lambda_1, \ldots, \lambda_N \, \right\rangle^{(S \times C,S_z), \bullet, \red}_{g, (\beta,d)}$.
By an argument parallel to Remark~\ref{rmk:K3 MCF} it suffices to consider connected invariants.
Then by the product formula of \cite{LQ} we find that
\[
\left\langle \, \lambda_1, \ldots, \lambda_N \, \right\rangle^{(S \times C,S_z), \red}_{g, (\beta,d)}
=
\left\langle \taut ; ( \delta_{ij}  )_{i,j} \right\rangle^{S,\GW,\red}_{g,\beta}
\]
for some tautological class $\taut \in R^{\ast}(\Mbar_{g,\sum_i \ell(\lambda_i)})$.
Assuming that Conjecture~\ref{conj:MCF} holds for the class $\beta$ this becomes:
\begin{align*}
\left\langle \taut ; ( \delta_{ij}  )_{i,j} \right\rangle^{S,\GW,\red}_{\beta}
& = \sum_{k|r} k^{2g-3 + \sum_{i,j} \deg(\delta_{i,j})} \left\langle \taut ; ( \varphi_r(\delta_{ij})  )_{i,j} \right\rangle^{S,\GW,\red}_{g,\varphi_k(\beta/k)} \\
& = \sum_{k|r} k^{2g-3 + \sum_{i,j} \deg(\delta_{i,j})} 
\left\langle \, \varphi_k(\lambda_1), \ldots, \varphi_k(\lambda_N) \, \right\rangle^{(S \times C,S_z), \red}_{g, (\varphi_k(\beta/k),d)}.
\end{align*}
The claim now follows from the dimension constraint (the invariants are zero if it is violated, so we may assume it):
\[ 2d (1-g(C)) + \sum_i \ell(\lambda_i) - |\lambda_i| + 1 = \sum_{i,j} \deg(\delta_{ij}). \]
\end{proof}

\begin{proof}[Proof of Proposition~\ref{prop:MCF implies GWPT}]
The first part is Lemma \ref{cor:PT rationality K3}.
The second claim follows
from Proposition~\ref{prop:mcf implies mcf for SxC},
equation \eqref{eqn:mcf pt}
and the primitive case of the GW/PT correspondence given by Theorem~\ref{thm:GWPT K3 general}.
\end{proof}

\appendix
\section{Higher descendents invariants} \label{sec:higher descendent}
The paper \cite{ABPZ} introduced nodal Gromov-Witten invariants
which count stable maps with prescribed nodes on the domain curve.
These counts can be computed by resolving the nodes and inserting (relative) diagonals.
We introduce here a corresponding notion for stable pairs
which we call 'higher descendents'.
As for nodal GW invariants, they can be computed through our marked relative invariants
by inserting (relative) diagonals, and at least very na\"ively, they carry information
on stable pairs whose underlying curve has nodes.
%For completeness we also state a degeneration formula for the higher-descendents, parallel to the standard case.

\subsection{Definition} \label{subsubsec:higher descendent invariants}
Let $(\BF,s)$ be the universal stable pair on $P_{\Gamma}(X,D) \times_{\CT} \CX$ and consider the diagram
\[
\begin{tikzcd}
& P_{\Gamma}(X,D) \times_{\CT} \CX \ar{dr}{\pi_X} \ar{dl}[swap]{\pi} &  \\
P_{\Gamma}(X,D) & & X
\end{tikzcd}
\]
where $\pi_X$ is the projection to $\CX$ followed by the universal contraction morphism $\CX \to X$.

Given a sequence $\mathbf{a} = (a_1, \ldots, a_r) \in \BZ^r$ and a class $\gamma \in H^{\ast}(X)$ we define the higher descendents
\[ \tau_{\mathbf{a}}(\gamma) = \pi_{\ast}( \ch_{2+a_1}(\BF ) \cdots \ch_{2+a_r}(\BF ) \cup \pi_X^{\ast}(\gamma) ) \in H^{\ast}(P_{\Gamma}(X,D)). \]

%Pandharipande-Thomas invariants are defined by integrating over these descendents.
Let $\lambda \in H^{\ast}(D^{[\ell]})$ be a cohomology class, and for $i \in \{1 , \ldots, s \}$ consider
tuples $\mathbf{a}_i = (a_{ij}) \in \BZ^{\ell(\mathbf{a}_i)}$,
and classes $\gamma_i \in H^{\ast}(X)$.
\begin{defn} 
The higher descendent Pandharipande-Thomas invariants are defined by
\[
\left\langle \, \lambda \middle| \tau_{\mathbf{a}_1}(\gamma_1) \cdots \tau_{\mathbf{a}_s}(\gamma_s) \right\rangle^{(X,D), \PT, \textup{hi.-desc.}}_{\Gamma} \\
=
\int_{ [ P_{\Gamma}(X,D) ]^{\vir} }
\ev^{\rel \ast}(\lambda) \cup \prod_i \tau_{\mathbf{a}_i}(\gamma_i).
\]
\end{defn}

\subsection{Comparision result}
We have the following comparision result
which essentially says that we can trade higher-descendent insertion
for marked relative insertions (considered in Section~\ref{subsec:relative descendents}) by the rule
\[ 
\tau_{a_1, \ldots, a_r}(\delta) \quad \rightsquigarrow \quad \tau_{a_1} \tau_{a_2} \cdots \tau_{a_r}( \Delta_{r \ast}^{\rel}(\delta) ) \]
where 
\[ \Delta_{r}^{\rel} : X \cong (X,D)^1 \to (X,D)^r \]
is the relative diagonal.

\begin{prop} For every $i \in \{1, \ldots, s\}$ let $\mathbf{a}_i = (a_{ij})_{j=1}^{\ell(\mathbf{a}_i)} \in \BZ^{\ell(\mathbf{a}_i)}$. Then
\[
\left\langle \, \lambda\, \middle| \,\prod_{i} \tau_{\mathbf{a}_i}(\gamma_i) \right\rangle^{(X,D), \PT, \textup{hi.-desc.}}_{\Gamma} \\
=
\left\langle \, \lambda\, \middle|\, \prod_{i} \tau_{a_{i1}} \cdots \tau_{a_{i \ell(\mathbf{a}_i)}}( \Delta^{\rel}_{\ell(\mathbf{a}_i) \ast}(\gamma_i) \right\rangle^{(X,D), \PT, \textup{marked}}_{\Gamma} 
\]
\end{prop}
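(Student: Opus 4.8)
The plan is to compare the two integrals by tracking how the respective integrands pull back along the forgetful morphism
\[ \pi : P_{\Gamma,r'}(X,D) \to P_{\Gamma}(X,D), \quad r' = \sum_i \ell(\mathbf{a}_i), \]
where the total number of markings $r'$ is the sum of the lengths of the tuples $\mathbf{a}_i$. By definition the marked virtual class is $[P_{\Gamma,r'}(X,D)]^{\vir} = \pi^{\ast}[P_\Gamma(X,D)]^{\vir}$, so since $\pi$ is flat and proper (Proposition~\ref{prop:moduli of r marked stable pairs}) we may compute the right hand side by first pushing forward the marked integrand along $\pi$ and then integrating against $[P_\Gamma(X,D)]^{\vir}$. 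The relative evaluation class $\ev^{\rel \ast}_D(\lambda)$ is pulled back from $P_\Gamma(X,D)$ and plays no role in the pushforward, so it suffices to analyze, for a single tuple $\mathbf{a} = (a_1, \ldots, a_m)$ and class $\gamma$, the pushforward
\[ \pi_{\ast}\Big( p_1^{\ast}(\ch_{2+a_1}(\BF)) \cdots p_m^{\ast}(\ch_{2+a_m}(\BF)) \cdot \ev^{\ast}(\Delta^{\rel}_{m \ast}(\gamma)) \Big). \]

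The key geometric input is the isomorphism $f : (X,D)^{r+1} \xrightarrow{\cong} \CX_r$ of \eqref{iso}, and its analogue for the moduli of marked stable pairs established in Proposition~\ref{prop:moduli of r marked stable pairs}, which identifies $P_{\Gamma,r+1}(X,D)$ with the universal target $\CX_r$ over $P_{\Gamma,r}(X,D)$. Iterating, the fiber product of $m$ copies of the universal target over $P_\Gamma(X,D)$ is the relevant moduli space, and under these identifications the marked points $p_1, \ldots, p_m$ become the $m$ tautological sections. The relative diagonal insertion $\ev^{\ast}(\Delta^{\rel}_{m \ast}(\gamma))$ cuts out exactly the locus where all $m$ marked points coincide. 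First I would express this coincidence locus in the fiber product using the small relative diagonal, and observe that on it all the sections $p_i$ agree with a single section $p$; the universal sheaf pullbacks $p_i^{\ast}(\ch_{2+a_i}(\BF))$ then all restrict to pullbacks along the single section $p$. Restricting to this diagonal and using that $(\BF_r,\sigma_r) = \tilde q^{\ast}(\BF,\sigma)$ (exactly as in the proof of Proposition~\ref{prop: comparision}), the product $\prod_i p_i^{\ast}(\ch_{2+a_i}(\BF))$ collapses to a single pushforward from $P_\Gamma(X,D) \times_\CT \CX$ of $\prod_i \ch_{2+a_i}(\BF)$ against $\pi_X^{\ast}(\gamma)$, which is precisely $\tau_{\mathbf{a}}(\gamma)$.

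The main obstacle will be handling the excess intersection and the contribution of non-transverse expansions in the pushforward along the diagonal locus: the small relative diagonal $\Delta^{\rel}_m \subset (X,D)^m$ is not the naive diagonal but its proper transform (Lemma~\ref{lemma:rel splitting} and its generalizations), so one must check that the diagonal restriction of $\prod_i p_i^{\ast}(\ch_{2+a_i}(\BF))$ produces no spurious boundary terms coming from the strata where the markings bubble off. Here the crucial point is that the universal section composed with the contraction $\tilde q$ behaves compatibly, so that the Chern character insertions are genuinely pulled back via a single map, and the excess normal bundle contributions cancel because the $\ch_{2+a_i}(\BF)$ are supported away from the relative divisor and the singular locus by the stable pair conditions (iii). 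I would verify this cancellation by a local computation near the diagonal, modeled on the étale-local description $(\BA^1,0)^m \times U^m$ used in Proposition~\ref{prop:moduli of ordered points}, reducing the claim to the formal identity $\Delta_{m \ast}^{\rel}(\gamma) \cdot \prod_i \mathrm{pr}_i^{\ast}(\alpha_i) = \Delta_{m \ast}^{\rel}\big(\gamma \cdot \prod_i \alpha_i\big)$ for classes $\alpha_i$ pulled back from $X$, which is the self-intersection formula for the diagonal. Once this is in place, integrating against $[P_\Gamma(X,D)]^{\vir}$ gives the left hand side and completes the proof.
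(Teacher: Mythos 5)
Your overall skeleton (pull the virtual class back along the forgetful map, collapse the diagonal insertion to a single section, finish with the push--pull of Proposition~\ref{prop: comparision}) is the same as the paper's, but the mechanism you propose for the middle step has a genuine gap. First, your identification of $P_{\Gamma,r'}(X,D)$, $r'=\sum_i\ell(\mathbf a_i)$, with ``the fiber product of $m$ copies of the universal target over $P_\Gamma(X,D)$'' is precisely what the paper warns against: iterating $P_{\Gamma,r+1}(X,D)\cong\CX_r$ gives a tower in which the classifying map to $\CT$ changes at each stage (new markings create new bubbles), and the natural morphism $P_{\Gamma,r}(X,D)\to \CX_0\times_{P_0}\cdots\times_{P_0}\CX_0$ is almost never an isomorphism (see the Remark following Proposition~\ref{prop:moduli of r marked stable pairs}). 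Working on that fiber product effectively replaces the relative diagonal by the absolute one, and the difference is exactly the rubber correction of Lemma~\ref{lemma:rel splitting} --- the terms your computation would silently drop. Second, your substitute argument --- restrict to the coincidence locus, cancel ``excess normal bundle contributions'' by a local computation, and reduce to $\Delta^{\rel}_{m\ast}(\gamma)\cdot\prod_i\pr_i^{\ast}(\alpha_i)=\Delta^{\rel}_{m\ast}\big(\gamma\cdot\prod_i\alpha_i\big)$ --- does not reach the actual difficulty: that projection-formula identity applies only to classes pulled back from $(X,D)^{r'}$, whereas the insertions $p_i^{\ast}\ch_{2+a_{ij}}(\BF)$ live on the moduli space and depend on the universal sheaf, so they are not of this form. (Your supporting claim that the $\ch_{2+a_{ij}}(\BF)$ are ``supported away from the relative divisor'' by condition (iii) is also incorrect: condition (iii) makes the \emph{cokernel} of $\sigma$ disjoint from the relative divisor; the sheaf itself meets it --- this is what $\ev^{\rel}_D$ records.)

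What your proposal is missing, and what the paper actually uses, is the Cartesian square
\[
\begin{tikzcd}
P_{\Gamma,s}(X,D) \ar{r}{\widetilde\Delta} \ar{d}{\ev} & P_{\Gamma,r'}(X,D) \ar{d}{\ev} \\
(X,D)^s \ar{r}{\Delta^{\rel}_{\ell(\mathbf a_1)\cdots\ell(\mathbf a_s)}} & (X,D)^{r'}
\end{tikzcd}
\]
over the relative multi-diagonal (which repeats $p_i$ exactly $\ell(\mathbf a_i)$ times), together with the base-change identity $\dbtilde{\Delta}^{\ast}\BF_{r'}=\BF_s$ for the induced map $\dbtilde\Delta:\CX_s\to\CX_{r'}$ of universal targets (both universal pairs are pulled back from $\CX_0$). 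Because the square is fibered there is no excess intersection and no boundary correction at all --- configurations where the common point sits on a bubble are already contained in $\Delta^{\rel}$ and in $P_{\Gamma,s}(X,D)$ --- so $\ev^{\ast}\big(\prod_i\Delta^{\rel}_{\ell(\mathbf a_i)\ast}(\gamma_i)\big)$ converts the integral over $P_{\Gamma,r'}(X,D)$ into an integral over $P_{\Gamma,s}(X,D)$ with insertion $\ev^{\ast}(\gamma_1\otimes\cdots\otimes\gamma_s)$ and the repeated Chern characters $\prod_j p_i^{\ast}\ch_{2+a_{ij}}(\BF_s)$ attached to the single section $p_i$. At that point all insertions are honest pullbacks along $q:P_{\Gamma,s}(X,D)\to\CX_0\times_{P_0}\cdots\times_{P_0}\CX_0$, and the push--pull of Proposition~\ref{prop: comparision} applies verbatim to produce $\tau_{\mathbf a_i}(\gamma_i)$. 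Your intuition about collapsing the sections was correct; the Cartesian square is what turns it into a theorem rather than a restriction with corrections to be cancelled by hand.
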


\begin{proof}
Write $P_r = P_{\Gamma,r}(X,D)$. Consider the relative diagonal map:
\[
\Delta^{\rel}_{\ell(\mathbf{a}_1) \cdots \ell(\mathbf{a}_s)} : (X,D)^s \to (X,D)^{r}
\]
given by sending $(p_1, \ldots, p_s) \in X[\ell]^s$ to
\[ (\underbrace{p_1, \ldots, p_1}_{\ell(\mathbf{a}_1) \text{ times }}, \underbrace{p_2, \ldots, p_2}_{\ell(\mathbf{a}_2) \text{ times }}, \ldots, \underbrace{p_s, \ldots, p_s}_{\ell(\mathbf{a}_1) \text{ times }}) \in X[\ell]^r \]
where $r = \sum_i \ell(\mathbf{a}_i)$.
We have
\[ \Delta^{\rel}_{\ell(\mathbf{a}_1) \cdots \ell(\mathbf{a}_s) \ast}(\gamma_1 \otimes \ldots \otimes \gamma_r) = 
\prod_{i} \Delta^{\rel}_{\ell(\mathbf{a}_i) \ast}(\gamma_i).
\]
%\Delta^{\rel}_{1 \cdots \ell(\mathbf{a}_1)} \cup \cdots \cup \Delta^{\rel}_{\sum_{i=1}^{s-1} \ell(\mathbf{a}_i) + 1, \ldots, \sum_i \ell(\mathbf{a}_i)}. \]
Moreover we have the fiber diagram
\[
\begin{tikzcd}
\CX_s \ar{d}{\pi_s} \ar{rr}{\dbtilde{\Delta}} && \CX_r \ar{d}{\pi_r} \\
P_{s} \ar{rr}{\widetilde{\Delta}} \ar{d}{\ev} & & P_r \ar{d}{\ev} \\
(X,D)^s \ar{rr}{\Delta^{\rel}_{\ell(\mathbf{a}_1) \cdots \ell(\mathbf{a}_s)}} & & (X,D)^{r}
\end{tikzcd} 
\]

Let $(\BF_r, \sigma_r)$ denote the universal stable pair on $\CX_r \to P_r$.
We have $\dbtilde{\Delta}^{\ast}(\BF_r) = \BF_s$ because both are pulled back from $\CX_0 \to P_0$ and the universal targets agree.
This shows that for every $b \in \{ 1, \ldots, r \}$ we have that
\[ \widetilde{\Delta}^{\ast} p_b^{\ast}( \ch_k(\BF_r) ) = p_{i(b)}^{\ast} \ch_k( \dbtilde{\Delta}^{\ast}(\BF_r) ) = p_{i(b)}^{\ast} \ch_k(\BF_s) \]
where $i(b) \in \{ 1, \ldots, s \}$ is the index such that $\mathrm{pr}_{i(b)} = \mathrm{pr}_b \circ \Delta^{\rel}_{\ell(\mathbf{a}_1) \cdots \ell(\mathbf{a}_s)}$.
We hence get
\begin{multline}
\label{3523524}
\left\langle \, \lambda \middle| \prod_{i} \tau_{a_{i1}} \cdots \tau_{a_{i \ell(\mathbf{a}_i)}}( \Delta^{\rel}_{\ell(\mathbf{a}_i) \ast}(\gamma_i)) \right\rangle^{(X,D), \PT, \textup{marked}}_{\Gamma} \\
=
\int_{ [ P_{\Gamma,s}(X,D) ]^{\vir} }
\ev_{\mathrm{rel} \ast}(\lambda) \cdot
\prod_{i=1}^{s} \left( \prod_{j=1}^{\ell(\mathbf{a}_i)} p_{i}^{\ast}( \ch_{a_{ij}}(\BF_s) ) \right) \cdot \ev^{\ast}( \gamma_1 \otimes \ldots \otimes \gamma_s ).
\end{multline}
Arguing now precisely as in the proof of Proposition~\ref{prop: comparision}
(i.e. using push-pull for the morphism $\tilde{\pi} : P_s \to \CX_0 \times_{P_0} \cdots \times_{P_0} \CX_0$
this becomes
\[
\int_{\CX_0 \times_{P_0} \cdots \times_{P_0} \CX_0}  \ev_{\rel}^{\ast}(\lambda)
\prod_{i=1}^{s} \left( \prod_{j=1}^{\ell(\mathbf{a}_i)} \rho_i^{\ast}\big( \ch_{a_{ij}}(\BF)  \pi_X^{\ast}(\gamma_i) \big)  \right)
\cap \pi^{\ast}[ P_0 ]^{\vir}
\]
which is precisely $\left\langle \, \lambda \middle| \prod_{i} \tau_{\mathbf{a}_i}(\gamma_i) \right\rangle^{(X,D), \PT, \textup{hi.-desc.}}_{\Gamma}$ as desired.
\end{proof}

Mathemathisches Institut, Universit\"at Bonn

georgo@math.uni-bonn.de \\


\begin{thebibliography}{99}
\bibitem{ACFW} D. Abramovich, C. Cadman, B. Fantechi, J. Wise. Expanded degenerations and pairs. Comm. Algebra 41 (2013), no. 6, 2346--2386.

\bibitem{ABPZ} H. Arg\"uz, P. Bousseau, R. Pandharipande, D. Zvonkine. Gromov-Witten Theory of Complete Intersections. arXiv:2109.13323

\bibitem{BB}
Y. Bae, T.-H. Buelles.
Curves on K3 surfaces in divisibility 2.
Forum Math. Sigma 9 (2021), Paper No. e9, 37 pp. 

\bibitem{Baltes} J.~Baltes. On Noether-Lefschetz Theory on Compactifications of the Moduli Space of K3 surfaces. Master Thesis, University of Bonn, 2020.

\bibitem{BL} J.~Bryan, N.-C. Leung. The enumerative geometry of K3 surfaces and modular forms. J. Amer. Math. Soc. 13 (2000), no. 2, 371--410.

\bibitem{BP} J. Bryan, R. Pandharipande. The local Gromov-Witten theory of curves. With an appendix by Bryan, C. Faber, A. Okounkov and Pandharipande. J. Amer. Math. Soc. 21 (2008), no. 1, 101--136. 

\bibitem{Chen}
Q. Chen.
Stable logarithmic maps to Deligne-Faltings pairs I.
Ann. of Math. (2) 180 (2014), no. 2, 455--521.

\bibitem{deCM}
M. A. de Cataldo, L. Migliorini. The Chow groups and the motive of the Hilbert scheme of points on a surface. J. Algebra 251 (2002), no. 2, 824--848.

\bibitem{Deligne1} Deligne, Pierre La conjecture de Weil. II. (French) [Weil's conjecture. II] Inst. Hautes Études Sci. Publ. Math. No. 52 (1980), 137–252. 


\bibitem{Deligne2} P. Deligne, N. Katz. Groupes des Monodromie en G\'eom\'etrie Alg\'ebrique (SGA VII, 2). Lecture Notes in Math. 340, 1973.

\bibitem{Fulton}
W. Fulton,
Intersection theory.
Second edition. Ergebnisse der Mathematik und ihrer Grenzgebiete. 3. Folge. A Series of Modern Surveys in Mathematics, 2. Springer-Verlag, Berlin, 1998. xiv+470 pp.

\bibitem{GP}
T. Graber, R. Pandharipande.
Localization of virtual classes.
Invent. Math. 135 (1999), no. 2, 487--518. 


\bibitem{GV}
T. Graber, R. Vakil. Relative virtual localization and vanishing of tautological classes on moduli spaces of curves. Duke Math. J. 130 (2005), no. 1, 1--37.

\bibitem{Greer}
F. Greer. Quasi-modular forms from mixed Noether-Lefschetz theory. Adv. Math. 355 (2019), 106765, 25 pp.

\bibitem{GS}
M. Gross, B. Siebert. Logarithmic Gromov-Witten invariants. J. Amer. Math. Soc. 26 (2013), no. 2, 451--510.

\bibitem{Hassett}
B. Hassett.
Moduli spaces of weighted pointed stable curves.
Adv. Math. 173 (2003), no. 2, 316--352. 

\bibitem{HuybrechtsK3}
D. Huybrechts.
Lectures on K3 surfaces.
Cambridge Studies in Advanced Mathematics, 158. Cambridge University Press, Cambridge, 2016. xi+485 pp.

\bibitem{HT}
Huybrechts, Daniel; Thomas, Richard P. Deformation-obstruction theory for complexes via Atiyah and Kodaira-Spencer classes. Math. Ann. 346 (2010), no. 3, 545–569. 

\bibitem{KL} Y.-H. Kiem, J. Li. Localizing virtual cycles by cosections. J. Amer. Math. Soc. 26 (2013), no. 4, 1025--1050. 


\bibitem{KT1}
M. Kool, R. Thomas.
Reduced classes and curve counting on surfaces I: theory.
Algebr. Geom. 1 (2014), no. 3, 334--383. 

\bibitem{LQ}
Y.-P. Lee, F. Qu.
A product formula for log Gromov-Witten invariants.
J. Math. Soc. Japan 70 (2018), no. 1, 229--242.


\bibitem{LiWu} J. Li, B. Wu. Good degeneration of Quot-schemes and coherent systems. Comm. Anal. Geom. 23 (2015), no. 4, 841--921. 


\bibitem{Li1} J. Li. Jun Stable morphisms to singular schemes and relative stable morphisms. J. Differential Geom. 57 (2001), no. 3, 509--578.

\bibitem{Li2} J. Li. A degeneration formula of GW-invariants. J. Differential Geom. 60 (2002), no. 2, 199--293.

\bibitem{LiICTP}
J. Li. Lecture notes on relative GW-invariants. Intersection theory and moduli, 41–96, ICTP Lect. Notes, XIX, Abdus Salam Int. Cent. Theoret. Phys., Trieste, 2004. 

\bibitem{Liu}
H. Liu. Quasimaps and stable pairs. Forum Math. Sigma 9 (2021), 42, Id/No e32.

\bibitem{LM}
A. Losev, Y. Manin.
New moduli spaces of pointed curves and pencils of flat connections.
Dedicated to William Fulton on the occasion of his 60th birthday.
Michigan Math. J. 48 (2000), 443--472. 

\bibitem{M} D. Maulik. Gromov-Witten theory of $A_n$-resolutions. Geom. and Top. 13 (2009), 1729--1773.

\bibitem{MNOP1} D. Maulik, N. Nekrasov, A. Okounkov, R. Pandharipande. Gromov-Witten theory and Donaldson-Thomas theory. I. Compos. Math. 142 (2006), no. 5, 1263--1285.

\bibitem{MNOP2} D. Maulik, N. Nekrasov, A. Okounkov, R. Pandharipande. Gromov-Witten theory and Donaldson-Thomas theory. II. Compos. Math. 142 (2006), no. 5, 1286--1304.

\bibitem{MOblomDT}
D. Maulik, A. Oblomkov. Donaldson-Thomas theory of $A_n \times \p^1$. Compos. Math. 145 (2009), no. 5, 1249--1276. 

\bibitem{MOblomQHilb}
D. Maulik, A. Oblomkov.
Quantum cohomology of the Hilbert scheme of points on $A_n$-resolutions.
J. Amer. Math. Soc. 22 (2009), no. 4, 1055--1091. 

\bibitem{MOOP} 
D. Maulik, A. Oblomkov, A. Okounkov, R. Pandharipande.
Gromov-Witten/Donaldson-Thomas correspondence for toric 3-folds.
Invent. Math. 186 (2011), no. 2, 435--479. 

\bibitem{MP} D.~Maulik, R.~Pandharipande. A topological view of Gromov-Witten theory. Topology 45 (2006), no. 5, 887--918.

\bibitem{MP_GWNL}
D. Maulik, R. Pandharipande.
Gromov-Witten theory and Noether-Lefschetz theory. A celebration of algebraic geometry, 469--507,
Clay Math. Proc., 18, Amer. Math. Soc., Providence, RI, 2013. 

\bibitem{MPT} D. Maulik, R. Pandharipande, R. P. Thomas. Curves on K3 surfaces and modular forms. With an appendix by A. Pixton., J. Topology (2010) 3 (4): 937--996.

\bibitem{MR} 
D. Maulik, D. Ranganathan.
Logarithmic Donaldson-Thomas theory.
arXiv:2006.06603

\bibitem{MoreiraOOP}
M. Moreira, A. Oblomkov, A. Okounkov, R. Pandharipande.
Virasoro constraints for stable pairs on toric 3-folds.
arXiv:2008.12514


\bibitem{Nak}
H.~Nakajima.
Heisenberg algebra and {H}ilbert schemes of points on projective surfaces.
\newblock Ann. of Math. (2) 145 (1997), no. 2, 379--388.

\bibitem{N1} D. Nesterov, Quasimaps to moduli spaces of coherent sheaves, Preprint.

\bibitem{N2} D. Nesterov, Quasimaps to moduli spaces of coherent sheaves on a K3 surface, Preprint.

\bibitem{HilbHAE} G. Oberdieck, Holomorphic anomaly equations for the Hilbert scheme of points of a K3 surface, in preparation.

\bibitem{HilbK3} G. Oberdieck, Gromov-Witten invariants of the Hilbert schemes of points of a K3 surface. Geom. Topol. 22 (2018), no. 1, 323--437. 

%\bibitem{ObMC} G. Oberdieck, Gromov-Witten theory and Noether-Lefschetz theory for holomorphic-symplectic varieties, arXiv:2102.11622

\bibitem{K3xE} G. Oberdieck, R. Pandharipande. Curve counting on $K3 \times E$, the Igusa cusp form $\chi_{10}$, and descendent integration. K3 surfaces and their moduli, 245-278, Progr. Math., 315, Birkhäuser/Springer, [Cham], 2016. 

\bibitem{OPix2} G.~Oberdieck, A.~Pixton.
Gromov--Witten theory of elliptic fibrations: Jacobi forms and holomorphic anomaly equations.
Geom. Topol. 23 (2019), no. 3, 1415--1489. 

\bibitem{QuasiK3}
G. Oberdieck.
Multiple cover formulas for K3 geometries, wallcrossing, and Quot schemes.
arXiv:2111.11239

\bibitem{OOP}
A. Oblomkov, A. Okounkov, R. Pandharipande.
GW/PT descendent correspondence via vertex operators.
Comm. Math. Phys. 374 (2020), no. 3, 1321--1359. 

\bibitem{OP}
A. Okounkov, R. Pandharipande.
Quantum cohomology of the Hilbert scheme of points in the plane.
Invent. Math. 179 (2010), no. 3, 523--557. 

\bibitem{OPLocal} A. Okounkov, R. Pandharipande. The local Donaldson-Thomas theory of curves. Geom. Topol. 14 (2010), no. 3, 1503--1567.

\bibitem{OkPandVir} A. Okounkov, R. Pandharipande. Virasoro constraints for target curves. Invent. Math. 163 (2006), no. 1, 47--108. 

\bibitem{OkPan_Unknot}
A. Okounkov, R. Pandharipande.
Hodge integrals and invariants of the unknot.
Geom. Topol. 8 (2004), 675--699. 

\bibitem{PaPixJap} R. Pandharipande, A. Pixton. Descendent theory for stable pairs on toric 3-folds. J. Math. Soc. Japan 65 (2013), no. 4, 1337--1372.

\bibitem{PP_GWPT_Toric3folds}
R. Pandharipande, A. Pixton.
Gromov-Witten/pairs descendent correspondence for toric 3-folds.
Geom. Topol. 18 (2014), no. 5, 2747--2821. 

\bibitem{PaPix_GWPT}
R.~Pandharipande, A.~Pixton. A. Gromov-Witten/Pairs correspondence for the quintic 3-fold. J. Amer. Math. Soc. 30 (2017), no. 2, 389--449.

\bibitem{PT} R. Pandharipande, R. P. Thomas. Curve counting via stable pairs in the derived category. Invent. Math. 178 (2009), no. 2, 407--447.

\bibitem{PTHigherGenusHilb}
R. Pandharipande, H.-H. Tseng.
Higher genus Gromov-Witten theory of Hilbn(C2) and CohFTs associated to local curves.
Forum Math. Pi 7 (2019), e4, 63 pp. 

\bibitem{Rang1}
D. Ranganathan.
Logarithmic Gromov-Witten theory with expansions.
arXiv:1903.09006

\bibitem{Setayesh}
I. Setayesh.
Relative Hilbert scheme of points.
J. Math. Soc. Japan 68 (2016), no. 3, 1325--1356. 

\end{thebibliography}
\end{document}